\newtheoremstyle{mystyle}%                % Name
  {}%                                     % Space above
  {}%                                     % Space below
  {\itshape}%                                     % Body font
  {}%                                     % Indent amount
  {\bfseries}%                            % Theorem head font
  {.}%                                    % Punctuation after theorem head
  { }%                                    % Space after theorem head, ' ', or \newline
  {}%                                     % Theorem head spec (can be left empty, meaning `normal')
\theoremstyle{mystyle}
\newtheorem{thm}{Theorem}[section]
\newtheorem{lem}[thm]{Lemma}
\newtheorem{defi}[thm]{Definition}
\newtheorem{prop}[thm]{Proposition}
\newtheorem{exam}[thm]{Example}
\newtheorem{rmk}[thm]{Remark}
\newtheorem{cor}[thm]{Corollary}
\newenvironment{customthm}[1]
  {\innercustomthm}
  {\endinnercustomthm}
\newcommand{\Punderline}[1]{\protect\underline{#1}}
\newcommand{\UZ}{\Punderline{\mathbb{Z}}}
\newcommand{\U}[1]{\Punderline{#1}}
\newcommand{\UA}{\U{A}}
\newcommand{\ZZ}{\mathbb{Z}}
\newcommand{\MS}[1]{\mathscr{#1}}
\newcommand{\UR}{\U{R}}
\newcommand{\UM}{\U{M}}
\newcommand{\UN}{\U{N}}
\newcommand{\UX}{\U{X}}
\newcommand{\RR}{\mathbb{R}}
\newcommand{\QQ}{\mathbb{Q}}
\newcommand{\FF}{\mathbb{F}}
\newcommand{\UB}{\U{\mathbb{B}}}
\newcommand{\JJJ}{\blacktriangle}
\newcommand{\pushout}{\ar@{}[dr]|{\text{\pigpenfont R}}}
\newcommand{\pullback}{\ar@{}[dr]|{\text{\pigpenfont J}}}
\renewcommand{\star}{\bigstar}
\newcommand{\ssdiff}[1]{\ssarrow{-1}{#1}\ssmove{1}{-#1}}
\newcommand{\ssreddiff}[1]{\ssarrow[color=red]{-1}{#1}\ssmove{1}{-#1}}
\newcommand{\ssblueline}[1]{\ssline[color=blue]{0}{#1}}
\newcommand{\ssgreenline}[1]{\ssline[color=green]{0}{#1}}
\newcommand{\sscurvedblueline}[1]{\ssline[color=blue, curve=-.1]{0}{#1}}
\def\JJ{\blacktriangledown}
\def\halfbox{
     \begin{tikzpicture}[x=1.2ex,y=1.2ex]
        \draw (0,0) rectangle +(1,1);
        \fill (0,0) -- (0,.5) -- (1,.5) -- (1,0) -- cycle;
     \end{tikzpicture}
            }
\def\diagbox{
     \begin{tikzpicture}[x=1.2ex,y=1.2ex]
        \draw (0,0) rectangle +(1,1);
        \fill (0,0) -- (0,1) -- (1,0) -- cycle;
     \end{tikzpicture}
            }
\def\twobox{\diagbox}
\def\fourbox{
     \begin{tikzpicture}[x=1.2ex,y=1.2ex]
        \draw (0,0) rectangle +(1,1);
        \fill (0,0) -- (1,1) -- (1,0) -- cycle;
     \end{tikzpicture}
            }
\def\ssray#1#2#3#4#5#6#7{
% this macro places a ray (linear sequence) of elements in a spectral sequence
% the direction of the ray must be in the first quadrant
% (#1,#2) is top value of (x,y) in the chart or ray
% (#3,#4) is position of initial element in the ray
% #5 is element dropped
% (#6,#7) is increment; both must be nonnegative
\ifnum#3>#1
\else
\ifnum#4>#2
\else
   \ssmoveto{#3}{#4}\ssdrop{#5}
   \xval=#3
   \yval=#4
   \advance \xval by #6
   \advance \yval by #7
   \ssray{#1}{#2}{\xval}{\yval}{#5}{#6}{#7}
\fi
\fi
}
\def\sslineray#1#2#3#4#5#6#7#8{
% this macro places a ray (linear sequence) of lines in a spectral sequence
% the direction of the ray must be in the first quadrant
% We assume the lements at each end have been dropped uniquely
% (#1,#2) is top value of (x,y) in the chart or ray
% (#3,#4) is position of initial element in the ray
% (#5,#6) is line direction, which must be in the third quadrant
% (#7,#8) is increment; both must be nonnegative
\ifnum#3>#1
\else
\ifnum#4>#2
\else
   \ssmoveto{#3}{#4}\ssline{#5}{#6}\ssmove{-#5}{-#6}
   \xval=#3
   \yval=#4
   \advance \xval by #7
   \advance \yval by #8
   \sslineray{#1}{#2}{\xval}{\yval}{#5}{#6}{#7}{#8}
\fi
\fi
}
\def\sseightfanray#1#2#3#4#5#6{
% this macro places a ray (linear sequence) of 8fans (see below)
% in a spectral sequence
% the direction of the ray must be in the first quadrant
% We assume the elements at each end have been dropped uniquely
% (#1,#2) is top value of (x,y) in the chart or ray
% (#3,#4) is position of initial element in the ray
% (#5,#6) is the increment, which must be in the first quadrant
\ifnum#3>#1\else \ifnum#4>#2\else
   \xval=#3
   \yval=#4
   \ssmoveto{#3}{#4}
% These four lines constitute an 8fan.
   \advance \xval by 1
   \advance \yval by 7
   \ifnum\xval>#1\else \ifnum\yval>#2\else
         \ssline{1}{7}\ssmove{-1}{-7}
   \fi\fi
   \advance \xval by 2
   \advance \yval by -2
   \ifnum\xval>#1\else \ifnum\yval>#2\else
         \ssline{3}{5}\ssmove{-3}{-5}
   \fi\fi
   \advance \xval by 2
   \advance \yval by -2
   \ifnum\xval>#1\else \ifnum\yval>#2\else
         \ssline{5}{3}\ssmove{-5}{-3}
   \fi\fi
   \advance \xval by 2
   \advance \yval by -2
   \ifnum\xval>#1\else \ifnum\yval>#2\else
         \ssline{7}{1}\ssmove{-7}{-1}
   \fi\fi
   \advance \xval by -7
   \advance \yval by -1
%Recursion follows
   \advance \xval by #5
   \advance \yval by #6
   \sseightfanray{#1}{#2}{\xval}{\yval}{#5}{#6}
\fi
\fi
}
\def\sscoloreightfanray#1#2#3#4#5#6{
% this macro places a ray (linear sequence) of colored 8fans (see below)
% in a spectral sequence
% the direction of the ray must be in the first quadrant
% We assume the elements at each end have been dropped uniquely
% (#1,#2) is top value of (x,y) in the chart or ray
% (#3,#4) is position of initial element in the ray
% (#5,#6) is the increment, which must be in the first quadrant
\ifnum#3>#1\else \ifnum#4>#2\else
   \xval=#3
   \yval=#4
   \ssmoveto{#3}{#4}
% These four lines constitute an 8fan.
   \advance \xval by 1
   \advance \yval by 7
   \ifnum\xval>#1\else \ifnum\yval>#2\else
         \ssline{1}{7}\ssmove{-1}{-7}
   \fi\fi
   \advance \xval by 2
   \advance \yval by -2
   \ifnum\xval>#1\else \ifnum\yval>#2\else
         \ssline[color=blue]{3}{5}\ssmove{-3}{-5}
   \fi\fi
   \advance \xval by 2
   \advance \yval by -2
   \ifnum\xval>#1\else \ifnum\yval>#2\else
         \ssline[color=green]{5}{3}\ssmove{-5}{-3}
   \fi\fi
   \advance \xval by 2
   \advance \yval by -2
   \ifnum\xval>#1\else \ifnum\yval>#2\else
         \ssline[color=cyan]{7}{1}\ssmove{-7}{-1}
   \fi\fi
   \advance \xval by -7
   \advance \yval by -1
%Recursion follows
   \advance \xval by #5
   \advance \yval by #6
   \sscoloreightfanray{#1}{#2}{\xval}{\yval}{#5}{#6}
\fi
\fi
}
\def\sseightfandoubleray#1#2#3#4#5#6#7#8{
% this macro places a double ray (bi-linear sequence) of fans
% in a spectral sequence
% the direction of the rays must be in the first quadrant
% We assume the elements at each end have been dropped uniquely
% (#1,#2) is top value of (x,y) in the chart or ray
% (#3,#4) is position of initial element in the ray
% (#5,#6) is the first increment; both must be nonnegative
% (#7,#8) is the second increment; both must be nonnegative
\ifnum#3>#1\else \ifnum#4>#2\else
   \sseightfanray{#1}{#2}{#3}{#4}{#5}{#6}
   \ssdoublexval=#3
   \ssdoubleyval=#4
   \advance \ssdoublexval by #7
   \advance \ssdoubleyval by #8
   \sseightfandoubleray{#1}{#2}{\ssdoublexval}{\ssdoubleyval}{#5}{#6}{#7}{#8}
\fi\fi
}
\def\sscoloreightfandoubleray#1#2#3#4#5#6#7#8{
% this macro places a double ray (bi-linear sequence) of coleredfans
% in a spectral sequence
% the direction of the rays must be in the first quadrant
% We assume the elements at each end have been dropped uniquely
% (#1,#2) is top value of (x,y) in the chart or ray
% (#3,#4) is position of initial element in the ray
% (#5,#6) is the first increment; both must be nonnegative
% (#7,#8) is the second increment; both must be nonnegative
\ifnum#3>#1\else \ifnum#4>#2\else
   \sscoloreightfanray{#1}{#2}{#3}{#4}{#5}{#6}
   \ssdoublexval=#3
   \ssdoubleyval=#4
   \advance \ssdoublexval by #7
   \advance \ssdoubleyval by #8
   \sscoloreightfandoubleray{#1}{#2}{\ssdoublexval}{\ssdoubleyval}
                            {#5}{#6}{#7}{#8}
\fi\fi
}
\def\ssantiray#1#2#3#4#5#6#7{
% this macro places a ray (linear sequence) of elements in a spectral sequence
% the direction of the ray must be in the thrid quadrant
% (#1,#2) is bottom value of (x,y) in the chart or ray
% (#3,#4) is position of initial element in the ray
% #5 is element dropped
% (#6,#7) is increment; both must be nonpositive
\ifnum#3<#1
\else
\ifnum#4<#2
\else
   \ssmoveto{#3}{#4}\ssdrop{#5}
   \xval=#3
   \yval=#4
   \advance \xval by #6
   \advance \yval by #7
   \ssantiray{#1}{#2}{\xval}{\yval}{#5}{#6}{#7}
\fi
\fi
}
\def\ssraydiff#1#2#3#4#5#6#7#8{
% (#1,#2) is top value of (x,y)
% (#3,#4) is position of initial element in the ray
% #5 is element dropped
% (#6,#7) is increment
% #8 is length of differential
\difflim=#2
\advance \difflim by -#8
\ifnum#3>#1
\else
\ifnum#4>#2
\else
   \ssmoveto{#3}{#4}\ssdrop{#5}
   \ifnum#4>\difflim
   \else
       \ssdiff{#8}
   \fi
   \xval=#3
   \yval=#4
   \advance \xval by #6
   \advance \yval by #7
   \ssraydiff{#1}{#2}{\xval}{\yval}{#5}{#6}{#7}{#8}
\fi
\fi
}
\def\ssrayreddiff#1#2#3#4#5#6#7#8{
% (#1,#2) is top value of (x,y)
% (#3,#4) is position of initial element in the ray
% #5 is element dropped
% (#6,#7) is increment
% #8 is length of differential
\difflim=#2
\advance \difflim by -#8
\ifnum#3>#1
\else
\ifnum#4>#2
\else
   \ssmoveto{#3}{#4}\ssdrop{#5}
   \ifnum#4>\difflim
   \else
       \ssreddiff{#8}
   \fi
   \xval=#3
   \yval=#4
   \advance \xval by #6
   \advance \yval by #7
   \ssrayreddiff{#1}{#2}{\xval}{\yval}{#5}{#6}{#7}{#8}
\fi
\fi
}
\def\ssdoubleray#1#2#3#4#5#6#7#8#9{
\ifnum#3>#1
\else
\ifnum#4>#2
\else
   \ssray{#1}{#2}{#3}{#4}{#5}{#6}{#7}
   \ssdoublexval=#3
   \ssdoubleyval=#4
   \advance \ssdoublexval by #8
   \advance \ssdoubleyval by #9
   \ssdoubleray{#1}{#2}{\ssdoublexval}{\ssdoubleyval}
               {#5}{#6}{#7}{#8}{#9}
\fi
\fi
}
\def\ssantidoubleray#1#2#3#4#5#6#7#8#9{
\ifnum#3<#1
\else
\ifnum#4<#2
\else
   \ssantiray{#1}{#2}{#3}{#4}{#5}{#6}{#7}
   \ssdoublexval=#3
   \ssdoubleyval=#4
   \advance \ssdoublexval by #8
   \advance \ssdoubleyval by #9
   \ssantidoubleray{#1}{#2}{\ssdoublexval}{\ssdoubleyval}{#5}{#6}{#7}{#8}{#9}
\fi
\fi
}
\def\ssrayblueline#1#2#3#4#5#6#7{
% (#1,#2) is top value of (x,y)
% (#3,#4) is position of initial element in the ray
% #5 is element dropped, accompanined by \ssblueline{-2}
% (#6,#7) is increment
\ifnum#3>#1
\else
\ifnum#4>#2
\else
   \ssmoveto{#3}{#4}\ssdrop{#5}\ssblueline{-2}
   \xval=#3
   \yval=#4
   \advance \xval by #6
   \advance \yval by #7
   \ssrayblueline{#1}{#2}{\xval}{\yval}{#5}{#6}{#7}
\fi
\fi
}
\def\ssraygreenline#1#2#3#4#5#6#7{
% (#1,#2) is top value of (x,y)
% (#3,#4) is position of initial element in the ray
% #5 is element dropped
% (#6,#7) is increment
\ifnum#3>#1
\else
\ifnum#4>#2
\else
   \ssmoveto{#3}{#4}\ssdrop{#5}\ssgreenline{-2}
   \xval=#3
   \yval=#4
   \advance \xval by #6
   \advance \yval by #7
   \ssraygreenline{#1}{#2}{\xval}{\yval}{#5}{#6}{#7}
\fi
\fi
}
\def\ssdoublerayblueline#1#2#3#4#5#6#7#8#9{
% creates a series of blueline rays incremented by (#8, #9)
\ifnum#3>#1
\else
\ifnum#4>#2
\else
   \ssrayblueline{#1}{#2}{#3}{#4}{#5}{#6}{#7}
   \ssdoublexval=#3
   \ssdoubleyval=#4
   \advance \ssdoublexval by #8
   \advance \ssdoubleyval by #9
   \ssdoublerayblueline{#1}{#2}{\ssdoublexval}{\ssdoubleyval}
                       {#5}{#6}{#7}{#8}{#9}
\fi
\fi
}
\def\ssdoubleraygreenline#1#2#3#4#5#6#7#8#9{
% creates a series of greenline rays incremented by (#8, #9)
\ifnum#3>#1
\else
\ifnum#4>#2
\else
   \ssraygreenline{#1}{#2}{#3}{#4}{#5}{#6}{#7}
   \ssdoublexval=#3
   \ssdoubleyval=#4
   \advance \ssdoublexval by #8
   \advance \ssdoubleyval by #9
   \ssdoubleraygreenline{#1}{#2}{\ssdoublexval}{\ssdoubleyval}
                       {#5}{#6}{#7}{#8}{#9}
\fi
\fi
}
\def\ssrayblueblueline#1#2#3#4#5#6#7{
% (#1,#2) is top value of (x,y)
% (#3,#4) is position of initial element in the ray
% #5 is element dropped
% (#6,#7) is increment
\ifnum#3>#1
\else
\ifnum#4>#2
\else
%   \ssmoveto{#3}{#4}\ssdrop{#5}\ssblueline{-2}
   \ssmoveto{#3}{#4}\ssdrop{#5}\ssblueline{-4}
   \xval=#3
   \yval=#4
   \advance \xval by #6
   \advance \yval by #7
   \ssrayblueblueline{#1}{#2}{\xval}{\yval}{#5}{#6}{#7}
\fi
\fi
}
\def\ssraygreengreenline#1#2#3#4#5#6#7{
% (#1,#2) is top value of (x,y)
% (#3,#4) is position of initial element in the ray
% #5 is element dropped
% (#6,#7) is increment
\ifnum#3>#1
\else
\ifnum#4>#2
\else
%   \ssmoveto{#3}{#4}\ssdrop{#5}\ssgreenline{-2}
   \ssmoveto{#3}{#4}\ssdrop{#5}\ssgreenline{-4}
   \xval=#3
   \yval=#4
   \advance \xval by #6
   \advance \yval by #7
   \ssraygreengreenline{#1}{#2}{\xval}{\yval}{#5}{#6}{#7}
\fi
\fi
}
\def\ssdoublerayblueblueline#1#2#3#4#5#6#7#8#9{
% creates a series of blueblueline rays incremented by (#8, #9)
\ifnum#3>#1
\else
\ifnum#4>#2
\else
   \ssrayblueblueline{#1}{#2}{#3}{#4}{#5}{#6}{#7}
   \ssdoublexval=#3
   \ssdoubleyval=#4
   \advance \ssdoublexval by #8
   \advance \ssdoubleyval by #9
   \ssdoublerayblueblueline{#1}{#2}{\ssdoublexval}{\ssdoubleyval}
                           {#5}{#6}{#7}{#8}{#9}
\fi
\fi
}
\def\ssdoubleraygreengreenline#1#2#3#4#5#6#7#8#9{
% creates a series of greengreenline rays incremented by (#8, #9)
\ifnum#3>#1
\else
\ifnum#4>#2
\else
   \ssraygreengreenline{#1}{#2}{#3}{#4}{#5}{#6}{#7}
   \ssdoublexval=#3
   \ssdoubleyval=#4
   \advance \ssdoublexval by #8
   \advance \ssdoubleyval by #9
   \ssdoubleraygreengreenline{#1}{#2}{\ssdoublexval}{\ssdoubleyval}
                           {#5}{#6}{#7}{#8}{#9}
\fi
\fi
}
\def\ssreddiffray#1#2#3#4#5#6#7{
% We assume both source and target have already been dropped uniquely.
\difflim=#2
\advance \difflim by -#7
\ifnum#3>#1
\else
\ifnum#4>\difflim
\else
       \ssmoveto{#3}{#4}\ssreddiff{#7}
       \xval=#3
       \yval=#4
       \advance \xval by #5
       \advance \yval by #6
       \ssreddiffray{#1}{#2}{\xval}{\yval}{#5}{#6}{#7}
\fi
\fi
}
\def\reddiffdoubleray#1#2#3#4#5#6#7#8#9{
\difflim=#2
\advance \difflim by -#7
\ifnum#3 > #1
\else
\ifnum#4 > \difflim
\else
       \ssreddiffray{#1}{#2}{#3}{#4}{#5}{#6}{#7}
       \ssdoublexval=#3
       \ssdoubleyval=#4
       \advance \ssdoublexval by #8
       \advance \ssdoubleyval by #9
       \reddiffdoubleray{#1}{#2}{\ssdoublexval}{\ssdoubleyval}{#5}{#6}{#7}{#8}{#9}
\fi
\fi
}
\def\ssreddiffantiray#1#2#3#4#5#6#7{
% We assume both source and target have already been dropped uniquely.
\difflim=#1
\advance \difflim by 1
\ifnum#3< \difflim
\else
\ifnum#4<#2
\else
       \ssmoveto{#3}{#4}\ssreddiff{#7}
       \xval=#3
       \yval=#4
       \advance \xval by #5
       \advance \yval by #6
       \ssreddiffantiray{#1}{#2}{\xval}{\yval}{#5}{#6}{#7}
\fi
\fi
}
\def\reddiffantidoubleray#1#2#3#4#5#6#7#8#9{
\difflim=#2
%\advance \difflim by -#7
\ifnum#3<#1
\else
\ifnum#4<\difflim
\else
       \ssreddiffantiray{#1}{#2}{#3}{#4}{#5}{#6}{#7}
       \ssdoublexval=#3
       \ssdoubleyval=#4
       \advance \ssdoublexval by #8
       \advance \ssdoubleyval by #9
       \reddiffantidoubleray{#1}{#2}{\ssdoublexval}{\ssdoubleyval}{#5}{#6}{#7}{#8}{#9}
\fi
\fi
}
\def\ssbluelineray#1#2#3#4#5#6#7{
% We assume both source and target have already been dropped uniquely.
%(#1,#2) = upper limit
%(#3,#4) = initial position
%(#5,#6) = increment, in first quadrant
% #7= length of vertical blueline
\difflim=#2
\advance \difflim by -#7
\ifnum#3>#1
\else
\ifnum#4>\difflim
\else
       \ssmoveto{#3}{#4}\ssblueline{#7}
       \xval=#3
       \yval=#4
       \advance \xval by #5
       \advance \yval by #6
       \ssbluelineray{#1}{#2}{\xval}{\yval}{#5}{#6}{#7}
\fi
\fi
}
\def\sscurvedbluelineray#1#2#3#4#5#6#7{
% We assume both source and target have already been dropped uniquely.
%(#1,#2) = upper limit
%(#3,#4) = initial position
%(#5,#6) = increment, in first quadrant
% #7= length of vertical curvedblueline
\difflim=#2
\advance \difflim by -#7
\ifnum#3>#1
\else
\ifnum#4>\difflim
\else
       \ssmoveto{#3}{#4}\sscurvedblueline{#7}
       \xval=#3
       \yval=#4
       \advance \xval by #5
       \advance \yval by #6
       \sscurvedbluelineray{#1}{#2}{\xval}{\yval}{#5}{#6}{#7}
\fi
\fi
}
\def\ssbluelineantiray#1#2#3#4#5#6#7{
% We assume both source and target have already been dropped uniquely.
%(#1,#2) = lower limit
%(#3,#4) = initial position
%(#5,#6) = increment, in third quadrant
% #7 = length of vertical blueline
\ifnum#3<#1
\else
\ifnum#4<#2
\else
       \ssmoveto{#3}{#4}\ssblueline{#7}
       \xval=#3
       \yval=#4
       \advance \xval by #5
       \advance \yval by #6
       \ssbluelineantiray{#1}{#2}{\xval}{\yval}{#5}{#6}{#7}
\fi
\fi
}
\def\ssgreenlineantiray#1#2#3#4#5#6#7{
% We assume both source and target have already been dropped uniquely.
%(#1,#2) = lower limit
%(#3,#4) = initial position
%(#5,#6) = increment, in third quadrant
% #7 = length of vertical greenline
\ifnum#3<#1
\else
\ifnum#4<#2
\else
       \ssmoveto{#3}{#4}\ssgreenline{#7}
       \xval=#3
       \yval=#4
       \advance \xval by #5
       \advance \yval by #6
       \ssgreenlineantiray{#1}{#2}{\xval}{\yval}{#5}{#6}{#7}
\fi
\fi
}
\def\bluelinedoubleray#1#2#3#4#5#6#7#8#9{
% We assume both source and target have already been dropped uniquely.
%(#1,#2) = lower limit
%(#3,#4) = initial position
%(#5,#6) = increment, in first quadrant
% #7 = length of vertical blueline
%(#8,#8) = second increment, in first quadrant
\difflim=#2
\advance \difflim by -#7
\ifnum#3>#1
\else
\ifnum#4>\difflim
\else
       \ssbluelineray{#1}{#2}{#3}{#4}{#5}{#6}{#7}
       \ssdoublexval=#3
       \ssdoubleyval=#4
       \advance \ssdoublexval by #8
       \advance \ssdoubleyval by #9
       \bluelinedoubleray{#1}{#2}{\ssdoublexval}{\ssdoubleyval}{#5}{#6}{#7}{#8}{#9}
\fi
\fi
}
\def\bluelineantidoubleray#1#2#3#4#5#6#7#8#9{
% We assume both source and target have already been dropped uniquely.
%(#1,#2) = lower limit
%(#3,#4) = initial position
%(#5,#6) = increment, in third quadrant
% #7 = length of vertical blueline
%(#8,#8) = second increment, in third quadrant
\difflim=#2
\advance \difflim by -#7
\ifnum#3<#1
\else
\ifnum#4<\difflim
\else
       \ssbluelineantiray{#1}{#2}{#3}{#4}{#5}{#6}{#7}
       \ssdoublexval=#3
       \ssdoubleyval=#4
       \advance \ssdoublexval by #8
       \advance \ssdoubleyval by #9
       \bluelineantidoubleray{#1}{#2}{\ssdoublexval}{\ssdoubleyval}{#5}{#6}{#7}{#8}{#9}
\fi
\fi
}
\def\greenlineantidoubleray#1#2#3#4#5#6#7#8#9{
% We assume both source and target have already been dropped uniquely.
%(#1,#2) = lower limit
%(#3,#4) = initial position
%(#5,#6) = increment, in third quadrant
% #7 = length of vertical greenline
%(#8,#8) = second increment, in third quadrant
\difflim=#2
\advance \difflim by -#7
\ifnum#3<#1
\else
\ifnum#4<\difflim
\else
       \ssgreenlineantiray{#1}{#2}{#3}{#4}{#5}{#6}{#7}
       \ssdoublexval=#3
       \ssdoubleyval=#4
       \advance \ssdoublexval by #8
       \advance \ssdoubleyval by #9
       \greenlineantidoubleray{#1}{#2}{\ssdoublexval}{\ssdoubleyval}{#5}{#6}{#7}{#8}{#9}
\fi
\fi
}
\def\bluessray#1#2#3#4#5#6#7{
% this macro places a ray (linear sequence) of elements in a spectral sequence
% the direction of the ray must be in the first quadrant
% (#1,#2) is top value of (x,y) in the chart or ray
% (#3,#4) is position of initial element in the ray
% #5 is element dropped
% (#6,#7) is increment; both must be nonnegative
\ifnum#3>#1
\else
\ifnum#4>#2
\else
   \ssmoveto{#3}{#4}\ssdrop[color=blue]{#5}
   \xval=#3
   \yval=#4
   \advance \xval by #6
   \advance \yval by #7
   \bluessray{#1}{#2}{\xval}{\yval}{#5}{#6}{#7}
\fi
\fi
}
\def\violetssray#1#2#3#4#5#6#7{
% this macro places a ray (linear sequence) of elements in a spectral sequence
% the direction of the ray must be in the first quadrant
% (#1,#2) is top value of (x,y) in the chart or ray
% (#3,#4) is position of initial element in the ray
% #5 is element dropped
% (#6,#7) is increment; both must be nonnegative
\ifnum#3>#1
\else
\ifnum#4>#2
\else
   \ssmoveto{#3}{#4}\ssdrop[color=violet]{#5}
   \xval=#3
   \yval=#4
   \advance \xval by #6
   \advance \yval by #7
   \violetssray{#1}{#2}{\xval}{\yval}{#5}{#6}{#7}
\fi
\fi
}
\def\violetssantiray#1#2#3#4#5#6#7{
% this macro places a ray (linear sequence) of elements in a spectral sequence
% the direction of the ray must be in the thrid quadrant
% (#1,#2) is bottom value of (x,y) in the chart or ray
% (#3,#4) is position of initial element in the ray
% #5 is element dropped
% (#6,#7) is increment; both must be nonpositive
\ifnum#3<#1
\else
\ifnum#4<#2
\else
   \ssmoveto{#3}{#4}\ssdrop[color=violet]{#5}
   \xval=#3
   \yval=#4
   \advance \xval by #6
   \advance \yval by #7
   \violetssantiray{#1}{#2}{\xval}{\yval}{#5}{#6}{#7}
\fi
\fi
}
\def\bluessantiray#1#2#3#4#5#6#7{
% this macro places a ray (linear sequence) of elements in a spectral sequence
% the direction of the ray must be in the thrid quadrant
% (#1,#2) is bottom value of (x,y) in the chart or ray
% (#3,#4) is position of initial element in the ray
% #5 is element dropped
% (#6,#7) is increment; both must be nonpositive
\ifnum#3<#1
\else
\ifnum#4<#2
\else
   \ssmoveto{#3}{#4}\ssdrop[color=blue]{#5}
   \xval=#3
   \yval=#4
   \advance \xval by #6
   \advance \yval by #7
   \bluessantiray{#1}{#2}{\xval}{\yval}{#5}{#6}{#7}
\fi
\fi
}
\def\ssbluelinedoubleray#1#2#3#4#5#6#7#8#9{
% creates a series of blueline rays incremented by (#8, #9) of length #7
\ifnum#3>#1
\else
\ifnum#4>#2
\else
   \ssbluelineray{#1}{#2}{#3}{#4}{#5}{#6}{#7}
   \ssdoublexval=#3
   \ssdoubleyval=#4
   \advance \ssdoublexval by #8
   \advance \ssdoubleyval by #9
   \ssbluelinedoubleray{#1}{#2}{\ssdoublexval}{\ssdoubleyval}
                       {#5}{#6}{#7}{#8}{#9}
\fi
\fi
}
\def\ssgreenlineray#1#2#3#4#5#6#7{
% We assume both source and target have already been dropped uniquely.
%(#1,#2) = upper limit
%(#3,#4) = initial position
%(#5,#6) = increment, in first quadrant
% #7= length of vertical greenline
\difflim=#2
\advance \difflim by -#7
\ifnum#3>#1
\else
\ifnum#4>\difflim
\else
       \ssmoveto{#3}{#4}\ssgreenline{#7}
       \xval=#3
       \yval=#4
       \advance \xval by #5
       \advance \yval by #6
       \ssgreenlineray{#1}{#2}{\xval}{\yval}{#5}{#6}{#7}
\fi
\fi
}
\def\ssgreenlinedoubleray#1#2#3#4#5#6#7#8#9{
% creates a series of greenline rays incremented by (#8, #9) of length #7
\ifnum#3>#1
\else
\ifnum#4>#2
\else
   \ssgreenlineray{#1}{#2}{#3}{#4}{#5}{#6}{#7}
   \ssdoublexval=#3
   \ssdoubleyval=#4
   \advance \ssdoublexval by #8
   \advance \ssdoubleyval by #9
   \ssgreenlinedoubleray{#1}{#2}{\ssdoublexval}{\ssdoubleyval}
                       {#5}{#6}{#7}{#8}{#9}
\fi
\fi
}
\title[$H\UZ$]{Equivariant Eilenberg-Mac Lane Spectra in Cyclic $p$-Groups}
\author{Mingcong Zeng}
\address{University of Rochester, Rochester, NY, 14627}
\email{mingcongzeng@gmail.com}
\begin{document}
\maketitle

\begin{abstract}
In this paper we compute $RO(G)$-graded homotopy Mackey functors of $H\UZ$, the Eilenberg-Mac Lane spectrum of the constant Mackey functor of integers for cyclic $p$-groups and give a complete computation for $C_{p^2}$. We also discuss homological algebra of $\UZ$-modules for cyclic $p$-groups, and interactions between these two. The goal of computation in this paper is to understand various slice spectral sequences as $RO(G)$-graded spectral sequences of Mackey functors.
\end{abstract}

\setcounter{tocdepth}{1}

\tableofcontents

\section{Introduction}

In the groundbreaking paper \cite{HHR}, Hill, Hopkins, and Ravenel prove that the Kervaire invariant one elements $\theta_j$, which are defined in \cite{Kerv-Mil} and known to exist for $j \leq 5$, do not exist for $j \geq 7$. Their approach to the problem is through equivariant stable homotopy theory. A $256$-periodic spectrum $\Omega$, which can detect $\theta_j$ and has trivial homotopy groups in the corresponding dimensions, is constructed as the $C_8$-fixed point spectrum of $\Omega_{\mathbb{O}}$, a localization of $N_2^8 MU_{\RR}$, the $C_8$-norm of the $C_2$-equivariant real cobordism spectrum.

Equivariant stable homotopy theory has two fundamental differences from its non-equivariant brother. First, the role of abelian groups in the non-equivariant stable homotopy theory is replaced by Mackey functors. Fixing a finite group $G$, a Mackey functor is an algebraic structure recording algebraic invariants for different subgroups of $G$ and relations between them. Analogous to the non-equivariant case, given a Mackey functor $\UM$, one can construct an Eilenberg-Mac Lane spectrum $H\UM$ with the desired property. Second, one can suspend an equivariant spectrum not only by spheres, but also by representation spheres $S^V$, which is the one point compactification of $V \in RO(G)$, the group of virtual $G$-representations. Therefore, the fundamental homotopy invariant of an equivariant spectrum $X$ would be $\U{\pi}_{\star}(X)$, its $RO(G)$-graded homotopy Mackey functor.

The analysis of $N_2^8 MU_{\RR}$ and its localization is made possible by the slice filtration. For any finite group $G$, the slice filtration is an equivariant filtration generalizing the Postnikov filtration in non-equivariant stable homotopy theory. Given an equivariant spectrum $X$, the slice filtration gives the slice tower $P^*X$, which then gives a spectral sequence of Mackey functors convergent to the $RO(G)$-graded homotopy Mackey functor of $X$. For $X = N_2^{2^n} MU_{\RR}$, one of the most technical result of \cite{HHR} is the slice and reduction theorem, which determines the slices of $N_2^{2^n} MU_{\RR}$.

\begin{thm}[{\cite[Theorem~6.1]{HHR}}]\label{thm-reduction}
Let $\UZ$ be the constant Mackey functor of integers. Then all slices of $N_2^{2^n}MU_{\RR}$ are suspensions of $H\UZ$ by induced representation spheres. Particularly, the zeroth slice is $H\UZ$ and $\U{\pi}_0(N_2^{2^n}MU_{\RR}) = \UZ$.
\end{thm}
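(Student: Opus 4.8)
Write $G=C_{2^n}$ and $MU^{((G))}=N_2^{2^n}MU_{\RR}$. The plan is to combine two inputs: the algebraic structure of $\U{\pi}^{C_2}_\star MU_{\RR}$ together with multiplicativity of the norm, which yields a ``cellular'' multiplicative filtration of $MU^{((G))}$; and the reduction theorem, identifying the quotient of $MU^{((G))}$ by the $G$-orbits of its polynomial generators with $H\UZ$. By Araki and Hu--Kriz, $\U{\pi}^{C_2}_\star MU_{\RR}$ contains a polynomial family $\bar r_1,\bar r_2,\dots$ with $\bar r_i$ in degree $i\rho_2$, restricting to polynomial generators of $\pi_*MU=\ZZ[r_1,r_2,\dots]$. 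Using the norm--restriction adjunction and the Real orientation of $MU^{((G))}$ one produces classes $\bar r_i^G$ whose $G$-translates have underlying classes generating $\pi_* i_e^*MU^{((G))}=\pi_*MU^{\wedge 2^{n-1}}$, and assembles them into a multiplicative refinement $A=S^0[G\cdot\bar r_1,G\cdot\bar r_2,\dots]\to MU^{((G))}$ out of a twisted monoid ring on the induced representation spheres $G_+\wedge_{C_2}S^{i\rho_2}$. Filtering $A$ by monomial length then induces a multiplicative filtration $F_\bullet MU^{((G))}$ whose subquotients are wedges of the ``monomial'' slice cells $G_+\wedge_H S^{k\rho_H}$.

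Next one identifies the associated graded as $\mathrm{gr}_\bullet MU^{((G))}\simeq \big(MU^{((G))}/(G\cdot\bar r_1,G\cdot\bar r_2,\dots)\big)\wedge W_\bullet$, with $W_\bullet$ the wedge of those monomial slice cells. Underlying, this is the assertion that $\pi_*MU^{\wedge 2^{n-1}}$, with its monomial filtration, is free over $\ZZ=\pi_*\big(MU^{\wedge 2^{n-1}}/(\text{all generators})\big)$; one bootstraps to the equivariant statement through the formalism of twisted monoid rings and the compatibility of such filtrations with $i_e^*$ and with norms.

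The remaining step, the reduction theorem, is the hard one: $MU^{((G))}/(G\cdot\bar r_1,G\cdot\bar r_2,\dots)\simeq H\UZ$. Since $\U{\pi}_0 MU^{((G))}=\UZ$ and the $\bar r_i$ lie in positive underlying degree, the canonical map descends to $\phi\colon MU^{((G))}/(G\cdot\bar r_1,\dots)\to H\UZ$, which is an underlying equivalence by the computation above. To promote $\phi$ to a genuine $G$-equivalence one argues by isotropy separation: as $\phi$ is already an underlying equivalence, it suffices to control the geometric fixed points $\Phi^{C_{2^k}}$ for $1\le k\le n$. Here the norm--geometric-fixed-point formula and $\Phi^{C_2}MU_{\RR}\simeq MO$ (whose homotopy is again polynomial) let one rewrite the geometric fixed points of $MU^{((G))}$ and of the quotient in terms of $MO$ and its norms, set against the known geometric fixed points of $H\UZ$, and an induction on $n$ closes the loop. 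This interplay of norms, geometric fixed points, and the quotient construction is the genuine obstacle; it is the part of \cite{HHR} that occupies its own section.

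Granting the reduction theorem, each subquotient of $F_\bullet$ is $H\UZ$ smashed with an induced representation sphere $G_+\wedge_H S^{k\rho_H}$, which by the standard slice-cell computations is precisely a $k|H|$-slice (and $H\UZ$ itself is the $0$-slice). Hence $F_\bullet$ refines the slice filtration and exhibits every slice of $MU^{((G))}$ as a wedge of suspensions of $H\UZ$ by induced representation spheres, with only even slices occurring since the $\bar r_i$ have even underlying degree. In particular $P^0_0 N_2^{2^n}MU_{\RR}=H\UZ$ and $\U{\pi}_0 N_2^{2^n}MU_{\RR}=\UZ$.
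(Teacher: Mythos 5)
Your sketch is accurate, but note that this paper does not prove the statement at all: it is quoted verbatim as background from Hill--Hopkins--Ravenel, so there is no internal proof to compare against. What you have written is a faithful outline of the original HHR argument itself --- the twisted monoid ring $S^0[G\cdot\bar r_1,G\cdot\bar r_2,\dots]$ refining the slice filtration, the Slice Theorem reduced to the Reduction Theorem $MU^{((G))}/(G\cdot\bar r_1,\dots)\simeq H\UZ$, and the latter proved by isotropy separation, the norm/geometric-fixed-point formula with $\Phi^{C_2}MU_{\RR}\simeq MO$, and induction on the group order --- so it takes essentially the same route as the cited source, with the genuinely hard content correctly located in the Reduction Theorem rather than in the formal filtration argument.
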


Recently, Jeremy Hahn and Danny Shi in \cite{Hahn-Shi} prove that the complex orientation of Morava $E_n$ at $p = 2$ lifts to a real orientation $MU_{\RR} \rightarrow E_n$, with the subgroup $C_2$ of Morava stabilizer group acting on the latter. This allows us to apply slice techniques to investigate Morava $E$-theories and their homotopy fixed points.

One subtlety of these theorems is that, unlike abelian groups, $\UZ$ is not the tensor unit of Mackey functors. The tensor unit of Mackey functors is $\UA$, the Burnside Mackey functor. However, $\UZ$ is still a ring of Mackey functor, which is commonly called Green functor, or even more, a Tambara functor. This means that in slice spectral sequences of modules over $N_2^{2^n} MU_{\RR}$, all differentials and extensions are in the category of $\UZ$-modules. This observation has already been exploited by Hill in \cite{Hill:eta}, where he proves that $\eta^3$ cannot be detected in the fixed point spectrum of $N_2^{2^n}MU_{\RR}$ by showing that in $3$-stem of the slice spectral sequence, there is no room for extensions of $\UZ$-modules that can fit in an element of order $8$. This motivates the first part of this paper: a computational discussion of homological algebra of $\UZ$-modules.

Homological algebra of Mackey functors and $\UZ$-modules is not a new subject. In \cite{Greenlees-Mackey}, Greenlees shows that projective dimension of a Mackey functor for any nontrivial finite group is either $0,1$ or $\infty$, and we encounter the last case quite often. On the other hand, James Arnold, in a series of papers \cite{Arnold:Cyclic},\cite{Arnold:infinite},\cite{Arnold:Dim2},\cite{Arnold:p} and \cite{Arnold:Q}, without mentioning Mackey functors, computes projective dimension of $\UZ$-modules for various finite groups. His work is translated into the language of Mackey functor by Bouc, Stancu, and Webb in \cite{BSW}:

\begin{thm}[{\cite[Corollary~7.2]{BSW}}, Arnold]
The category of $\UZ$-modules in $G$-Mackey functors has finite projective dimension if and only if for each prime $p > 2$, the Sylow $p$-group of $G$ is cyclic, and the Sylow $2$-group of $G$ is either cyclic or dihedral.
\end{thm}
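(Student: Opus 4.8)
The plan is to reduce the statement over a general finite group $G$ to the case of a $p$-group, treat the ``allowed'' groups by showing that every $\UZ$-module has projective dimension at most one, and rule out the remaining groups by producing a single $\UZ$-module over a small subgroup whose minimal resolution never terminates. Throughout I would use that a $\UZ$-module is the same thing as a cohomological Mackey functor, so that $\mathrm{tr}^G_H\circ\mathrm{res}^G_H=[G:H]\cdot\mathrm{id}$ for every $H\le G$, and that (by Greenlees \cite{Greenlees-Mackey}, whose argument applies equally to $\UZ$-modules over a nontrivial group) the projective dimension of any $\UZ$-module is $0$, $1$, or $\infty$; hence ``finite global dimension'' means exactly ``global dimension $\le 1$'', which makes Step 3 a question about a single object and Step 4 a question about the existence of one bad object.

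\textbf{Step 1: reduction to Sylow $p$-subgroups.} I would show that $\UZ$-modules over $G$ have finite global dimension if and only if, for each prime $p$ dividing $|G|$, the $\UZ_{(p)}$-modules over a Sylow $p$-subgroup $G_p$ do. Both directions run through the adjunctions $\mathrm{ind}^G_{G_p}\dashv\mathrm{res}^G_{G_p}\dashv\mathrm{coind}^G_{G_p}$ of exact functors, so $\mathrm{res}$ and $\mathrm{ind}$ both preserve projectives and hence do not raise projective dimension. For ``$\Rightarrow$'': the double-coset formula gives $\mathrm{res}^G_{G_p}\mathrm{ind}^G_{G_p}\UN\cong\UN\oplus(\cdots)$, so $\UN$ is a summand of $\mathrm{res}^G_{G_p}\mathrm{ind}^G_{G_p}\UN$ and an infinite-projective-dimension $\UN$ forces $\mathrm{ind}^G_{G_p}\UN$ to have infinite projective dimension over $G$. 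For ``$\Leftarrow$'': after $p$-localising (the rational part is semisimple, as the rational Burnside ring is a product of fields, so $\mathrm{pd}\,\UM=\max_p\mathrm{pd}\,\UM_{(p)}$) the cohomological identity makes $[G:G_p]$ act invertibly, so $\UM_{(p)}$ is a summand of $\mathrm{ind}^G_{G_p}\mathrm{res}^G_{G_p}\UM_{(p)}$ and $\mathrm{pd}\,\UM_{(p)}\le\mathrm{pd}\,\mathrm{res}^G_{G_p}\UM_{(p)}\le\mathrm{gl.dim}$ over $G_p$. This step should be essentially formal.

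\textbf{Steps 2--3: organising the $p$-group case, and the allowed groups.} For $G$ a $p$-group I would present $\UZ$-modules as modules over the permutation Hecke algebra $\mathcal{E}_G=\mathrm{End}_{\ZZ_{(p)}G}\!\big(\bigoplus_{H\le G}\ZZ_{(p)}[G/H]\big)$ (Yoshida's identification of cohomological Mackey functors) and use the Th\'evenaz--Webb filtration by families of subgroups: every $\UZ$-module is assembled from finitely many extensions of pieces supported at a single conjugacy class $[H]$, the piece at $[H]$ being governed by the homological algebra of $p$-permutation lattices over $\ZZ_{(p)}[W_G(H)]$ with $W_G(H)=N_G(H)/H$. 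Since sections of cyclic groups are cyclic and sections of dihedral groups are cyclic or dihedral, an induction on $|G|$ reduces the whole theorem to: a direct analysis of the bottom stratum $H=1$ (lattices over $\ZZ_{(p)}[G]$), plus control, via the $0/1/\infty$ trichotomy, of the at-most-one-step gluing between adjacent strata. For the allowed groups this induction closes: for $G$ cyclic of order $p^n$ the permutation lattices $\ZZ_{(p)}[G/H]$ are finite in number and indecomposable, and for $G=D_{2^n}$ the $p$-permutation lattices are again finite in number while every $W_G(H)$ is cyclic or dihedral; choosing a normal $N$ with $G/N\cong C_p$, one peels off the top stratum, resolves the top piece in one step, and splices it with the length-one resolutions of the residues from the inductive hypothesis --- keeping the splice at length one is precisely where the restrictive subgroup structure of cyclic (resp.\ dihedral) groups is used --- so every $\UZ$-module over such $G$ has projective dimension $\le 1$.

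\textbf{Step 4: the forbidden groups, and the main obstacle.} By the classification of $p$-groups of $2$-rank $\le 2$ (and the fact that a non-cyclic $p$-group with $p$ odd contains $C_p\times C_p$), every $p$-group that is not cyclic, and not dihedral when $p=2$, contains one of $C_p\times C_p$ ($p$ odd), $C_2\times C_2\times C_2$, $C_4\times C_2$, or $Q_8$ as a subgroup --- note that $C_2\times C_2=D_4$ is itself allowed, which is why the $p=2$ list is not simply $C_2\times C_2$. By the summand argument of Step 1 it then suffices to exhibit, over each of these ``minimal bad'' groups $Q$, one $\UZ$-module of projective dimension $\ge 2$, which by the trichotomy has infinite projective dimension. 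Natural candidates are the constant Mackey functor $\UZ$ itself, or the cokernel of the unit $\UZ\to\mathrm{coInd}$ of a suitable induced Mackey functor, whose minimal free resolution one shows to be eventually periodic with nonzero terms, i.e.\ $\mathrm{Ext}^k_{\mathcal{E}_Q}(\UM,\UM)\neq 0$ for all large $k$, the periodicity class coming from the periodic cohomology of $Q_8$ or from an explicit Yoneda self-extension over $C_p\times C_p$ and $C_2^3$. I expect this non-vanishing to be the hard part: \emph{bounding} the Ext groups in Step 3 is comparatively soft, whereas showing that infinitely many of them are \emph{nonzero} is exactly what forces the explicit lattice computations of Arnold, since the $p$-permutation/cohomological constraint annihilates many classes that would be available for an arbitrary Mackey functor and one must check that at least one survives in each degree.
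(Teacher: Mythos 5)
First, note that the paper does not prove this statement at all: it is quoted from \cite[Corollary~7.2]{BSW} (after Arnold), so there is no in-paper proof to compare against, and your proposal has to stand on its own. It does not, because it rests on a false foundational premise. You assert that Greenlees's $0/1/\infty$ trichotomy for projective dimensions of Mackey functors ``applies equally to $\UZ$-modules,'' so that finite global dimension would mean global dimension $\leq 1$. That is exactly what fails for cohomological Mackey functors, and the surrounding paper makes the point explicitly: Theorem \ref{thm-dim3} (BSW/Arnold) says that for a nontrivial finite cyclic group the category $Mod_{\UZ}$ has global dimension $3$, and the computations of Section \ref{sec-HA} exhibit modules realizing this, e.g.\ $\U{Ext}^3_{\UZ}(\UB_1,\UZ)\cong\UB_1\neq\U{0}$, so $\UB_1$ has projective dimension exactly $3$ --- neither $\leq 1$ nor infinite. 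Greenlees's trichotomy is a statement about modules over the Burnside Mackey functor $\UA$; passing to $\UZ$ (the cohomological condition) destroys it.

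This error propagates through your Steps 3 and 4 and cannot be patched locally. In Step 3 the goal ``every $\UZ$-module over a cyclic (or dihedral) $p$-group has projective dimension $\leq 1$'' is simply false, so the one-step splice in your induction cannot close; the correct target for the ``if'' direction is a finite bound (3 for cyclic groups, and the corresponding finite value for dihedral $2$-groups), which is established by producing explicit bounded resolutions by permutation lattices of the kind the paper writes down for $\UB_1$ --- this is where Arnold's lattice-theoretic work actually lives, and it is not ``comparatively soft.'' In Step 4 the inference ``projective dimension $\geq 2$, hence infinite by the trichotomy'' is unjustified; for the forbidden groups ($C_p\times C_p$, $Q_8$, etc.) one must genuinely prove that some module has nonvanishing $\U{Ext}^k$ for infinitely many $k$ (e.g.\ via periodicity phenomena), not merely that its dimension exceeds $1$. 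Your Step 1 reduction to Sylow subgroups via induction/restriction and $p$-localization is the standard and essentially sound part of the outline, but the core of the theorem, in both directions, is precisely what the trichotomy shortcut was meant to avoid.
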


In this paper, we gives various computations and examples around $\U{Ext}_{\UZ}$ and $\U{Tor}^{\UZ}$, the derived functors of the internal Hom and internal tensor product of $\UZ$-modules for $G = C_{p^n}$. The highlight is that these derived functors have peculiar but computable phenomena.

\begin{customthm}{\ref{thm-ext}}
Let $G = C_{p^n}$ and $\UM$ be a $\UZ$-module that $\UM(G/e) \cong 0$, then
\[
    \U{Ext}_{\UZ}^i(\UM,\UZ) \cong \left\{ \begin{array}{ll}
                \UM^E   & \textrm{for $i = 3$}\\
                \U{0}   & \textrm{otherwise}
                                           \end{array} \right.
\]
where $\UM^E$ is the levelwise dual of $\UM$.
\end{customthm}

We provide two proofs of this theorem. The first one is purely algebraic, making use of the finiteness of projective dimensions for cyclic $p$-groups. The second one is through the lens of equivariant stable homotopy theory. Similar to the Quillen equivalence between the category of chain complexes of abelian groups and $H\ZZ$-modules, we can translate $\U{Ext}_{\UZ}$-computation into the category of $H\UZ$-modules. However, in the equivariant world, the category of $H\UZ$-modules has richer structures: suspensions by any representation spheres, not only $S^n$, are invertible and we have equivariant dualities. Making use of these advantages, we obtain an easier proof.

Another consequence of Theorem \ref{thm-reduction} is that the slices of $N_2^{2^n} MU_{\RR}$ are all smash product of $H\UZ$ and induced representation spheres. Therefore by understanding the $RO(G)$-graded homotopy Mackey functors of $H\UZ$, we can understand the $E_2$-page of the slice spectral sequence. Indeed, by only computing a small portion of $\U{\pi}_{\star}(H\UZ)$, Hill, Hopkins, and Ravenel prove the gap theorem.

\begin{thm}[{\cite[Theorem~8.3]{HHR}}]
For $-4 < i < 0$, $\U{\pi}_{i}(\Omega_{\mathbb{O}}) \cong \U{0}$.
\end{thm}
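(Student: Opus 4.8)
The plan is to run the slice spectral sequence for $\Omega_{\mathbb{O}}$ and to convert the desired vanishing into a short computation inside the $RO(G)$-graded homotopy of $H\UZ$ for subgroups of $C_8$. Recall $\Omega_{\mathbb{O}} = D^{-1}N_2^8 MU_{\RR}$ for the relevant periodicity class $D$, whose degree is a multiple of the regular representation $\rho_8$. By the Reduction Theorem (Theorem~\ref{thm-reduction}) and the Slice Theorem of \cite{HHR}, each slice $P^n_n\Omega_{\mathbb{O}}$ is a wedge of spectra $H\UZ\wedge(C_8)_+\wedge_H S^{k\rho_H}$ with $n$ even. The first point I would pin down is that the subgroup $H$ is always one of $C_2,C_4,C_8$ — no free slice cells occur — since $N_2^8 MU_{\RR}$ is assembled from the norm generators $\bar r_i$ and their $C_8$-translates with the internal $C_2$ acting trivially on each $\bar r_i$, so no monomial has trivial $C_8$-stabilizer, and inverting $D$ does not change the isotropy subgroups that appear. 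Since the slice tower gives a spectral sequence of Mackey functors converging to $\U{\pi}_{\star}\Omega_{\mathbb{O}}$, it suffices to show $\U{\pi}_{-i}\bigl(H\UZ\wedge\widehat{S}\bigr) = \U{0}$ for $i\in\{1,2,3\}$ and every slice cell $\widehat{S}$ occurring.

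Next I would reduce to subgroups. Evaluating the Mackey functor at $C_8/K$ and using the double coset formula, the Wirthm\"uller isomorphism, and the projection formula (together with $i_H^*H\UZ\simeq H\UZ$), everything reduces to proving
\[
\pi^{H}_{-i-k\rho_H}(H\UZ) = 0 \qquad \text{for all } H\leq C_8,\ k\in\ZZ,\ i\in\{1,2,3\}.
\]
(If one prefers to phrase the statement via the non-equivariant $\Omega = \Omega_{\mathbb{O}}^{C_8}$, one also invokes the Homotopy Fixed Point Theorem of \cite{HHR} to identify $\pi_\ast\Omega$ with $\pi^{C_8}_\ast\Omega_{\mathbb{O}}$.)

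The heart of the argument is ``the Gap''. For $k\geq 0$ the vanishing is immediate, since $S^{k\rho_H}$ has a $G$-CW structure with cells only in nonnegative dimensions, so $H\UZ\wedge S^{k\rho_H}$ is $(-1)$-connected. For $k<0$, writing $\rho_H = 1 + \bar\rho_H$,
\[
\pi^H_{-i-k\rho_H}(H\UZ) \cong \pi^H_{|k|\rho_H - i}(H\UZ) \cong \widetilde{H}^{i}_H\bigl(S^{|k|\rho_H};\UZ\bigr) \cong \widetilde{H}^{i-|k|}_H\bigl(S^{|k|\bar\rho_H};\UZ\bigr),
\]
and reduced Bredon cohomology of a space vanishes in negative degrees, so only $|k|\in\{1,2,3\}$ survive. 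For these, the cofiber sequence $S(V)_+\to S^0\to S^V$ with $V=|k|\bar\rho_H$, together with the fact that for the \emph{constant} Mackey functor the restriction $\widetilde{H}^0_H(S^0;\UZ)\to H^0_H\bigl(S(|k|\bar\rho_H)_+;\UZ\bigr)$ is an isomorphism, forces $\widetilde{H}^0_H$ and $\widetilde{H}^1_H$ of $S^{|k|\bar\rho_H}$ to vanish. This disposes of $\U{\pi}_{-1}$ and $\U{\pi}_{-2}$ entirely, and of $\U{\pi}_{-3}$ except for the single genuine input $\widetilde{H}^2_H\bigl(S^{\bar\rho_H};\UZ\bigr) \cong H^1_H\bigl(S(\bar\rho_H);\UZ\bigr) = 0$ for $H\in\{C_2,C_4,C_8\}$ — precisely HHR's ``Cell Lemma''.

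The main obstacle is therefore this last vanishing. For $H=C_2$ it is trivial ($S(\bar\rho_{C_2})$ is a single orbit), but for $H=C_4$ and $H=C_8$ it is an honest computation, carried out with the $G$-CW structure on the unit sphere $S(\bar\rho_H)$ — equivalently, with the integral cohomology of a lens-space-type quotient — and it is exactly the sort of $RO(G)$-graded $H\UZ$ calculation developed systematically in the rest of this paper. Finally, the interval $(-4,0)$ is optimal for this strategy: $\U{\pi}_0\Omega_{\mathbb{O}}\cong\UZ\neq\U{0}$, and one can exhibit slice cells contributing nontrivially in degree $-4$, where the same manipulation runs into $\widetilde{H}^3$ of a representation sphere, which need not vanish.
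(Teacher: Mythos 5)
The paper itself does not prove this theorem: it is quoted from HHR (their Theorem~8.3) as motivation, so the only comparison available is with HHR's argument, and your outline reproduces that argument faithfully — the slice/reduction theorem, the observation that only slice cells induced from $C_2$, $C_4$, $C_8$ occur and that inverting $D$ (in degree a multiple of $\rho_8$) keeps the cells of this form, the Wirthm\"uller reduction to $\pi^H_{-i-k\rho_H}(H\UZ)$, connectivity for $k\geq 0$, and for $k<0$ the cofiber sequence $S(V)_+\to S^0\to S^V$ together with the isomorphism $\tilde{H}^0_H(S^0;\UZ)\to H^0_H(S(V)_+;\UZ)$, which is special to the constant coefficients $\UZ$, leaving only $\tilde{H}^2_H(S^{\bar\rho_H};\UZ)\cong H^1_H(S(\bar\rho_H);\UZ)=0$, i.e.\ HHR's Cell Lemma. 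That is the right skeleton.

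Two corrections are needed. The substantive one: your displayed reduction asserts $\pi^H_{-i-k\rho_H}(H\UZ)=0$ for \emph{all} $H\leq C_8$, which is false for $H=e$ (for a $C_2$-cell with $k=-1$, the underlying spectrum of $H\UZ\wedge (C_8)_+\wedge_{C_2}S^{-\rho_{C_2}}$ is a wedge of $\Sigma^{-2}H\ZZ$'s, so $\U{\pi}_{-2}$ of that slice is nonzero at level $C_8/e$). Indeed the statement as transcribed in this paper — vanishing of the whole Mackey functor $\U{\pi}_i(\Omega_{\mathbb{O}})$ — fails at level $C_8/e$: the underlying homotopy of $\Omega_{\mathbb{O}}$ is a localization of a polynomial ring at an element of degree $152$ and is nonzero in every even degree, in particular in degree $-2$. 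What HHR prove (and what your argument actually establishes) is the vanishing of $\pi^{C_8}_i$, and in fact of $\U{\pi}_i(\Omega_{\mathbb{O}})(C_8/K)$ for every $K\neq e$, since $C_2$ is central and all slice cells are induced from subgroups containing $C_2$, so every double-coset contribution at such a level again involves only subgroups containing $C_2$; you should restrict your reduction and the conclusion accordingly. The minor points: the element of order $2$ does not act trivially on the $\bar r_i$ — it acts by $(-1)^i$; the correct statement is that it fixes every monomial up to sign, so the stabilizers relevant to the slice-cell decomposition all contain $C_2$, the sign being absorbed by the degree $-1$ underlying action on $S^{\rho_{C_2}}$. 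Also, identifying $\pi_*\Omega$ with $\pi^{C_8}_*\Omega_{\mathbb{O}}$ is just the definition of the genuine fixed points; the Homotopy Fixed Point Theorem of HHR is not needed for the Gap Theorem.
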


Another interesting phenomenon about the slice filtration is that, even though we mainly care about elements in the integer degrees, multiplicative generators almost never lie in integer degrees. It is common that the integer degree part of a slice spectral sequence itself is difficult to describe, but belongs to a very nice $RO(G)$-graded spectral sequence. Also, structure maps of Mackey functors commute with differentials, and the norm can produce new differentials from known ones. Therefore when computing the slice spectral sequences, we really want to compute any slice spectral sequence as an $RO(G)$-graded spectral sequence of Mackey functors. This motivates the second part of this paper, the computation of $\U{\pi}_{\star}(H\UZ)$ for $G = C_{p^n}$.

The $RO(G)$-graded homotopy Mackey functors of an Eilenberg-Mac Lane spectrum can be thought as the $RO(G)$-graded ordinary cohomology of a point for the coefficient Mackey functor. As an $RO(G)$-graded Green functor, its structure is very complicated. Lewis in \cite[Theorem~2.1,2.3]{Lewis:ROG} computed the $RO(G)$-graded cohomology of a point with $G = C_p$ and coefficient $\UA$, the Burnside Mackey functor, and use them to prove a freeness theorem of cohomology of a $G$-cell complex \cite[Theorem~2.6]{Lewis:ROG}. For $\UZ$-coefficient, when $G = C_2$, $\U{\pi}_{\star}(H\UZ)$ is well-known and is computed in \cite[Theorem~2.8]{Dugger} and \cite[Section~2]{Greenlees-Four}. In this paper, we give a inductive procedure of computing $\U{\pi}_{\star}(H\UZ)$ through the Tate diagram, and use this procedure to completely compute the case $G = C_{p^2}$.

\begin{customthm}{\ref{thm-Cp2oriented}}
For $G = C_{p^2}$, the $RO(G)$-graded homotopy Mackey functor of $H\UZ$ is computed in terms of generators and relations.
\end{customthm}

There are two different paths of making inductive arguments in equivariant stable homotopy theory. One is through induction on subgroups. An example is the isotropy separation sequence \cite[Section~2.5.2]{HHR}. But this is not the path we take. For $G = C_{p^n}$, we make induction through its quotient groups. We construct a pullback functor $\Psi^*_K$ from $G/K$-Mackey functors to $G$-Mackey functors, for any normal subgroup $K \subset G$. $\Psi^*_K$ is strongly monoidal, exact and preserves projective objects, therefore preserves any homological invariants. Using $\Psi^*_K$ and the Tate diagram, we boil down the computation of $\U{\pi}_{\star}(H\UZ)$ into two different periodic $RO(G)$-graded Mackey functors, and the gold relation \cite[Lemma~3.6]{HHR:C4} describes how these two parts interact with each other.

Inside $\U{\pi}_{\star}(H\UZ)$, there are some important torsion free Mackey functors in degree $\U{\pi}_{|V| - V}(H\UZ)$. They are equivariant generalization of orientations $H_n(S^n;\ZZ)$ in representation spheres. Many different Mackey functors can appear as orientations of $S^V$, but all of them satisfy a structural property: the orientation Mackey functor evaluating at each orbit is $\ZZ$. We prove that the converse is also true: Any such Mackey functor can appear as an orientation. Furthermore, we can find representation spheres modelling the Moore spectra of these $\UZ$-modules.

\begin{customthm}{\ref{thm-formz}}
For $G = C_{p^n}$, if $\UM$ is a $\UZ$-module that $\UM(G/K) \cong \ZZ$ for all subgroups $K \subset G$, then there is a virtual representation $V$ of diemension $0$ that
\[
    H\UM \simeq S^{V} \wedge H\UZ.
\]
\end{customthm}

Another topic we explore about $H\UZ$ is equivariant duality. In \cite{Hill-Meier}, Hill and Meier show that as a $C_2$-spectrum, the topological modular form with level $3$ structure $Tmf_1(3)$ is equivariantly self-dual with an $RO(C_2)$-shift. The shift is a combination of the Serre duality of the cohomology of the underlying stack, and the self-duality of $H\UZ$. In general, for $G = C_{p^n}$, there are two different kinds of self-duality presented in $\U{\pi}_{\star}(H\UZ)$. One of them is the Anderson duality, constructed using the injective resolution $\ZZ \rightarrow \QQ \rightarrow \QQ/\ZZ$ of abelian groups and an equivariant Brown representability theorem. Another one is the Spainer-Whitehead duality of $H\UZ$-modules, and its computation involves a universal coefficient spectral sequence, which requires homological algebra of $\UZ$-modules as input. We show that how these two dualities interplay with each other and show that if we understand orientations of $H\UZ$ in prior, we can use these two dualities iteratively to compute $\U{\pi}_{\star}(H\UZ)$.

The structure of this paper is the following. In Section \ref{sec-Mack} we give definitions of Mackey, Green and Tambara functors and build up basic properties we need for computation. Section \ref{sec-HA} discusses homological algebra of $\UZ$-modules and gives various computations. These two sections are purely algebraic. In section \ref{sec-EM} we talk about constructions needed in equivariant stable homotopy theory, including fixed point and orbit constructions, the Tate diagram and the equivariant universal coefficient spectral sequence. In Section \ref{sec-HZ1} we give proofs of Theorem \ref{thm-ext} and Theorem \ref{thm-formz}. Lastly, Section \ref{sec-HZ2} contains a complete computation of $\U{\pi}_{\star}(H\UZ)$ for $C_{p^2}$ with multiplicative structure, an investigation of the dualities of $\U{\pi}_{\star}(H\UZ)$, and more homological computation through equivariant topology.

\subsection*{Acknowledgements}
I want to thank organizers, mentors and participants of Talbot workshop 2016 and European Talbot workshop 2017, where I learned the background of this paper and benefits from enormous amounts of conversations. I also want to thank John Greenlees, Tyler Lawson, Nicolas Ricka, Danny XiaoLin Shi, Guozhen Wang, Qiaofeng Zhu and Zhouli Xu for illuminating conversations. Lastly and most deeply, I want to thank Mike Hill for his generous help in all stages of this paper, and Doug Ravenel, my advisor, for everything he gave me: support, guidance and inspiration.

\section{Mackey Functors and $\UZ$-Modules}\label{sec-Mack}

\subsection{Mackey functors}
We use the definition of Mackey functor in \cite{Lewis:Green}. Notice that there are equivalent definitions, like the one of Dress \cite{Dress:Mackey}.

\begin{defi}\label{Def-semi-Burnside}
Let $G$ be a finite group. The \textbf{Lindner category} $\MS{B}_G^+$ is the following:
\begin{itemize}
    \item Objects: Finite left $G$-sets.
    \item Morphisms: A morphism $f: X \rightarrow Y$ is represented by a diagram of finite $G$-set maps.
        \[
            X \xleftarrow{f_1} Z_f \xrightarrow{f_2} Y
        \]
        Two diagrams $f,g$ represent the same morphism if there is a $G$-set isomorphism $\theta$ such that the following diagram commutes:
        \[
        \xymatrix{
        Z_f \ar[d]_{f_1} \ar[r]^{f_2} \ar@{=}[dr]_{\theta} & Y \\
        X                                              & Z_g \ar[l]^{g_1} \ar[u]_{g_2}
        }
        \]
    \item Compositions: Given $f:X \rightarrow Y$ and $g: Y \rightarrow Z$, their composition $g \circ f$is the pullback diagram
        \[
        \xymatrix{
        Z_{g \circ f} \ar[d] \ar[r] \pullback & Z_g \ar[d]_{g_1} \ar[r]_{g_2} & Z \\
        Z_f \ar[d]_{f_1} \ar[r]_{f_2} & Y & \\
        X & &
        }
        \]
\end{itemize}
\end{defi}

Notice that morphism set $\MS{B}_G^+(X,Y)$ is a commutative monoid under the disjoint union, and the composition is bilinear. Thus we can define the Burnside category by turning morphism monoids in $\MS{B}_G^+$ into abelian groups.

\begin{defi}\label{Def-Burnside}
The \textbf{Burnside category} $\MS{B}_G$ is obtained from $\MS{B}_G^+$ by forming formal differences in each morphism monoid.
\end{defi}

There is an evident functor $D:\MS{B}_G \rightarrow \MS{B}_G^{op}$ which is identical on objects and switches two legs of a morphism. With Cartesian product of $G$-sets, one can verify that they make $\MS{B}_G$ into a symmetric monoidal category with duality, that is, we have natural isomorphism
\[ \label{eq-dualBG}
\MS{B}_G(X \times Y,Z) \cong \MS{B}_G(X, DY \times Z)
\]

\begin{defi}\label{Def-Mackey}
The category of \textbf{$G$-Mackey functors} $Mack_G$ ($G$ will be omitted if the group is clear) is the category of contravariant enriched additive functor from $\MS{B}_G$ to $Ab$, the category of abelian groups.
\end{defi}

Throughout this paper, all Mackey functors will be presented with underline, i.e. $\UM$ and $\UN$.

The category $Mack_G$ is an abelian category, and all operations are done levelwise.

We can think of a Mackey functor $\UM$ as an assignment that for each orbit $G/H$ we have an abelian group $\UM(G/H)$ and morphisms between $G/H$ and $G/K$ gives structure maps of a Mackey functor. Among these maps, there are a few with significant importance:
\begin{itemize}
\item \textbf{Restrictions} Let $K \subset H \subset G$ be subgroups of $G$, then the map
    \[
    G/K \xleftarrow{id} G/K \twoheadrightarrow G/H
    \]
    induces a homomorphism $Res^H_{K}:\UM(G/H) \rightarrow \UM(G/K)$. These maps are called restrictions.
\item \textbf{Transfers} Let $K \subset H \subset G$ be subgroups of $G$, then the map
    \[
    G/H \twoheadleftarrow G/K \xrightarrow{id} G/K
    \]
    induces a homomorphism $Tr^H_{K}:\UM(G/H') \rightarrow \UM(G/H)$. These maps are called transfers.

\item \textbf{Weyl group actions} Let $H \subset G$ be a subgroup of $G$. Then given an element $\gamma \in W_G(H)$, the Weyl group of $H$ in $G$, the map
    \[
    G/H \xleftarrow{\gamma} G/H \xrightarrow{id} G/H
    \]
    induces a left action of $W_G(H)$ on $\UM(G/H)$.
\end{itemize}

These structure maps are required to satisfied certain compatibility conditions, by the definition of Mackey functor as a functor. In fact, it is sufficient to construct a Mackey functor $\UM$ by constructing all $\UM(G/H)$ and all restrictions, transfers and Weyl group actions in between. An equivalent but more concrete definition of Mackey functor along this line is \cite[1.1.2]{MazurMackey}. Given $K \subset H$, an important compatibility condition is the following:
\[
Res^H_K(Tr^H_K)(x) = \sum_{\gamma \in W_K(H)} \gamma(x)
\]

A common way of describing a Mackey functor is the \textbf{Lewis diagram} \cite{Lewis:ROG}. Let $\UM$ be a Mackey functor. We will put $\UM(G/G)$ on the top and $\UM(G/e)$ on the bottom. Thus restrictions are maps going downwards and transfers are maps going upwards. If $G$ is abelian, Weyl group action will be indicated by $G$-module structure on each $\UM(X)$, otherwise will be omitted. For example, a Lewis diagram of a $C_{p^2}$-Mackey functor $\UM$ is the following:
\[
\xymatrix
@R=7mm
@C=10mm{
\text{$\UM(C_{p^2}/C_{p^2})$}\ar@/_/[d]_{Res^{p^2}_p}\\
\text{$\UM(C_{p^2}/C_p)$}\ar@/_/[d]_{Res^p_e} \ar@/_/[u]_{Tr^{p^2}_p}\\
\text{$\UM(C_{p^2}/e)$}\ar@/_/[u]_{Tr^p_e}
}
\]

\begin{exam}\label{exam-Burnside}
Given a finite $G$-set $X$, the representable functor $\MS{B}_G(-,X)$ gives a Mackey functor $\U{A}_X$, the \textbf{Burnside Mackey functor} of $X$. When $X = G/G$, we write $\U{A}$ for $\U{A}_X$. For any finite group $G$, $\UA(X)$ is the free abelian group generated by $G$-sets over $X$, with restrictions given by pullbacks of $G$-sets and transfers given by compositions.
\end{exam}

\begin{exam}\label{exam-fixMF}
Given a $G$-module $M$, the \textbf{fixed point Mackey functor} $\UM$ is defined as $\UM(G/H) = M^{H}$, the subgroup of $M$ fixed by $H \subset G$. Restrictions are inclusions of fixed points, and transfers are summations over cosets. The Mackey functor $\UZ$, the fixed point Mackey functor of trivial $G$-module $\ZZ$, plays an important role through this paper. $\U{0}$ will stand for the trivial Mackey functor.
\end{exam}

\begin{exam}\label{exam-orbitMF}
Given a $G$-module $M$, the \textbf{orbit Mackey functor} $\U{O}(M)$ is defined as $\U{O}(M)(G/H) \cong M/H$, the orbit of $H \subset G$. Transfers are quotient maps, and restrictions are summations over representatives.
\end{exam}

\subsection{The box product and $\UZ$-modules}
The advantage of defining the category of Mackey functors as a functor category is that we can define a symmetric monoidal product and internal Hom by categorical tools.

\begin{defi}\label{Def-Box}
Given Mackey functors $\UM$ and $\UN$, their \textbf{box product} $\UM \square \UN$ is the left Kan extension \cite[X.3]{MacLane} of the following diagram.
\[
\xymatrix
@R=7mm
@C=20mm
{
\MS{B}_G \times \MS{B}_G \ar[r]^{\UM(-) \otimes \UN(-)} \ar[d]_{(-) \times (-)} & Ab\\
\MS{B}_G \ar@{-->}[ur]_{\UM \square \UN} &
}
\]
Where the horizontal arrow is $(X,Y) \mapsto \UM(X) \otimes \UM(Y)$ and the vertical arrow is $(X,Y) \mapsto X \times Y$, and $\UM \square \UN$ is the left Kan extension
\[
Lan_{(-)\times (-)}\UM(-) \otimes \UN(-).
\]

For any Mackey functor $\UM$, the functor $-\square \UM$ has a right adjoint $\U{Hom}(\UM,-)$, which is the internal Hom of Mackey functors.
\end{defi}

This process is known as the Day convolution, as it was first studied by Brian Day in \cite{Day}. He showed that the Day convolution gives a closed symmetric monoidal structure on the functor category. In our case, it means that the box product is associative, commutative and unital with unit $\UA$. We use $Hom(\UM,\UN)$ for the abelian group of natural transformation from $\UM$ to $\UN$. The internal Hom Mackey functor and the Hom abelian group is related by the following:
\[
\U{Hom}(\UM,\UN)(G/G) \cong Hom(\UM,\UN)
\]

One can show that the box product and the internal Hom commutes with the forgetful functor to a subgroup, thus specially, $(\UM \square \UN)(G/e) \cong \UM(G/e) \otimes \UN(G/e)$ and $\U{Hom}(\UM,\UN)(G/e) \cong Hom_{Ab}(\UM(G/e),\UN(G/e))$, where $\otimes$ and $Hom_{Ab}$ are tensor and Hom of abelian groups.

Before we start to do computation, we need an important operation, the lift.

\begin{defi}\label{def-lift}
Given a Mackey functor $\UM$ and a $G$-set $X$, $\UM_{X}$, the \textbf{lift Mackey functor} of $\UM$ by $X$ is defined as the composition of functors
\[
    \MS{B}_G \xrightarrow{- \times X} \MS{B}_G \xrightarrow{\UM} Ab
\]
\end{defi}

Using the monoidal structure on $\MS{B}_G$, one can verify that two definitions of $\UA_{X}$ agree. Furthermore, there is a natural isomorphism in $\UM$
\[
    \UM \square \UA_{X} \cong \UM_X \cong \U{Hom}(\UA_X,\UM)
\]

Now we are ready for the computational aspect of the box product and the internal Hom.

\begin{itemize}
\item \textbf{Box Product.} In \cite[1.2.1]{MazurMackey}, Mazur shows that for $G = C_{p^n}$, the box product $\UM \square \UN$ can be computed inductively as follows:

    Given $H \subset G$ and $K$ the maximal proper subgroup of $H$, we have
    \[
        (\UM \square \UN)(G/H) \cong (\UM(G/H) \otimes \UN(G/H) \oplus (\UM \square \UN)(G/K)/_{W_H(K)})/_{FR}
    \]
    Where the transfer map $Tr^{H}_{K}$ is the quotient map onto $\UM \square \UN)(G/K)/_{W_H(K)}$.

    The Frobenius reciprocity $FR$ (see also \ref{def-Green2}) is generated by elements of the form, where $K'$ varies as subgroups of $K$:
    \[
        a \otimes Tr^{H}_{K'}(b) - Tr^{H}_{K}Tr^{K}_{K'}(Res^{H}_{K'}(a) \otimes b)
    \]
    and
    \[
        Tr^{H}_{K'}(c) \otimes d - Tr^{H}_{K}Tr^{K}_{K'}(c \otimes Res^{H}_{K'}(d))
    \]

    The Weyl group acts diagonally on $\UM(G/H) \otimes \UN(G/H)$.

    The restriction map $Res^{H}_{K}$ is defined diagonally on $\UM(G/H) \otimes \UN(G/H)$, and for $x \in (\UM \square \UN)(G/K)/_{W_H(K)}$, by
    \[
        Res^{H}_{K}(Tr^{H}_{K}(x)) = \sum_{\gamma \in W_H(K)} \gamma (x)
    \]

    Intuitively, we can think of the box product as the smallest Mackey functor obtained from coefficient system $\UM(G/H) \otimes \UN(G/H)$ by adding all transfers via Frobenius reciprocity.
\item \textbf{Internal Hom.} By \cite[Definition~1.3]{Lewis:Green}, internal Hom $\U{Hom}(\UM,\UN)$ can be computed as follows:
    \[
    \U{Hom}(\UM,\UN)(X) \cong Hom(\UM,\UN_{X})
    \]
    With restrictions and transfers given by structure maps induced by the factor $X$.
\end{itemize}

\begin{exam}\label{exam-box}
Let $G = C_{p^2}$ and $\UM$ be the Mackey functor with the following Lewis diagram:
\[
\xymatrix
@R=7mm
@C=10mm{
\ZZ \ar@/_/[d]_{1}\\
\ZZ \ar@/_/[d]_{p} \ar@/_/[u]_{p}\\
\ZZ \ar@/_/[u]_{1}
}
\]
We can compute $\UM \square \UM$ by Mazur's formula, and see that it is $\UN_1 \oplus \UN_2$, where $\UN_1$ is
\[
\xymatrix
@R=7mm
@C=10mm{
\ZZ \ar@/_/[d]_{p}\\
\ZZ \ar@/_/[d]_{p} \ar@/_/[u]_{1}\\
\ZZ \ar@/_/[u]_{1}
}
\]
and $\UN_2$ is
\[
\xymatrix
@R=7mm
@C=10mm{
\ZZ/p \ar@/_/[d]\\
0     \ar@/_/[d] \ar@/_/[u]\\
0 \ar@/_/[u]
}
\]
This example is generalized in Example \ref{exam-tor}.
\end{exam}

\begin{defi}\label{def-Green}
A \textbf{Green functor} $\UR$ is a Mackey functor $\UR$ equipped with a structure of a monoid in $(Mack_G,\square,\UA)$.
\end{defi}

We can show that it is equivalent to the following definition.

\begin{defi}\label{def-Green2}
A \textbf{Green functor} $\UR$ is a Mackey functor that
\begin{itemize}
\item $\UR$(G/H) is a ring for each subgroup $H \subset G$ and $Res^H_K$ are ring homomorphisms.
\item Transfers satisfy the Frobenius reciprocity: If $K \subset H \subset G$, then
    \[
        Tr^H_K(a) \cdot b = Tr^H_K(a \cdot Res^H_K(b))
    \]
    for all $a \in \UR(G/K)$ and $b \in \UR(G/H)$
\end{itemize}
\end{defi}

One very important Mackey functor is $\UZ$, the fixed point Mackey functor of the trivial $G$-module $\ZZ$. It is a commutative monoid in $Mack_G$, thus we can use the box product to define a closed symmetric monoidal category $Mod_{\UZ}$, the category of $\UZ$-modules.

\begin{defi}\label{def-Z}
A \textbf{$\UZ$-module} $\UM$ is a Mackey functor $\UM$ equipped with an associative and unital map $\UZ \square \UM \rightarrow \UM$. The $\UZ$-box product $\square_{\UZ}$ is defined using the coequalizer diagram
\[
\xymatrix{
\text{$\UM$} \square \UZ \square \UN \ar@<0.7ex>[r] \ar@<-0.7ex>[r] & \text{$\UM$} \square \UN \ar[r]^{coeq} & \text{$\UM$} \square_{\UZ} \UN
}
\]
$\UM \square_{\UZ} -$ has a right adjoint $\U{Hom}_{\UZ}(\UM,-)$, the internal Hom of $\UZ$-modules, defined as an equalizer
\[
\xymatrix{
\text{$\U{Hom}_{\UZ}(\UM,\UN)$} \ar[r]^{eq} & \text{$\U{Hom}(\UM,\UN)$} \ar@<0.7ex>[r] \ar@<-0.7ex>[r] & \text{$\U{Hom}(\UZ \square \UM,N) \cong \U{Hom}(\UM,\U{Hom}(\UZ,\UN))$}
}
\]
We use $(Mod_{\UZ},\square_{\UZ},\U{Hom}_{\UZ},\ZZ)$ for the closed symmetric monoidal category of $\UZ$-modules.
\end{defi}

\begin{rmk}\label{rmk-Z}
Since $\UZ \square \UZ \cong \UZ$, the forgetful functor $Mod_{\UZ} \rightarrow Mack_G$ is fully faithful, therefore being a $\UZ$-module is a condition rather than a structure, and $\UZ$-module maps are no different than Mackey functor maps. The condition of being a $\UZ$-module is made clear by decoding the box product $\UZ \square \UM$ using the concrete formula. We see that for $H \subset G$ and $K$ varies as subgroups of $H$,
\[
    (\UZ \square \UM)(G/H) \cong \UM(G/H)/{([H:K]x - Tr^H_K(Res^H_K(x)))}
\]
Therefore, $\UZ \square \UM$ is simply a levelwise quotient of $\UM$ by equating multiplication of $x$ by index of subgroup and transfer of restriction of $x$. Thus $\UM$ is a $\UZ$-module if and only if the quotient map induced by the unit map $\UA \rightarrow \UZ$
\[
\UM \cong \UA \square \UM \rightarrow \UZ \square \UM
\]
is an isomorphism. This condition is called "cohomological" in classical literatures \cite[Proposition~16.3]{Thevenaz-Webb}.
\end{rmk}

In fact, we can define the closed symmetric monoidal category of $\UZ$-modules in the same way as the definition of Mackey functors. This definition is useful in the proof of Corollary \ref{cor-equivalence}.

\begin{defi}\label{def-BZG}
The \textbf{Burnside $\UZ$-category} for a finite group $G$, $\MS{B} \UZ_{G}$ is the following:
\begin{itemize}
    \item Objects: Finite $G$-sets.
    \item Morphisms: $\MS{B} \UZ_{G}(X,Y) := Hom_G(\ZZ[X],\ZZ[Y])$.
    \item Composition: Composition of $G$-maps.
\end{itemize}
\end{defi}

There is a functor $Q:\MS{B}_G \rightarrow \MS{B} \UZ_G$, which carries exactly the same information as the unit map $\UA \rightarrow \UZ$. $Q$ is identity on objects and sends a span $X \xleftarrow{f} Z \xrightarrow{g} Y$ to the composition $\ZZ[X] \xrightarrow{f^*} \ZZ[Z] \xrightarrow{g_*} \ZZ[Y]$, where $f^*$ is defined using coinduction $\ZZ[X] \cong Set(X,\ZZ)$ and $g_*$ is defined using induction $\ZZ[X] \cong \oplus_{x \in X} \ZZ$.

\begin{prop}\label{prop-FunctorZmod}
A Mackey functor $\UM$ is a $\UZ$-module if and only if it factor through $Q:\MS{B}_G \rightarrow \MS{B} \UZ_G$. Therefore the category of $\UZ$-modules is isomorphic to the category of additive enriched contravariant functors from $\MS{B} \UZ_G$ to $Ab$.
\end{prop}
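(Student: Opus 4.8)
The plan is to exhibit $\MS{B}\UZ_G$ as an explicit quotient category of the Burnside category $\MS{B}_G$ and then read off the functor‑category statement from the universal property of quotient categories; the two directions of the proposition correspond to the two halves of identifying this quotient. The first task is to show that $Q\colon\MS{B}_G\to\MS{B}\UZ_G$ is \emph{full} (it is the identity on objects by construction). By additivity of both $\mathrm{Hom}$‑bifunctors one reduces to surjectivity of $Q$ on $\MS{B}_G(X,Y)\to\mathrm{Hom}_{\ZZ[G]}(\ZZ[X],\ZZ[Y])$ for single $G$‑sets $X,Y$. Here $\MS{B}_G(X,Y)$ is free abelian on isomorphism classes of \emph{connected} spans $X\xleftarrow{a}G/L\xrightarrow{b}Y$, while $\mathrm{Hom}_{\ZZ[G]}(\ZZ[X],\ZZ[Y])$ is free abelian on the $G$‑orbits of $X\times Y$ (the orbit sums forming the canonical basis). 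One checks that if $(a,b)\colon G/L\to X\times Y$ is injective, i.e. identifies $G/L$ with an orbit $\mathcal{O}$, then $Q$ carries that span to the orbit‑sum basis vector $e_{\mathcal{O}}$; since these ``embedded'' spans already account for every basis vector of the target, $Q$ is onto.

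By fullness, $\MS{B}\UZ_G=\MS{B}_G/\mathcal{I}$ with $\mathcal{I}(X,Y)=\ker\bigl(Q\colon\MS{B}_G(X,Y)\to\MS{B}\UZ_G(X,Y)\bigr)$ a two‑sided categorical ideal, so additive contravariant functors $\MS{B}\UZ_G\to Ab$ are exactly the Mackey functors annihilating every morphism of $\mathcal{I}$. The heart of the matter is that $\mathcal{I}$ is generated, as a categorical ideal, by the morphisms
\[
\gamma_{H,K}\;:=\;\bigl(G/H\xleftarrow{\pi}G/K\xrightarrow{\pi}G/H\bigr)\;-\;[H:K]\cdot\mathrm{id}_{G/H}\ \in\ \MS{B}_G(G/H,G/H),\qquad K\subseteq H\subseteq G.
\]
That $\gamma_{H,K}\in\mathcal{I}$ is immediate: under $Q$ a quotient map $\pi$ pulls back ($\pi^*$) as the sum over its fibre and pushes forward ($\pi_*$) by collapsing the fibre, so $Q(\pi_*\pi^*)=[H:K]\cdot\mathrm{id}$. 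For the reverse inclusion, factor an arbitrary connected span $X\xleftarrow{a}G/L\xrightarrow{b}Y$ through the orbit $\mathcal{O}\cong G/M$ it surjects onto: $a=a'c$, $b=b'c$ with $c\colon G/L\to G/M$ a covering (normalised to a quotient map) and $(a',b')$ the embedding of $\mathcal{O}$. Writing the span as $\rho\circ(\text{covering part})\circ\lambda$ for one‑sided morphisms $\lambda\colon X\to G/M$ and $\rho\colon G/M\to Y$, replacing the covering part by $\mathrm{id}_{G/M}$ yields the embedded span $\sigma_0$ over $\mathcal{O}$, while $c_*c^*=[M:L]\cdot\mathrm{id}$ gives $Q(\text{span})=[M:L]\,Q(\sigma_0)$; hence $\text{span}-[M:L]\sigma_0=\rho\circ\gamma_{M,L}\circ\lambda$ lies in the ideal generated by $\gamma_{M,L}$. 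Because the target of $Q$ is free on orbits, these differences span $\mathcal{I}(X,Y)$, which proves the claim. Chaining equivalences then finishes the argument: a Mackey functor $\UM$ factors through $Q$ $\iff$ $\UM$ annihilates $\mathcal{I}$ $\iff$ $\UM(\gamma_{H,K})=0$ for all $K\subseteq H$ (a functor automatically respects compositions, so killing the generators is enough) $\iff$ $Tr^H_K\circ Res^H_K=[H:K]\cdot\mathrm{id}$ on $\UM(G/H)$ for all $K\subseteq H$ $\iff$ $\UM$ is cohomological $\iff$ $\UM$ is a $\UZ$-module, the last step by Remark~\ref{rmk-Z}. Since $Q$ is the identity on objects and full, the comparison functor between $\UZ$-modules and additive contravariant functors on $\MS{B}\UZ_G$ is an \emph{isomorphism} of categories, not merely an equivalence.

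The main obstacle is the reverse inclusion $\mathcal{I}\subseteq\langle\gamma_{H,K}\rangle$: one must control \emph{all} the relations imposed by passing from spans to $\ZZ[-]$-module maps, not just the visible transfer–restriction ones, and this forces the bookkeeping with connected spans, their orbit images under $(a,b)$, and covering degrees. Choosing bases on the two sides of $Q$ so that $Q$ is ``diagonal'' on embedded spans (with the degenerate spans differing from a multiple of an embedded one by an element manifestly in $\langle\gamma_{M,L}\rangle$) is what makes both the fullness of $Q$ and this kernel computation go through cleanly.
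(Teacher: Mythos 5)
Your proposal is correct, but it takes a genuinely different route from the paper. The paper's proof is a two-line computation: it identifies the restricted representable functor $\MS{B}\UZ_G(Q(-),G/G)$ with $\UZ$ (each level is $Hom_G(\ZZ[G/H],\ZZ)\cong\ZZ$ and all restrictions are isomorphisms by the definition of $Q$) and then asserts that the factorization criterion ``follows formally,'' leaving the Yoneda/Morita-style step implicit. You instead prove the statement directly by exhibiting $\MS{B}\UZ_G$ as the quotient of $\MS{B}_G$ by the two-sided ideal generated by $\gamma_{H,K}=Tr^H_K\circ Res^H_K-[H:K]\cdot\mathrm{id}$: fullness of $Q$ via the two bases (connected spans in $\MS{B}_G(X,Y)$, orbit sums in $Hom_G(\ZZ[X],\ZZ[Y])\cong\ZZ[X\times Y]^G$), and the kernel computation by reducing every connected span, modulo the ideal, to $[M:L]$ times the embedded span over its image orbit. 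This is in effect a proof of Yoshida's characterization of cohomological Mackey functors as those factoring through permutation modules, with the final identification (cohomological $\iff$ $\UZ$-module) supplied by Remark \ref{rmk-Z}, which is also where the paper's argument ultimately lands. Your route buys a self-contained and fully explicit argument for both directions, and it makes transparent why the comparison is an isomorphism (not merely an equivalence) of categories, since $Q$ is identity on objects and full with kernel exactly the cohomological relations; the paper's route buys brevity, at the cost of leaving the formal descent step to the reader. All the individual steps you use check out: the spans $G/H\xleftarrow{\pi}G/K\xrightarrow{\pi}G/H$ do evaluate to $Tr^H_K\circ Res^H_K$ under a contravariant functor, $Q$ sends them to $[H:K]\cdot\mathrm{id}$, and the factorization $\text{span}=\rho\circ(\text{covering part})\circ\lambda$ through the image orbit is valid because the relevant pullbacks along identities and along $c$ are again $G/L$.
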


\begin{proof}
We only need to prove that the composition $\MS{B}\UZ_G(Q(-),G/G)$ is $\UZ$, and the result follows formally. Let $\UM := \MS{B}\UZ_G(Q(-),G/G)$, we see that $\UM(G/H) \cong Hom_G(\ZZ[G/H],\ZZ) \cong \ZZ$ and if $K \subset H \subset G$, $Res^H_K$ is induced by quotient map $G/K \rightarrow G/H$, by the definition of $Q$, therefore is an isomorphism.
\end{proof}

Since $\UM \square \UZ$ is the same as quotient $\UM(G/H)$ by the cohomology condition in Remark \ref{rmk-form-Z}, the left Kan extension $Lan_{Q}\UM$ is isomorphic to $\UM \square \UZ$ and therefore the Day convolution on the functor category $\MS{B} \UZ_G \rightarrow Ab$ gives the same box product $\square_{\UZ}$ and internal Hom $\U{Hom}_{\UZ}$ on $\UZ$-modules.

\begin{exam}\label{exam-gpcoh}
$\UZ$-modules occur naturally in group homology and cohomology. Given a finite group $G$ and $M$ a $G$-module, $H_{i}(G;M)$ the group homology with coefficient $M$ can be computed as the $i$-th left derived functors of the orbit functor $(-)/G$, and $H^{i}(G;M)$ the group cohomology with coefficient $M$ can be computed as the $i$-th right derived functors of the fixed point functor $(-)^G$. Since the forgetful functors has both left and right adjoint, it preserves projective and injective resolutions. Take a projective resolution of $P_{\bullet} \rightarrow M$ in $G$-modules, then take the orbit Mackey functor and take differentials, we obtain a Mackey functor structure on group homologies for $H_{i}(K;M)$ for all $K \subset G$. Similarly, we obtain a Mackey functor structure on group cohomologies for $H^{i}(K;M)$ for all $K \subset G$. Since orbit and fixed point Mackey functors are $\UZ$-modules, the group (co)homology Mackey functors are also $\UZ$-modules.
\end{exam}

There are some special $\UZ$-modules that will be important for our computation, namely forms of $\UZ$.

\begin{defi}\label{def-form-Z}
A $\UZ$-module $\UM$ is a \textbf{form of $\UZ$} if $\UM(G/H) \cong \ZZ$ with trivial Weyl group action for all $H \subset G$.
\end{defi}

\begin{rmk}\label{rmk-form-Z}
If $G = C_{p^n}$, we see that in adjacent levels of Lewis diagram of a form of $\UZ$, one of the restruction and transfer is an isomorphism and another is a multiplication by $p$. Therefore there are $2^n$ isomorphism classes of forms of $\UZ$.
\end{rmk}

\begin{defi}\label{def-form-Z-index}
For $G = C_{p^n}$, let $\UZ_{t_1,t_2,...,t_n}$, where $t_i = 0$ or $1$, be the form of $\UZ$ such that $Res^{p^i}_{p^{i-1}} = p^{t^i}$ for $1 \leq i \leq n$. Let $\UB_{t_1,t_2,...,t_n}$ be the cokernel of $\UZ_{t_1,t_2,...,t_n} \rightarrow \UZ$, where the map is an isomorphism on $G/e$-level.
\end{defi}

\begin{exam}

$\UZ_{1,0}$ for $C_{p^2}$ has the following Lewis diagram
\[
\xymatrix
@R=7mm
@C=10mm{
\ZZ \ar@/_/[d]_{1}\\
\ZZ \ar@/_/[d]_{p} \ar@/_/[u]_{p}\\
\ZZ \ar@/_/[u]_{1}
}
\]

$\UB_{1,0}$ for $C_{p^2}$ has the following Lewis diagram
\[
\xymatrix
@R=7mm
@C=10mm{
\ZZ/p \ar@/_/[d]_{1}\\
\ZZ/p \ar@/_/[d] \ar@/_/[u]_0\\
0 \ar@/_/[u]
}
\]

\end{exam}

\begin{exam}\label{exam-box2}
In this notation, Example \ref{exam-box} says that
\[
\UZ_{1,0} \square_{\UZ} \UZ_{1,0} \cong \UZ_{1,1} \oplus \UB_{0,1}
\]
\end{exam}

\begin{exam}
If $\UM$ is a form of $\UZ$, then $\U{Hom}_{\UZ}(\UM,\UZ) \cong \UZ$:
\[
\U{Hom}_{\UZ}(\UM,\UZ)(G/H) \cong Hom(i^*_{H}\UM,i^*_H\UZ) \cong \ZZ
 \]
which is generated by the map that is an isomorphism on $G/e$-level, for every \linebreak $H \subset G$. Since the restrictions in the internal Hom are forgetful maps, we see that all restrictions in $\U{Hom}_{\UZ}(\UM,\UZ)$ are isomorphisms, therefore $\U{Hom}_{\UZ}(\UM,\UZ) \cong \UZ$.
\end{exam}

The following lemma is simple but useful.

\begin{lem}\label{lem-torsion}
Let $\UM$ be a $\UZ$-module that $\UM(G/e)$ is torsion, then $\UM(G/H)$ is torsion for all $H \subset G$.
\end{lem}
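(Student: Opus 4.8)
The plan is to exploit the compatibility relation between restrictions and transfers together with the fact that, for a $\UZ$-module, every element at level $G/H$ with $H \neq e$ is constrained by a "cohomological" identity of the form $[H:K]\,x = Tr^H_K(Res^H_K(x))$ from Remark \ref{rmk-Z}. Let $n$ be an integer that annihilates $\UM(G/e)$; I claim that $n^{|G|}$ (or, more carefully, a suitable power of $n$ depending on the subgroup lattice) annihilates every $\UM(G/H)$. The argument proceeds by induction on the order of $H$: the base case $H = e$ is the hypothesis, and the inductive step shows that if a power of $n$ kills $\UM(G/K)$ for all proper subgroups $K \subsetneq H$, then a (larger) power of $n$ kills $\UM(G/H)$.

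For the inductive step, fix $x \in \UM(G/H)$. Since $\UM$ is a $\UZ$-module, applying the identity in Remark \ref{rmk-Z} to a maximal proper subgroup $K \subset H$ gives $[H:K]\,x = Tr^H_K(Res^H_K(x))$. Now $Res^H_K(x) \in \UM(G/K)$, which by the inductive hypothesis is killed by some power $n^{m}$ of $n$; hence $n^{m}\,Tr^H_K(Res^H_K(x)) = Tr^H_K(n^{m}Res^H_K(x)) = 0$, so $n^{m}[H:K]\,x = 0$. Since $[H:K]$ is a divisor of $|G|$ — and in the case of interest $G = C_{p^n}$ so $[H:K]$ is a power of $p$ — this does not immediately give that a power of $n$ kills $x$ unless $n$ and $[H:K]$ are not coprime. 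To handle the general case I would instead use \emph{all} proper subgroups $K \subset H$ simultaneously: the images of the transfers $Tr^H_K$ over all maximal $K$, together with the "cohomological" quotient description $(\UZ\square\UM)(G/H) \cong \UM(G/H)$, show that modulo the subgroup $\sum_K \mathrm{im}(Tr^H_K)$ the group $\UM(G/H)$ is a quotient of a cyclic-type relation forcing torsion, while $\sum_K \mathrm{im}(Tr^H_K)$ is itself torsion by the inductive hypothesis (transfers of torsion elements are torsion). Either way, $\UM(G/H)$ is an extension of a torsion group by a torsion group, hence torsion.

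The main obstacle I anticipate is bookkeeping the interaction between the integer $n$ annihilating the underlying group and the indices $[H:K]$ appearing in the transfer-restriction formulas: one must be careful that "torsion" is the right conclusion rather than "$n$-torsion", and that no infinitely-generated phenomenon sneaks in. The cleanest route is probably to observe that for a $\UZ$-module the structure maps make $\UM(G/H)$ a subquotient (via restriction to $e$ and the cohomological relations) of something built functorially out of $\UM(G/e)$ and images of transfers, so that torsionness propagates upward formally; this avoids tracking exponents entirely and reduces the lemma to the two facts that transfers preserve torsion and that the cokernel of $\bigoplus_K Tr^H_K$ in a $\UZ$-module is torsion, the latter being exactly the content of the cohomological condition $[H:K]x = Tr^H_K Res^H_K(x)$.
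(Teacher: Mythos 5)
Your proposal is correct, and the engine is the same as the paper's: the cohomological relation $[H:K]x = Tr^H_K(Res^H_K(x))$ from Remark \ref{rmk-Z} plus the fact that transfers of torsion elements are torsion. But you make it harder than it is in two ways. First, the paper needs no induction at all: taking $K = e$ directly gives $|H|\,x = Tr^H_e(Res^H_e(x))$, and since $Res^H_e(x) \in \UM(G/e)$ is torsion, $|H|\,x$ is torsion, hence $x$ is torsion --- one line. Second, the ``obstacle'' you flag about coprimality of $n$ and $[H:K]$ is not an obstacle: the lemma asserts only that $\UM(G/H)$ is torsion, not $n$-torsion, and your own identity $n^{m}[H:K]\,x = 0$ already exhibits a nonzero integer annihilating $x$, so your inductive argument is complete at that point. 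The fallback you sketch (cokernel of the transfers is $[H:K]$-torsion by the cohomological condition, images of transfers are torsion by induction, extension of torsion by torsion is torsion) is also valid, but it is machinery you do not need. So: correct proof, same idea as the paper, with an unnecessary induction and an unnecessary detour that you could delete.
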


\begin{proof}
Let $x \in \UM(G/H)$, the cohomological condition implies that \[|H|x = Tr^H_e(Res^H_e(x))\]
is torsion.
\end{proof}

We will call a $\UZ$-module $\UM$ torsion, if $\UM(G/e)$ is a torsion abelian group.

We close this section with some discussion about Tambara functors. The Tambara structure will not affect computations in this paper, however we need the fact that $\UZ$ is a Tambara functor in the proof of Corollary \ref{cor-equivalence}.

\begin{defi}[{\cite[Definition~2.3]{Mazur:EquiTensor}}] \label{def-Tambara}
A \textbf{$G$-Tambara functor} $\UR$ is a commutative Green functor $\UR$ with norm maps
\[
    N_K^H: \UR(G/K) \rightarrow \UR(G/H)
\]
for subgroups $K \subset H \subset G$. These are maps of multiplicative monoids that satisfy formulas about norm of sums and norm of transfers.
\end{defi}

\begin{prop}[{\cite[Example~1.4.5]{MazurMackey}}]\label{prop-fixTambara}
A fixed point Green functor $\UR$ is naturally a Tambara functor. Given $K \subset H \subset G$ the norm map $N^H_K:R^K \rightarrow R^H$ is defined by
\[
    N^H_K(x) = \prod_{\gamma \in W_H(K)} \gamma x
\]
\end{prop}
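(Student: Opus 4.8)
The plan is to realise $\UR$ concretely and build the norm maps as ``fibrewise multiplication''. Writing $R := \UR(G/e)$ for the underlying $G$-ring, one has $\UR(X) \cong \mathrm{Map}_G(X,R)$ naturally in the finite $G$-set $X$ (for $X = G/H$ this is evaluation at the identity coset). Under this identification, restriction along a $G$-map $\pi\colon X \to Y$ is precomposition with $\pi$, the additive transfer along $\pi$ sends $\phi$ to $y \mapsto \sum_{x \in \pi^{-1}(y)} \phi(x)$, and I would \emph{define} the norm along $\pi$ by $(N_\pi \phi)(y) = \prod_{x \in \pi^{-1}(y)} \phi(x)$. The first thing to check is that $N_\pi\phi$ is again $G$-equivariant: for $g \in G$ the action of $g$ carries $\pi^{-1}(y)$ bijectively onto $\pi^{-1}(gy)$, so $(N_\pi\phi)(gy) = \prod_{x \in \pi^{-1}(y)} \phi(gx) = g\!\prod_{x \in \pi^{-1}(y)} \phi(x)$, using that $G$ acts on $R$ by ring automorphisms and that $R$ is commutative (so the product is unambiguous). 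Taking $\pi$ to be the projection $G/K \to G/H$, whose fibre over the identity coset is $\{gK : g \in H\}$, and writing $\phi(gK) = g\,\phi(eK)$, the value of $N_\pi\phi$ at the identity coset is $\prod_{gK \in H/K} g\,x$ with $x = \phi(eK) \in R^K$; when $K$ is normal in $H$ one has $W_H(K) = H/K$ and this is exactly the product in the statement, so the displayed formula is the special case of fibrewise multiplication for the canonical projection of orbits.

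The routine verifications come next. Multiplicativity, $N_\pi(\phi\psi) = N_\pi(\phi)N_\pi(\psi)$ and $N_\pi(1) = 1$, is immediate from commutativity of $R$, so each $N^H_K$ is a homomorphism of multiplicative monoids. Functoriality, $N_{\pi \circ \rho} = N_\pi \circ N_\rho$, follows because the $(\pi\circ\rho)$-fibre over $y$ is the disjoint union of the $\rho$-fibres over the points of the $\pi$-fibre over $y$, so the iterated product collapses to a single product; specialised to nested subgroups this is transitivity $N^H_K \circ N^K_L = N^H_L$. The interaction of $N$ with $Res$ is the multiplicative double-coset formula: along a pullback square of $G$-sets the composites $Res \circ N$ and $N \circ Res$ agree, which at the level of subgroups unwinds to the usual double-coset description of $Res^H_L \, N^H_K$, and once again reduces to reindexing a product over a fibre.

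The substantive point, and the one I expect to be the main obstacle, is the remaining Tambara axiom: the distributive (``exponential'') law that simultaneously encodes the norm of a sum and the norm of a transfer. Given an exponential diagram of $G$-sets built from a $G$-map $g$, one must match the composite ``norm, then transfer along $g$, then restriction'' against ``restriction, then norm, then transfer'' computed along the associated dependent-product $G$-set. After unwinding the fibrewise descriptions above this becomes, fibre by fibre, the elementary ring identity
\[
\prod_{z}\Bigl(\sum_{y \in g^{-1}(z)} a_y\Bigr) = \sum_{s} \prod_{z} a_{s(z)},
\]
where $z$ ranges over a fibre of the outer map, $a_y \in R$, and $s$ ranges over the (finitely many) sections of $g$ over that fibre; this is just repeated application of distributivity in the commutative ring $R$. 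The content of the step is that the indexing set of such sections is canonically the fibre of the dependent-product $G$-set appearing in the exponential diagram, \emph{equivariantly} and naturally in the $G$-set data — this bookkeeping is where care is genuinely needed. Taking $g$ a fold map recovers the ``norm of a sum'' formula and taking $g$ an orbit projection recovers the ``norm of a transfer'' formula of Definition \ref{def-Tambara}. Naturality of the entire construction in the $G$-ring $R$ is then formal, so the Tambara structure on $\UR$ is natural as claimed.
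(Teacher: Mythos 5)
Your construction is correct, and it is in substance the argument behind the result: the paper itself offers no proof here, citing Mazur's thesis (Example 1.4.5), and your fibrewise-multiplication description of the norm is exactly the standard route taken there. Realising $\UR(X)$ as $\mathrm{Map}_G(X,R)$, defining $N_\pi$ by multiplying over fibres, and checking equivariance, multiplicativity, composition, and base change are all routine as you say; the genuine content is the exponential-diagram axiom, and you have correctly isolated it as the identity $\prod_z\bigl(\sum_{y\in g^{-1}(z)}a_y\bigr)=\sum_s\prod_z a_{s(z)}$ together with the equivariant identification of the set of sections $s$ with the fibre of the dependent-product $G$-set; that identification is the definition of the dependent product (sections of $g$ over a fibre, with $G$ acting by conjugating sections), so the deferred bookkeeping is standard and your sketch closes. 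Two small points are worth recording. First, your derivation actually yields the formula $N^H_K(x)=\prod_{gK\in H/K}gx$ (well defined since $x\in R^K$), which agrees with the displayed product over $W_H(K)$ only when $K$ is normal in $H$; since the paper works with $G=C_{p^n}$ this is automatic, but for general $G$ your indexing over $H/K$ is the correct one and the statement's $W_H(K)$ should be read that way. Second, the paper's Definition \ref{def-Tambara} only asks for norms of sums and of transfers, so your final reduction (fold maps give the norm-of-sum formula, orbit projections give the norm-of-transfer formula) is precisely what is needed; proving the full exponential-diagram compatibility, as you do, gives the stronger bispan-level Tambara structure at no extra cost.
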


\section{Homological Algebra of $\UZ$-Modules}\label{sec-HA}

\subsection{The internal homological algebra}
One important feature of the category of Mackey functors is that it has enough projective and injective objects, therefore combining with the box product and the internal Hom, we can define "internal" derived functors $\U{Tor}$ and $\U{Ext}$. A more detailed argument about derived functors in $Mack_G$ is in \cite[Section~4]{LewisMandell}.

\begin{prop}\label{prop-proj-inj}
The category of Mackey functors has enough projective and injective objects.
\end{prop}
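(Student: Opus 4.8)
The plan is to invoke the standard machinery for functor categories: since $Mack_G$ is the category of additive contravariant functors from the small preadditive category $\MS{B}_G$ to $Ab$, it inherits enough projectives and enough injectives from $Ab$ in the usual way. Concretely, I would first record that $Mack_G$ is an abelian category with all small (co)limits computed levelwise, which is immediate from the fact that $Ab$ has them and they are preserved by evaluation at each object. Then I would produce the projective generators explicitly: for each finite $G$-set $X$ (equivalently, each orbit $G/H$), the representable Mackey functor $\MS{B}_G(-,X)$ is projective by the enriched Yoneda lemma, because $\operatorname{Hom}_{Mack_G}(\MS{B}_G(-,X),\UN)\cong \UN(X)$ naturally in $\UN$, and the functor $\UN\mapsto \UN(X)$ is exact. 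Hence any Mackey functor $\UM$ admits an epimorphism from a (possibly infinite) direct sum $\bigoplus_{i}\MS{B}_G(-,X_i)\twoheadrightarrow\UM$ obtained by choosing generators in each $\UM(X_i)$; since finitely many orbits suffice to index the representables and direct sums of projectives are projective, $Mack_G$ has enough projectives.

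For injectives, the cleanest route is the dual-of-projective construction. I would use the fact that $Ab$ has enough injectives (every abelian group embeds in a divisible group, e.g.\ via $A\hookrightarrow \prod \QQ/\ZZ$) together with the contravariant ``coinduction'' functors built from the representables. Precisely, for a finite $G$-set $X$ and an injective abelian group $I$, the functor $\UN \mapsto \operatorname{Hom}_{Ab}(\UN(X),I)$ — once suitably corepresented as a Mackey functor using the duality isomorphism \eqref{eq-dualBG} on $\MS{B}_G$, which identifies covariant and contravariant representables — is injective in $Mack_G$, again because evaluation at $X$ is exact and $\operatorname{Hom}_{Ab}(-,I)$ is exact. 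Given an arbitrary $\UM$, one embeds each $\UM(X)$ into an injective abelian group, assembles these into a monomorphism $\UM\hookrightarrow \prod_X J_X$ where each $J_X$ is one of these injective Mackey functors, and concludes that $Mack_G$ has enough injectives. Alternatively, and perhaps more in the spirit of this paper, one can cite that $Mack_G$ is equivalent to the module category over the Burnside ring ``algebra'' (the category algebra of $\MS{B}_G$, when $G$ is finite this is a finite-dimensional-over-$\ZZ$ object), and module categories over rings always have enough projectives and injectives; this is essentially \cite[Section~4]{LewisMandell}, which can simply be referenced.

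The only genuine subtlety — and the step I would be most careful about — is the construction of enough injectives, since the naive ``levelwise injective'' assignment does not define a Mackey functor: one cannot just take $H\mapsto I_H$ with $I_H$ injective in $Ab$ and hope the restrictions and transfers glue. The fix is exactly the corepresentable/coinduction trick above, which uses the self-duality of the Burnside category $\MS{B}_G$ (the functor $D:\MS{B}_G\to\MS{B}_G^{op}$) to turn the exact functor ``evaluate at $X$ and then $\operatorname{Hom}_{Ab}(-,I)$'' into something corepresented inside $Mack_G$; verifying that the resulting object is honestly injective is a short adjunction computation. Everything else is formal functor-category bookkeeping, so I would keep the write-up brief, state the representable projectives and their injective duals, and defer the detailed verifications to the cited references.
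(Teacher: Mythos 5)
Your proposal is correct and follows essentially the same route as the paper: representables $\MS{B}_G(-,X)$ are projective by the enriched Yoneda lemma since evaluation at $X$ is exact, and the injectives are the duals $\U{I}(X,E)=\operatorname{Hom}_{Ab}(\MS{B}_G(X,-),E)$ with $E$ injective, characterized by $\operatorname{Hom}(\UM,\U{I}(X,E))\cong \operatorname{Hom}_{Ab}(\UM(X),E)$. The only cosmetic difference is that you route the injective construction through the self-duality $D$ of $\MS{B}_G$, which is not needed since $\operatorname{Hom}_{Ab}(\MS{B}_G(X,-),E)$ is already contravariant in the variable.
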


\begin{proof}
We first prove there are enough projective objects. By (enriched) Yoneda lemma \cite[Section~2.4]{Kelly}, we have $Hom(A_{X},\UM) \cong \UM(X)$. Since surjection is defined levelwisely, we see that for any $G$-set $X$, $A_{X}$ is projective and coproducts of them form enough projective objects.

For enough injective objects, given an abelian group $E$, let $\U{I}(X,E)$ be the Mackey functor $Hom_{Ab}(\MS{B}_{G}(X,-),E)$. By Yoneda lemma again we have
\[Hom(\UM,\U{I}(X,E)) \cong Hom_{Ab}(\UM(X),E).\] Thus products of Mackey functors of the form $\U{I}(X,E)$ where $E$ is an injective abelian group forms enough injective objects.
\end{proof}

The following proposition is standard by using induction and coinduction from Mackey functor to $\UR$-modules.

\begin{prop}\label{prop-proj-inj-module}
Given a commutative Green functor $\UR$, the category $Mod_{\UR}$ has enough projective and injective objects.
\end{prop}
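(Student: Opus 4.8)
The plan is to transport the ``enough projectives and injectives'' conclusion of Proposition \ref{prop-proj-inj} along the forgetful functor $U \colon Mod_{\UR} \to Mack_G$ and its two adjoints, exactly as one does for modules over an ordinary ring.

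First I would set up the formal infrastructure. Kernels and cokernels of $\UR$-module maps are computed levelwise, so $U$ creates all finite limits and colimits; in particular $Mod_{\UR}$ is abelian and $U$ is exact and faithful. Because $\UR$ is a monoid in $(Mack_G,\square,\UA)$, the free--forgetful adjunction furnishes a left adjoint $\UR\square(-)$ to $U$ (induction), and the tensor--Hom adjunction of Definition \ref{Def-Box} furnishes a right adjoint $\U{Hom}(\UR,-)$ to $U$ (coinduction). Since a left adjoint of an exact functor preserves projectives and a right adjoint of an exact functor preserves injectives, $\UR\square(-)$ sends projective Mackey functors to projective $\UR$-modules and $\U{Hom}(\UR,-)$ sends injective Mackey functors to injective $\UR$-modules.

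Next I would build the covers and envelopes. Given an $\UR$-module $\UM$, Proposition \ref{prop-proj-inj} supplies a projective Mackey functor $\UP$ and an epimorphism $\UP \twoheadrightarrow U\UM$; applying $\UR\square(-)$ and composing with the counit $\varepsilon_{\UM}\colon \UR\square U\UM \to \UM$ (the module action) gives an $\UR$-module map $\UR\square\UP \to \UM$ out of a projective. It is an epimorphism because $U\varepsilon_{\UM}$ is a split epimorphism in $Mack_G$: the unit $u\colon\UA\to\UR$ of the Green functor induces a section $U\UM \cong \UA\square U\UM \xrightarrow{u\,\square\,\mathrm{id}} \UR\square U\UM$ by module unitality, and then faithfulness and exactness of $U$ force $\UR\square\UP \to \UM$ itself to be epi. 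Dually, choose an injective Mackey functor $\UI$ with $U\UM\hookrightarrow\UI$, apply $\U{Hom}(\UR,-)$, and precompose with the unit $\eta_{\UM}\colon\UM\to\U{Hom}(\UR,U\UM)$; the composite $\UM \xrightarrow{\eta_{\UM}} \U{Hom}(\UR,U\UM) \xrightarrow{\U{Hom}(u,\mathrm{id})} \U{Hom}(\UA,U\UM)\cong U\UM$ is the identity on $U\UM$, so $U\eta_{\UM}$ is a split monomorphism, whence $\UM\hookrightarrow\U{Hom}(\UR,\UI)$ realizes $\UM$ as a subobject of an injective $\UR$-module.

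I do not expect a genuine obstacle. The only point worth stating with care is that the section of $\varepsilon_{\UM}$ and the retraction of $\eta_{\UM}$ are a priori only maps of Mackey functors rather than of $\UR$-modules, which is harmless: $U$ is faithful and exact, hence reflects epimorphisms and monomorphisms, so being split epi/mono after applying $U$ already forces epi/mono in $Mod_{\UR}$. Everything else is routine adjoint-functor bookkeeping.
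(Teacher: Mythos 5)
Your argument is correct and is exactly the route the paper has in mind: the paper simply declares the proposition ``standard by using induction and coinduction from Mackey functors to $\UR$-modules,'' and your write-up fills in precisely that adjoint-functor argument (free module $\UR\square(-)$ and coinduction $\U{Hom}(\UR,-)$, splitting of the counit/unit via the unit map of $\UR$, and exactness/faithfulness of the forgetful functor). No gap; you have just made explicit what the paper leaves as routine.
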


\begin{defi}\label{def-ext-tor}
Let $\U{Ext}_{\UR}^i(\UM,\UN)$ be the $i$-th right derived functor of $\U{Hom}_{\UR}(\UM,-)$. Let $\U{Tor}^{\UR}_i(\UM,\UN)$ be the $i$-th left derived functor of $\UM \square_{\UR} -$.
\end{defi}

By standard argument, we can use either projective resolutions of $\UM$ or injective resolutions of $\UN$ to compute $\U{Ext}_{\UR}^i(\UM,\UN)$. But in practice, projective objects and resolutions are easier to describe than injective objects and resolutions, therefore we will mostly use projective version to compute homological algebra.

These derived functors have all the expected basic properties.

\begin{prop}\label{prop-LES}
$\U{Tor}^{\UR}_*$ and $\U{Ext}^*_{\UR}$ are naturally $\UR$-modules, and converts short exact sequences in each variable into long exact sequences. And there are natural isomorphisms
\[
\U{Tor}^{\UR}_0(\UM,\UN) \cong \UM \square_{\UR} \UN
\]
and
\[
\U{Ext}^0_{\UR}(\UM,\UN) \cong \U{Hom}_{\UR}(\UM,\UN)
\]
\end{prop}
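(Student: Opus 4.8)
The plan is to reduce everything to the standard theory of derived functors of additive bifunctors on abelian categories with enough projectives and injectives, which is available here by Proposition~\ref{prop-proj-inj-module}, and then to check the two statements that use more than formal nonsense: the $\UR$-module structure and exactness in the variable that is \emph{not} resolved.

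First, the degree-zero identifications. Being a left adjoint, $\square_{\UR}$ is right exact in each variable; hence for a projective resolution $P_\bullet \twoheadrightarrow \UM$ in $Mod_{\UR}$ we get
\[
\U{Tor}^{\UR}_0(\UM,\UN) \cong \mathrm{coker}\bigl(P_1 \square_{\UR} \UN \to P_0 \square_{\UR} \UN\bigr) \cong \UM \square_{\UR} \UN.
\]
Dually $\U{Hom}_{\UR}(-,\UN)$ is left exact, so $\U{Ext}^0_{\UR}(\UM,\UN) \cong \U{Hom}_{\UR}(\UM,\UN)$, and by balancing (below) this may be computed either from a projective resolution of $\UM$ or from an injective resolution of $\UN$.

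Next, the long exact sequences. In the resolved variable this is the usual consequence of the horseshoe lemma together with independence of the chosen resolution. For the other variable the key fact is that $P \square_{\UR} (-)$ is exact whenever $P$ is projective: a projective $\UR$-module is a retract of a free one $\UR \square \UA_X \cong \UR_X$, and a base-change computation gives $\UR_X \square_{\UR} \UN \cong \UA_X \square \UN \cong \UN_X$, the lift of $\UN$ by $X$, which is exact in $\UN$ since it is computed levelwise by precomposing with $(-) \times X$ on $\MS{B}_G$. Dually, $\U{Hom}_{\UR}(P,-)$ is exact for $P$ projective and $\U{Hom}_{\UR}(-,I)$ is exact for $I$ injective. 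Given a short exact sequence in the unresolved variable, applying $P_\bullet \square_{\UR}(-)$ (resp. $\U{Hom}_{\UR}(P_\bullet,-)$ or $\U{Hom}_{\UR}(-,I_\bullet)$) produces a short exact sequence of complexes, whose homology long exact sequence is the one we want; the usual double-complex comparison between $P_\bullet \square_{\UR} \UN$, $\UM \square_{\UR} Q_\bullet$ and $P_\bullet \square_{\UR} Q_\bullet$ (and the analogous triple of complexes for $\U{Ext}$) yields the balancing isomorphisms, so the long exact sequences obtained in the two ways agree and $\U{Tor}^{\UR}_*$, $\U{Ext}^*_{\UR}$ convert short exact sequences in either variable into long exact sequences.

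Finally, the $\UR$-module structure: $\UM \square_{\UR} \UN$ and $\U{Hom}_{\UR}(\UM,\UN)$ are objects of $Mod_{\UR}$ by construction, and since $Mod_{\UR}$ has enough projectives and injectives we may take all resolutions inside $Mod_{\UR}$; then the homology (resp. cohomology) objects are $\UR$-modules, and the comparison theorem makes this structure and all naturality independent of the choices. The only non-formal ingredient is the exactness of the lift $\UN \mapsto \UN_X$ together with the base-change isomorphism $\UR_X \square_{\UR} \UN \cong \UN_X$; both are immediate from the levelwise descriptions, so I do not expect a genuine obstacle here — the content is purely organizational.
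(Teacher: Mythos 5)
Your proposal is correct, and it is essentially the argument the paper relies on: the paper states this proposition without proof as standard homological algebra (pointing to Lewis--Mandell for derived functors in Mackey functor categories), and your write-up simply supplies those standard details. You correctly isolate the one non-formal input, namely that $\UR_X \square_{\UR} \UN \cong \UA_X \square \UN \cong \UN_X$ and that the lift $\UN \mapsto \UN_X$ is exact (being precomposition with $-\times X$), which is exactly what makes projectives flat and $\U{Hom}_{\UR}$-acyclic here, so the balancing and long exact sequence arguments go through as you describe.
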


\begin{rmk}
As in the classical case, $\U{Ext}^1_{\UR}(\UM,\UN)$ can be interpreted as isomorphism classes of extensions between $\UN$ and $\UM$. More precisely, $\U{Ext}^1_{\UR}(\UM,\UN)(G/H)$ is the abelian group of extensions of $\UR$-modules between $i^*_H(\UN)$ and $i^*_H(\UM)$. Therefore, $\U{Ext}_{\UR}$ computation can help in resolving extension problems in spectral sequences of $\UR$-modules. An example of this application is \cite[Section~6.2]{Hill:eta}
\end{rmk}

In \cite{Greenlees-Mackey}, Greenlees proves that a Mackey functor has projective dimension either $0,1$ or $\infty$, which means that in computing $\U{Ext}_{\UA}^*$ and $\U{Tor}^{\UA}_*$, we usually encounter infinitely many nontrivial terms. However, the story is very different for the category $Mod_{\UZ}$.

\begin{thm}[{\cite[Theorem~1.7]{BSW}}{\cite{Arnold:Cyclic}}]\label{thm-dim3}

If $G$ is cyclic and finite, then $Mod_{\UZ}$ has global cohomological dimension $3$. More precisely, any $\UZ$-module has a projective resolution of length at most $3$.

\end{thm}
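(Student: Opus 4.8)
The plan is to construct, for every $\UZ$-module over a cyclic group, a projective resolution of length at most $3$; I will carry this out for $G = C_{p^n}$, the passage to an arbitrary finite cyclic group being a routine localization argument. First one localizes the ring $\MS{B}\UZ_G$ (Proposition~\ref{prop-FunctorZmod}) at each prime $\ell$ and at $\QQ$. Rationally the Burnside ring is a product of copies of $\QQ$, so $\QQ$-linear $\UZ$-modules form a Serre subcategory of a semisimple category and have projective dimension $0$. For $\ell \neq p$ the order $|G|$ is invertible in $\ZZ_{(\ell)}$, so the cohomological relation forces a $\UZ$-module to be recovered from its underlying $\ZZ_{(\ell)}[G]$-module; since $\ZZ_{(\ell)}[C_{p^n}] \cong \prod_i \ZZ_{(\ell)}[\zeta_{p^i}]$ is a finite product of Dedekind domains, the projective dimension is at most $1$ there. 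So all of the content lies at the prime $p$.

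Fix $G = C_{p^n}$ and work over $\ZZ_{(p)}$, where $\MS{B}\UZ_G$ is module-finite over a local ring, hence semiperfect; its global dimension therefore equals $\sup_S \operatorname{pd}(S)$ as $S$ runs over the finitely many simple $\UZ$-modules (all $\FF_p$-linear, since a $p$-group has no nontrivial simple $\FF_p$-modules). I would then produce for each $S$ an explicit complex $0 \to P_3 \to P_2 \to P_1 \to P_0 \to S \to 0$ with each $P_j$ a finite sum of the projective generators $\UZ \square \UA_{G/H}$. The key simplification is that the subgroup lattice of $C_{p^n}$ is a chain $e \subset C_p \subset \cdots \subset C_{p^n}$: this makes Mazur's inductive description of $\square$, and all restriction/transfer/Weyl relations, completely explicit, so the differentials can be written down as matrices and exactness checked level by level. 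Equivalently one can argue by induction on $n$ via the quotient $G \twoheadrightarrow G/C_p \cong C_{p^{n-1}}$: the pullback functor $\Psi^*_{C_p}$ recalled in the introduction is exact and preserves projectives, and every $\UZ$-module sits in a short exact sequence relating it to $\Psi^*_{C_p}$ of a $C_{p^{n-1}}$-module and to its ``new bottom layer''; tracking projective dimension along this sequence, together with the inductive hypothesis, should keep the bound at $3$.

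The crux — and the step I expect to be the main obstacle — is the tail of the resolution: showing that the third syzygy $\Omega^3 S$ is always projective, equivalently that $\operatorname{Ext}^i_{\UZ}(S,-)$ vanishes for $i \geq 4$. This is precisely where the extra unit over the characteristic-$p$ field case appears, coming from the interplay between the $p$-torsion forced by the cohomological relation $[H:e]\,x = Tr^H_e Res^H_e(x)$ and the torsion-free $\UZ$ sitting underneath it; it is the genuine combinatorial heart of Arnold's computation, and the chain structure of the subgroups is what keeps it finite. Finally, that the global dimension is exactly $3$ and not $2$ is seen by computing a single minimal resolution that does not terminate sooner — for instance that of $\UZ$ itself, or of $\UB_{1,0,\dots,0}$ of Definition~\ref{def-form-Z-index} — which is consistent with the non-vanishing third Ext group recorded in Theorem~\ref{thm-ext}.
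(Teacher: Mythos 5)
There is a genuine gap, and it sits exactly where you flag it yourself. The paper does not prove this statement at all: it is quoted from Arnold and from Bouc--Stancu--Webb, so the only question is whether your sketch amounts to an independent proof. It does not. Your reductions (localizing away from $p$, rational semisimplicity, semiperfectness of the $p$-local ringoid of Proposition~\ref{prop-FunctorZmod} and the reduction of global dimension to the simple modules) are reasonable preprocessing, but the actual content of the theorem is the claim that $\operatorname{pd}(S)\leq 3$ for each simple $S$, equivalently that the third syzygy is projective, and at that point you write only that you ``would'' produce explicit length-$3$ resolutions and that the inductive alternative ``should keep the bound at $3$.'' Neither route is carried out: no candidate resolution of a simple module is exhibited, and the proposed induction on $n$ via $\Psi^*_{C_p}$ is not set up precisely enough to control projective dimension --- the short exact sequence comparing a $C_{p^n}$-module to a pulled-back $C_{p^{n-1}}$-module and a ``new bottom layer'' is not constructed, and naive dimension-shifting along such sequences raises bounds rather than preserving them. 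Since this is, as you say, the combinatorial heart of Arnold's computation, deferring it means the theorem is not proved.

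Two smaller points. For the lower bound you propose computing the minimal resolution ``of $\UZ$ itself,'' but $\UZ$ is projective (it is $\U{\ZZ[G/G]}$, cf.\ Proposition~\ref{prop-proj-Z}), so it cannot witness global dimension $3$; a correct witness is a torsion module such as $\UB_{1}$ for $C_p$, whose explicit length-$3$ resolution appears in Section~\ref{sec-HA} and whose nonvanishing $\U{Ext}^3_{\UZ}$ gives the sharpness. Also, appealing to Theorem~\ref{thm-ext} for sharpness would be circular in the context of this paper, since its algebraic proof uses Theorem~\ref{thm-dim3} as input.
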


This means that when computing in $\UZ$-modules, we will only have nontrivial derived functors ranging from diemnsion $0$ to $3$.

Projective objects in $\UZ$-modules are easy to describe. They are fixed point Mackey functors of permutation $G$-modules.

\begin{prop}\label{prop-proj-Z}
For any $\UZ$-module $\UM$ there is a surjection $\U{P} \rightarrow \UM$ such that $\U{P}$ is a projective $\UZ$-module and is of the form $\U{\ZZ[X]}$ for some $G$-set $X$.
\end{prop}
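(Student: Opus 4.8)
The plan is to use the functor-category description of $\UZ$-modules from Proposition \ref{prop-FunctorZmod}: a $\UZ$-module is precisely an additive enriched contravariant functor $\MS{B}\UZ_G \to Ab$. In this picture the representable functors $\MS{B}\UZ_G(-,X)$ are the natural projective generators, just as $\UA_X = \MS{B}_G(-,X)$ were for general Mackey functors in Proposition \ref{prop-proj-inj}; so the real content is to recognize these representables as the fixed-point functors of permutation modules.

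First I would identify $\MS{B}\UZ_G(-,X)$ with $\U{\ZZ[X]}$, the fixed-point Mackey functor of the permutation module $\ZZ[X]$ (Example \ref{exam-fixMF}). Evaluating at an orbit gives, by the definition of $\MS{B}\UZ_G$ and the induction--restriction adjunction applied to $\ZZ[G/H] \cong \mathrm{Ind}_H^G\ZZ$,
\[
\MS{B}\UZ_G(-,X)(G/H) = Hom_G(\ZZ[G/H],\ZZ[X]) \cong Hom_H(\ZZ,i^*_H\ZZ[X]) = \ZZ[X]^H.
\]
Then I would check that the structure maps agree: unwinding the definition of $Q$, the restriction $Res^H_K$ is precomposition with the induced map $\ZZ[G/K] \to \ZZ[G/H]$ and, under the adjunction, becomes the inclusion of fixed points $\ZZ[X]^H \hookrightarrow \ZZ[X]^K$, while the transfer $Tr^H_K$ is precomposition with the coinduced map $\ZZ[G/H] \to \ZZ[G/K]$ (using $\ZZ[-] \cong Set(-,\ZZ)$) and becomes $v \mapsto \sum_{\gamma \in H/K}\gamma v$, the transfer of $\U{\ZZ[X]}$. (The case $H = G$, $X = G/G$ is exactly the computation in the proof of Proposition \ref{prop-FunctorZmod}.) Hence $\MS{B}\UZ_G(-,X) \cong \U{\ZZ[X]}$ as $\UZ$-modules, and since evaluation at $X$ is exact, the enriched Yoneda isomorphism $Hom_{Mod_{\UZ}}(\U{\ZZ[X]},\UN) \cong \UN(X)$ shows each $\U{\ZZ[X]}$ is projective.

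Finally, I would assemble the cover. For a $\UZ$-module $\UM$, for each $H \subset G$ and each $m \in \UM(G/H)$, Yoneda produces a unique $\UZ$-module map $\U{\ZZ[G/H]} \to \UM$ carrying the canonical generator $1 \in \ZZ[G/H]^H$ to $m$. Let $X$ be the disjoint union of one copy of $G/H$ for every such pair $(H,m)$; since fixed points commute with direct sums, $\U{\ZZ[X]} \cong \bigoplus_{(H,m)}\U{\ZZ[G/H]}$, and assembling the maps above gives $\U{P} := \U{\ZZ[X]} \to \UM$. At each orbit $G/K$ the summand indexed by $(K,m)$ already hits $m$, so this map is levelwise surjective, hence an epimorphism of $\UZ$-modules; and $\U{P}$, being a coproduct of the projectives $\U{\ZZ[G/H]}$, is itself projective. (If $\UM$ is finitely generated one may take $X$ finite; in general one allows infinite permutation modules, or equivalently restricts $(H,m)$ to a generating set.) The only non-formal step is the structure-map bookkeeping of the second paragraph, and I expect no genuine obstacle beyond it.
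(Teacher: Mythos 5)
Your proposal is correct, but it takes a genuinely different (though parallel) route from the paper's. The paper argues in the box-product picture: since $\UZ \square -$ is right exact and $Hom(\UZ \square \UA_{G/H},\UN) \cong Hom(\UA_{G/H},\UN) \cong \UN(G/H)$ for any $\UZ$-module $\UN$, the base-changed representables $\UZ \square \UA_{G/H}$ are projective generators of $Mod_{\UZ}$, and the lift isomorphism $\UZ \square \UA_{G/H} \cong \UZ_{G/H} \cong \U{\ZZ[G/H]}$ identifies them with fixed-point permutation functors, with no structure-map check required. You instead work in the ringoid picture of Proposition \ref{prop-FunctorZmod}, take the representables $\MS{B}\UZ_G(-,X)$ as projective generators via the enriched Yoneda lemma, and verify directly that $\MS{B}\UZ_G(-,G/H) \cong \U{\ZZ[G/H]}$ by computing $Hom_G(\ZZ[G/H],\ZZ[X]) \cong \ZZ[X]^H$ and matching restrictions (precomposition with $\ZZ[G/K] \to \ZZ[G/H]$, i.e.\ inclusion of fixed points) and transfers (precomposition with the coinduced map, i.e.\ summation over cosets); that bookkeeping is right and is the only non-formal step. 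The paper's route buys brevity, since the identification $\UZ \square \UA_X \cong \UZ_X \cong \U{\ZZ[X]}$ comes for free from the lift construction; your route buys self-containedness given Proposition \ref{prop-FunctorZmod}, and the explicit isomorphism of representables with $\U{\ZZ[X]}$ is essentially the same computation that underlies the tiltor identification in Corollary \ref{cor-equivalence}. Your parenthetical about allowing infinite $X$ (or restricting to a generating set) is the correct reading of the statement and matches the paper's implicit use of arbitrary direct sums of the $\U{\ZZ[G/H]}$.
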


\begin{proof}
Since $\UZ \square -$ is right exact, and for $\UZ$-module $\UM$, $\UM \square \UZ \cong \UM$, direct sum of $\UZ \square \UA_{G/H}$ forms enough projective objects in $Mod_{\UZ}$. However
\[
\UZ \square \UA_{G/H} \cong \UZ_{G/H} \cong \U{\ZZ[G/H]}
\]
\end{proof}

We can also define a levelwise $Hom$ and $Ext$, using the corresponding notations in abelian groups and the fact that $\MS{B}_G$ is self-dual. We will see how this definition interacts with the derived functors above.

\begin{defi}\label{def-Hom-Ext-L}
Given a Mackey functor $\UM$ and an abelian group $A$, $Hom_L(\UM,A)$ is the composition of functors
\[
\MS{B}_G \xrightarrow{D} \MS{B}_G \xrightarrow{\UM} Ab \xrightarrow{Hom_{Ab}(-,A)} Ab
\]
We use $\UM^*$ for $Hom_L(\UM,\ZZ)$.

Similarly, $Ext_L(\UM,A)$ is the composition
\[
\MS{B}_G \xrightarrow{D} \MS{B}_G \xrightarrow{\UM} Ab \xrightarrow{Ext_{Ab}(-,A)} Ab
\]
We use $\UM^E$ for $Ext_L(\UM,\ZZ)$.

Here $L$ stands for levelwise.
\end{defi}

These constructions have no derived categorical interpretations, but they will show up in computations of the internal $\U{Ext}_{\UZ}$.

\begin{exam}
By the above notations, we have
\[
\UZ_{t_1,t_2,...,t_n}^* \cong \UZ_{1-t_1,1-t_2,...,1-t_n}
\]
\end{exam}

\subsection{Pullback from quotient groups}
If $K$ is a normal subgroup of $G$, then we can consider the subcategory of $G$-$\UZ$-modules that are pullbacks of $G/K$-$\UZ$-modules. It turns out that homological invariants of this subcategory can be computed in $G/K$-$\UZ$-modules.

\begin{defi}\label{def-pullMF}
Let $K$ be a normal subgroup of $G$. Given a finite $G/K$-set $X$, the quotient map $G \rightarrow G/K$ gives a $G$-action on $X$, thus induces a functor
\[
    \psi : \MS{B} \UZ_{G/K} \rightarrow \MS{B} \UZ_{G}.
\]
Given a $G/K$-$\UZ$-module $\UM$, we define the \textbf{pullback of $\UM$},
\[
\Psi^*_K(\UM) = Lan_{\psi}\UM,
\]
the left Kan extension of $\UM$ along $\psi$, as in the following diagram
\[
\xymatrix
@R=7mm
@C=20mm
{
\MS{B} \UZ_{G/K} \ar[r]^{\UM} \ar[d]_{\psi} & Ab\\
\MS{B} \UZ_G \ar@{-->}[ur]_{\Psi^*_K(\UM)} &
}
\]
\end{defi}

Since left Kan extension preserves representable functors, we have
\begin{prop}\label{prop-pullFix}
For a $G/K$-set $X$,
\[
    \Psi_K^*(\U{\ZZ[X]}) \cong \U{\ZZ[X]}
\]
where we treat $X$ as a $G$-set on the right hand side.
\end{prop}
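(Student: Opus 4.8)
The plan is to recognize $\U{\ZZ[X]}$ as an enriched representable functor on $\MS{B}\UZ_G$ and then invoke the fact that left Kan extension along any functor carries representables to representables. By Proposition \ref{prop-FunctorZmod} a $\UZ$-module is the same thing as an additive contravariant functor $\MS{B}\UZ_G \to Ab$, and under this identification the free $\UZ$-module $\U{\ZZ[X]}$ associated to a $G$-set $X$ is precisely the representable functor $\MS{B}\UZ_G(-,X)$: evaluating at $G/H$ gives $Hom_G(\ZZ[G/H],\ZZ[X]) \cong \ZZ[X]^H$ with its permutation-module restrictions and transfers, which matches the description of $\U{\ZZ[X]}$ used in Proposition \ref{prop-proj-Z} in the case $X = G/H$ and extends additively to all $X$. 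The identical statement holds with $G/K$ in place of $G$. Next I would unwind Definition \ref{def-pullMF}: $\Psi^*_K = Lan_\psi$, the left Kan extension along $\psi\colon \MS{B}\UZ_{G/K} \to \MS{B}\UZ_G$, where on objects $\psi$ inflates a $G/K$-set to a $G$-set along $G \twoheadrightarrow G/K$, and on hom-objects $\psi$ is the inclusion $Hom_{G/K}(\ZZ[X],\ZZ[Y]) \hookrightarrow Hom_G(\ZZ[X],\ZZ[Y])$. In particular $\psi$ sends the object $X$ to the $G$-set $X$ of the statement.

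The one substantive input is the (enriched) co-Yoneda fact: for an $Ab$-enriched functor $\psi\colon \mathcal{C} \to \mathcal{D}$ and an object $c \in \mathcal{C}$, the left Kan extension relevant for contravariant functors (i.e.\ the one along $\psi^{op}$) sends the representable $\mathcal{C}(-,c)$ to the representable $\mathcal{D}(-,\psi(c))$; this is the standard density/coend computation, as in \cite[Section~4]{Kelly} (or \cite[X.3]{MacLane} in the unenriched setting). Applying this with $c = X$ gives
\[
\Psi^*_K(\U{\ZZ[X]}) = Lan_\psi\bigl(\MS{B}\UZ_{G/K}(-,X)\bigr) \cong \MS{B}\UZ_G(-,\psi(X)) = \U{\ZZ[X]},
\]
where on the right $\psi(X)$ is $X$ viewed as a $G$-set, which is exactly the claim.

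I do not expect a genuine obstacle here. The only thing requiring care is bookkeeping: getting the variance right, since Mackey functors are contravariant functors out of $\MS{B}\UZ_G$, so the ``Kan extension of a representable is a representable'' statement must be applied to $\psi^{op}$; and confirming that the notation $\U{\ZZ[X]}$ really denotes the enriched representable on $\MS{B}\UZ_G$ rather than, say, the fixed-point Mackey functor of the permutation module presented in some other guise (these agree, but the identification should be made explicit). As an alternative to the abstract argument one could instead compute $Lan_\psi$ pointwise from its defining coend and check levelwise agreement with $\U{\ZZ[X]}$; this is strictly more work and yields no extra insight, so I would only fall back on it if the representable identification proves awkward to state cleanly.
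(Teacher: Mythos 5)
Your argument is exactly the paper's: the paper justifies this proposition with the single remark that left Kan extension preserves representable functors, which is precisely your identification of $\U{\ZZ[X]}$ with the representable $\MS{B}\UZ_G(-,X)$ together with the co-Yoneda fact applied to $\psi$. Your extra care about variance and about verifying the representable description of $\U{\ZZ[X]}$ is correct bookkeeping but adds nothing beyond what the paper leaves implicit.
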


By the definition of Kan extension, $\Psi^*_K$ is the left adjoint of of the composition functor
\[
    \psi^*: Mod_{\UZ}^G \rightarrow Mod_{\UZ}^{G/K}
\]
that sends $\UM$ to $\UM \circ \psi$. If $K = G$, then by \cite[Proposition~2.10]{BlumHill:Rightadj}, $\Psi^*_G$ is strongly symmetric monoidal. A slight modification of their proof works in general.

\begin{prop}\label{prop-pullmonoidal}
Given a normal subgroup $K \subset G$, the pullback functor
\[
    \Psi^*_K : Mod_{\UZ}^{G/K} \rightarrow Mod_{\UZ}^G
\]
is strongly symmetric monoidal. That is, we have a natural isomorphism
\[
    \Psi^*_K(\UM) \square_{\UZ} \Psi^*_K(\UN) \cong \Psi^*_K(\UM \square_{\UZ} \UN).
\]
\end{prop}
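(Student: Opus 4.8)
The plan is to reduce the statement to a computation with representable functors, using that both $\square_{\UZ}$ and the Day convolution on the functor category $\MS{B}\UZ_G \to Ab$ commute with colimits in each variable, together with Proposition \ref{prop-pullFix}. First I would recall from Proposition \ref{prop-FunctorZmod} that $Mod_{\UZ}^G$ is the category of additive contravariant functors $\MS{B}\UZ_G \to Ab$, and similarly for $G/K$, and that under this identification $\square_{\UZ}$ is the Day convolution associated to the Cartesian monoidal structure on $G$-sets (as explained in the excerpt immediately after Proposition \ref{prop-FunctorZmod}). The functor $\psi: \MS{B}\UZ_{G/K} \to \MS{B}\UZ_G$ is strong monoidal for these Cartesian structures: for $G/K$-sets $X, Y$, the product $X \times Y$ taken in $G/K$-sets, viewed as a $G$-set through $\psi$, agrees with the product in $G$-sets, since the $G$-action factors through $G/K$ on both. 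This is the structural input that will make everything go through.

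Next I would carry out the reduction to representables. Since $\Psi^*_K = Lan_\psi$ is a left adjoint it preserves all colimits, and every $\UZ$-module is a colimit (in fact, by Proposition \ref{prop-proj-Z}, a reflexive coequalizer of direct sums) of representables $\U{\ZZ[X]}$. Both sides of the claimed isomorphism $\Psi^*_K(\UM) \square_{\UZ} \Psi^*_K(\UN) \cong \Psi^*_K(\UM \square_{\UZ} \UN)$ are, by construction, functors of $\UM$ and $\UN$ that preserve colimits separately in each variable: the left side because $\Psi^*_K$ is a left adjoint and $\square_{\UZ}$ is a Day convolution (hence cocontinuous in each slot), and the right side for the same two reasons composed. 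Therefore it suffices to produce a natural isomorphism when $\UM = \U{\ZZ[X]}$ and $\UN = \U{\ZZ[Y]}$ for $G/K$-sets $X$ and $Y$. By the universal property of Day convolution, $\U{\ZZ[X]} \square_{\UZ} \U{\ZZ[Y]} \cong \U{\ZZ[X \times Y]}$ in $Mod_{\UZ}^{G/K}$; applying $\Psi^*_K$ and using Proposition \ref{prop-pullFix} gives $\U{\ZZ[X \times Y]}$ as a $G$-$\UZ$-module, where now $X \times Y$ is the product of $G$-sets by the monoidality of $\psi$ noted above. On the other hand, Proposition \ref{prop-pullFix} identifies $\Psi^*_K(\U{\ZZ[X]}) \cong \U{\ZZ[X]}$ and $\Psi^*_K(\U{\ZZ[Y]}) \cong \U{\ZZ[Y]}$ as $G$-$\UZ$-modules, and again by Day convolution in $Mod_{\UZ}^G$ their box product is $\U{\ZZ[X \times Y]}$. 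Hence both sides agree on representables.

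To finish, I would check that the isomorphism just constructed on representables is natural in $X$ and $Y$ and compatible with the presentations of general $\UM$, $\UN$ as coequalizers of representables, so that it extends uniquely to a natural isomorphism of cocontinuous bifunctors; this is the standard "left Kan extension along a dense subcategory" argument, using that $\{\U{\ZZ[X]}\}$ is a dense generating family in $Mod_{\UZ}$. Symmetry and the unit compatibility of the monoidal structure transport along $\psi$ because $\psi$ sends the terminal $G/K$-set to the terminal $G$-set, so $\Psi^*_K(\UZ) \cong \UZ$; together with the coherence isomorphisms this upgrades $\Psi^*_K$ to a strong symmetric monoidal functor, matching the cited Proposition \ref{prop-pullFix} and \cite[Proposition~2.10]{BlumHill:Rightadj} in the case $K = G$.

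The main obstacle is the first step: verifying carefully that $\psi: \MS{B}\UZ_{G/K} \to \MS{B}\UZ_G$ is strong \emph{symmetric} monoidal, i.e. that on morphism abelian groups $Hom_{G/K}(\ZZ[X],\ZZ[Y])$ the induced map $\psi$ is compatible with the Cartesian product and its symmetry — the point being that a $G/K$-equivariant homomorphism $\ZZ[X] \to \ZZ[Y]$ is exactly a $G$-equivariant one when the $G$-actions factor through $G/K$, and that this identification respects the product maps and braidings. Once that naturality and coherence bookkeeping is in place, the colimit-preservation argument reduces the monoidal comparison to representables where it is immediate from Proposition \ref{prop-pullFix}. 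I do not expect any genuine difficulty beyond this diagram-chasing, since the substance is the same as in \cite{BlumHill:Rightadj} and the excerpt explicitly says "a slight modification of their proof works in general."
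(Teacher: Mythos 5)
Your argument is correct, but it follows a genuinely different route from the paper. The paper never reduces to representables: it first proves Lemma \ref{lem-pull}, the internal-Hom compatibility $\psi^*(\U{Hom}_{\UZ}^G(\Psi^*_K(\UN),\UX)) \cong \U{Hom}_{\UZ}^{G/K}(\UN,\psi^*(\UX))$ (whose proof is where the strong monoidality of $\psi$ with respect to Cartesian products enters, via $\psi^*(\UX_T) \cong \psi^*(\UX)_T$), and then obtains the proposition by mapping both sides into an arbitrary $G$-$\UZ$-module $\UX$, shuffling adjunctions, and invoking Yoneda. Your proof instead uses the mapping-in/density direction: $\Psi^*_K$ and $\square_{\UZ}$ are cocontinuous in each variable, the representables $\U{\ZZ[X]}$ are dense, and on representables both sides are $\U{\ZZ[X\times Y]}$ by Proposition \ref{prop-pullFix} and the Day-convolution formula for representables; this is exactly the general fact that a strong (symmetric) monoidal functor of ringoids such as $\psi:\MS{B}\UZ_{G/K}\to\MS{B}\UZ_G$ induces a strong symmetric monoidal left Kan extension on module categories. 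Each approach has its advantages: the paper's adjunction/Yoneda argument produces the independently useful projection-type Lemma \ref{lem-pull} and sidesteps the naturality and coherence bookkeeping that your extension from representables to general modules requires (which you correctly flag as the main obstacle, and which is standard but must be done); your argument makes the unit isomorphism $\Psi^*_K(\UZ)\cong\UZ$ and the symmetry immediate, and transports verbatim to other coefficient ringoids (e.g.\ the $\UA$-coefficient case mentioned in the paper's subsequent remark), since it only uses that $\psi$ preserves finite products of orbits and that representables are dense.
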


We need a lemma first. This lemma generalizes the fact that the $G/G$-level of the internal Hom Mackey functor is the group of natural transformation.

\begin{lem}\label{lem-pull}
For any $G$-$\UZ$-module $\UX$ and $G/K$-$\UZ$-module $\UN$, we have
\[
\psi^*(\U{Hom}_{\UZ}^G(\Psi^*_K(\UN),\UX)) \cong \U{Hom}_{\UZ}^{G/K}(\UN,\psi^*(\UX)).
\]
\end{lem}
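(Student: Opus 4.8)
The plan is to build the isomorphism by chasing adjunctions, evaluating both sides at an arbitrary $G$-set of the form $\psi(X)$ for $X$ a $G/K$-set — since by Proposition \ref{prop-proj-Z} (or rather its description of projectives) every object of $Mod_{\UZ}^{G}$ that is a pullback is built from such, and in any case the internal Hom is determined by its values against representables $\U{\ZZ[X]}$. First I would recall that for a $G$-set $Y$, the internal Hom satisfies $\U{Hom}_{\UZ}^{G}(\UP, \UX)(Y) \cong Hom_{Mod_{\UZ}^{G}}(\UP, \UX_{Y})$, where $\UX_Y$ is the lift (Definition \ref{def-lift}); this is the module analogue of the formula $\U{Hom}(\UM,\UN)(X) \cong Hom(\UM,\UN_X)$ recalled after Definition \ref{def-Box}. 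Applying this with $\UP = \Psi^*_K(\UN)$ and $Y = \psi(X)$, the left-hand side at level $X$ becomes $Hom_{Mod_{\UZ}^{G}}(\Psi^*_K(\UN), \UX_{\psi(X)})$.

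The key step is then the adjunction $\Psi^*_K \dashv \psi^*$ from Definition \ref{def-pullMF}, which rewrites this as $Hom_{Mod_{\UZ}^{G/K}}(\UN, \psi^*(\UX_{\psi(X)}))$. The second thing to check is that pulling back commutes with lifting in the expected way, i.e. $\psi^*(\UX_{\psi(X)}) \cong (\psi^*\UX)_{X}$ as $G/K$-$\UZ$-modules; this is immediate from the definitions, since $\psi^*$ is precomposition with $\psi$, lifting by $\psi(X)$ is precomposition with $-\times \psi(X)$, and $\psi$ is a (strong) monoidal functor $\MS{B}\UZ_{G/K} \to \MS{B}\UZ_G$, so $\psi(-\times X) \cong \psi(-)\times \psi(X)$. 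Combining, level $X$ of the left-hand side is $Hom_{Mod_{\UZ}^{G/K}}(\UN, (\psi^*\UX)_X) \cong \U{Hom}_{\UZ}^{G/K}(\UN, \psi^*\UX)(X)$, which is exactly level $X$ of the right-hand side.

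Finally I would verify that these level-wise isomorphisms assemble into a map of $G/K$-$\UZ$-modules, i.e. that they are natural in $X \in \MS{B}\UZ_{G/K}$ and hence respect restrictions and transfers. This is where a little care is needed: the structure maps on $\U{Hom}_{\UZ}(-,-)$ are induced by the lift variable, so naturality reduces to functoriality of $X \mapsto \UX_{\psi(X)}$ and of $\psi^*$, together with naturality of the $\Psi^*_K \dashv \psi^*$ adjunction — all of which hold on the nose. The main obstacle, such as it is, is bookkeeping: making sure the monoidal structure on $\psi$ is used consistently so that the two lift operations (before and after pulling back) are genuinely identified, rather than merely levelwise isomorphic. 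There is no deep content; the lemma is a formal consequence of the adjunction plus the compatibility of $\psi$ with products, and it will feed directly into the proof of Proposition \ref{prop-pullmonoidal}.
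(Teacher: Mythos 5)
Your proposal is correct and follows essentially the same route as the paper: evaluate at a $G/K$-set, rewrite the internal Hom as Hom into the lift, apply the adjunction $\Psi^*_K \dashv \psi^*$, and identify $\psi^*(\UX_{\psi(X)})$ with $(\psi^*\UX)_X$ via the strong monoidality of $\psi$. The only cosmetic difference is that the paper unfolds $\psi^*$ of the internal Hom through an extra Yoneda/adjunction step (using that $\Psi^*_K$ preserves representables), whereas you unfold it directly as precomposition, which is the same content.
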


\begin{proof}
Evaluating the left hand side at a $G/K$-set $T$., we have
\begin{align*}
    \psi^*(\U{Hom}_{\UZ}^G(\Psi^*_K(\UN),\UX))(T) & \cong Hom_{\UZ}^{G/K}(\U{\ZZ[T]},\psi^*(\U{Hom}_{\UZ}^G(\Psi^*_K(\UN),\UX)))\\
                                                  & \cong Hom_{\UZ}^G(\U{\ZZ[T]},\U{Hom}_{\UZ}^G(\Psi^*_K(\UN),\UX))\\
                                                  & \cong \U{Hom}_{\UZ}^G(\Psi^*_K(\UN),\UX)(T)\\
                                                  & \cong Hom_{\UZ}^G(\Psi^*_K(\UN),\UX_T)\\
                                                  & \cong Hom_{\UZ}^{G/K}(\UN,\psi^*(\UX_T))
\end{align*}
Since $\psi:\MS{B}_{G/K} \rightarrow \MS{B}_{G}$ is strongly monoidal with respect to Cartesian product of $G$ and $G/K$-sets, we have a commutative diagram
\[
\xymatrix
{
    \MS{B}_{G/K} \ar[r]^{- \times T} \ar[d]^{\psi} & \MS{B}_{G/K} \ar[d]^{\psi} & \\
    \MS{B}_{G}   \ar[r]^{- \times T}               & \MS{B}_{G} \ar[r]^{\UX}  & Ab
}
\]
One path of composition is $\psi^*(\UX_T)$ and another is $\psi^*(\UX)_T$, thus
\[
    \psi^*(\UX_T) \cong \psi^*(\UX)_T.
\]
So we have
\begin{align*}
    Hom_{\UZ}^{G/K}(\UN,\psi^*(\UX_T)) & \cong Hom_{\UZ}^{G/K}(\UN,\psi^*(\UX)_T)\\
                                     & \cong \U{Hom}_{\UZ}^{G/K}(\UN,\psi^*(\UX))(T)
\end{align*}
And it is the right hand side evaluating at $T$.
\end{proof}
\begin{proof}[Proof of Proposition \ref{prop-pullmonoidal}]
Given any $G$-$\UZ$-module $\U{X}$, we have
\begin{align*}
    Hom_{\UZ}^G(\Psi^*_K(\UM) \square_{\UZ} \Psi^*_K(\UN),\UX) & \cong Hom_{\UZ}^G(\Psi^*_K(\UM),\U{Hom}_{\UZ}^G(\Psi^*_K(\UM),X))\\
                                                                & \cong Hom_{\UZ}^{G/K}(\UM,\psi^*(\U{Hom}_{\UZ}^G(\Psi^*_K(\UN),\UX))).\\
\end{align*}
By the lemma above, it is
\begin{align*}
    Hom_{\UZ}^{G/K}(\UM,\U{Hom}_{\UZ}^{G/K}(\UN,\psi^*(\UX))) & \cong Hom_{\UZ}^{G/K}(\UM \square_{\UZ} \UN, \psi^*(\UX))\\
                                                              & \cong Hom_{\UZ}^{G}(\Psi^*_K(\UM \square_{\UZ} \UN),\UX)
\end{align*}
\end{proof}

As a left adjoint, $\Psi^*_K$ is right exact. However, in the case of $\UZ$-module, it is actually an exact functor.

\begin{lem}\label{lem-exact}
Given a $G/K$-$\UZ$-module $\UM$, we have
\[
    \UM \cong \psi^*(\Psi^*_K(\UM)).
\]
That is, the unit of the adjunction is an isomorphism. Therefore, $\Psi^*_K$ is an exact functor.
\end{lem}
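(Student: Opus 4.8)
The plan is to exhibit an explicit natural isomorphism $\eta_{\UM}: \UM \to \psi^*(\Psi^*_K(\UM))$ and check it is an iso levelwise, i.e. after evaluating at every $G/K$-set $T$. By Proposition \ref{prop-FunctorZmod}, a $G/K$-$\UZ$-module is the same as an additive contravariant functor $\MS{B}\UZ_{G/K} \to Ab$, and $\Psi^*_K(\UM) = Lan_\psi \UM$. So I first reduce to the case $\UM = \MS{B}\UZ_{G/K}(Q(-),G/K)$-type representables: since both $\Psi^*_K$ and $\psi^*$ are additive and colimit-preserving in $\UM$ (the former as a left Kan extension, the latter because precomposition with $\psi$ commutes with the colimits computing a general $\UM$ as a coend of representables), and since every $\UZ$-module is a colimit of the projective generators $\U{\ZZ[X]}$ for $G/K$-sets $X$, it suffices to prove $\U{\ZZ[X]} \cong \psi^*(\Psi^*_K(\U{\ZZ[X]}))$ naturally in $X \in \MS{B}\UZ_{G/K}$.

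For that case I would invoke Proposition \ref{prop-pullFix}: $\Psi^*_K(\U{\ZZ[X]}) \cong \U{\ZZ[X]}$, where on the right $X$ is regarded as a $G$-set via $G \twoheadrightarrow G/K$. Now $\psi^*$ of this is, by definition, the functor $T \mapsto \U{\ZZ[X]}^G(\psi(T))$ on $\MS{B}\UZ_{G/K}$; because $\psi$ sends a $G/K$-set $T$ to the same underlying set with $G$ acting through the quotient, and because $Hom_G(\ZZ[\psi T], \ZZ[X]) = Hom_{G/K}(\ZZ[T], \ZZ[X])$ (both $\ZZ[X]$ and $\ZZ[\psi T]$ have trivial $K$-action, so a $G$-map is exactly a $G/K$-map), this recovers precisely $\U{\ZZ[X]}$ as a $G/K$-$\UZ$-module. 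Tracking the unit of the $(\Psi^*_K, \psi^*)$-adjunction through these identifications shows $\eta_{\U{\ZZ[X]}}$ is the identity, hence an isomorphism, and naturality in $X$ is formal. Extending back to arbitrary $\UM$ by the colimit argument gives $\eta_\UM$ an isomorphism for all $\UM$.

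Finally, exactness of $\Psi^*_K$: it is right exact as a left adjoint, so I only need left exactness. Given a short exact sequence $0 \to \UM' \to \UM \to \UM'' \to 0$ of $G/K$-$\UZ$-modules, apply $\Psi^*_K$ to get a right-exact sequence $\Psi^*_K(\UM') \to \Psi^*_K(\UM) \to \Psi^*_K(\UM'') \to 0$; applying the \emph{exact} functor $\psi^*$ (it is evaluation-at-$\psi(-)$, hence exact since exactness in $Mod_{\UZ}$ is levelwise) and using the natural isomorphism just proved identifies this with $0 \to \UM' \to \UM \to \UM'' \to 0$, which is exact on the left. Since $\psi^*$ is faithful (again by the natural iso $\mathrm{id} \cong \psi^*\Psi^*_K$, or directly because a $G/K$-map is detected on underlying sets), injectivity of $\Psi^*_K(\UM') \to \Psi^*_K(\UM)$ follows, so $\Psi^*_K$ is exact.

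The main obstacle I anticipate is the bookkeeping in the reduction to representables — making precise that $\Psi^*_K$ commutes with the particular colimit presentation of $\UM$ and that the unit transformation is compatible with that presentation — rather than any single computation; the representable case itself is essentially forced by Proposition \ref{prop-pullFix} together with the triviality of the $K$-action on permutation modules pulled back from $G/K$.
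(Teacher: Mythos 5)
Your proof of the isomorphism $\UM \cong \psi^*(\Psi^*_K(\UM))$ is correct and follows essentially the paper's route: reduce to the representables $\U{\ZZ[X]}$ via Proposition \ref{prop-pullFix} together with the fact that $K$ acts trivially on $\ZZ[X]$ (so $G$-maps and $G/K$-maps of the relevant permutation modules coincide), and then extend to all $\UM$ using that $\psi^*\Psi^*_K$ commutes with the relevant colimits. The paper does the extension with a two-term presentation by representables, right exactness of $\Psi^*_K$ and exactness of $\psi^*$, while you use the canonical colimit of representables; these are the same idea, and your remark that the identification on representables is the unit map (hence automatically natural) makes explicit a point the paper leaves implicit.

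The deduction of exactness of $\Psi^*_K$, however, has a genuine gap: $\psi^*$ is \emph{not} faithful on $G$-$\UZ$-modules. It only records the values at orbits $G/H$ with $K \subseteq H$, so any map that vanishes on those levels but is nonzero at some level $G/H$ with $K \not\subseteq H$ is killed by $\psi^*$ (for instance, for $K = G = C_p$, the identity of the $\UZ$-module which is $\ZZ/p$ at $G/e$ and $0$ at $G/G$). Neither of your parenthetical justifications repairs this: the natural isomorphism $\mathrm{id} \cong \psi^*\Psi^*_K$ shows that $\Psi^*_K$ is faithful, not $\psi^*$. Consequently, injectivity of $\psi^*(\Psi^*_K(\UM')) \to \psi^*(\Psi^*_K(\UM))$ does not by itself give injectivity of $\Psi^*_K(\UM') \to \Psi^*_K(\UM)$ at the levels $G/H$ with $K \not\subseteq H$; a kernel concentrated at such levels is invisible to $\psi^*$. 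To close the gap you must control those levels, e.g. by the explicit description of Proposition \ref{prop-pullP} (for $G = C_{p^n}$ and $H \subseteq K$ the value of $\Psi^*_K(\UM)$ at $G/H$ is $\UM((G/K)/e)$ with the restriction from the $K$-level an isomorphism), or in general by checking from the coend formula that restriction induces $\Psi^*_K(\UM)(G/HK) \cong \Psi^*_K(\UM)(G/H)$, so that injectivity at levels containing $K$ propagates downward.
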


\begin{proof}
First, assume that $\UM$ is a representable functor, that is,
\[
\UM \cong \U{\ZZ[X]}
\]
for some $G/K$-set $X$. Then $\Psi^*_K(\UM) \cong \U{\ZZ[X]}$ by treating $X$ as a $G$-set. Now since $X$ is a $G/K$-set, $\ZZ[X]^K \cong \ZZ[X]$, so we ahve
\[
\U{\ZZ[X]} \cong \psi^*(\Psi^*_K(\U{\ZZ[X]})).
\]

Now for a general $G/K$-$\UZ$-module $\UM$, we can form a projective resolution $\U{P}_{\bullet}$ of $\UM$ by representable functors. Since $\Psi^*_K$ is right exact and $\psi_*$ is exact, we see that
\[
\psi^*(\Psi^*_K(\UM)) \cong coker(\psi^*(\Psi^*_K(\U{P}_0 \leftarrow \U{P}_1))) \cong coker(\U{P}_0 \leftarrow \U{P}_1) \cong \UM.
\]
\end{proof}

If $G = C_{p^n}$, and $K = C_{p^k}$, then we can describe $\Psi^*_K$ explicitly.

\begin{prop}\label{prop-pullP}
For $G = C_{p^n}$ and $K = C_{p^k}$, we have
\[
\Psi^*_K(\UM) \cong \left\{ \begin{array}{ll}
    \UM((G/K)/(K'/K)) & \textrm{for $K \subset K'$}\\
    \UM((G/K)/e)      & \textrm{for $K' \subset K$},
    \end{array} \right.
\]
with restriction $Res^K_e$ isomorphism.
\end{prop}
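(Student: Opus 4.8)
The plan is to split along the two cases of the displayed formula. Since the subgroups of $G=C_{p^n}$ form a chain, every subgroup $K'$ satisfies $K\subseteq K'$ or $K'\subseteq K$, so the two cases are exhaustive. The levels $G/K'$ with $K\subseteq K'$ will be handled by Lemma~\ref{lem-exact}, and the levels $G/K'$ with $K'\subseteq K$ by pushing a projective presentation of $\UM$ through $\Psi^*_K$ and using the permutation-module description of projective $\UZ$-modules.

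For $K\subseteq K'$: I would first record that $\psi\colon\MS{B}\UZ_{G/K}\to\MS{B}\UZ_G$ is fully faithful onto the full subcategory of $\MS{B}\UZ_G$ spanned by the $G$-sets on which $K$ acts trivially --- if $K$ acts trivially on $X$ and $Y$, a $\ZZ$-linear map $\ZZ[X]\to\ZZ[Y]$ is $G$-equivariant precisely when it is $G/K$-equivariant. Lemma~\ref{lem-exact} provides a natural isomorphism $\UM\cong\psi^*(\Psi^*_K(\UM))$ in $Mod_{\UZ}^{G/K}$. Evaluating this isomorphism at the $G/K$-orbit $(G/K)/(K'/K)$, which $\psi$ sends to the $G$-set $G/K'$, gives $\Psi^*_K(\UM)(G/K')\cong\UM((G/K)/(K'/K))$; and since $\psi$ is full on this subcategory while the restrictions, transfers and Weyl actions among these levels all arise from spans of $G$-sets with trivial $K$-action, the structure maps among $\{\,G/K':K\subseteq K'\,\}$ match those of $\UM$. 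Taking $K'=K$ gives in particular $\Psi^*_K(\UM)(G/K)\cong\UM((G/K)/(K/K))=\UM((G/K)/e)$, which ties the two cases together.

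For $K'\subseteq K$: using Proposition~\ref{prop-proj-Z} inside $Mod_{\UZ}^{G/K}$, choose a presentation $\U{\ZZ[X_1]}\xrightarrow{\,d\,}\U{\ZZ[X_0]}\to\UM\to 0$ with $X_0,X_1$ finite $G/K$-sets. Since $\Psi^*_K$ is exact (Lemma~\ref{lem-exact}) and $\Psi^*_K(\U{\ZZ[X]})\cong\U{\ZZ[X]}$ for a $G/K$-set $X$ (Proposition~\ref{prop-pullFix}), applying $\Psi^*_K$ yields a presentation $\U{\ZZ[X_1]}\xrightarrow{\,\Psi^*_K(d)\,}\U{\ZZ[X_0]}\to\Psi^*_K(\UM)\to 0$ of $G$-$\UZ$-modules. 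Now $\U{\ZZ[X]}$ is the fixed-point Mackey functor of the permutation module $\ZZ[X]$, so $\U{\ZZ[X]}(G/K')=(\ZZ[X])^{K'}$; when $X$ is a $G/K$-set and $K'\subseteq K$ this is just $\ZZ[X]$, and every restriction $Res^{K_1'}_{K_2'}$ with $K_2'\subseteq K_1'\subseteq K$ is the identity. Hence evaluating the presentation at any $G/K'$ with $K'\subseteq K$ gives the cokernel of one and the same map of abelian groups --- naturally, via identities, in $K'$ --- so $\Psi^*_K(\UM)(G/K')\cong\Psi^*_K(\UM)(G/K)$ with all intervening restrictions isomorphisms, and by the previous paragraph this common value is $\UM((G/K)/e)$.

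The argument is formal once the pieces are assembled; the only genuine point of care is the passage from a presentation of $\UM$ by representables to one of $\Psi^*_K(\UM)$ by representables (the combination of exactness of $\Psi^*_K$ with its effect on representables), together with the observation that the restrictions between the new levels are literally identities rather than merely abstract isomorphisms --- which is exactly the content of the "$Res^K_e$ isomorphism" clause. I do not anticipate an obstacle beyond this bookkeeping.
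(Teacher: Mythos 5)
Your proof is correct. The paper states Proposition \ref{prop-pullP} without proof, and your argument assembles exactly the ingredients the paper intends it to follow from: the unit isomorphism of Lemma \ref{lem-exact} evaluated on orbits $(G/K)/(K'/K)$ handles the levels above $K$, and pushing a representable presentation (Proposition \ref{prop-proj-Z}) through $\Psi^*_K$ via Proposition \ref{prop-pullFix} handles the levels below $K$, where the $K'$-fixed points of a permutation module with trivial $K$-action are the whole module, so the restrictions below $K$ are literally identities. The only cosmetic remark is that for the second half you need only right exactness of $\Psi^*_K$, which is automatic for a left adjoint, rather than the full exactness statement of Lemma \ref{lem-exact}.
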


Since the pullback functor $\Psi^*_{K}: Mod^{G/K}_{\UZ} \rightarrow Mod^{G}_{\UZ}$ is strongly monoidal, exact, and preserves projective objects, it preserves all homological invariants.
\begin{cor}\label{cor-pullhom}
Let $K$ be a normal subgroup of $G$ and $\UM,\UN$ be $G/K$-$\UZ$-modules, we have
\[
    \Psi^*_K(\U{Tor}^{\UZ}_*(\UM,\UN)) \cong \U{Tor}^{\UZ}(\Psi^*_K(\UM),\Psi^*_K(\UN))
\]
and
\[
    \Psi^*_K(\U{Ext}^*_{\UZ}(\UM,\UN)) \cong \U{Ext}^*_{\UZ}(\Psi^*_K(\UM),\Psi^*_K(\UN)).
\]
\end{cor}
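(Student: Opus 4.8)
The plan is to fix one projective resolution of $\UM$ in $Mod_{\UZ}^{G/K}$, push it forward along $\Psi^*_K$, and then use the three properties of $\Psi^*_K$ already established --- exactness (Lemma \ref{lem-exact}), strong symmetric monoidality (Proposition \ref{prop-pullmonoidal}), and the fact that it carries representables to representables (Proposition \ref{prop-pullFix}) --- to identify the two complexes computing the derived functors. This is the standard ``derived functor of a monoidal exact functor'' argument, and the only point requiring genuine care is the internal-Hom identification needed for $\U{Ext}$.

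First I would choose, using Proposition \ref{prop-proj-Z} (equivalently the enriched Yoneda lemma for $\MS{B}\UZ_{G/K}$ in the guise of Proposition \ref{prop-FunctorZmod}), a resolution $\U{P}_{\bullet} \to \UM$ with each $\U{P}_i$ of the form $\U{\ZZ[X_i]}$ for a finite $G/K$-set $X_i$. Applying $\Psi^*_K$ and invoking exactness (Lemma \ref{lem-exact}) together with $\Psi^*_K(\U{\ZZ[X]}) \cong \U{\ZZ[X]}$ (Proposition \ref{prop-pullFix}) shows that $\Psi^*_K\U{P}_{\bullet} \to \Psi^*_K\UM$ is again a projective resolution, now in $Mod_{\UZ}^{G}$. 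Hence $\U{Tor}^{\UZ}_*(\Psi^*_K\UM,\Psi^*_K\UN)$ is the homology of $\Psi^*_K\U{P}_{\bullet} \square_{\UZ} \Psi^*_K\UN$, and $\U{Ext}^*_{\UZ}(\Psi^*_K\UM,\Psi^*_K\UN)$ is the cohomology of $\U{Hom}_{\UZ}(\Psi^*_K\U{P}_{\bullet},\Psi^*_K\UN)$.

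For $\U{Tor}$, Proposition \ref{prop-pullmonoidal} gives a natural isomorphism of chain complexes $\Psi^*_K\U{P}_{\bullet}\square_{\UZ}\Psi^*_K\UN \cong \Psi^*_K(\U{P}_{\bullet}\square_{\UZ}\UN)$, and since an exact functor commutes with the formation of homology this yields $\U{Tor}^{\UZ}_*(\Psi^*_K\UM,\Psi^*_K\UN) \cong \Psi^*_K(\U{Tor}^{\UZ}_*(\UM,\UN))$. For $\U{Ext}$ the one non-formal point is to move $\Psi^*_K$ past the internal Hom out of the terms of the resolution. Here I would use that each representable $\U{\ZZ[X]} \cong \UZ \square \UA_X$ is dualizable in $Mod_{\UZ}$ with dual itself: by Definition \ref{def-lift} and the discussion of lifts following it, $\U{Hom}_{\UZ}(\U{\ZZ[X]},\UN) \cong \UN_X \cong \U{\ZZ[X]}\square_{\UZ}\UN$ naturally in $\UN$. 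Since a strong monoidal functor preserves dualizable objects and their duals, combining this with Propositions \ref{prop-pullmonoidal} and \ref{prop-pullFix} produces natural isomorphisms
\[
\Psi^*_K\big(\U{Hom}_{\UZ}(\U{\ZZ[X]},\UN)\big) \cong \Psi^*_K(\U{\ZZ[X]}\square_{\UZ}\UN) \cong \U{\ZZ[X]}\square_{\UZ}\Psi^*_K\UN \cong \U{Hom}_{\UZ}(\U{\ZZ[X]},\Psi^*_K\UN),
\]
which should assemble into an isomorphism of cochain complexes $\U{Hom}_{\UZ}(\Psi^*_K\U{P}_{\bullet},\Psi^*_K\UN) \cong \Psi^*_K(\U{Hom}_{\UZ}(\U{P}_{\bullet},\UN))$; exactness of $\Psi^*_K$ then gives the claimed isomorphism on cohomology. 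Independence of the chosen resolution is the usual comparison theorem, and naturality of all the identifications used shows the resulting isomorphisms are $\UZ$-module maps compatible with the long exact sequences of Proposition \ref{prop-LES}.

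The step I expect to be the main obstacle is exactly this last identification in the $\U{Ext}$ case: one must check that the objectwise isomorphisms $\Psi^*_K\U{Hom}_{\UZ}(\U{\ZZ[X]},-) \cong \U{Hom}_{\UZ}(\U{\ZZ[X]},\Psi^*_K-)$ are natural in $X$, so that the differentials of $\U{P}_{\bullet}$ --- which are built from maps $\U{\ZZ[X']}\to\U{\ZZ[X]}$ --- are respected and one genuinely obtains a map of complexes rather than merely a degreewise isomorphism. The self-duality of permutation $\UZ$-modules makes this bookkeeping tractable, but it is the place where the argument is not completely automatic.
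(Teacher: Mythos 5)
Your proposal is correct and follows essentially the same route as the paper, which simply observes that $\Psi^*_K$ is strongly monoidal, exact, and preserves projectives (via Proposition \ref{prop-pullFix}) and concludes that it preserves all homological invariants. The extra care you take with the internal-Hom identification for $\U{Ext}$, using the self-duality $\U{Hom}_{\UZ}(\U{\ZZ[X]},\UN)\cong \U{\ZZ[X]}\square_{\UZ}\UN$, is exactly the detail the paper leaves implicit (and uses elsewhere, in the proof of Theorem \ref{thm-ext}), so it is a sound filling-in rather than a departure.
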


\begin{rmk}
If we replace $\MS{B} \UZ_G$ and $\MS{B} \UZ_{G/K}$ by $\MS{B}_G$ and $\MS{B}_{G/K}$, that is, consider all computation in coefficient $\UA$ instead of $\UZ$, then Proposition \ref{prop-pullFix} holds, if we replace fixed point Mackey functors by generalized Burnside Mackey functors. Therefore, Proposition \ref{prop-pullmonoidal} also holds. However, Lemma \ref{lem-exact} fails even for $\UM \cong \UA$. If we want to compute in $\UA$ coefficient, then we can first compute in $\UZ$, and make use of the splitting of augmentation ideal of $\UA \rightarrow \UZ$ in \cite{GreenMay:Mackey}. We will not pursue this direction here.
\end{rmk}

\subsection{Computation}
Now we start to compute $\U{Ext}_{\UZ}^*$ and $\U{Tor}^{\UZ}_*$ for $G = C_{p^n}$ and various $\UZ$-modules. Some computation and proof can be simplified, once we introduce equivariant Eilenberg-Mac Lane spectra, but computations here do not rely on equivariant stable homotopy theory.

we start with $G = C_p$. Notations of $\UZ$-modules used in the following examples are defined in Definition \ref{def-form-Z-index} and \ref{def-Hom-Ext-L}, and $\gamma$ will be a chosen generator for $G$.

\begin{exam}
For $G = C_p$, the $\UZ$-module $\UB_{1}$ has the following Lewis diagram:
\[
\xymatrix
@R=7mm
@C=10mm{
\ZZ/p \ar@/_/[d] \\
0     \ar@/_/[u]
}
\]

It has a projective resolution
\[
\UB_{1} \leftarrow \UZ \leftarrow \U{\ZZ[C_p]} \leftarrow \U{\ZZ[C_p]} \leftarrow \UZ
\]
In Lewis diagram, the resolution is the following:
\[
\xymatrix
@R=7mm
@C=10mm{
\ZZ/p \ar@/_/[d] & \ZZ \ar@/_/[d]_{1} \ar[l]_{1} & \ZZ \ar@/_/[d]_{\Delta} \ar[l]_{p} & \ZZ \ar@/_/[d]_{\Delta} \ar[l]_{0} & \ZZ \ar@/_/[d]_{1} \ar[l]_1 \\
0     \ar@/_/[u] & \ZZ \ar@/_/[u]_{p} \ar[l] & \ZZ[C_p] \ar@/_/[u]_{\nabla} \ar[l]_{\nabla} & \ZZ[C_p] \ar@/_/[u]_{\nabla} \ar[l]_{1-\gamma} & \ZZ \ar@/_/[u]_{p}\ar[l]_{\Delta}
},
\]

where $\nabla: \ZZ[G] \rightarrow \ZZ$ is the augmentation map defined by $\nabla(1) = 1$ and $\Delta$ is the diagonal embedding defined by $\Delta(1) = \sum\limits_{i = 0}^{p-1} \gamma^i$.

We can compute $\U{Ext}_{\UZ}^*(\UB_{1},\UZ)$ by applying $\U{Hom}_{\UZ}(-,\UZ)$ to the resolution. We then get the following chain complex:
\[
\xymatrix
@R=7mm
@C=10mm{
\ZZ \ar@/_/[d]_{1}  \ar[r]^{1}      & \ZZ \ar@/_/[d]_{\Delta} \ar[r]^{0}             & \ZZ \ar@/_/[d]_{\Delta} \ar[r]^{p}          & \ZZ \ar@/_/[d]_{1}  \\
\ZZ \ar@/_/[u]_{p}  \ar[r]^{\Delta} & \ZZ[C_p] \ar@/_/[u]_{\nabla} \ar[r]^{1-\gamma} & \ZZ[C_p] \ar@/_/[u]_{\nabla} \ar[r]^{\nabla}& \ZZ \ar@/_/[u]_{p}
}
\]

It is simply the projective resolution of $\UB_{1}$ in the opposite direction. Therefore we conclude that
\[
\U{Ext}_{\UZ}^i(\UB_{1},\UZ) = \left\{ \begin{array}{ll}
                                    \UB_{1} & \textrm{for $i = 3$} \\
                                    \U{0} & \textrm{otherwise}
    \end{array} \right.
\]

$\U{Ext}_{\UZ}^*(\UB_{1},\UB_{1})$ can be computed in a similar way. Since $\U{Hom}_{\UZ}(\UZ,\UB_1) \cong \UB_1$ and $\U{Hom}_{\UZ}(\U{\ZZ[C_p]},\UB_1) \cong \U{0}$, we conclude that
\[
\U{Ext}_{\UZ}^i(\UB_{1},\UB_{1}) = \left\{ \begin{array}{ll}
                                    \UB_{1} & \textrm{for $i = 0,3$} \\
                                    \U{0}   & \textrm{otherwise}
                                    \end{array} \right.
\]
By the short exact sequence
\[
    \U{0} \rightarrow \UZ_1 \rightarrow \UZ \rightarrow \UB_{1} \rightarrow \U{0}
\]
we see that
\[
\U{Ext}_{\UZ}^i(\UB_{1},\UZ_1) = \left\{ \begin{array}{ll}
                                        \UB_1 & \textrm{for $i = 1$}\\
                                        \U{0} & \textrm{otherwise}
                                    \end{array} \right.
\]
The nontrivial $\U{Ext}^1_{\ZZ}$ corresponds to the extension
\[
    \U{0} \rightarrow \UZ_1 \rightarrow \UZ \rightarrow \UB_1 \rightarrow \U{0}.
\]
\end{exam}

Then we can do some $C_{p^2}$ computation.
\begin{exam}
For $G = C_{p^2}$, first we consider $\UB_{0,1}$, who has the following Lewis diagram
\[
\xymatrix
@R=7mm
@C=10mm{
\ZZ/p \ar@/_/[d] \\
0 \ar@/_/[d] \ar@/_/[u]\\
0 \ar@/_/[u]
}
\]

Since $\UB_{0,1} \cong \Psi^*_{C_p}(\UB_{1})$, by Corollary \ref{cor-pullhom} we have
\[
\U{Ext}^i_{\UZ}(\UB_{0,1},\UZ) = \left\{ \begin{array}{ll}
                                    \UB_{0,1} & \textrm{if $i = 3$}\\
                                    \U{0}     & \textrm{otherwise.}
                                        \end{array} \right.
\]

For $\UB_{1,1}$, which has the following Lewis diagram
\[
\xymatrix
@R=7mm
@C=10mm{
\ZZ/p^2 \ar@/_/[d]_{1} \\
\ZZ/p \ar@/_/[d] \ar@/_/[u]_{p}\\
0 \ar@/_/[u]
}
\]
a similar projective resolution can be constructed:
\[
\UB_{1,1} \leftarrow \UZ \xleftarrow{\nabla} \U{\ZZ[C_{p^2}]} \xleftarrow{1-\gamma} \U{\ZZ[C_{p^2}]} \xleftarrow{\Delta} \UZ
\]
Therefore we see that
\[
    \U{Ext}^i_{\UZ}(\UB_{0,1},\UZ) = \left\{ \begin{array}{ll}
                                                \UB_{1,1} & \textrm{for $i = 3$}\\
                                                \U{0}     & \textrm{otherwise}
                                            \end{array} \right.
\]
Now we compute $\U{Ext}^*_{\UZ}(\UB_{1,0},\UZ)$. $\UB_{1,0}$ fits into a short exact sequence
\[\label{eq-SES-B}
\U{0} \rightarrow \UB_{0,1} \rightarrow \UB_{1,1} \rightarrow \UB_{1,0} \rightarrow \U{0}
\]
In Lewis diagram it is the following:
\[
\xymatrix
@R=7mm
@C=10mm{
\ZZ/p \ar@/_/[d] \ar@{^{(}->}[r]^{p} & \ZZ/p^2 \ar@/_/[d]_{1} \ar@{->>}[r]       & \ZZ/p \ar@/_/[d]_{1}\\
0 \ar@/_/[u] \ar@/_/[d] \ar[r]       & \ZZ/p \ar@/_/[u]_{p} \ar@/_/[d] \ar[r]^1  & \ZZ/p \ar@/_/[u]_{0} \ar@/_/[d]\\
0 \ar@/_/[u] \ar[r]                  & 0 \ar@/_/[u] \ar[r]                       & 0 \ar@/_/[u]
}
\]

Apply $\U{Ext}^i_{\UZ}(-,\UZ)$ to it, we get a long exact sequence of $\UZ$-modules, which is trivial for $i \neq 3$, and for $i = 3$, by the above computation, we have
\[
\U{0} \rightarrow \U{Ext}^3_{\UZ}(\UB_{1,0},\UZ) \rightarrow \U{Ext}^3_{\UZ}(\UB_{1,1},\UZ) \cong \UB_{1,1} \rightarrow \U{Ext}^3_{\UZ}(\UB_{0,1},\UZ) \cong \UB_{0,1} \rightarrow 0
\]

Therefore, $\U{Ext}^3_{\UZ}(\UB_{1,0},\UZ)$ is not $\UB_{1,0}$, but $\UB_{1,0}^E$ (see Definition \ref{def-Hom-Ext-L}), which is obtained from $\UB_{1,0}$ by flipping restrictions and transfers around. The Lewis diagram of $\UB_{1,0}^E$ is the following:
\[
\xymatrix
@R=7mm
@C=10mm{
\ZZ/p \ar@/_/[d]_0 \\
\ZZ/p \ar@/_/[u]_1 \ar@/_/[d]\\
0 \ar@/_/[u]
}
\]
\end{exam}

%The difference between $\UB_{1,0}$ and $\UB_{0,1}$ or $\UB_{1,1}$ is that $\UB_{1,0}$ has a more complicated projective resolution. Using the short exact sequence \ref{eq-SES-B}, we can use a double complex construction and projective resolutions of $\UB_{0,1}$ and $\UB_{1,1}$ to construct a projective resolution for $\UB_{0,1}$. However, it is a projective of length $4$. By Theorem \ref{thm-dim3}, there exists a projective resolution of length $3$ for $\UB_{1,0}$, but in practice this resolution is very complicated and awkward to use: it amounts to find a projective resolution of length $1$ of the fixed point Mackey functor $\U{\ZZ[C_{p^2}]\langle \frac{1-\gamma}{p} \rangle}$. Even though one can follow proofs in \cite{Arn} to construct such a resolution, it is not the most economical way of do computation.

The phenomenon we see in computing $\U{Ext}^*_{\UZ}(\UM,\UZ)$ where $\UM$ is torsion can be generalized into the following theorem.

\begin{thm}\label{thm-ext}
For $G = C_{p^n}$, if $\UM(G/e) \cong 0$, then
\[
\U{Ext}^i_{\UZ}(\UM,\UZ) = \left\{ \begin{array}{ll}
                                        \UM^E & \textrm{for $i = 3$}\\
                                        \U{0} & \textrm{otherwise}
                                    \end{array} \right.
\]
This isomorphism is natural in $\UM$.
\end{thm}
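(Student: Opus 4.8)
The plan is to compute $\U{Ext}^*_{\UZ}(\UM,\UZ)$ from a projective resolution of $\UM$. By Theorem~\ref{thm-dim3} such a resolution may be taken of length at most $3$, and by Proposition~\ref{prop-proj-Z} built from permutation modules; fix $\U{0}\to\U{P}_3\to\U{P}_2\to\U{P}_1\to\U{P}_0\to\UM\to\U{0}$ with each $\U{P}_j\cong\U{\ZZ[X_j]}$. Then $\U{Ext}^i_{\UZ}(\UM,\UZ)=H^i(\U{Hom}_{\UZ}(\U{P}_\bullet,\UZ))$, and since $\U{Hom}_{\UZ}(\U{\ZZ[X]},\UZ)\cong\U{\ZZ[X]}$ the dual complex again consists of permutation modules. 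Two facts are immediate: $\U{Ext}^i_{\UZ}(\UM,\UZ)=\U{0}$ for $i>3$, and $\U{Ext}^0_{\UZ}(\UM,\UZ)=\U{Hom}_{\UZ}(\UM,\UZ)=\U{0}$, the latter because $\UM$ is levelwise torsion (Lemma~\ref{lem-torsion}) and a Mackey functor map from a levelwise torsion Mackey functor to $\UZ$ vanishes at every level. So the content lies in degrees $1,2,3$.

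Next I would dispose of the $G/e$-level. Evaluated at $G/e$ the resolution becomes an exact complex $\U{0}\to\U{P}_3(G/e)\to\cdots\to\U{P}_0(G/e)\to\UM(G/e)=0$ of finitely generated free abelian groups, whose $\ZZ$-linear dual is again exact; and since $\U{Hom}_{\UZ}(\U{\ZZ[X]},\UZ)(G/e)\cong Hom_{Ab}(\ZZ[X],\ZZ)$, this dual is precisely $\U{Hom}_{\UZ}(\U{P}_\bullet,\UZ)(G/e)$. Hence $\U{Ext}^i_{\UZ}(\UM,\UZ)(G/e)=0$ for all $i$, so by Lemma~\ref{lem-torsion} each $\U{Ext}^i_{\UZ}(\UM,\UZ)$ is torsion, matching $\UM^E(G/e)=Ext_{Ab}(0,\ZZ)=0$. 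What remains is to show that at each orbit $G/H$ the dual complex has cohomology concentrated in degree $3$, where it equals $\UM^E$.

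For degree $1$ I would use the short exact sequence of $\UZ$-modules $\U{0}\to\UZ\to\U{\QQ}\to\U{\QQ/\ZZ}\to\U{0}$. The groups $\U{Ext}^i_{\UZ}(\UM,\U{\QQ})$ are modules over the Green functor $\U{\QQ}$, hence levelwise $\QQ$-vector spaces; they vanish at $G/e$ by the argument just given (dualizing the free complex into the injective abelian group $\QQ$), and a $\U{\QQ}$-module with trivial $G/e$-level is trivial by the cohomological condition, multiplication by $|H|$ being invertible at $G/H$. So $\U{Ext}^*_{\UZ}(\UM,\U{\QQ})=\U{0}$, and the long exact sequence gives natural isomorphisms $\U{Ext}^{i+1}_{\UZ}(\UM,\UZ)\cong\U{Ext}^i_{\UZ}(\UM,\U{\QQ/\ZZ})$ for $i\ge 0$. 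In degree $0$, $\U{Ext}^1_{\UZ}(\UM,\UZ)\cong\U{Hom}_{\UZ}(\UM,\U{\QQ/\ZZ})=\U{0}$, because every restriction of $\U{\QQ/\ZZ}$ is an isomorphism, so a $\UZ$-module map $\UM\to\U{\QQ/\ZZ}$ that vanishes at $G/e$ (which it must, since $\UM(G/e)=0$) vanishes everywhere; and $\U{Ext}^3_{\UZ}(\UM,\U{\QQ/\ZZ})\cong\U{Ext}^4_{\UZ}(\UM,\UZ)=\U{0}$.

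The remaining content — vanishing of $\U{Ext}^2_{\UZ}(\UM,\UZ)\cong\U{Ext}^1_{\UZ}(\UM,\U{\QQ/\ZZ})$ and the \emph{natural} identification $\U{Ext}^3_{\UZ}(\UM,\UZ)\cong\U{Ext}^2_{\UZ}(\UM,\U{\QQ/\ZZ})\cong\UM^E$ — is the main obstacle, and I would handle it by dévissage. Vanishing of $\U{Ext}^1_{\UZ}(-,\U{\QQ/\ZZ})$ on torsion modules passes through short exact sequences (the degree-$0$ term of the long exact sequence already being zero there) and through filtered colimits; it holds for the modules $\UB_{t_1,\dots,t_n}$ of Definition~\ref{def-form-Z-index} by the explicit self-dual resolutions of the examples above, and for modules pulled back from a proper quotient by Corollary~\ref{cor-pullhom} and induction on $n$; as every torsion $\UZ$-module over $C_{p^n}$ is a filtered colimit of finitely generated ones carrying finite filtrations whose simple subquotients are obtained from the $\UB_{t_1,\dots,t_n}$ and their pullbacks via short exact sequences, this yields the vanishing in general. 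Granting it, $\U{Ext}^2_{\UZ}(-,\U{\QQ/\ZZ})$ is an exact contravariant functor on torsion $\UZ$-modules — its neighbours $\U{Ext}^1_{\UZ}(-,\U{\QQ/\ZZ})$ and $\U{Ext}^3_{\UZ}(-,\U{\QQ/\ZZ})$ both vanish there — as is $(-)^E=Ext_L(-,\ZZ)$, since $Hom_L(-,\ZZ)$ kills torsion; both commute with $\Psi^*_K$ and with filtered colimits and both send $\UB_{t_1,\dots,t_n}$ to its levelwise dual by the examples, so the same dévissage identifies them. The one genuinely delicate point is to make the identification natural: I would first construct a comparison natural transformation $\U{Ext}^3_{\UZ}(-,\UZ)\Rightarrow(-)^E$ of functors on torsion $\UZ$-modules — e.g. from the connecting maps of the $\U{\QQ}/\U{\QQ/\ZZ}$ sequence together with the levelwise identity $\UM^E\cong Hom_L(\UM,\QQ/\ZZ)$ valid for torsion $\UM$ — and only then run the dévissage to see that it is an isomorphism. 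Constructing that transformation cleanly is the technical heart of the argument. (A slicker route avoiding much of this bookkeeping is the second, homotopy-theoretic proof promised in the introduction, working in $H\UZ$-modules.)
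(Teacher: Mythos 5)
Your reductions are fine as far as they go: the length-$3$ permutation resolution kills degrees $i>3$, the levelwise-torsion argument kills degree $0$ and the whole $G/e$-level, and the shift $\U{Ext}^{i+1}_{\UZ}(\UM,\UZ)\cong\U{Ext}^{i}_{\UZ}(\UM,\U{\QQ/\ZZ})$ together with $\U{Hom}_{\UZ}(\UM,\U{\QQ/\ZZ})=\U{0}$ disposes of degree $1$. But the part you yourself flag as "the technical heart" is a genuine gap, in two ways. First, the d\'evissage class is not right as stated: for $C_{p^2}$ the simple $\UZ$-module with $\ZZ/p$ concentrated at the intermediate level $G/C_p$ (zero at $G/G$ and $G/e$) is neither a $\UB_{t_1,t_2}$ nor a pullback $\Psi^*_{C_p}$ of a $C_p$-module, since by Proposition \ref{prop-pullP} any pullback has its two bottom levels isomorphic under restriction; so the claimed filtrations "with simple subquotients obtained from the $\UB_{t_1,\dots,t_n}$ and their pullbacks" do not exist in the form asserted. (This particular simple is a submodule of $\UB_{1,0}$, so its $\U{Ext}$ can be extracted from a long exact sequence, but that is an argument you have not given, and for general $C_{p^n}$ the classification of composition factors of torsion modules is not addressed at all.) Second, and more fundamentally, the comparison map $\U{Ext}^3_{\UZ}(-,\UZ)\Rightarrow(-)^E$ is never constructed. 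Without it the d\'evissage cannot even be run: two exact functors that agree objectwise on building blocks need not be isomorphic, and the five-lemma comparisons across short exact sequences and colimits require a natural transformation in hand before you start. Since the theorem explicitly asserts naturality in $\UM$, this is not a cosmetic omission; and your proposed source for the map — connecting homomorphisms plus the levelwise identity $\UM^E\cong Hom_L(\UM,\QQ/\ZZ)$ — still requires identifying the \emph{internal} $\U{Ext}^2_{\UZ}(\UM,\U{\QQ/\ZZ})$ with a \emph{levelwise} construction, which is exactly the nontrivial content of the theorem.

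For contrast, the paper avoids all of this by staying with the permutation resolution you started from: using self-duality $\U{Hom}_{\UZ}(\U{\ZZ[X]},\UZ)\cong\U{\ZZ[X]}$ it identifies the dual complex evaluated at $G/H$ with $Hom_{\ZZ}(\ZZ[G/H]\otimes_G\ZZ[X_*],\ZZ)$, i.e.\ with the levelwise $\ZZ$-dual of the orbit complex $\U{O}_*$; then $\U{O}_*\cong\U{P}_*\square_{\UZ}\UZ^*$, so its homology is $\U{Tor}^{\UZ}_*(\UM,\UZ^*)$, which the small resolution $\UZ^*\leftarrow\U{\ZZ[G]}\leftarrow\U{\ZZ[G]}\leftarrow\UZ$ and the hypothesis $\UM(G/e)\cong 0$ show is $\UM$ concentrated in degree $2$. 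Ordinary universal coefficients applied levelwise then gives both the vanishing in degrees $\neq 3$ and the identification with $\UM^E$ in one stroke. If you want to salvage your route, you would need to (a) actually classify the composition factors of finitely generated torsion $\UZ$-modules for $C_{p^n}$ and verify the base cases for all of them, and (b) construct the natural comparison map first — at which point you are essentially forced back into an argument like the paper's.
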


\begin{proof}
Let $\U{P}_*$ be a projective resolution of $\UM$ with length $3$ (see Theorem \ref{thm-dim3}). We can assume that $\U{P}_* \cong \U{\ZZ[X_*]}$ for a graded $G$-set $X_*$. Since $\U{\ZZ[X]}$ is self-dual in $\UZ$-modules, we have
\[
\U{Hom}_{\UZ}(\U{\ZZ[X]},\UZ) \cong \U{Hom_G(\ZZ[X],\ZZ[G])} \cong \U{\ZZ[X]}
\]
The last isomorphism is not natural. Since everything involved is fixed point Mackey functor, chain maps on $\U{Hom}_{\UZ}(\U{P}_*,\UZ)$ is determined by $G/e$-levels. Therefore, we have
\begin{align*}
    \U{Ext}^i_{\UZ}(\UM,\UZ)(G/H) & \cong \U{H}^i(\U{Hom}_{\UZ}(\U{P}_*,\UZ))(G/H) \\
                                  & \cong H^i(Hom_G(\ZZ[X_*],\ZZ[G])^H) \\
                                  & \cong H^i(Hom_G(\ZZ[G/H],Hom_G(\ZZ[X_*],\ZZ[G])))
\end{align*}
The Mackey functor structure is given by restriction and transfer $G$-module maps between $\ZZ[G/H]$ when $H$ varies as subgroups of $G$. Then by adjunction, we have
\begin{align*}
Hom_G(\ZZ[G/H],Hom_G(\ZZ[X_*],\ZZ[G])) & \cong Hom_G(\ZZ[G/H] \otimes_G \ZZ[X_*],\ZZ[G])\\
                                       & \cong Hom_{\ZZ}(\ZZ[G/H] \otimes_G \ZZ[X_*],\ZZ)
\end{align*}
That means, we can compute $\U{Ext}^i_{\UZ}(\UM,\UZ)$ as follows:
\begin{enumerate}
\item Take the underlying G-modules of projective resolution $\U{P}_*$, which is $\ZZ[X_*]$.
\item Form the orbit Mackey functor (see Example \ref{exam-orbitMF}) $\U{O}_* = \U{O}(\ZZ[X_*])$.
\item Take $Hom_L(\U{O}_*,\ZZ)$, the levelwise Hom into $\ZZ$, and compute cohomology of the resulting chain complex.
\end{enumerate}

Therefore, if we can prove that
\[
    \U{H}_i(\U{O}_*) = \left\{ \begin{array}{ll}
                                    \UM     & \textrm{for $i = 2$}\\
                                    \U{0}   & \textrm{otherwise,}
                                \end{array} \right.
\]
then
\[
    \U{H}_2(\U{O}_*) \leftarrow Ker(d_2(\U{O}_*)) \leftarrow \U{O}_3 \leftarrow 0
\]
is a free resolution of abelian groups of $\UM(G/H)$ for each $G/H$-level. Then taking $Hom_{\ZZ}(-,\ZZ)$ levelwise, and we compute both $\U{Ext}^3_{\UZ}(\UM,\UZ)$ and $Ext_L(\UM,Z)$.
\end{proof}

So we only need the following lemma.

\begin{lem}\label{lem-orbitH}
Using the same notation of the proof, we have
\[
    \U{H}_i(\U{O}_*) = \left\{ \begin{array}{ll}
                                    \UM     & \textrm{for $i = 2$}\\
                                    \U{0}   & \textrm{otherwise.}
                                \end{array} \right.
\]
\end{lem}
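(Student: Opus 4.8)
The plan is to prove the statement level by level and then check that the resulting isomorphisms are natural. For a subgroup $H\subseteq G$, the $G/H$-level of $\U{O}_* = \U{O}(\ZZ[X_*])$ is the chain complex of $H$-coinvariants $(\ZZ[X_*])_H$ of the underlying graded $\ZZ[G]$-module $C_* := \ZZ[X_*]$ (this is exactly $\ZZ[G/H]\otimes_G\ZZ[X_*]$, as in the proof of Theorem \ref{thm-ext}), so I must show that $H_i\big((\ZZ[X_*])_H\big)$ is $\UM(G/H)$ for $i=2$ and $0$ otherwise. Two inputs are used throughout. First, since $\UM(G/e)\cong 0$ the complex $C_*$ is exact; writing $Z_j=\ker(d_j\colon C_j\to C_{j-1})$, this gives exact sequences $0\to Z_1\to C_1\xrightarrow{d_1}C_0\to 0$ and $0\to C_3\xrightarrow{\ \sim\ }Z_2\to C_2\to Z_1\to 0$. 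Second, because $\U{P}_*\to\UM$ is a resolution at every level, $(C_*)^H=\ZZ[X_*]^H$ is a resolution of $\UM(G/H)$; in particular $\UM(G/H)\cong\mathrm{coker}\big(C_1^H\to C_0^H\big)$.

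The degrees $i\neq 2$ are easy. Applying $(-)_H$ to $0\to Z_1\to C_1\to C_0\to 0$ shows $(C_1)_H\to(C_0)_H$ is onto with kernel $\mathrm{im}\big((Z_1)_H\to(C_1)_H\big)$, and applying it to $0\to Z_2\to C_2\to Z_1\to 0$ shows $(C_2)_H\to(Z_1)_H$ is onto; combining these gives $H_0 = H_1 = 0$. For $i=3$, $d_3\colon\ZZ[X_3]\hookrightarrow\ZZ[X_2]$ is an injection of permutation modules, so $\QQ[X_3]$ is a $\QQ[G]$-summand of $\QQ[X_2]$ by semisimplicity; hence $(d_3)_H$ is still injective after $\otimes\QQ$, so its kernel is torsion inside the torsion-free group $(\ZZ[X_3])_H = \ZZ[X_3/H]$, and $H_3 = 0$.

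The content is degree $2$. Factoring $d_2$ as $C_2\twoheadrightarrow Z_1\hookrightarrow C_1$ and chasing through $(-)_H$ identifies $H_2\big((\ZZ[X_*])_H\big)=\ker((d_2)_H)/\mathrm{im}((d_3)_H)$ with $\ker\big((Z_1)_H\to(C_1)_H\big)$. Now I use that $H$ is a cyclic $p$-group. As $C_1=\ZZ[X_1]$ is a permutation module over the cyclic group $H$, Shapiro's lemma gives $\widehat{H}^{-1}(H;C_1)=\bigoplus_x\widehat{H}^{-1}(H_x;\ZZ)=0$, so the norm map $\mathrm{Nm}\colon(C_1)_H\to(C_1)^H$ is injective; comparing with $\mathrm{Nm}\colon(Z_1)_H\to(Z_1)^H$ along $Z_1^H\hookrightarrow C_1^H$ then yields $\ker\big((Z_1)_H\to(C_1)_H\big)=\ker\big(\mathrm{Nm}\colon(Z_1)_H\to(Z_1)^H\big)=\widehat{H}^{-1}(H;Z_1)$. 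Since $H$ is cyclic, $\widehat{H}^{-1}(H;Z_1)$ is canonically $\ker(N_H)/(1-\sigma)Z_1$, where $\sigma$ generates $H$ and $N_H=\sum_{h\in H}h$, and this group is exactly $H^1(H;Z_1)$. Finally $H^1(H;C_1)=\bigoplus_x H^1(H_x;\ZZ)=0$, so the long exact cohomology sequence of $0\to Z_1\to C_1\to C_0\to 0$ gives $H^1(H;Z_1)=\mathrm{coker}\big(C_1^H\to C_0^H\big)=\UM(G/H)$.

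It remains to see that these identifications are natural in $H$, so that they assemble into isomorphisms of Mackey functors $\U{H}_2(\U{O}_*)\cong\UM$ and $\U{H}_i(\U{O}_*)\cong\U{0}$ for $i\neq 2$: the norm is a map of $\UZ$-modules $\U{O}(N)\to\U{N}$, the cokernel description is simply $\UM=\mathrm{coker}(\U{P}_1\to\U{P}_0)$, and the one delicate point — that the Tate periodicity isomorphism $\widehat{H}^{-1}(H;-)\cong H^1(H;-)$ for cyclic $H$ is compatible with restrictions and transfers among the subgroups $C_{p^b}\subseteq C_{p^a}$ of $G$ — follows from the factorization of norm elements $N_{C_{p^a}}=N_{C_{p^a}/C_{p^b}}\,N_{C_{p^b}}$ together with $N_{C_{p^a}/C_{p^b}}(1-\gamma)=1-\sigma_{C_{p^b}}$. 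I expect the degree-$2$ step, and in particular making this periodicity genuinely Mackey-functorial, to be the main obstacle; everything else is routine diagram chasing that only uses that every subgroup of $C_{p^n}$ is a cyclic $p$-group with $H^1(-;\ZZ)=\widehat{H}^{-1}(-;\ZZ)=0$.
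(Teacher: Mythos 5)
Your proof is correct in substance, but it takes a genuinely different route from the paper. The paper's argument stays internal to $\UZ$-modules: by Mazur's formula one identifies $\U{O}_* \cong \U{P}_* \square_{\UZ} \UZ^*$, so that $\U{H}_i(\U{O}_*) \cong \U{Tor}^{\UZ}_i(\UM,\UZ^*)$, and then balances the Tor by computing with the length-two projective resolution $\UZ^* \leftarrow \U{\ZZ[G]} \leftarrow \U{\ZZ[G]} \leftarrow \UZ$; since $\UM \square_{\UZ}\U{\ZZ[G]} \cong \U{0}$ when $\UM(G/e)\cong 0$, only the degree-$2$ term $\UM\square_{\UZ}\UZ \cong \UM$ survives, and the Mackey-functor (indeed $\UZ$-module) structure of the answer is automatic. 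Your argument instead works levelwise with the underlying acyclic complex of permutation modules, using right-exactness of coinvariants for $i=0,1$, rational semisimplicity for $i=3$, and the identification $\U{H}_2$ at level $G/H$ with $\widehat{H}^{-1}(H;Z_1)\cong H^1(H;Z_1)\cong \mathrm{coker}(C_1^H\to C_0^H)=\UM(G/H)$ via Shapiro's lemma and cyclic periodicity; each of these steps checks out, and the naturality issue you flag is real but handles as you indicate (compatible periodicity classes $u_K=\mathrm{res}^H_K u_H$ in $\widehat{H}^2(-;\ZZ)$ together with the projection formula, or equivalently your norm-element identities, make $\widehat{H}^{-1}\cong H^1$ a Mackey isomorphism over $C_{p^n}$). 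The trade-off: the paper's balancing argument is shorter, hides all Tate-cohomological bookkeeping, and gives the Mackey structure for free, whereas your computation is more hands-on and makes visible exactly where cyclicity enters (periodicity of Tate cohomology), which in the paper's proof enters instead through the specific two-stage resolution of $\UZ^*$; the price is that the Mackey-functoriality, which is the part actually needed downstream in the proof of Theorem \ref{thm-ext}, is the delicate point of your approach and would need to be written out carefully.
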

\begin{proof}
By direct computation using Mazur's formula, we see that
\[
\U{O}_* \cong \U{P}_* \square_{\UZ} \UZ^*
\]
So $\U{H}_i(\U{O}_*) \cong \U{Tor}^{\UZ}_i(\UM,\UZ^*)$. To compute $\U{Tor}$, instead of using $\U{P}_*$, a projective resolution of $\UM$, we can use a projective resolution of $\UZ^*$. The minimal one is the following:
\[
\UZ^* \xleftarrow{\nabla} \U{\ZZ[G]} \xleftarrow{1 - \gamma} \U{\ZZ[G]} \xleftarrow{\Delta} \UZ
\]
Now since $\UM(G/e) \cong 0$
\[
    \UM \square_{\UZ} \U{\ZZ[G]} \cong \UM_{G/e} \cong \U{\UM(G/e)} \cong \U{0}
\]
We have our result.
\end{proof}

We close this section with some $\U{Tor}^{\UZ}$ computations.

\begin{exam}
As in the proof of Lemma \ref{lem-orbitH}, we see that if $\UM(G/e) \cong 0$, then
\[
    \U{Tor}^{\UZ}_i(\UM,\UZ^*) = \left\{ \begin{array}{ll}
                                    \UM     & \textrm{for $i = 2$}\\
                                    \U{0}   & \textrm{otherwise.}
                                \end{array} \right.
\]

Now let $G = C_p$, apply $\U{Tor}^{\UZ}_*(\UB_{1},-)$ to the short exact sequence
\[
 0 \rightarrow \UZ^* \rightarrow \UZ \rightarrow \UB_{1} \rightarrow 0
\]
Since $\UZ$ is projective, we have
\[
\U{Tor}^{\UZ}_i(\UB_{1},\UB_{1}) = \left\{ \begin{array}{ll}
                                                \UB_{1} & \textrm{for $i = 0,3$}\\
                                                \U{0}   & \textrm{otherwise.}
                                            \end{array} \right.
\]
By applying $\U{Tor}^{\UZ}_i(\UZ^*,-)$ to the same short exact sequence, we have
\[\U{Tor}^{\UZ}_i(\UZ^*,\UZ^*) = \left\{ \begin{array}{ll}
                                                \UZ^*   & \textrm{for $i = 0$}\\
                                                \UB_{1} & \textrm{for $i = 1$}\\
                                                \U{0}   & \textrm{otherwise.}
                                            \end{array} \right.
\]
The same argument works for $G = C_{p^n}$. In $C_{p^n}$ we have
\[
\U{Tor}^{\UZ}_i(\UZ^*,\UZ^*) = \left\{ \begin{array}{ll}
                                                \UZ^*   & \textrm{for $i = 0$}\\
                                                \UB_{1,1,..,1} & \textrm{for $i = 1$}\\
                                                \U{0}   & \textrm{otherwise.}
                                            \end{array} \right.
\]
Notice that $\UB_{1,1,..,1}$ is the cokernel of the map $\UZ^* \rightarrow \UZ$ which is an underlying isomorphism.
\end{exam}

\section{Equivariant Orthogonal Spectra}\label{sec-EM}
\subsection{Equivariant spectra and commutative ring spectra}
In this paper, we use equivariant orthogonal spectra with positive complete model structure to model equivariant stable homotopy theory, which is written in detail in \cite[Appendix A,B]{HHR}. We use this specific setting because under the positive complete model structure, the category of modules over a commutative ring spectrum can be given a model structure, which will be used in Corollary \ref{cor-equivalence}. We use $(\mathcal{S}p^G, \wedge , S^{-0})$ for the symmetric monoidal model category of orthogonal $G$-spectra ($G$-spectra for short), and $ho\mathcal{S}p^G$ for its homotopy category. We use $[-,-]^G$ for the abelian group of homotopy classes of maps and $Fun_G(-,-)$ for the equivariant function spectrum. Given an equivariant commutative ring spectrum $R$, we use $\wedge_R$ and $Fun_R(-,-)$ for the induced smash product and function spectrum in $R$-modules.

\begin{prop}[{\cite[Proposition~B.138]{HHR}}]\label{prop-module-model}
Let $R$ be a commutative ring spectrum, then the forgetful functor
\[
    Mod_R \rightarrow \mathcal{S}p^G
\]
creates a cofibrantly generated symmetric monoidal model structure on $Mod_R$, in which fibrations and weak equivalences are underlying fibrations and weak equivalences.
\end{prop}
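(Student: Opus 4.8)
The plan is to obtain the model structure on $Mod_R$ by transferring the positive complete model structure on $\mathcal{S}p^G$ along the free--forgetful adjunction
\[
F = R \wedge (-) : \mathcal{S}p^G \rightleftarrows Mod_R : U,
\]
via Kan's recognition theorem for cofibrantly generated model categories. Let $I$ and $J$ be sets of generating cofibrations and generating acyclic cofibrations for $\mathcal{S}p^G$, and set $I_R = F(I)$ and $J_R = F(J)$. First I would check the smallness hypotheses: since $F$ is a left adjoint it preserves colimits, and it sends the (small) domains of $I$ and $J$ to objects whose underlying spectra are retracts of $I$-- and $J$--cell complexes, hence small relative to the relevant cell complexes in $Mod_R$.

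The heart of the argument is the acyclicity condition: every relative $J_R$-cell complex must become a weak equivalence in $\mathcal{S}p^G$ after applying $U$. A cobase change in $Mod_R$ along $F(f)$ with $f \in J$ has underlying spectrum a cobase change along $R \wedge f$, and one must also handle transfinite composites of such. The relevant input is the monoid axiom for orthogonal $G$-spectra, applied to the object $R$: smashing an acyclic cofibration with an arbitrary spectrum and then forming cell complexes yields underlying weak equivalences. Hence relative $J_R$-cell complexes are underlying weak equivalences, and Kan's theorem produces a cofibrantly generated model structure on $Mod_R$ in which a map is a fibration (resp. weak equivalence) iff its underlying map is; the statement that this structure is "created by $U$" is then immediate from the construction.

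For the symmetric monoidal structure I would verify the pushout--product and unit axioms for $(\wedge_R, R)$. Since $F$ is strong monoidal, the pushout--product of images satisfies $F(f) \square_R F(g) \cong F(f \square g)$, so the pushout--product of two generating (acyclic) cofibrations of $Mod_R$ is $F$ applied to the corresponding pushout--product in $\mathcal{S}p^G$; the pushout--product axiom for $Mod_R$ therefore reduces to that for $(\mathcal{S}p^G, \wedge, S^{-0})$. The unit axiom requires that for any cofibrant $R$-module $M$, tensoring with a cofibrant replacement of the unit $R$ is a weak equivalence; this follows from flatness of cofibrant objects in the positive complete model structure, i.e. the fact that smashing with a cofibrant orthogonal $G$-spectrum preserves weak equivalences.

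I expect the main obstacle to be the acyclicity step: one needs to know that $\mathcal{S}p^G$ with the positive complete model structure satisfies the monoid axiom, equivalently that underlying weak equivalences are stable under the cobase-change and transfinite-composition operations appearing in $J_R$-cell complexes. A secondary subtlety is the unit axiom, which is genuinely nontrivial precisely because $S^{-0}$ is not cofibrant in the positive complete model structure, so one cannot appeal to a cofibrant unit and must instead invoke flatness of cofibrant replacements.
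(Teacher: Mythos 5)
The paper gives no proof of this statement --- it is quoted verbatim from \cite[Proposition~B.138]{HHR} --- so the only comparison available is with the argument in that appendix, and your proposal is essentially it: transfer the positive complete structure along $R \wedge (-) \dashv U$ via the recognition theorem, with the monoid axiom for the positive complete model structure supplying the acyclicity of relative $J_R$-cell complexes, and the pushout--product, unit, and flatness considerations handled exactly as you describe. Your identification of the two genuine pressure points (the monoid axiom and the non-cofibrancy of $S^{-0}$, hence of $R$ as an $R$-module, forcing a flatness argument for the unit axiom) matches where the cited source does the real work, so the proposal is correct and takes the same route.
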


\begin{defi}\label{def-RO(G)}
The \textbf{group of orthogonal $G$-representation} $RO(G)$ is the Grothendieck group of finite dimensional $G$-representations under direct sum.
\end{defi}

Given an equivariant orthogonal spectrum $X$, we use $\U{\pi}_{\star}(X)$ for its $RO(G)$-graded homotopy Mackey functor.

In non-equivariant stable homotopy theory, for each abelian group $A$, there is an Eilenberg-Mac Lane spectrum $HA$ with the property that
\[
    \pi_{i}(HA) = \left\{ \begin{array}{ll}
                            A & \textrm{for $i = 0$}\\
                            0 & \textrm{for $i \neq 0$}.
                            \end{array} \right.
\]
Furthermore, $HA$ is unique up to homotopy, and if $A$ is an associative or commutative ring, $HA$ is an associative or commutative ring spectrum on the nose.
In orthogonal $G$-spectra, we can indeed construct Eilenberg-Mac Lane spectra out of Mackey functors.
\begin{thm}[{\cite[Theorem~5.3]{Greenlees-May}}]\label{thm-EM}
For a Mackey functor $\UM$, there is an Eilenberg-Mac Lane spcetrum $H\UM$, unique up to isomorphism in $ho\mathcal{S}p^G$. For Mackey functors $\UM$ and $\UN$,
\[
    [H\UM,H\UN]^G \cong \U{Hom}(\UM,\UN)(G/G).
\]
\end{thm}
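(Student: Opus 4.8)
The plan is to construct $H\UM$ by hand as an equivariant cell spectrum, deduce uniqueness from equivariant obstruction theory and the Whitehead theorem, and then read off the $Hom$-formula from that cellular model; all of this rests on one external input, the identification of the Burnside category with $\{\Sigma^\infty_+ G/H\}$ inside $ho\mathcal{S}p^G$.

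\emph{Existence.} Recall from Proposition~\ref{prop-proj-inj} that the representable Mackey functors $\UA_{G/H}=\MS{B}_G(-,G/H)$ are projective with $Hom(\UA_{G/H},\UN)\cong\UN(G/H)$, and that on the topological side the orbit suspension spectra $\Sigma^\infty_+ G/H$ are connective with $[\Sigma^k\Sigma^\infty_+ G/H,X]^G\cong\U{\pi}_k(X)(G/H)$; in particular $\U{\pi}_0(\Sigma^\infty_+ G/H)\cong\UA_{G/H}$ and $[\Sigma^\infty_+ G/H,\Sigma^\infty_+ G/K]^G\cong Hom(\UA_{G/H},\UA_{G/K})$. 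Choose a presentation $P_1\xrightarrow{\varphi}P_0\to\UM\to\U 0$ with $P_0,P_1$ wedges of representables, realize $\varphi$ by a map $f\colon Y_1\to Y_0$ of wedges of orbit suspension spectra inducing $\varphi$ on $\U{\pi}_0$, and set $Z=\mathrm{cofib}(f)$. The long exact sequence of $\U{\pi}_\ast$ shows $Z$ is connective with $\U{\pi}_0(Z)\cong\mathrm{coker}(\varphi)\cong\UM$. Now define $H\UM:=\tau_{\le 0}Z$, built from $Z$ by iteratively attaching cells $\Sigma^{n+1}\Sigma^\infty_+ G/H$ (indexed by generators of $\U{\pi}_n$) to kill $\U{\pi}_n$ for $n\ge 1$ and passing to the homotopy colimit; connectivity is preserved and $\U{\pi}_0$ is untouched, so $\U{\pi}_\ast(H\UM)$ is $\UM$ in degree $0$ and $\U 0$ otherwise.

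\emph{Uniqueness.} Given two such spectra $X,X'$ with a chosen isomorphism $\U{\pi}_0(X)\cong\UM\cong\U{\pi}_0(X')$, realize $X$ as a CW $G$-spectrum with cells $\Sigma^n\Sigma^\infty_+ G/H$ and build a map $X\to X'$ skeleton by skeleton: the successive obstructions to extending over the $n$-skeleton lie in equivariant cohomology groups with coefficients in $\U{\pi}_{n-1}(X')$, which vanish except possibly in the single degree where $X'$ has its homotopy, and there the extension is governed by the prescribed isomorphism. The resulting map is a $\U{\pi}_\ast$-isomorphism, hence an equivalence in $ho\mathcal{S}p^G$ by the equivariant Whitehead theorem. (Conceptually this is the statement that $ho\mathcal{S}p^G$ carries a $t$-structure with heart $Mack_G$ via $\U{\pi}_0$, and $H(-)$ is the inclusion of the heart; but that identification is essentially what we are proving.)

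\emph{The $Hom$-formula.} The map $[H\UM,H\UN]^G\to Hom(\U{\pi}_0H\UM,\U{\pi}_0H\UN)=Hom(\UM,\UN)$ induced by $\U{\pi}_0$ is the claimed isomorphism, verified from the presentation above. Since $\U{\pi}_\ast(H\UN)$ is concentrated in degree $0$, we have $[\Sigma^k\Sigma^\infty_+ G/H,H\UN]^G=0$ for $k\ne 0$; hence $[-,H\UN]^G$ annihilates all cells of dimension $\ge 1$ used to pass from $Z$ to $\tau_{\le 0}Z$, giving $[H\UM,H\UN]^G\cong[Z,H\UN]^G$. Applying $[-,H\UN]^G$ to $Y_1\xrightarrow{f}Y_0\to Z$, using $[Y_i,H\UN]^G\cong Hom(P_i,\UN)$ and the vanishing of $[\Sigma Y_1,H\UN]^G$, produces a left-exact sequence identifying $[Z,H\UN]^G$ with $\ker\!\big(Hom(P_0,\UN)\to Hom(P_1,\UN)\big)=Hom(\mathrm{coker}\,\varphi,\UN)=Hom(\UM,\UN)$, which is $\U{Hom}(\UM,\UN)(G/G)$ by the identification recorded in Section~\ref{sec-Mack}; chasing the definitions shows this is exactly the $\U{\pi}_0$ map. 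The main obstacle is the uniqueness step: one must set up equivariant obstruction theory carefully enough that the CW model of $X$ maps to $X'$ and the comparison is seen to be a weak equivalence; the existence and the $Hom$-computation are then formal consequences of the cofiber long exact sequence together with the Burnside-category identification and the equivariant Whitehead theorem, all of which we would cite rather than reprove.
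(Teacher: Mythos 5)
The paper itself gives no proof of this statement (it is quoted directly from Greenlees--May), and your argument is essentially the one in that source: realize a projective presentation of $\UM$ by wedges of orbit suspension spectra via the identification of the Burnside category with $\{\Sigma^\infty_+G/H\}$ in $ho\mathcal{S}p^G$, take the cofiber, kill the higher homotopy by attaching orbit cells, and then compute $[-,H\UN]^G$ cellwise to get both uniqueness and the $Hom$-formula. The outline is correct; the only points you pass over --- the Milnor $\lim^1$ term when mapping out of the homotopy colimit defining $\tau_{\le 0}Z$ (it vanishes because the relevant towers are constant, as $[\Sigma^k\Sigma^\infty_+G/H,H\UN]^G=0$ for $k\ge 1$), and the obstruction-theoretic uniqueness step, which can be replaced simply by running your $Hom$-computation with target an arbitrary spectrum whose homotopy Mackey functors are concentrated in degree $0$ and invoking the equivariant Whitehead theorem --- are standard.
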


Furthermore, one can show that if $\UM$ is a (commutative) Green functor, $H\UM$ is a (commutative) monoid in $ho\mathcal{S}p^G$. However, what we need for computation is something stronger: We wish $H\UM$ not only is a monoid in $ho\mathcal{S}p^G$, but a monoid in $\mathcal{S}p^G$ before passing to homotopy category. It turns out that this is more subtle than the non-equivariant case. The essential reason is that if $R$ is a commutative ring spectrum, $\U{\pi}_0(R)$ is not only a Green functor, but a Tambara functor.

\begin{thm}[{\cite[Example~5.14, Thoerem~1.4]{Angeltveit-Bohmann}}]
If $R$ is a commutative ring spectrum, then $\U{\pi}_0(X)$ is a Tambara functor.
\end{thm}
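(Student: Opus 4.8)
The plan is to produce the norm maps on $\U{\pi}_0(R)$ directly from the Hill--Hopkins--Ravenel norm functor and then verify the axioms of Definition~\ref{def-Tambara} (here $R$ is the commutative ring spectrum, so the claim is that $\U{\pi}_0(R)$ is a Tambara functor). I would begin with the easier half: $\U{\pi}_0(R)$ is a commutative Green functor. Each value $\U{\pi}_0(R)(G/H) = [S^0, i^*_H R]^H$ is a commutative associative unital ring under the product induced by $\mu : R \wedge R \to R$ together with $S^0 \wedge S^0 \cong S^0$; the restrictions are ring homomorphisms since $i^*_K$ is strong symmetric monoidal; and Frobenius reciprocity $Tr^H_K(a)\cdot b = Tr^H_K(a\cdot Res^H_K(b))$ holds because, up to homotopy, the transfers are maps of modules over the relevant restrictions of $R$ (the projection formula). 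This part uses only that $R$ is a homotopy-commutative ring.

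Next I would construct the norms. By \cite{HHR}, the norm $N_K^H$ on commutative ring spectra is left adjoint to restriction; applying this to $\mathrm{id}\colon i^*_K(i^*_H R) \to i^*_K(i^*_H R)$ gives a canonical map of $H$-equivariant commutative ring spectra \[ n_K^H : N_K^H i^*_K R \longrightarrow i^*_H R . \] On homotopy there is an external norm $\U{\pi}_0(R)(G/K) \to \pi_0^H(N_K^H i^*_K R)$ sending $[\alpha : S^0 \to i^*_K R]$ to $[N_K^H\alpha : S^0 \cong N_K^H S^0 \to N_K^H i^*_K R]$; this is well defined because $N_K^H$ preserves weak equivalences between cofibrant objects in the positive complete model structure, and it is multiplicative and unital because $N_K^H$ is strong symmetric monoidal, fixes $S^0$, and carries the diagonal of $S^0$ to the diagonal. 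Composing with $(n_K^H)_\ast$ defines $N_K^H : \U{\pi}_0(R)(G/K) \to \U{\pi}_0(R)(G/H)$, a morphism of multiplicative monoids which is the identity for $K = H$, is transitive ($N_K^H\circ N_J^K = N_J^H$) by transitivity of the norm functor and naturality of $n$, and is $W_H(K)$-equivariant.

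What then remains is the interaction of $N_K^H$ with restrictions, transfers and addition. The double coset formula expressing $Res^H_{H'} N_K^H(x)$ as a product of composites of smaller norms, restrictions and conjugations follows by applying $\U{\pi}_0$ to the decomposition of $i^*_{H'} N_K^H(-)$ as an indexed smash product (\cite{HHR}). The two genuinely Tambara-flavoured relations --- the value of $N_K^H$ on a sum and on a transfer --- follow from the distributivity of the norm over wedges: for $K$-spectra $A, B$ there is a natural splitting of $N_K^H(A \vee B)$ indexed by the $K$-stable subsets of $H/K$, and $N_K^H(Ind_{K'}^K C)$ splits as an indexed wedge of induced norms. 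Pushing these equivalences through the ring structure of $R$ and reading off the maps on $\U{\pi}_0$ yields exactly Tambara's exponential formulas for $N_K^H(x+y)$ and $N_K^H(Tr^K_{K'}(x))$. Combining the restriction/transfer/Weyl data of the Green functor with the norms $N_K^H$, all the identities of Definition~\ref{def-Tambara} are verified, so $\U{\pi}_0(R)$ is a $G$-Tambara functor.

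I expect this last step to be the main obstacle: carefully tracking the indexing sets of the distributivity splittings and matching them with the bispan (exponential) diagrams defining a Tambara functor. This is in substance Brun's theorem, and the cleanest bookkeeping is the one carried out in \cite{Angeltveit-Bohmann}; an alternative route uses that $\U{\pi}_0$ of the sphere spectrum is the Burnside Tambara functor together with a detection-on-representables argument, but that still relies on the norm's behaviour on wedges.
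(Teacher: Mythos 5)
The paper gives no proof of this statement—it is imported verbatim from \cite{Angeltveit-Bohmann} (with a typo: the conclusion should read $\U{\pi}_0(R)$, not $\U{\pi}_0(X)$, as you silently corrected)—so the only comparison available is with the cited source, and your sketch follows essentially that route: Green structure from homotopy commutativity, norm maps from the counit of the $N_K^H \dashv i_K^*$ adjunction on commutative rings of \cite{HHR}, and the Tambara exponential formulas for norms of sums and transfers from distributivity of indexed smash products over wedges, which is Brun's theorem as systematized by Angeltveit--Bohmann. You correctly flag the distributivity/bispan bookkeeping as the real content rather than claiming it; the only caveat is that the cited result is in fact more general (the paper's own remark notes the hypothesis is weaker than strict commutativity), which your argument, leaning on the full norm adjunction, does not recover, but that is not needed for the statement as given.
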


In fact, the theorem is more general than this: Their condition is weaker than being a commutative ring spectrum, and they show that the $RO(G)$-graded homotopy Mackey functor $\U{\pi}_{\star}(R)$ is an $RO(G)$-graded Tambara functor. However, we won't need these facts in this paper.

\begin{thm}[{\cite[Theorem~5.1]{Ullman:Tambara}}]
There is a functor
\[
        EM: Tamb_G \rightarrow Comm_G
\]
that taking value in cofibrant and fibrant Eilenberg-Mac Lane spectra such that the composition $\U{\pi}_0\circ EM$ is naturally isomorphic to the identity.
\end{thm}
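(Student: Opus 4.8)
The plan is to realize $EM$ as a rigidified quasi-inverse of $\U{\pi}_0$ on Eilenberg--Mac Lane objects. Work inside the positive complete model structure of \cite[Appendix~B]{HHR}, so that $Comm_G$, like $Mod_R$ in Proposition~\ref{prop-module-model}, is a model category, and let $\mathcal{H}\subset Comm_G$ be the full subcategory on the commutative ring $G$-spectra $R$ with $\U{\pi}_i(R)\cong\U{0}$ for $i\neq 0$. The Angeltveit--Bohmann theorem quoted above shows $\U{\pi}_0$ carries $\mathcal{H}$ into $Tamb_G$, and the core claim to prove is that the induced functor
\[
    \U{\pi}_0\colon \mathcal{H}\longrightarrow Tamb_G
\]
is an equivalence on homotopy categories. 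Granting this, one defines $EM(\UR)$ by choosing a quasi-inverse and post-composing with a functorial cofibrant--fibrant replacement in $Comm_G$; since weak equivalences are $\U{\pi}_\star$-isomorphisms, $EM(\UR)$ is a cofibrant--fibrant Eilenberg--Mac Lane spectrum and $\U{\pi}_0\circ EM\cong\mathrm{id}$ is the (co)unit of the equivalence. That the quasi-inverse can be taken to be a strict point-set functor into $Comm_G$ rather than merely into its homotopy category follows by a standard rectification argument, the source $Tamb_G$ being an ordinary category; alternatively one may run the explicit bar resolution described below on the canonical free presentation of each $\UR$.

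For essential surjectivity I would, given a Tambara functor $\UR$, build a connective commutative ring $G$-spectrum $\tilde R$ with $\U{\pi}_0(\tilde R)\cong\UR$ by a (non-functorial) cell attachment. Starting from the free commutative ring $G$-spectrum $R_0 = \mathbb{P}(\bigvee_H \Sigma^\infty_+ G/H)$ on a set of generators for $\UR$, which is connective with $\U{\pi}_0(R_0)$ the corresponding free Tambara functor (using $\mathbb{P}(\Sigma^\infty_+ T)\simeq\Sigma^\infty_+\mathrm{Sym}(T)$ and Tambara's identification of its $\U{\pi}_0$), attach commutative-ring-spectrum cells along $\mathbb{P}$ of generating cofibrations to impose the defining relations of $\UR$ on $\U{\pi}_0$ and then iteratively to kill any higher homotopy thereby created; then $\tau_{\leq 0}\tilde R\in\mathcal{H}$ has $\U{\pi}_0\cong\UR$. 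For full faithfulness, for $R,R'\in\mathcal{H}$ I would compute $\mathrm{Map}_{Comm_G}(R,R')$ by writing $R$ as a cell commutative ring $G$-spectrum built from free cells $\mathbb{P}(S^{-V}\wedge(G/H)_+)$: the mapping space becomes the totalization of a cosimplicial space whose terms are products of spaces $\Omega^\infty$ of desuspensions of $R'$ and whose $\U{\pi}_0$-data is a free Tambara resolution of $\U{\pi}_0(R)$; since $\U{\pi}_i(R')\cong\U{0}$ for $i\neq 0$, the totalization spectral sequence forces $\mathrm{Map}_{Comm_G}(R,R')$ to be homotopy-discrete with $\pi_0$ equal to $\mathrm{Hom}_{Tamb_G}(\U{\pi}_0 R,\U{\pi}_0 R')$. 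Hence $\U{\pi}_0$ is fully faithful on $\mathcal{H}$, and together with essential surjectivity this yields the equivalence.

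The hard part is the full-faithfulness step and its genuine-equivariant input: one must know that in the positive complete model structure the free functor $\mathbb{P}$ sends positively-cofibrant $G$-spectra to cofibrant commutative ring $G$-spectra and that $\mathrm{Map}_{Comm_G}(\mathbb{P}(X),R')\simeq\mathrm{Map}_{\mathcal{S}p^G}(X,R')$, so that the cell and cosimplicial analysis is legitimate. This is precisely the circle of issues around norms and completeness of the indexing universe that forces the positive complete model structure, and it is the technical heart of \cite{Ullman:Tambara}. By contrast, the cell construction for essential surjectivity and the identification of the heart are comparatively formal, and the collapse of the totalization spectral sequence is a connectivity count once the framework is in place.
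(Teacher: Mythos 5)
First, note that the paper does not prove this statement at all: it is imported verbatim from Ullman, so there is no internal argument to compare against; what you have written is an attempt to reprove Ullman's theorem, and as it stands it has genuine gaps. The decisive one is in your full-faithfulness step. You justify the collapse of the totalization spectral sequence, and hence the homotopy-discreteness of $\mathrm{Map}_{Comm_G}(R,R')$, by the vanishing $\U{\pi}_i(R')\cong\U{0}$ for $i\neq 0$. But in the positive complete model structure the cells of a cofibrant commutative ring $G$-spectrum are of the form $\mathbb{P}\bigl(G_+\wedge_H(S^{-V}\wedge D^n_+)\bigr)$ with $V$ a nontrivial $H$-representation, so the terms of your cosimplicial object have homotopy groups given by $RO(G)$-graded groups $\U{\pi}_{n-V}(R')$, not integer-graded ones. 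For an Eilenberg-Mac Lane spectrum these are far from vanishing above degree zero (this is the entire content of the present paper: classes such as $u_{\lambda_i}$ and their products live in positive integer degrees of $\U{\pi}_{\star}(H\UZ)$), so the "connectivity count" does not force discreteness, and the claimed identification of $\pi_0$ of the totalization with $\mathrm{Hom}_{Tamb_G}(\U{\pi}_0R,\U{\pi}_0R')$ is not established. Any correct argument must either show that Eilenberg-Mac Lane commutative ring spectra can be resolved by cells attached only along integer spheres, or control the $RO(G)$-graded contributions by some other mechanism; this is precisely the technical content you have deferred to Ullman.

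Two further steps are asserted rather than proved. Your essential surjectivity and the identification of the cosimplicial $\pi_0$-data both rest on knowing that $\U{\pi}_0$ of the free commutative ring $G$-spectrum on $\Sigma^\infty_+G/H$ is the free Tambara functor (the point-set identification $\mathbb{P}(\Sigma^\infty_+T)\simeq\Sigma^\infty_+\mathrm{Sym}(T)$ does not compute the derived free functor, whose extended powers involve genuine $G\times\Sigma_n$-homotopy orbits); that identification is itself the computational heart of the paper you are trying to reprove, so invoking it makes the argument close to circular. Finally, the theorem demands a strict point-set functor $EM\colon Tamb_G\rightarrow Comm_G$ with a natural isomorphism $\U{\pi}_0\circ EM\cong\mathrm{id}$, and a homotopy-category equivalence plus "a standard rectification argument" does not deliver this; Ullman's proof instead constructs $EM$ directly and functorially (realizing a canonical free Tambara resolution by free commutative ring spectra, forming a bar-type construction, and applying a multiplicative Postnikov truncation), and your sketch of the bar alternative is not carried out, nor is the multiplicativity of the truncation $\tau_{\leq 0}$ on genuine commutative ring spectra addressed.
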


Since $\UZ$ is a Tambara functor by Proposition \ref{prop-fixTambara}, we have

\begin{cor}\label{cor-ZTambara}
$H\UZ$ is a commutative ring spectrum and $Mod_{H\UZ}$ is a cofibrantly generated model category.
\end{cor}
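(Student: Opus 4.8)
The plan is simply to chain together the three ingredients assembled just above. First I would invoke Proposition~\ref{prop-fixTambara}: since $\UZ$ is the fixed point Green functor of the trivial $G$-module $\ZZ$, it is canonically a $G$-Tambara functor, its norm maps $N^H_K \colon \UZ(G/K) \to \UZ(G/H)$ being given by the product formula of that proposition (on the trivial module this is just $x \mapsto x^{|W_H(K)|}$). Thus $\UZ$ is an object of $Tamb_G$.

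Next I would apply Ullman's functor $EM \colon Tamb_G \to Comm_G$ of \cite[Theorem~5.1]{Ullman:Tambara} to $\UZ$. This produces a cofibrant-and-fibrant commutative monoid $EM(\UZ)$ in $\mathcal{S}p^G$ whose underlying spectrum is an Eilenberg--Mac Lane spectrum and which satisfies $\U{\pi}_0(EM(\UZ)) \cong \UZ$. By the uniqueness clause of Theorem~\ref{thm-EM}, $EM(\UZ) \simeq H\UZ$ in $ho\mathcal{S}p^G$, so we may take $H\UZ := EM(\UZ)$ as our point-set model; it is now a genuine commutative ring spectrum, not merely a monoid in the homotopy category. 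Finally, with $R = H\UZ$ a commutative ring spectrum in the positive complete model structure fixed at the start of Section~\ref{sec-EM}, Proposition~\ref{prop-module-model} (that is, \cite[Proposition~B.138]{HHR}) immediately endows $Mod_{H\UZ}$ with a cofibrantly generated symmetric monoidal model structure whose fibrations and weak equivalences are created on underlying spectra. This is exactly the claimed statement.

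There is essentially no obstacle: all the work lives in the cited theorems. The only point worth a remark is that $EM(\UZ)$ genuinely represents the Eilenberg--Mac Lane homotopy type, but this is forced by the fact that $EM$ takes values in Eilenberg--Mac Lane spectra together with $\U{\pi}_0 \circ EM \cong \mathrm{id}$, and the uniqueness of $H\UZ$ in $ho\mathcal{S}p^G$ then identifies the two. One should likewise observe that Proposition~\ref{prop-module-model} applies to an arbitrary commutative ring spectrum in this model structure, so no further compatibility of model structures needs to be verified.
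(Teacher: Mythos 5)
Your argument is exactly the one the paper intends: $\UZ$ is a Tambara functor by Proposition~\ref{prop-fixTambara}, Ullman's $EM$ functor then supplies a genuine commutative monoid model of $H\UZ$ (identified with the Eilenberg--Mac Lane spectrum via $\U{\pi}_0\circ EM \cong \mathrm{id}$ and Theorem~\ref{thm-EM}), and Proposition~\ref{prop-module-model} gives the cofibrantly generated model structure on $Mod_{H\UZ}$. This matches the paper's (essentially immediate) deduction, so there is nothing to correct.
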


\begin{rmk}
The point here is to show $H\UZ$ is a commutative monoid in $G$-spectra, and the category of $H\UZ$-modules is a cofibrantly generated model category. The same argument applies to any Tambara functors. However, for a Green functor $\UR$, if we are willing to alter the smash product in $G$-spectra, we can also make $H\UR$ into a commutative monoid. If we use the smash product of the spectral Mackey functors in \cite{Guillou-May3} or the smash product corresponding to the trivial $N_{\infty}$-operad in \cite{BlumHill:Smash}, then $H\UR$ is a commutative monoid there.
\end{rmk}

\subsection{Fix point and orbit spectra}Fixed points and orbits constructions are the main bridges connecting equivariant objects to non-equivariant objects. In orthogonal $G$-spectra, we will make use of several different fixed points, namely (derived) \emph{fixed point spectrum}, \emph{homotopy fixed point spectrum} and \emph{geometric fixed point spectrum}.

By only considering trivial representations in $\MS{J}_G$, from an orthogonal $G$-spectrum $X$ we can obtain a non-equivariant spectrum with $G$-action
$i^*_0X$.
\begin{defi}[{\cite[Section~2.5]{HHR}}]\label{def-fixpoint}
The \textbf{fixed point spectrum} of $X$, $X^G$ is the non-equivariant orthogonal spectrum obtained by taking levelwise fixed point space of $i^*_0X$.
\end{defi}

This functor is not homotopic. However, if $X$ is fibrant, we have an isomorphism
\[
    \pi_*(X^G) \cong \pi_*^G X,
\]

therefore we will always consider the derived fixed point functor. By the above isomorphism, the derived fixed point spectrum reflects parts of the information homotopy Mackey functor carries. Since $\U{\pi}_*(-)$ is lax monoidal, and in general $(\UM \square \UN)(G/G)$ is not isomorphic to $\UM(G/G) \otimes \UN(G/G)$, we see that the derived fixed point does not commute with smash product. The derived fixed point also does not commute with the suspension functor. This can be seen by taking the example $\Sigma^{\infty} G_+$.

The next fixed point functor is \emph{the homotopy fixed point}.
\begin{defi}\label{def-htpyfix}
The \textbf{homotopy fixed point spectrum} of a $G$-spectrum $X$ is
\[
        X^{hG} := Fun_G(EG_+,X)^G,
\]
where $EG$ is a contractible $G$-space with free $G$-action.
\end{defi}

Sometimes the $G$-spectrum $X^h := Fun_G(EG_+,X)$ is also useful, we will use \emph{the homotopy fixed point $G$-spectrum} referring to the function spectrum as a $G$-spectrum before taking fixed points, and use \emph{the homotopy fixed point spectrum} referring to the non-equivariant spectrum $X^{hG}$.

Along with homotopy fixed points, we have \emph{the homotopy orbit functor}.

\begin{defi}\label{def-horbit}
The \textbf{homotopy orbit spectrum} of a $G$-spectrum $X$ is
\[
    X_{hG} := (EG_+ \wedge X)^G
\]
\end{defi}

We will call $X_h := EG_+ \wedge X$ \emph{the homotopy orbit $G$-spectrum} and $X_{hG}$ \emph{the homotopy orbit spectrum}.

There are canonical maps $X^G \rightarrow X^{hG}$ and $X_{hG} \rightarrow X^G$, induced by the collapsing map $EG_+ \rightarrow S^0$. These maps will be useful in our computation.

Since $EG_+$ is built out of only free $G$-cells, smashing with $EG_+$ or taking maps from it will forget a lot of information in the world of $G$-spectra.

\begin{prop}[{\cite[Proposition~1.1]{GreeMay-Tate}}]\label{prop-hfphoequi}
If $f: X \rightarrow Y$ is a map of $G$-spectra that is a weak equivalence of underlying non-equivariant spectra, then the induced map
\[
    Fun(1,f): X^h \rightarrow Y^h
\]
and
\[
    1 \wedge f: X_h \rightarrow Y_h
\]
are weak equivalences of $G$-spectra. Thus $X^{hG} \rightarrow Y^{hG}$ and $X_{hG} \rightarrow Y_{hG}$ are weak equivalences.
\end{prop}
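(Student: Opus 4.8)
The plan is to prove the two assertions about the $G$-spectra $X^h = Fun_G(EG_+, X)$ and $X_h = EG_+ \wedge X$ first, and then to deduce the statements about $X^{hG}$ and $X_{hG}$ by applying (derived) fixed points. The whole argument rests on the fact that $EG$ can be modelled as a $G$-CW complex built entirely out of \emph{free} cells $G \times D^n$. First I would fix such a model and its skeletal filtration $EG = \operatorname{colim}_n EG^{(n)}$, so that for each $n$ there is a cofiber sequence of based $G$-spaces
\[
\bigvee_{\alpha} G_+ \wedge S^{n} \longrightarrow EG^{(n)}_+ \longrightarrow EG^{(n+1)}_+ .
\]
Smashing with $X$ turns this into a cofiber sequence of $G$-spectra, and applying $Fun_G(-,X)$ turns it into a fiber sequence; both are natural in $X$. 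Using the long exact sequences of homotopy Mackey functors and the five lemma, one shows by induction on skeleta that $EG^{(n)}_+ \wedge f$ (resp. $Fun_G(EG^{(n)}_+, f)$) is a $\U{\pi}_*$-isomorphism for all $n$, \emph{provided} the corresponding statement holds for each single free cell $G_+ \wedge S^n$. Passing to the homotopy colimit, which commutes with $\U{\pi}_*$, then gives the result for $X_h$; passing to the homotopy limit of the tower $\{Fun_G(EG^{(n)}_+, X)\}$ and invoking the Milnor $\varprojlim^1$-sequence gives the result for $X^h$.

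So the real content is the single-orbit case, and here I would use the Wirthm\"uller isomorphism. Since suspension by $S^n$ is a self-equivalence of $ho\mathcal{S}p^G$, it is enough to treat $G_+ \wedge X$ and $Fun_G(G_+, X)$ themselves. The shearing (untwisting) isomorphism identifies $G_+ \wedge X \cong G_+ \wedge i^*_e X$ with $G$ acting only on the $G_+$-factor, i.e. the spectrum induced from the underlying spectrum $i^*_e X$; dually $Fun_G(G_+, X) \cong F(G_+, i^*_e X)$ is coinduced, and for the finite group $G$ induction and coinduction agree. In both cases the homotopy Mackey functor is computed, via the double-coset formula, purely from the non-equivariant homotopy groups of $i^*_e X$. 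Hence an underlying weak equivalence $f$ makes $G_+ \wedge f$ and $Fun_G(G_+, f)$ into $\U{\pi}_*$-isomorphisms, which is exactly the input required above.

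Finally, by Definition \ref{def-htpyfix} and Definition \ref{def-horbit} we have $X^{hG} = (X^h)^G$ and $X_{hG} = (X_h)^G$. Taking derived fixed points sends weak equivalences of $G$-spectra to weak equivalences of spectra, since for fibrant $Z$ one has $\pi_*(Z^G) \cong \U{\pi}_*^G(Z)$ and a $\U{\pi}_*$-isomorphism is in particular a $\U{\pi}_*^G$-isomorphism; applying this to the $G$-equivalences $X^h \to Y^h$ and $X_h \to Y_h$ produced above finishes the proof. The main obstacle I anticipate is not any single deep input but the bookkeeping in the passage through the (co)limit: verifying that the tower $\{Fun_G(EG^{(n)}_+, X)\}$ has vanishing $\varprojlim^1$ on the relevant homotopy groups (which follows because the inductive step already makes each map in the tower surjective there), and being careful about the point-set (co)fibrancy needed both for the homotopy (co)limits and for the identification $\pi_*((-)^G) \cong \U{\pi}_*^G(-)$; modulo that, everything is forced once one reduces to the free orbit $G_+$.
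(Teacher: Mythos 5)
The paper gives no proof of this proposition, simply citing Greenlees--May, and your argument is exactly the standard one from that source: induct over the free $G$-CW filtration of $EG$, reduce to a single free orbit, and handle $G_+\wedge X$ and $Fun_G(G_+,X)$ via the untwisting/Wirthm\"uller identification so that only underlying homotopy groups enter; this is correct. Two cosmetic points: the attaching cofiber sequence should involve $G_+\wedge S^n_+$ (equivalently, use $EG^{(n)}_+\to EG^{(n+1)}_+\to\bigvee G_+\wedge S^{n+1}$) rather than $G_+\wedge S^n$, and for $X^h$ you do not need $\varprojlim^1$ to vanish --- a levelwise equivalence of towers already induces isomorphisms on both $\varprojlim$ and $\varprojlim^1$ in the Milnor sequence, which suffices.
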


An advantage of the homotopy fixed points and homotopy orbits is that they are very computable via \emph{homotopy fixed point spectral sequences} and \emph{homotopy orbit spectral sequences}. They are spectral sequences arise from the cellular structure of $EG_+$.
\begin{thm}\label{thm-hfpss}
There are spectral sequences with
\[
E_2^{s,t} = H^t(G,\pi_s(X)) \Rightarrow \pi_{s-t}(X^{hG})
\]
and
\[
E^2_{s,t} = H_t(G,\pi_s(X)) \Rightarrow \pi_{s+t}(X_{hG})
\]
\end{thm}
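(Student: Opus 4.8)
The plan is to realize both spectral sequences as the spectral sequences associated to the skeletal filtration of a model for $EG$. First I would fix a $G$-CW model of $EG$ with skeleta $EG^{(n)}$; since the $G$-action on $EG$ is free, the cellular chain complex $C_*(EG)$ consists of free $\ZZ[G]$-modules, and since $EG$ is contractible, $C_*(EG) \to \ZZ$ is a free resolution of the trivial module. The subquotients of the skeletal filtration are wedges of free cells, $EG^{(n)}/EG^{(n-1)} \simeq \bigvee_{J_n} G_+ \wedge S^n$, where $J_n$ indexes a $\ZZ[G]$-basis of $C_n(EG)$.

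For the homotopy orbit spectral sequence I would smash this filtration with $X$ and apply $(-)^G$, which filters $X_{hG} = (EG_+ \wedge X)^G$ with $n$-th subquotient
\[
\bigl((EG^{(n)}/EG^{(n-1)}) \wedge X\bigr)^G \simeq \Bigl(\bigvee_{J_n} G_+ \wedge S^n \wedge X\Bigr)^G \simeq \bigvee_{J_n} \Sigma^n i^*_0 X,
\]
using the untwisting equivalence $G_+ \wedge X \simeq G_+ \wedge i^*_0 X$ together with $(G_+ \wedge Z)^G \simeq Z$ for a non-equivariant spectrum $Z$. The associated exact couple yields a spectral sequence with $E^1_{s,t} = C_t(EG) \otimes_{\ZZ[G]} \pi_s(X)$, where $\pi_s(X)$ carries its natural $\ZZ[G]$-module structure, and the $d^1$-differential is induced by the cellular boundary, hence equals $\partial \otimes \mathrm{id}$. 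Therefore $E^2_{s,t} = H_t\bigl(C_*(EG) \otimes_{\ZZ[G]} \pi_s(X)\bigr) = H_t(G; \pi_s(X))$. Since the connectivity of $\Sigma^n i^*_0 X$ increases with $n$, the filtration is exhaustive and complete in each degree, so this spectral sequence converges strongly to $\pi_{s+t}(X_{hG})$.

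For the homotopy fixed point spectral sequence I would dually apply $Fun_G(-, X)$ to the skeletal filtration and then $(-)^G$, producing a tower of spectra whose inverse limit is $Fun_G(EG_+, X)^G = X^{hG}$. The fiber of the map from the $n$-th to the $(n-1)$-st stage is
\[
Fun_G\bigl(EG^{(n)}/EG^{(n-1)}, X\bigr)^G \simeq \prod_{J_n} Fun_G(G_+ \wedge S^n, X)^G \simeq \prod_{J_n} \Omega^n i^*_0 X,
\]
using $Fun_G(G_+ \wedge Z, X)^G \simeq Fun(Z, i^*_0 X)$. The spectral sequence of this tower, reindexed cohomologically, has $E_1^{s,t} = \mathrm{Hom}_{\ZZ[G]}(C_t(EG), \pi_s(X))$ with differential induced by $\partial$, so $E_2^{s,t} = H^t\bigl(\mathrm{Hom}_{\ZZ[G]}(C_*(EG), \pi_s(X))\bigr) = H^t(G; \pi_s(X))$, abutting to $\pi_{s-t}(X^{hG})$.

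The identifications of the subquotients and fibers are routine, using only the freeness of the $G$-cells together with the Wirthm\"uller-type equivalences for induced and coinduced spectra, so I would not spell them out. The one genuine point requiring care is the convergence of the homotopy fixed point spectral sequence: the defining tower may have a nonvanishing $\lim^1$-term, so in general one only obtains conditional convergence, which upgrades to strong convergence under mild hypotheses such as $X$ being bounded below — the case in all applications in this paper, where the spectral sequence is applied to $H\UZ$-modules. The homotopy orbit spectral sequence, being built from an exhaustive filtration by cofibrations of increasing connectivity, converges strongly with no such caveat.
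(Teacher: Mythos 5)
Your construction via the skeletal filtration of $EG$ is exactly what the paper has in mind: Theorem \ref{thm-hfpss} is stated there without proof, with only the remark that the spectral sequences arise from the cellular structure of $EG_+$, and your identifications of the subquotients, of the $E^1$-terms as $C_*(EG)\otimes_{\ZZ[G]}\pi_s(X)$ resp.\ $\mathrm{Hom}_{\ZZ[G]}(C_*(EG),\pi_s(X))$, and of the $E^2$-terms as group (co)homology are the standard ones. The only imprecision is the closing aside that bounded-belowness of $X$ by itself upgrades the homotopy fixed point spectral sequence to strong convergence: unlike the homotopy orbit case, a lower bound on $\pi_*(X)$ does not make the number of nonzero $E_2$-entries in a fixed total degree finite, so one should instead appeal to conditional convergence together with Boardman's criterion (which is automatic in the applications in this paper, where the spectral sequence is applied to suspensions of $H\UZ$ whose homotopy is concentrated in finitely many degrees, forcing collapse).
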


\begin{rmk}
If we consider all subgroups $H \subset G$, then group (co)homology has the structure of $\UZ$-modules (see Example \ref{exam-gpcoh}), therefore these spectral sequences are spectral sequences of $\UZ$-modules (Though the extensions might not respect the $\UZ$-module structure). We can also consider homotopy fixed points or homotopy orbits of $S^V \wedge X$, to obtain $RO(G)$-graded spectral sequences. In this way, we can think about homotopy fixed point and homotopy orbit spectral sequences are $RO(G)$-graded spectral sequences computing $\U{\pi}_{\star}(X^h)$ and $\U{\pi}_{\star}(X_h)$. This is the version we use in our computation. Details of $RO(G)$-graded homotopy fixed point spectral sequences appear in \cite[Section~2.3]{Hill-Meier}.
\end{rmk}

The last fixed point functor we introduce is \emph{the geometric fixed point}. Consider the space $E\mathcal{P}$, which is characterized by the property
\[
    (E\mathcal{P})^H \simeq \left\{ \begin{array}{ll}
                                    \emptyset & H = G\\
                                    pt        & H \neq G,
                                \end{array} \right.
\]
and the cofibre sequence $E\mathcal{P}_+ \rightarrow S^0 \rightarrow \widetilde{E\mathcal{P}}$.

\begin{defi}\label{def-geofp}
The \textbf{geometric fixed point spectrum} of a $G$-spectrum $X$ is
\[
    \Phi^G(X) = (\widetilde{E\mathcal{P}} \wedge X)^G.
\]
\end{defi}

Similarly, we use \emph{the geometric fixed point $G$-spectrum} for $\Phi(X) = \widetilde{E\mathcal{P}} \wedge X$ and \emph{the geometric fixed point spectrum} for $\Phi^G(X)$.

\begin{rmk}\label{rmk-geofp}
When $G = C_p$, we have $E\mathcal{P} = EG$, therefore we have a cofibre sequence in $G$-spectra
\[
    X_h \rightarrow X \rightarrow \Phi(X)
\]
and a cofibre sequence in spectra
\[
    X_{hG} \rightarrow X^G \rightarrow \Phi^G(X).
\]
However, this is not true for other groups.
\end{rmk}

The geometric fixed point has the best formal properties among all fixed point functors.
\begin{prop}[{\cite[Proposition~2.43]{HHR}}]\label{prop-geofp}
The geometric fixed point functor $\Phi^{G}$ has the following properties:
\begin{enumerate}
\item $\Phi^G$ preserves weak equivalences.
\item $\Phi^G$ commutes with filtered homotpy colimits.
\item Given a $G$-space $A$ and an actual $G$ representation $V$, there is a weak equivalence
\[
    \Phi^G(S^{-V} \wedge A) \approx S^{-V^G} \wedge A,
\]
where $V^G$ is the $G$-invariant subspace of $V$.
\item For $G$-spectra $X$ and $Y$, there is a natural chain of weak equivalences connecting
\[
    \Phi^G(X \wedge Y) \text{ and } \Phi^{G}(X) \wedge \Phi^{G}(Y).
\]
\end{enumerate}
\end{prop}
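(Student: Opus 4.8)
The plan is to factor the derived geometric fixed point functor through the point-set level \emph{monoidal geometric fixed point functor} $\Phi^G_M \colon \mathcal{S}p^G \to \mathcal{S}p$. Recall that $\Phi^G_M$ can be constructed so that it is simultaneously (a) a strong symmetric monoidal left adjoint, (b) a left Quillen functor for the positive complete model structures, (c) computes $\Phi^G_M(\Sigma^\infty_G A) = \Sigma^\infty A^G$ on suspension spectra, and (d) sends a representation sphere $S^{-V}$ to $S^{-V^G}$, all of which are visible from its definition. The crucial first step is the comparison: for cofibrant $X$ there is a natural weak equivalence $\Phi^G_M(X) \approx (\widetilde{E\mathcal{P}} \wedge X)^G = \Phi^G(X)$. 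This is where the bookkeeping of cofibrancy in the positive complete model structure genuinely enters, and it is the technical heart of the statement; for the purposes of this paper it is exactly the content of \cite[Appendix~B]{HHR}, which I would simply cite.

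Granting the comparison, the four items follow formally. For (1): $\Phi^G_M$ preserves weak equivalences between cofibrant objects since it is left Quillen; as $\widetilde{E\mathcal{P}} \wedge (-)$ preserves the underlying weak homotopy type and cofibrant replacement is functorial, $\Phi^G$ preserves all weak equivalences. For (2): $\Phi^G_M$ is a left adjoint, hence commutes with all colimits, in particular with filtered colimits of cofibrant diagrams; since filtered homotopy colimits are modeled by such diagrams and $\Phi^G \simeq \Phi^G_M$ on cofibrant inputs, $\Phi^G$ commutes with filtered homotopy colimits. For (3): evaluate $\Phi^G_M$ directly on $S^{-V}$ to obtain $S^{-V^G}$, use strong monoidality to rewrite $\Phi^G_M(S^{-V} \wedge A)$ as $\Phi^G_M(S^{-V}) \wedge \Phi^G_M(A)$, and invoke the suspension-spectrum formula to identify the second factor. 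For (4): assemble the natural zig-zag
\[
\Phi^G(X \wedge Y) \leftarrow \Phi^G(X^c \wedge Y^c) \xrightarrow{\ \approx\ } \Phi^G_M(X^c \wedge Y^c) \xrightarrow{\ \cong\ } \Phi^G_M(X^c) \wedge \Phi^G_M(Y^c) \xrightarrow{\ \approx\ } \Phi^G(X^c) \wedge \Phi^G(Y^c) \to \Phi^G(X) \wedge \Phi^G(Y),
\]
where $(-)^c$ denotes cofibrant replacement and the middle isomorphism is the strong monoidal structure of $\Phi^G_M$; the outer maps are weak equivalences because smashing cofibrant objects is homotopy invariant together with (1).

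The main obstacle is precisely the first step: constructing $\Phi^G_M$ and proving that it is left Quillen and strong symmetric monoidal for the positive complete model structure, together with the natural equivalence $\Phi^G_M(X) \approx (\widetilde{E\mathcal{P}} \wedge X)^G$ on cofibrant $X$. Once that infrastructure is available the properties (1)--(4) are pure bookkeeping, and I would expect no surprises there.
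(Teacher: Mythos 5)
Your proposal is correct, and it is essentially the argument behind the cited result: the paper itself gives no proof of this proposition, quoting it directly from \cite[Proposition~2.43]{HHR}, whose proof runs exactly through the monoidal geometric fixed point functor $\Phi^G_M$ (strong symmetric monoidal, left Quillen, computing $S^{-V}\mapsto S^{-V^G}$ and $\Sigma^\infty_G A \mapsto \Sigma^\infty A^G$) together with the comparison $\Phi^G_M(X)\approx(\widetilde{E\mathcal{P}}\wedge X)^G$ on cofibrant $X$ established in their Appendix~B. So your sketch matches the source's approach, with the genuinely technical input correctly identified and delegated to that appendix.
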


Our main computation tool is the Tate diagram, a diagram that relates the homotopy orbit, the fixed point and the homotopy fixed point of a $G$-spectrum. It is constructed in \cite{GreeMay-Tate}, and is the main topic of the memoir.

Consider the cofibre sequence
\[
EG_+ \rightarrow S^0 \rightarrow \widetilde{EG}.
\]
Smashing it with the cannonical map $X \rightarrow X^h$ we obtain a commutative diagram
\[
\xymatrix{
X_h     \ar[r] \ar[d] & X \ar[r] \ar[d]& {\widetilde{EG}} \wedge X \ar[d]\\
(X^h)_h \ar[r]                 & X^h \ar[r]     & {\widetilde{EG}} \wedge X^h
}
\]

Since the left vertical map induces isomorphism on the underlying homotopy groups, by Proposition \ref{prop-hfphoequi}, we have
\begin{prop}
The left vertical map
\[
    X_h \rightarrow (X^h)_h
\]
is a weak equivalence in $G$-spectra.
\end{prop}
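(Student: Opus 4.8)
The statement to prove is that the left vertical map $X_h \to (X^h)_h$ in the Tate diagram is a weak equivalence in $G$-spectra.

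The plan is to invoke Proposition \ref{prop-hfphoequi} directly. That proposition says: if $f: X \to Y$ is a map of $G$-spectra that is a weak equivalence of underlying non-equivariant spectra, then $1 \wedge f: X_h \to Y_h$ is a weak equivalence of $G$-spectra. So the only thing I need to verify is that the canonical map $X \to X^h = Fun_G(EG_+, X)$ is an underlying (non-equivariant) weak equivalence; the displayed map $X_h \to (X^h)_h$ is by construction $1 \wedge (X \to X^h)$, i.e. $EG_+ \wedge X \to EG_+ \wedge X^h$.

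So the key step is: \emph{the canonical map $X \to X^h$ is an underlying equivalence}. This is the standard fact that homotopy fixed points are built so that, after forgetting to the trivial group, $Fun_G(EG_+, X)$ agrees with $X$. Concretely, restricting to the underlying spectrum, $EG_+$ becomes a free contractible (hence non-equivariantly contractible, pointed) space, and the collapse $EG_+ \to S^0$ is an underlying equivalence of pointed $G$-spaces; since $Fun(-, X)$ on underlying spectra sends this underlying equivalence to an underlying equivalence $X = Fun(S^0, X) \to Fun(EG_+, X)$. (One should be a little careful: the relevant statement is that $i^*_e$ of the map $X \to Fun_G(EG_+,X)$ is an equivalence, which follows because $i^*_e Fun_G(EG_+, X) \simeq Fun(i^*_e EG_+, i^*_e X)$ and $i^*_e EG_+ \simeq S^0$ non-equivariantly.) Once this underlying-equivalence claim is in hand, Proposition \ref{prop-hfphoequi} applies verbatim with $f$ the map $X \to X^h$, giving that $1 \wedge f : X_h \to (X^h)_h$ is a weak equivalence of $G$-spectra, which is exactly the assertion.

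The proof therefore has essentially no obstacle: the content is entirely in the already-cited Proposition \ref{prop-hfphoequi} (from \cite{GreeMay-Tate}), plus the elementary observation about underlying equivalences. If I wanted to be even more economical, I would just say: the map $X \to X^h$ is an underlying equivalence by construction of the homotopy fixed point functor, so smashing with $EG_+$ preserves it by Proposition \ref{prop-hfphoequi}. The one point to be careful about in writing is to make explicit that the left vertical map in the square is indeed $1 \wedge (X \to X^h)$ and not some other map, but this is immediate from how the square was obtained (smashing the cofibre sequence $EG_+ \to S^0 \to \widetilde{EG}$ with $X \to X^h$).

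\begin{proof}
By construction, the left vertical map in the square is obtained by smashing the canonical map $X \to X^h = Fun_G(EG_+, X)$ with $EG_+$, so it is the map
\[
    1 \wedge (X \to X^h) : EG_+ \wedge X \to EG_+ \wedge X^h,
\]
that is, $X_h \to (X^h)_h$. Now the canonical map $X \to X^h$ is a weak equivalence of underlying non-equivariant spectra: forgetting the $G$-action, $EG_+$ becomes a non-equivariantly contractible pointed space, so the collapse $EG_+ \to S^0$ is an underlying equivalence, and hence so is $i^*_e(X \to Fun_G(EG_+, X))$, since $i^*_e Fun_G(EG_+, X) \simeq Fun(i^*_e EG_+, i^*_e X)$. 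Therefore Proposition \ref{prop-hfphoequi}, applied to $f : X \to X^h$, shows that $1 \wedge f : X_h \to (X^h)_h$ is a weak equivalence of $G$-spectra.
\end{proof}
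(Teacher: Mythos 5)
Your proposal is correct and matches the paper's argument: the paper likewise observes that $X \to X^h$ is an underlying equivalence and applies Proposition \ref{prop-hfphoequi} to conclude that smashing with $EG_+$ yields a weak equivalence $X_h \to (X^h)_h$. Your extra verification that $i^*_e(X \to Fun_G(EG_+,X))$ is an equivalence, via $i^*_e EG_+ \simeq S^0$, just spells out the step the paper leaves implicit.
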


We will use $\widetilde{X}$ for $\widetilde{EG} \wedge X$ and $X^t$ for $\widetilde{EG} \wedge X^h$. The latter is called the Tate spectrum of $X$.

\begin{defi}\label{def-Tate}
The \textbf{Tate diagram} of a $G$-spectrum $X$ is the commutative diagram of cofibrations
\[
\xymatrix{
X_h     \ar[r] \ar[d]^{\simeq} & X \ar[r] \ar[d]& {\widetilde{X}} \ar[d]\\
X_h     \ar[r]                 & X^h \ar[r]     & X^t
}.
\]
\end{defi}

\subsection{Equivariant Anderson duality}
Non-equivariantly, there is a universal coefficient exact sequence between integral homology and cohomology
\[
    0 \rightarrow Ext^1(H_{*-1}(X;\ZZ),\ZZ) \rightarrow H^*(X;\ZZ) \rightarrow Hom(H_*(X;\ZZ),\ZZ) \rightarrow 0.
\]
One way of generalize this exact sequence is the Anderson duality \cite{AndDual}. Consider the short exact sequence
\[
    0 \rightarrow \ZZ \rightarrow \QQ \rightarrow \QQ/\ZZ \rightarrow 0,
\]
since both $\QQ$ and $\QQ/\ZZ$ are injective abelian groups, by Brown Representability Theorem, $Hom(\pi_*(-),\QQ)$ and $Hom(\pi_*(-), \QQ/\ZZ)$ represents cohomology theories $I_{\QQ}$ and $I_{\QQ/\ZZ}$. Let $I_{\ZZ}$ be the fibre of $I_{\QQ} \rightarrow I_{\QQ/\ZZ}$ and $I_{\ZZ}(X) = Fun(X,I_{\ZZ})$. Then for any spectrum $E$, viewed as a (co)homology theory, we have a universal coefficient exact sequence of abelian groups
\[
    0 \rightarrow Ext^1(E_{*-1}(X),\ZZ) \rightarrow I_{\ZZ}(E)^*(X) \rightarrow Hom(E_{*}(X),\ZZ) \rightarrow 0
\]

Equivariantly, we consider $Hom(\pi_*^G(-),\QQ)$ and $Hom(\pi_*^G(-),\QQ/\ZZ)$. By an equivariant version of Brown Representability Theorem (e.g. \cite[Corollary~XIII.3.3]{May:Alaska}), we see that as in the non-equivariant case, they are represented by $G$-spectra $I^G_{\QQ}$ and $I^G_{\QQ/\ZZ}$. Let $I^G_{\ZZ}$ be the homotopy fibre of $I^G_{\QQ} \rightarrow I^G_{\QQ/\ZZ}$.
\begin{defi}\label{def-Anderson}
The \textbf{equivariant Anderson dual} of a $G$-spectrum $X$ is
\[
    I^G_{\ZZ}(X) = Fun_G(X,I_{\ZZ}^G).
\]
\end{defi}

Equivariant Anderson duality for $G = C_2$ is studied in detail in \cite[Section~3.2]{RicDual}, which includes the $C_2$-version of all propositions here. Since the proof is pretty much identical for any finite group, we would not reprove them here.

By the same argument as the non-equivariant case, we can obtain a universal coefficient exact sequence from Anderson duality. Furthermore, since the short exact sequence is natural both in the cohomology theory $E$ and the $G$-spectra $X$, it respects the Mackey functor structure. By smashing with representation spheres, we can also use $RO(G)$-grading instead of integer grading.
\begin{prop}\label{prop-Anderson}
Given a $G$-spectra $E$ and $X$, we have an $RO(G)$-graded short exact sequence
\[
    0 \rightarrow Ext_L(E_{\star-1}(X),\ZZ) \rightarrow I^G_{\ZZ}(E)^{\star}(X) \rightarrow Hom_L(E_{\star}(X),\ZZ) \rightarrow 0
\]
$Ext_L$ and $Hom_L$ are defined in Definition \ref{def-Hom-Ext-L}.
\end{prop}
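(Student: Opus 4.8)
The plan is to transcribe the non-equivariant derivation of the universal coefficient sequence from Anderson duality, carrying along the Mackey-functor structure and using only that $Fun_G$ is exact and that $\QQ$ and $\QQ/\ZZ$ are injective. Write $I^G_A(E) := Fun_G(E,I^G_A)$ for $A\in\{\ZZ,\QQ,\QQ/\ZZ\}$; then by the tensor--hom adjunction $I^G_A(E)^{\star}(X)$ is computed by the $RO(G)$-graded homotopy Mackey functor of the $G$-spectrum $Fun_G(X\wedge E,I^G_A)$.

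The first step is to produce the relevant long exact sequence. By definition $I^G_\ZZ$ is the homotopy fibre of $I^G_\QQ\to I^G_{\QQ/\ZZ}$, the map induced by $0\to\ZZ\to\QQ\to\QQ/\ZZ\to 0$. Applying $Fun_G(X\wedge E,-)$, which is exact (it is a right adjoint, and in $ho\mathcal{S}p^G$ fibre sequences agree with cofibre sequences), gives a fibre sequence of $G$-spectra whose $RO(G)$-graded homotopy Mackey functors are the cohomology theories $I^G_\ZZ(E)^{\star}(X)$, $I^G_\QQ(E)^{\star}(X)$, $I^G_{\QQ/\ZZ}(E)^{\star}(X)$. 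Since $\U{\pi}_{\star}$ carries a fibre sequence of $G$-spectra to a long exact sequence of homotopy Mackey functors, and smashing $X$ with the spheres $S^V$ supplies the $RO(G)$-grading, we obtain an exact sequence of $RO(G)$-graded Mackey functors
\[
    I^G_\QQ(E)^{\star-1}(X) \to I^G_{\QQ/\ZZ}(E)^{\star-1}(X) \to I^G_\ZZ(E)^{\star}(X) \to I^G_\QQ(E)^{\star}(X) \to I^G_{\QQ/\ZZ}(E)^{\star}(X).
\]

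The second step is to identify the two ``rational'' terms as Mackey functors. Evaluating at an orbit $G/H$ and combining the shearing isomorphism $G/H_+\wedge W\cong G_+\wedge_H i^*_H W$, the induction--restriction adjunction, and the representing property of $I^G_\QQ$ yields a natural isomorphism of abelian groups
\[
    I^G_\QQ(E)^{\star}(X)(G/H)\cong Hom\bigl(E_{\star}(X)(G/H),\,\QQ\bigr),
\]
and similarly with $\QQ/\ZZ$ in place of $\QQ$. The one genuinely equivariant point is that these levelwise isomorphisms assemble into isomorphisms of Mackey functors $I^G_\QQ(E)^{\star}(X)\cong Hom_L(E_{\star}(X),\QQ)$ and $I^G_{\QQ/\ZZ}(E)^{\star}(X)\cong Hom_L(E_{\star}(X),\QQ/\ZZ)$; that is, the restriction maps on the left are the abelian-group duals of the transfers of $E_{\star}(X)$ and the transfers the duals of the restrictions. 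This is exactly the role played by the duality functor $D$ in Definition \ref{def-Hom-Ext-L}, and it follows from the self-duality of the orbits $\Sigma^{\infty}G/H_+$ in $ho\mathcal{S}p^G$ (the stable counterpart of the self-duality $\MS{B}_G(X\times Y,Z)\cong\MS{B}_G(X,DY\times Z)$ of $\MS{B}_G$), under which $Fun_G(\Sigma^{\infty}G/H_+,-)$ computes the dual and the projection $G/K\to G/H$ is interchanged with the stable transfer $G/H\to G/K$.

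The last step is to put in the homological algebra of abelian groups. Since $Ext^1(A,\QQ)=0$ for every abelian group $A$, the sequence $0\to\ZZ\to\QQ\to\QQ/\ZZ\to 0$ produces
\[
    0\to Hom(A,\ZZ)\to Hom(A,\QQ)\to Hom(A,\QQ/\ZZ)\to Ext^1(A,\ZZ)\to 0,
\]
and, applied levelwise with $A=E_{\star}(X)(G/H)$ and combined with the second step, this identifies the kernel of $I^G_\QQ(E)^{\star}(X)\to I^G_{\QQ/\ZZ}(E)^{\star}(X)$ with $Hom_L(E_{\star}(X),\ZZ)$ and its cokernel with $Ext_L(E_{\star}(X),\ZZ)$. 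Splicing these identifications into the exact sequence of the first step then yields the asserted short exact sequence
\[
    0\to Ext_L(E_{\star-1}(X),\ZZ)\to I^G_\ZZ(E)^{\star}(X)\to Hom_L(E_{\star}(X),\ZZ)\to 0,
\]
which is natural in $E$ and in $X$ because every construction used is, and which is $RO(G)$-graded from the outset. The main obstacle is precisely the Mackey-functor bookkeeping of the second step: it is immediate that $I^G_\QQ(E)^{\star}(X)$ agrees levelwise with $Hom(E_{\star}(X)(G/H),\QQ)$, but one must verify that, as a Mackey functor, it is $Hom_L(E_{\star}(X),\QQ)$ — with restrictions and transfers correctly swapped by $D$ — and likewise for $\QQ/\ZZ$ and hence for the $Ext_L$ term. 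Granting the self-duality of the orbits in the stable category, which the paper has already used to set up $\MS{B}_G$, everything else is a formal, group-independent copy of the classical argument.
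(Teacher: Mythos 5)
Your proposal is correct and is essentially the argument the paper intends: the paper does not write the proof out, instead citing Ricka's treatment of the $C_2$ case and remarking that the non-equivariant derivation from the fibre sequence $I^G_{\ZZ}\to I^G_{\QQ}\to I^G_{\QQ/\ZZ}$ goes through verbatim, with naturality in $X$ and $E$ supplying the Mackey functor structure and smashing with representation spheres supplying the $RO(G)$-grading. Your explicit verification that the levelwise identifications assemble into $Hom_L$ and $Ext_L$ --- via the self-duality of the orbits $\Sigma^{\infty}G/H_+$, which interchanges projections and transfers exactly as the duality $D$ does in Definition \ref{def-Hom-Ext-L} --- is precisely the bookkeeping the paper leaves implicit, so nothing is missing.
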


\begin{prop}\label{prop-AndersonRing}
Let $R$ be an equivariant homotopy commutative ring spectrum and $M$ an $R$-module in the homotopy category, then $I_{\QQ/\ZZ}^G(X)$ and $I_{\ZZ}^G(M)$ are naturally an $R$-modules.
\end{prop}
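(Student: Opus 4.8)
The plan is to obtain both $R$-module structures formally, from the closed symmetric monoidal structure on the equivariant stable homotopy category; I read the statement's $X$ as the $R$-module $M$. Recall that $(ho\mathcal{S}p^G,\wedge,S^{-0},Fun_G(-,-))$ is closed symmetric monoidal, with $Fun_G(A,-)$ right adjoint to $A\wedge-$, and that (extending the notation of Definition \ref{def-Anderson} in the evident way) $I^G_{\QQ/\ZZ}(M)=Fun_G(M,I^G_{\QQ/\ZZ})$ and $I^G_{\ZZ}(M)=Fun_G(M,I^G_{\ZZ})$.

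First I would record the relevant categorical fact, which is entirely parallel to the way $\U{Hom}_{\UZ}(\UM,\UN)$ is made a $\UZ$-module in Section \ref{sec-Mack}: in any closed symmetric monoidal category, if $R$ is a commutative monoid and $A$ is an $R$-module with action $\mu\colon R\wedge A\to A$, then for every object $N$ the function object $Fun_G(A,N)$ carries a natural $R$-module structure whose action map $R\wedge Fun_G(A,N)\to Fun_G(A,N)$ is the adjunct of
\[
R\wedge Fun_G(A,N)\wedge A\longrightarrow Fun_G(A,N)\wedge A\longrightarrow N,
\]
where the first arrow applies $\mu$ after the symmetry and the second is evaluation. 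Associativity and unitality of this action are diagram chases that reduce to associativity and unitality of $\mu$, coherence of the symmetry, and commutativity of $R$; the last is the only place the hypothesis ``homotopy \emph{commutative}'' is genuinely used, converting the a priori right $R$-module structure on $Fun_G(A,N)$ into a left one. The construction is contravariantly functorial in $A$ and covariantly functorial in $N$, through maps of $R$-modules.

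Then I would apply this with $A=M$ and with $N$ equal to $I^G_{\QQ}$, $I^G_{\QQ/\ZZ}$ and $I^G_{\ZZ}$ in turn. This yields that $I^G_{\QQ}(M)$, $I^G_{\QQ/\ZZ}(M)$ and $I^G_{\ZZ}(M)$ are $R$-modules in the homotopy category, naturally in $M$ for $R$-module maps. Since $I^G_{\ZZ}$ was defined as the homotopy fibre of $I^G_{\QQ}\to I^G_{\QQ/\ZZ}$ and $Fun_G(M,-)$ preserves fibre sequences, the maps $I^G_{\ZZ}(M)\to I^G_{\QQ}(M)\to I^G_{\QQ/\ZZ}(M)$ are maps of $R$-modules by the functoriality just noted; this is not needed for the statement itself, but is the form in which the structure feeds into the universal coefficient sequence of Proposition \ref{prop-Anderson} when $R=H\UZ$.

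I do not expect a serious obstacle here: the proposition is a formal consequence of the closed symmetric monoidal structure on $ho\mathcal{S}p^G$, and the only genuine work is the routine verification that the displayed composite defines an associative, unital action. What most merits care is bookkeeping --- tracking exactly where commutativity of $R$ enters, and ensuring every construction is performed inside $ho\mathcal{S}p^G$ rather than on point-set models, since $R$ is only assumed homotopy commutative and we have no model structure on its modules available in this generality.
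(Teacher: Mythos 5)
Your proof is correct. The paper gives no argument for this proposition at all --- it defers to the $C_2$ case in the cited reference on equivariant Anderson duality, remarking that the proofs carry over to any finite group --- and your construction, putting the $R$-action on $Fun_G(M,N)$ as the adjoint of evaluation precomposed with the symmetry and the action $R\wedge M\to M$ (with homotopy commutativity of $R$ used to turn the a priori right module structure into a left one, and then taking $N=I^G_{\QQ/\ZZ}$ and $N=I^G_{\ZZ}$), is exactly the standard formal argument that reference supplies, carried out entirely in $ho\mathcal{S}p^G$ as the hypotheses require.
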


\begin{prop}\label{prop-AndersonHom}
Let $R$ be an equivariant commutative ring spectrum and $M,N$ be $R$-modules in $\mathcal{S}p^G$, then
\[
Fun_R(M,I_{\ZZ}(N)) \simeq I_{\ZZ}(M \wedge_R N)
\]
\end{prop}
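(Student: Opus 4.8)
The plan is to reduce the statement to a formal adjunction identity and then specialize. First I would establish that for any two $R$-modules $M,N$ and \emph{any} $G$-spectrum $Y$ there is a natural equivalence of $R$-modules
\[
    Fun_R(M, Fun_G(N, Y)) \simeq Fun_G(M \wedge_R N, Y),
\]
where $Fun_G(N,Y)$ is regarded as an $R$-module through the $R$-module structure on $N$ (the structure used implicitly in Proposition \ref{prop-AndersonRing}). Granting this, one takes $Y = I^G_{\ZZ}$ and unwinds Definition \ref{def-Anderson}: the left side becomes $Fun_R(M, I_{\ZZ}(N))$ and the right side becomes $I_{\ZZ}(M \wedge_R N)$, giving the proposition, in fact as an equivalence of $R$-modules. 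The only point to check here is that the $R$-module structure Proposition \ref{prop-AndersonRing} puts on $I_{\ZZ}(N) = Fun_G(N, I^G_{\ZZ})$ is the one induced from $N$, so that the two structures in play coincide.

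To prove the displayed identity I would argue by corepresentability in the homotopy category of $R$-modules, which is available because $Mod_R$ carries a cofibrantly generated model structure by Proposition \ref{prop-module-model}. For an arbitrary $R$-module $L$ there is a chain of natural isomorphisms
\[
    [L, Fun_R(M, Fun_G(N,Y))]_R \cong [L \wedge_R M, Fun_G(N,Y)]_R \cong [(L\wedge_R M)\wedge_R N, Y]^G,
\]
using first the $(-\wedge_R M)\dashv Fun_R(M,-)$ adjunction in $Mod_R$, and then the elementary fact that for any $R$-module $A$ a map of $R$-modules $A \to Fun_G(N,Y)$ is the same as a map of $G$-spectra $A\wedge N \to Y$ coequalizing the two $R$-actions, i.e. factoring through $A\wedge_R N$, whence $[A, Fun_G(N,Y)]_R \cong [A\wedge_R N, Y]^G$. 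Associativity and commutativity of $\wedge_R$ identify $(L\wedge_R M)\wedge_R N \simeq L\wedge_R(M\wedge_R N)$, and running the same two steps in reverse gives $[L\wedge_R(M\wedge_R N), Y]^G \cong [L, Fun_G(M\wedge_R N, Y)]_R$. Since this is natural in $L$, the Yoneda lemma in the homotopy category of $R$-modules yields the asserted equivalence.

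The argument is entirely formal, so the only real obstacle is homotopical bookkeeping: one must pass to cofibrant $R$-module replacements of $M$ and $N$ and a fibrant replacement of $Y$ so that the function spectra and relative smash products above compute the derived functors, and one must verify the compatibility of $R$-module structures noted above; both are routine given Proposition \ref{prop-module-model} and Corollary \ref{cor-ZTambara}. A variant which avoids point-set manipulation of $R$-modules — in substance the route of \cite{RicDual} for $C_2$ — is to use exactness of $Fun_R(M,-)$ and $Fun_G(M\wedge_R N,-)$ together with the defining fibre sequence $I^G_{\ZZ}\to I^G_{\QQ}\to I^G_{\QQ/\ZZ}$ to reduce to the targets $I^G_{\QQ}$ and $I^G_{\QQ/\ZZ}$. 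For an injective abelian group $A$ the $G$-spectrum $I^G_A$ represents $Hom(\pi^H_{\star}(-),A)$ for all $H$, and $Hom(-,A)$ is exact, so both sides of the identity can be shown to have the same homotopy Mackey functors via the equivariant Brown representability of Definition \ref{def-Anderson}; a map inducing an isomorphism on homotopy is then an equivalence. I expect the first approach to be cleaner to write out, with the fibre-sequence reduction serving as a consistency check.
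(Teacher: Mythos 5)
Your proposal is correct and is in substance the same formal adjunction argument as the paper's proof: the paper establishes $Fun_R(M,I_{\ZZ}(N)) \simeq Fun_G(M \wedge_R N, I_{\ZZ}^G)$ by the direct chain $Fun_G(N,I_{\ZZ}^G) \simeq Fun_R(N,Fun_G(R,I_{\ZZ}^G))$ followed by the $(-\wedge_R M)\dashv Fun_R(M,-)$ adjunction, whereas you obtain the same identity by testing against an arbitrary $R$-module $L$ and invoking Yoneda in the homotopy category of $R$-modules. The repackaging (and your fibre-sequence/Brown-representability variant) is fine, but it proves nothing beyond what the paper's shorter chain of equivalences already gives.
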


\begin{proof}
Since $M \wedge_R -$ is the left adjoint of $Fun_R(M,-)$ and the forgetful functor $i^*: \mathcal{S}p^G \rightarrow \mathcal{S}p$ is the left adjoint of $Fun(R,-)$, we have
\begin{align*}
    Fun_R(M, I_{\ZZ}(N)) & \cong Fun_R(M, Fun(N,I_{\ZZ}))\\
                         & \cong Fun_R(M, Fun_R(N,Fun(R,I_{\ZZ})))\\
                         & \cong Fun_R(M \wedge_R N, Fun(R,I_{\ZZ}))\\
                         & \cong Fun(M \wedge_R N, I_{\ZZ})\\
                         & \cong I_{\ZZ}(M \wedge_R N)
\end{align*}
\end{proof}

\subsection{Universal coefficient and K\"unneth spectral sequences}
Another way of generalizing the universal coefficient exact sequence of integer (co)homology is the universal coefficient spectral sequences. The non-equivariant version appears in \cite[Lecture~1]{Ad:99}. The equivariant analog is the main topic of \cite{LewisMandell}, which uses homological algebra of Mackey functors (see Definition \ref{def-ext-tor}) in an essential way. Let $E$ be an equivariant commutative ring spectrum and $X,Y$ be $G$-spectra.
\begin{thm}[Lewis-Mandell]\label{thm-UCSS}
The \textbf{equivariant K\"unneth spectral sequence} is the strongly convergent spectral sequences of Mackey functors
\[
\U{E}^2_{s,t} = \U{Tor}^{\U{E}_*}_{s,t}(\U{E}_*(X),\U{E}_*(Y)) \Rightarrow \U{E}_{s+t}(X \wedge Y).
\]
The \textbf{equivariant universal coefficient spectral sequence} is the conditionally convergent spectral sequence of Mackey functors
\[
\U{E}_2^{s,t} = \U{Ext}^{s,t}_{\U{E}_*}(\U{E}_*(X),\U{E}_*) \Rightarrow \U{E}^{t-s}(X).
\]
\end{thm}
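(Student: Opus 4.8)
The plan is to deduce both spectral sequences from a single device: a cell-like resolution of the $E$-module $E\wedge X$ by induced $E$-modules that realizes a projective resolution of $\U{E}_\star(X)$ in $Mod_{\U{E}_\star}$. Throughout I work in $Mod_E$ with the cofibrantly generated model structure of Proposition \ref{prop-module-model}, and I call an $E$-module \emph{induced} if it is a wedge of modules $S^V\wedge E\wedge(G/H)_+$ with $V\in RO(G)$ and $H\subseteq G$.

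The first step is relative homological algebra. The free--forgetful adjunction gives $Fun_E(E\wedge(G/H)_+,N)\simeq Fun((G/H)_+,N)$, hence $[S^V\wedge E\wedge(G/H)_+,N]^G_E\cong\U{\pi}_V(N)(G/H)$; thus maps of $E$-modules out of an induced module are detected on the homotopy Mackey functor and are unobstructed. Moreover $\U{\pi}_\star(E\wedge(G/H)_+)\cong\U{E}_\star\square\UA_{G/H}\cong(\U{E}_\star)_{G/H}$, and as $H$ and $V$ vary these are exactly the projective generators of $Mod_{\U{E}_\star}$ guaranteed by Proposition \ref{prop-proj-inj-module} (they are the extensions of scalars along $\UA\to\U{E}_\star$ of the projective generators $\UA_{G/H}$ of $Mack_G$). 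Exactly as in the construction of cell modules over a ring spectrum, one can therefore build a filtration $M_0\hookrightarrow M_1\hookrightarrow M_2\hookrightarrow\cdots$ of $E$-modules with homotopy colimit $E\wedge X$, whose successive cofibres are induced modules and whose associated homotopy exact couple has $(E^1,d^1)$ equal to a projective resolution $\U{P}_\bullet$ of $\U{E}_\star(X)$ in $Mod_{\U{E}_\star}$.

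For the K\"unneth spectral sequence I would apply the left Quillen functor $-\wedge_E(E\wedge Y)$ to this filtration. The homotopy colimit becomes $(E\wedge X)\wedge_E(E\wedge Y)\simeq E\wedge X\wedge Y$, so the resulting filtration spectral sequence abuts to $\U{E}_\star(X\wedge Y)$. The input identifying the $E^1$-page is a flatness statement: since $(S^V\wedge E\wedge(G/H)_+)\wedge_E(E\wedge Y)\simeq S^V\wedge E\wedge((G/H)_+\wedge Y)$ and $(G/H)_+\wedge Y\simeq G_+\wedge_H i^*_H Y$, the homotopy Mackey functor of a subquotient is the induced Mackey functor of $i^*_H\U{E}_\star(Y)$, which is precisely $(\U{E}_\star\square\UA_{G/H})\square_{\U{E}_\star}\U{E}_\star(Y)$; hence the $E^1$-complex is $\U{P}_\bullet\square_{\U{E}_\star}\U{E}_\star(Y)$ and its homology is $\U{Tor}^{\U{E}_\star}_{s,t}(\U{E}_\star(X),\U{E}_\star(Y))$, landing on $E^2$ after the usual reindexing. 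For the universal coefficient spectral sequence I would instead apply the functor $Fun_E(-,E)$, which carries homotopy colimits to homotopy limits, obtaining a tower of fibrations with homotopy inverse limit $Fun_E(E\wedge X,E)\simeq Fun(X,E)$. Using $Fun_E(S^V\wedge E\wedge(G/H)_+,E)\simeq S^{-V}\wedge Fun((G/H)_+,E)\simeq S^{-V}\wedge E\wedge(G/H)_+$ (self-duality of finite orbit spectra), the $E_1$-complex of the tower is $\U{Hom}_{\U{E}_\star}(\U{P}_\bullet,\U{E}_\star)$, so the $E_2$-page is $\U{Ext}^{s,t}_{\U{E}_\star}(\U{E}_\star(X),\U{E}_\star)$, converging to $\U{\pi}_{-\star}Fun(X,E)=\U{E}^\star(X)$.

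The main obstacle is twofold. First, the flatness and realization statements: one must show that induced $E$-modules have homotopy Mackey functor $\U{E}_\star\square\UA_{G/H}$ and, crucially, that box product over $\U{E}_\star$ with a representable is computed topologically by smashing with $(G/H)_+$. This rests on the explicit description of the box product and on the compatibility of induction, restriction and transfer in $Mack_G$ (Section \ref{sec-Mack}), and it is exactly the point at which one is forced to work with the homological algebra of $\UZ$-modules (here $\U{E}_\star$-modules) rather than a naive module category. Second, convergence: the K\"unneth filtration is exhaustive by construction, and arranging the successive cells to be increasingly highly connected (after smashing with any fixed $S^V$) makes the filtration complete and Hausdorff in each degree, yielding strong convergence; but $Fun_E(-,E)$ turns increasing connectivity into a completeness and $\lim^1$ condition on the resulting tower of partial function spectra, so for the universal coefficient spectral sequence one obtains only conditional convergence, and even that requires controlling the derived inverse limit of that tower. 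I would expect the convergence bookkeeping, together with making the flatness identification natural in both variables so that the spectral sequences genuinely live in Mackey functors, to absorb most of the work.
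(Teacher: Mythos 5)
The paper does not prove this theorem itself; it is quoted from Lewis--Mandell \cite{LewisMandell}, so there is no internal proof to compare against. Your sketch follows essentially the same strategy as that cited source (and its nonequivariant antecedents): topologically realize a projective resolution of $\U{E}_\star(X)$ in $Mod_{\U{E}_\star}$ by a filtration of $E\wedge X$ whose subquotients are induced modules $S^V\wedge E\wedge (G/H)_+$, then apply $-\wedge_E(E\wedge Y)$ for the K\"unneth spectral sequence and $Fun_E(-,E)$ for the universal coefficient spectral sequence, obtaining strong convergence in the first case and only conditional convergence in the second, exactly as stated.
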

\begin{rmk}
The index $t$ and $*$ can be understood as either integer grading or $RO(G)$-grading, and different choice of indexing groups will give very different spectral sequences. In this paper we will only use the integer grading version of these spectral sequences.
\end{rmk}

\subsection{Representations and representation spheres}\label{sec-rep}

Before we do any computation, we need to understand the index group $RO(G)$ for $G = C_{p^n}$ and the corresponding representation spheres. They are analyzed in detail in \cite{HHR:RO(G)}, and we follow their approach.

Given a primitive $p^n$-th root of unity $\mu_{p^n}$, it determines a group homomorphism $\mu_{p^n}: C_{p^n} \rightarrow S^1$. Let $\lambda(k)$ be the representation given by composition of $\mu_{p^n}$ with a degree $k$ map $k:S^1 \rightarrow S^1$. This is a representation of $C_{p^n}$ on $\RR^2$.

The \textbf{regular representation} of $G$ is $\rho_G = \RR[G]$, which has a decomposition
\[
    \rho_G = 1 \oplus \bigoplus_{i = 1}^{\frac{p^n - 1}{2}} \lambda(i)
\]
for $p > 2$ and
\[
    \rho_{C_{2^n}} = 1 \oplus \sigma \oplus \bigoplus_{i = 1}^{2^{n-1}-1} \lambda(i),
\]
where $\sigma$ is the sign representation of $C_{2^n}$.

We can build cellular structures on representation spheres. For $S^{\lambda(rp^k)}$ where $p \nmid r$, we consider $p^{n-k}$ rays (1-cells) passing the origin of $\RR^2$ that divide $\RR^2$ into $p^{n-k}$ parts equivalently (2-cells). Thus a cellular structure of $S^{\lambda(rp^k)}$ is the following:
\[
    S^0 \cup C_{p^n}/C_{p^k+} \wedge e^1 \cup C_{p^n}/C_{p^k+} \wedge e^2
\]

We can then obtain a cellular structure on any $S^V$ by smashing various $S^{\pm \lambda(rp^k)}$ together. However, this is a cellular structure that is too big to compute with. When $V$ is an actual representation, we can simplify the cellular structure by two steps.

The first step is to identify all $\lambda(rp^k)$ for $p \nmid r$. If we localize at $p$, then all $S^{\lambda(rp^k)}$ are homotopy equivalent to each other, since the degree $r$ map is invertible now. If we don't localize, then by \cite{Kawakubo}, different $S^{\lambda(rp^k)}$ are not even stably equivalent. However, we have the following.
\begin{prop}[{\cite[Lemma~1]{HuKrizCoef}}]\label{prop-choice}
$S^{\lambda(rp^k)} \wedge H\UZ \simeq S^{\lambda(p^k)} \wedge H\UZ$ for all $p \nmid r$.
\end{prop}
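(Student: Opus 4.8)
The plan is to realise the asserted equivalence by transporting everything along an automorphism of the group $C_{p^n}$. Since $p \nmid r$, raising to the $r$-th power is a group automorphism $\phi_r \colon C_{p^n} \to C_{p^n}$, $\gamma \mapsto \gamma^r$ (with $\gamma$ our chosen generator), with inverse given by $\gamma \mapsto \gamma^{s}$ for $s$ an inverse of $r$ modulo $p^n$. Restriction of the $G$-action along $\phi_r$ yields a functor $\phi_r^* \colon \mathcal{S}p^{C_{p^n}} \to \mathcal{S}p^{C_{p^n}}$ which is a strong symmetric monoidal self-equivalence; in particular it commutes with $\wedge$ and carries $S^V$ to $S^{\phi_r^* V}$.

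First I would identify the two representation spheres. Unwinding the definitions in Section~\ref{sec-rep}, the generator $\gamma$ acts on $\lambda(p^k)$ through $\mu_{p^n}(\gamma)^{p^k}$, so it acts on $\phi_r^*\lambda(p^k)$ through $\mu_{p^n}(\gamma^r)^{p^k} = \mu_{p^n}(\gamma)^{rp^k}$, which is exactly the $C_{p^n}$-action defining $\lambda(rp^k)$. (If $k \ge n$ both representations are the trivial two-dimensional one and there is nothing to prove, so assume $k < n$.) Hence $\phi_r^*\big(S^{\lambda(p^k)}\big) = S^{\lambda(rp^k)}$ on the nose.

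Next I would show $\phi_r^* H\UZ \simeq H\UZ$. The equivalence $\phi_r^*$ on spectra induces the corresponding pullback on homotopy Mackey functors, so it suffices to check $\phi_r^*\UZ \cong \UZ$. Because every subgroup of a cyclic $p$-group is characteristic, this pullback fixes each orbit $G/H$ together with all restrictions and transfers, and only conjugates the Weyl-group actions; since $\UZ$ is the fixed-point Mackey functor of the trivial module, all its Weyl actions are trivial and hence $\phi_r^*\UZ \cong \UZ$. Therefore $\phi_r^* H\UZ$ is an Eilenberg-Mac Lane spectrum for $\UZ$, hence equivalent to $H\UZ$ by the uniqueness statement in Theorem~\ref{thm-EM}. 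Combining the three points,
\[
S^{\lambda(rp^k)} \wedge H\UZ = \phi_r^*\big(S^{\lambda(p^k)}\big) \wedge \phi_r^*(H\UZ) \cong \phi_r^*\big(S^{\lambda(p^k)} \wedge H\UZ\big) \simeq S^{\lambda(p^k)} \wedge H\UZ.
\]

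The step requiring care — a matter of diligence rather than a genuine obstacle — is verifying the behaviour of $\phi_r^*$ on restrictions, transfers and Weyl actions in the previous paragraph. It is worth recording why an indirect argument of this kind is needed at all: the evident geometric candidate, the $r$-th power map $z \mapsto z^r \colon S^{\lambda(p^k)} \to S^{\lambda(rp^k)}$, induces multiplication by $r$ on underlying spectra after smashing with $H\UZ$ and so is not an equivalence, whereas $\phi_r^*$ restricts to the identity on the underlying (trivial-subgroup) level and alters only the equivariant structure.
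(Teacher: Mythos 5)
Your points (1)--(3) are fine: restriction along $\phi_r$ is a strong monoidal self-equivalence, $\phi_r^*S^{\lambda(p^k)}\cong S^{\lambda(rp^k)}$, and $\phi_r^*H\UZ\simeq H\UZ$ since all subgroups of $C_{p^n}$ are characteristic and $\UZ$ has trivial Weyl actions, so Theorem \ref{thm-EM} applies. The gap is the last link of your displayed chain: the equivalence $\phi_r^*\bigl(S^{\lambda(p^k)}\wedge H\UZ\bigr)\simeq S^{\lambda(p^k)}\wedge H\UZ$ is asserted with no argument, and since the first two identifications are reversible, this assertion \emph{is} the proposition you are trying to prove -- the argument is circular. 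It cannot be had for free: $\phi_r^*$ does not preserve equivalence classes of $G$-spectra in general, as your own point (2) shows, because $\phi_r^*S^{\lambda(p^k)}=S^{\lambda(rp^k)}$ and these spheres are not even stably equivalent by \cite{Kawakubo}, which the paper cites immediately before the proposition. Nor can you run the uniqueness argument you used for $H\UZ$ on $S^{\lambda(p^k)}\wedge H\UZ$: its homotopy Mackey functors are not concentrated in one degree (there is torsion in $\U{\pi}_0$ and a copy of $\UZ$ in $\U{\pi}_2$), so even after checking that $\phi_r^*$ fixes these Mackey functors you still have a k-invariant that the twist could in principle move; some input special to $\UZ$-coefficients is unavoidable.

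What the automorphism genuinely gives is that $S^{\lambda(rp^k)}\wedge H\UZ$ is the $\phi_r$-twist of $S^{\lambda(p^k)}\wedge H\UZ$, hence has isomorphic homotopy Mackey functors; promoting this to an untwisted equivalence is exactly where the work lies. The paper closes this gap by a computation: it forms the virtual difference $S^{\lambda(rp^k)-\lambda(p^k)}\wedge H\UZ$, computes its homotopy from the box product of the two cellular chain complexes, finds $\UZ$ concentrated in degree $0$, and only then invokes uniqueness of Eilenberg-Mac Lane spectra before smashing back with $S^{\lambda(p^k)}$. To salvage your approach you would need either such a computation or an explicit analysis showing the single k-invariant of $S^{\lambda(p^k)}\wedge H\UZ$ is preserved (up to automorphism of the Postnikov pieces) by $\phi_r^*$; as written, the proof does not go through.
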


\begin{proof}
Using the cellular structures above, we see that the $\UZ$-coefficient cellular chain for $S^{\lambda(rp^k)}$, $\U{C}_*(S^{\lambda(rp^k)})$ is the following
\[
\xymatrix
@R=0mm{
    0 & 1 & 2\\
    {\UZ} & \ar[l] {\U{\ZZ[C_{p^n}/C_{p^k}]}} & {\U{\ZZ[C_{p^n}/C_{p^k}]}.} \ar[l]_{1-\gamma^r}
}
\]
The cellular chain for $S^{-\lambda(p^k)}$, $\U{C}_*(S^{-\lambda(p^k)})$ is the dual chain for $\U{C}_*(S^{\lambda(p^k)})$
\[
\xymatrix
@R=0mm{
    -2 & -1 & 0\\
    {\U{\ZZ[C_{p^n}/C_{p^k}]}} & {\U{\ZZ[C_{p^n}/C_{p^k}]}} \ar[l]_{1-\gamma} & {\UZ} \ar[l]
}
\]

Then, $\U{\pi}_*(S^{\lambda(rp^k) - \lambda(p^k)} \wedge H\UZ)$ can be computed by the total homology of the double complex
\[
\U{C}_*(S^{rp^k}) \square_{\UZ} \U{C}_*(S^{-p^k}).
\]
By direct computation, $\U{\pi}_*(S^{\lambda(rp^k) - \lambda(p^k)} \wedge H\UZ)$ is $\UZ$ concentrated in degree $0$, therefore by uniqueness of Eilenberg-Mac Lane spectra we know that
\[
    S^{\lambda(rp^k) - \lambda(p^k)} \wedge H\UZ \simeq H\UZ,
\]
thus
\[
    S^{\lambda(rp^k)} \wedge H\UZ \simeq S^{\lambda(p^k)} \wedge H\UZ.
\]
\end{proof}

Either way, we will not distinguish $S^{\lambda(rp^k)}$ for different $r$ that $p \nmid r$, and use $S^{\lambda_k}$ for them. By equating all $S^{\lambda(rp^k)}$, we obtain a quotient group $JO(G)$ from $RO(G)$. For $p$ odd, $JO(G)$ is freely generated by $\lambda_k$ for $0 \leq k \leq n-1$ and the trivial representation. When $p = 2$, $JO(C_{2^n})$ is freely generated by $\lambda_k$ for $0 \leq k \leq n-2$, the sign representation $\sigma$ and the trivial representation, and $\lambda_{n-1} = 2\sigma$. We will still use the word ``$RO(G)$-grading", but it will actually mean $JO(G)$-grading. We will use $\lambda$ for $\lambda_0$.

Now if $V$ is an actual representation of $C_{p^n}$, we can assume that
\[
V = \Sigma_{i = 0}^{n-1} a_i\lambda_i + a_n.
\]

The second step comes from the simple fact that if $H \subset K \subset G$, there is no $G$-map $G/K \rightarrow G/H$. So lower skeletons of a $G$-CW-complex always have larger stabilizer groups. If we apply $i^*_{C_{p^k}}$ for $0 < k < n$ then only the first $k$ $\lambda_i$ are nontrivial, so we have
\begin{align*}
    S^V = & S^{a_n} \cup C_{p^n}/C_{p^{n-1}+} \wedge e^{a_n+1} \cup_{1-\gamma} C_{p^n}/C_{p^{n-1}+} \wedge e^{a_n+2} \cup ...\\
          & \cup C_{p^n}/C_{p^{n-1}+} \wedge e^{a_n + 2a_{n-1}} \cup C_{p^n}/C_{p^{n-2}+} \wedge e^{a_n + 2a_{n-1} + 1} \cup ...\\
          & \cup C_{p^n+} \wedge e^{\Sigma a_i}.
\end{align*}

\begin{defi}\label{def-orientable}
Let $V \in RO(G)$, we say $V$ is \textbf{orientable} if
\[
V = V_1 - V_2
\]
where $V_i$ are actual representations and the maps $V_i: G \rightarrow O(|V_{i}|)$ factor through $SO(|V_{i}|)$.
\end{defi}

If $G = C_{p^n}$ and $p$ is odd, every virtual representation is orientable. If $p = 2$, then orientable representations form an index $2$ subgroup of $RO(G)$, with quotient generated by $\sigma$, the sign representation of $C_{2^n}$ on $\RR$.

We end this section with a simple but useful lemma.

\begin{lem}\label{lem-miracle}
For $G = C_{p^n}$,
\[
    I_{\ZZ}H\UZ \simeq H\UZ^* \simeq \Sigma^{2-\lambda} H\UZ.
\]
\end{lem}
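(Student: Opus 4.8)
Proof proposal.

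The plan is to treat the middle term $H\UZ^{*}$ as the hub, where $\UZ^{*}=Hom_{L}(\UZ,\ZZ)$ is the levelwise dual from Definition~\ref{def-Hom-Ext-L} (concretely a form of $\UZ$, namely $\UZ_{1,1,\dots,1}$), and to establish the two equivalences $I_{\ZZ}H\UZ\simeq H\UZ^{*}$ and $\Sigma^{2-\lambda}H\UZ\simeq H\UZ^{*}$ separately. In each case the strategy is the same: compute the relevant integer-graded homotopy Mackey functor, observe that it is concentrated in degree $0$ with value $\UZ^{*}$, and conclude by uniqueness of Eilenberg--Mac Lane spectra (Theorem~\ref{thm-EM}).

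For the first equivalence I would specialize the Anderson universal coefficient sequence of Proposition~\ref{prop-Anderson} to $E=H\UZ$ and $X=S^{0}$ with integer grading, obtaining for every $n$ the short exact sequence
\[
0\to Ext_{L}\bigl(\U{\pi}_{n-1}(H\UZ),\ZZ\bigr)\to \U{\pi}_{-n}(I_{\ZZ}H\UZ)\to Hom_{L}\bigl(\U{\pi}_{n}(H\UZ),\ZZ\bigr)\to 0 .
\]
Since $H\UZ$ is an Eilenberg--Mac Lane spectrum, $\U{\pi}_{n}(H\UZ)=\U{0}$ for $n\neq 0$, and $Ext_{L}(\UZ,\ZZ)=\U{0}$ because $Ext_{Ab}(\ZZ,\ZZ)=0$; hence every term vanishes except $\U{\pi}_{0}(I_{\ZZ}H\UZ)\cong Hom_{L}(\UZ,\ZZ)=\UZ^{*}$. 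Thus $\U{\pi}_{\star}(I_{\ZZ}H\UZ)$ is concentrated in degree $0$ with value $\UZ^{*}$, so $I_{\ZZ}H\UZ\simeq H\UZ^{*}$.

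For the second equivalence, since $S^{\lambda}$ is invertible we have $\Sigma^{2-\lambda}H\UZ\simeq \Sigma^{2}(S^{-\lambda}\wedge H\UZ)$, so it suffices to compute $\U{\pi}_{\star}(S^{-\lambda}\wedge H\UZ)$ as the homology of the cellular chain complex $\U{C}_{*}(S^{-\lambda})$ with $\UZ$-coefficients. From the cell structure of $S^{-\lambda}$ recalled in the proof of Proposition~\ref{prop-choice} (using that the only nonzero map $\UZ\to\U{\ZZ[G]}$ is the diagonal $\Delta$), $\U{C}_{*}(S^{-\lambda})$ is, in degrees $-2,-1,0$, exactly the truncation $\U{\ZZ[G]}\xleftarrow{1-\gamma}\U{\ZZ[G]}\xleftarrow{\Delta}\UZ$ of the minimal projective resolution of $\UZ^{*}$ written down in the proof of Lemma~\ref{lem-orbitH}. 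By exactness of that resolution its homology is $coker(1-\gamma)\cong\UZ^{*}$ in degree $-2$ and $\U{0}$ in degrees $-1,0$; shifting up by $2$ gives $\U{\pi}_{\star}(\Sigma^{2-\lambda}H\UZ)\cong\UZ^{*}$ concentrated in degree $0$, hence $\Sigma^{2-\lambda}H\UZ\simeq H\UZ^{*}$. (Alternatively one could run the dual computation on $\U{Hom}_{\UZ}(\U{C}_{*}(S^{\lambda}),\UZ)\simeq Fun_{G}(S^{\lambda},H\UZ)$ using the self-duality $\U{Hom}_{\UZ}(\U{\ZZ[G]},\UZ)\cong\U{\ZZ[G]}$ from the proof of Theorem~\ref{thm-ext}, or first invoke Theorem~\ref{thm-formz} to know $H\UZ^{*}\simeq S^{V}\wedge H\UZ$ for some $V$ of dimension $0$ and then pin down $V=2-\lambda$ by the same calculation.)

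The formal ingredients --- Proposition~\ref{prop-Anderson}, Theorem~\ref{thm-EM}, invertibility of $S^{\lambda}$, and the cell structure of $S^{\lambda}$ from Section~\ref{sec-rep} --- make both halves essentially mechanical once set up; the one place that needs genuine care is the bookkeeping in the second step, namely identifying $\U{C}_{*}(S^{-\lambda})$ with the truncated resolution of $\UZ^{*}$ from Section~\ref{sec-HA} and in particular matching the differentials. A useful sanity check throughout is the dimension count $|2-\lambda|=0$, which is consistent with $\UZ^{*}(G/e)=\ZZ$ being the underlying degree-$0$ homotopy of $H\UZ^{*}$.
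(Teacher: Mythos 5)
Your proof is correct and follows essentially the same route as the paper: the first equivalence from the Anderson duality short exact sequence of Proposition \ref{prop-Anderson} applied to $H\UZ$, and the second by identifying the $\UZ$-coefficient cellular chain complex of $S^{-\lambda}$ (whose homology is $\UZ^*$ concentrated in degree $-2$) and invoking uniqueness of Eilenberg--Mac Lane spectra. Your write-up merely spells out the details the paper leaves implicit, including the useful observation that this chain complex is the truncated minimal projective resolution of $\UZ^*$ from Lemma \ref{lem-orbitH}.
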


\begin{proof}
The first equivalence is straightforward from the short exact sequence of Anderson duality.

The cellular chain complex of $S^{-\lambda}$ in $\UZ$ coefficient is
\[
    \U{\ZZ[C_{p^n}]} \xleftarrow{1 - \gamma} \U{\ZZ[C_{p^n}]} \xleftarrow{\Delta} \ZZ,
\]
whose homology is $\UZ^*$ concentrated in degree $-2$. The result then comes from uniqueness of Eilenberg-Mac Lane spectra.
\end{proof}

\section{$H\UZ$ and its modules}\label{sec-HZ1}

In this section we start to compute around the Eilenberg-Mac Lane spectrum $H\UZ$ for $G = C_{p^n}$. The main goals of this section are the following:
\begin{enumerate}
\item A topological proof of Theorem \ref{thm-ext}, which we restate here.
\begin{customthm}{\ref{thm-ext}}
For $G = C_{p^n}$, if $\UM(G/e) \cong 0$, then
\[
\U{Ext}^i_{\UZ}(\UM,\UZ) = \left\{ \begin{array}{ll}
                                        \UM^E & \textrm{for $i = 3$}\\
                                        \U{0} & \textrm{otherwise}
                                    \end{array} \right.
\]
This isomorphism is natural in $\UM$.
\end{customthm}

\item A proof of the following theorem.
\begin{customthm}{\ref{thm-formz}}
For $G = C_{p^n}$, if $\UM$ is a form of $\UZ$ (see Definition \ref{def-form-Z}), then
\[
    H\UM \simeq \Sigma^{V} H\UZ,
\]
for some $V \in RO(G)$.
\end{customthm}
\end{enumerate}

\subsection{A topological proof of Theorem \ref{thm-ext}}
The strategy here is to convert the $\U{Ext}$ computation into a topological setting, and then make use of the equivariant Anderson duality. The following theorem of Schwede and Shipley is crucial.
\begin{thm}[{\cite[Theorem~5.1.1]{Schwede-Shipley3}}]\label{thm-modeleq}
Let $\MS{C}$ be a simplicial, cofibrantly generated, stable model category and $A$ a ringoid. Then the following conditions are equivalent:
\begin{enumerate}
\item There is a chain of Quillen equivalences between $\MS{C}$ and the model category of chain complexes of $\MS{A}$-modules.
\item The homotopy category of $\MS{C}$ is triangulated equivalent to $\MS{D}(\MS{A})$, the unbounded derived category of the ringoid $\MS{A}$.
\item $\MS{C}$ has a set of compact generators and the full subcategory of compact objects in $ho(\MS{C})$ is triangulated equivalent to $K^b(proj-\MS{A})$, the homotopy category of bounded chain complexes of finitely generated projective $\MS{A}$-modules.
\item The model category $\MS{C}$ has a set of tiltors whose endomorphism ringoid in $ho(\MS{C})$ is isomorphic to $\MS{A}$.
\end{enumerate}
\end{thm}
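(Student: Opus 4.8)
This statement is quoted verbatim from Schwede--Shipley, so ``proving'' it means recovering the architecture of their argument. The plan is to close the cycle of implications $(1)\Rightarrow(2)\Rightarrow(3)\Rightarrow(4)\Rightarrow(1)$; the first three implications are formal, and the last is the substantive one.

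For $(1)\Rightarrow(2)$: a Quillen equivalence induces an equivalence of homotopy categories which is automatically exact, since both sides are stable. For $(2)\Rightarrow(3)$: inside $\MS{D}(\MS{A})$ the free modules $\MS{A}(-,a)$ indexed by the objects $a$ of $\MS{A}$ are a set of compact generators, and the thick subcategory they generate is exactly the perfect complexes $K^b(proj-\MS{A})$; both properties transport along any triangulated equivalence, so they hold for $\MS{C}$. For $(3)\Rightarrow(4)$: pull the modules $\MS{A}(-,a)$ back through the given equivalence between the full subcategory of compact objects of $ho(\MS{C})$ and $K^b(proj-\MS{A})$ to obtain compact objects $P_a\in\MS{C}$; these are compact generators of $ho(\MS{C})$, and a Yoneda computation in $K^b(proj-\MS{A})$ gives $[P_a,P_b]_*\cong\MS{A}(b,a)$ concentrated in degree $0$, which is precisely the assertion that $\{P_a\}$ is a set of tiltors with endomorphism ringoid $\MS{A}$.

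The real content is $(4)\Rightarrow(1)$, which I would run in two stages. First, the several-objects version of Schwede--Shipley's theorem that stable model categories are module categories: from a set of tiltors (more generally, compact generators) in a simplicial, cofibrantly generated, stable model category one builds a symmetric ring spectrum with several objects $\MS{E}$, its endomorphism spectral ringoid, together with a Quillen equivalence $\MS{C}\simeq\mathrm{Mod}\text{-}\MS{E}$; this is where the simplicial and cofibrant-generation hypotheses enter, to frame the $P_a$ and produce the morphism spectra. Second, rigidification: because the $P_a$ are tiltors, $\pi_*\MS{E}$ is concentrated in degree $0$ with value $\MS{A}$, so $\MS{E}$ is an Eilenberg-Mac Lane spectral ringoid and there is a zig-zag of stable equivalences connecting it to $H\MS{A}$, yielding a Quillen equivalence $\mathrm{Mod}\text{-}\MS{E}\simeq\mathrm{Mod}\text{-}H\MS{A}$. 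Finally, the ringoid version of Shipley's comparison between $H\ZZ$-algebra spectra and differential graded algebras --- equivalently, the chain of Quillen equivalences linking $H\ZZ$-module symmetric spectra with $Ch(\ZZ)$, extended over the several objects of $\MS{A}$ --- identifies $\mathrm{Mod}\text{-}H\MS{A}$ with chain complexes of $\MS{A}$-modules, closing the loop.

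The main obstacle is this last stage. Constructing $\MS{E}$ with the correct homotopy type, and then producing the zig-zag $H\MS{A}\leftrightarrow\MS{E}$ of spectral ringoids, requires the full strength of the $H\ZZ$-linearity machinery (Postnikov-tower and $k$-invariant obstruction theory together with Shipley's theorem), and is the step that genuinely uses all of the hypotheses on $\MS{C}$; the remaining implications are bookkeeping with compact generators and triangulated equivalences.
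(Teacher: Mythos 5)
The paper does not prove this statement at all: it is imported verbatim from Schwede--Shipley (their Theorem~5.1.1) and used as a black box to deduce Corollary~\ref{cor-equivalence}, so there is no in-paper argument to compare against. Your outline is a faithful reconstruction of the actual Schwede--Shipley architecture: the formal implications $(1)\Rightarrow(2)\Rightarrow(3)\Rightarrow(4)$ via compact generators and the representable projectives $\MS{A}(-,a)$, and the substantive step $(4)\Rightarrow(1)$ via the endomorphism spectral ringoid $\MS{E}$ of the tiltors, rigidification to the Eilenberg-Mac Lane ringoid $H\MS{A}$, and the $H\ZZ$-linear comparison identifying $H\MS{A}$-modules with chain complexes of $\MS{A}$-modules. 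One small overstatement: because the tiltors have morphism spectra with homotopy concentrated in degree $0$, no $k$-invariant obstruction theory is needed for the zig-zag $\MS{E}\leftrightarrow H\MS{A}$; connective cover followed by zeroth Postnikov truncation already gives the chain of stable equivalences of spectral ringoids. With that caveat your sketch matches the cited proof.
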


A ringoid $\MS{A}$ is a category enriched over $(Ab,\otimes,\ZZ)$, such as $\MS{B}_G$ and $\MS{B} \UZ_G$. The category of modules over a ringroid $\MS{A}$ is the category of contravariant additive enriched functors from $\MS{A}$ to $Ab$, for example $Mack_G$ is the category of modules over $\MS{B}_G$ and $Mod_{\UZ}$ is the category of modules over $\MS{B} \UZ_G$. The model structure of $\MS{A}$-modules here is the projective model structure, thus it computes the correct derived functor. A set of tiltors in a stable model category $\MS{C}$ is a set of compact generators $\mathbb{T}$ such that for any $T,T' \in \mathbb{T}$, $Ho(\MS{C})(T,T')_*$ is concentrated in $* = 0$.

\begin{cor}\label{cor-equivalence}
There is a chain of Quillen equivalences between $Mod_{H\UZ}$ and the category of chain complexes of $\UZ$-modules. Therefore, derived functors of $Hom_{\UZ}$ and $\square_{\UZ}$ can be computed in $Mod_{H\UZ}$. More explicitly we have
\[
    \U{Ext}^i_{\UZ}(\UM,\UN) \cong \U{\pi}_{-i}(Fun_{H\UZ}(H\UM,H\UN))
\]
and
\[
    \U{Tor}^{\UZ}_i(\UM,\UN) \cong \U{\pi}_i(H\UM \wedge_{H\UZ} H\UN).
\]
\end{cor}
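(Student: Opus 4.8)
The plan is to verify the hypotheses of Theorem \ref{thm-modeleq} for the stable model category $\MS{C} = Mod_{H\UZ}$ and the ringoid $\MS{A} = \MS{B}\UZ_G$, and then read off the explicit formulas. First I would record that $H\UZ$ is a genuine commutative ring spectrum: this is exactly Corollary \ref{cor-ZTambara}, and it is the place where the Tambara (as opposed to merely Green) structure of $\UZ$ is used, via Ullman's functor $EM: Tamb_G \to Comm_G$. Consequently, by Proposition \ref{prop-module-model}, $Mod_{H\UZ}$ carries a cofibrantly generated, stable model structure (and is simplicial, or is Quillen equivalent to a simplicial model category, since orthogonal $G$-spectra are topological). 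On the algebraic side, Proposition \ref{prop-FunctorZmod} identifies the category of $\MS{B}\UZ_G$-modules with $Mod_{\UZ}$, and its projective model structure computes the derived functors $\U{Ext}_{\UZ}$ and $\U{Tor}^{\UZ}$ of Definition \ref{def-ext-tor}.

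For the verification I would use criterion (4) of Theorem \ref{thm-modeleq}: exhibit a set of tiltors in $Mod_{H\UZ}$ whose endomorphism ringoid in the homotopy category is $\MS{B}\UZ_G$. The natural candidates are the free $H\UZ$-modules $\{H\UZ \wedge (G/H)_+\}_{H \subset G}$. These are compact (summands of finite cell $H\UZ$-modules) and they generate $Mod_{H\UZ}$, since the $(G/H)_+$ generate $\mathcal{S}p^G$ and $H\UZ \wedge (-)$ is left adjoint to the forgetful functor. Using the free--forgetful adjunction and the identification $H\UZ \wedge (G/K)_+ \simeq H\U{\ZZ[G/K]}$, which comes from $\UZ \square \UA_{G/K} \cong \U{\ZZ[G/K]}$ (Proposition \ref{prop-proj-Z}) together with the fact that smashing an Eilenberg--Mac Lane spectrum with a finite discrete $G$-set again yields an Eilenberg--Mac Lane spectrum, we get
\[
    Fun_{H\UZ}(H\UZ \wedge (G/H)_+,\ H\UZ \wedge (G/K)_+) \simeq Fun_G((G/H)_+,\ H\U{\ZZ[G/K]}),
\]
whose $G$-equivariant homotopy groups are $\pi^H_*(H\U{\ZZ[G/K]})$, which is concentrated in degree $0$, where it equals $\U{\ZZ[G/K]}(G/H) \cong Hom_G(\ZZ[G/H],\ZZ[G/K]) = \MS{B}\UZ_G(G/H, G/K)$. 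One then checks that the composition pairing on these homotopy groups reproduces composition in $\MS{B}\UZ_G$, so that the endomorphism ringoid of this set is $\MS{B}\UZ_G$; hence these objects are a set of tiltors and Theorem \ref{thm-modeleq} yields the chain of Quillen equivalences between $Mod_{H\UZ}$ and chain complexes of $\UZ$-modules.

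Finally, to extract the displayed formulas: under the resulting equivalence $ho(Mod_{H\UZ}) \simeq \MS{D}(\UZ)$, the Eilenberg--Mac Lane module $H\UM$ corresponds to $\UM$ placed in degree $0$, a cofibrant model of which is a projective resolution $\U{P}_* \to \UM$ by sums of $\U{\ZZ[G/H]}$'s (Proposition \ref{prop-proj-Z}). Then $Fun_{H\UZ}(H\UM, H\UN)$ corresponds to the derived internal Hom $\U{Hom}_{\UZ}(\UM,\UN)$, and evaluating its homotopy Mackey functors level by level against the tiltors above gives $\U{\pi}_{-i} Fun_{H\UZ}(H\UM, H\UN) \cong \U{Ext}^i_{\UZ}(\UM,\UN)$; likewise $H\UM \wedge_{H\UZ} H\UN$ corresponds to the derived box product, with $\U{\pi}_i(H\UM \wedge_{H\UZ} H\UN) \cong \U{Tor}^{\UZ}_i(\UM,\UN)$. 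The main obstacle is not the formal part but the bookkeeping needed to apply Theorem \ref{thm-modeleq}: arranging the simpliciality hypothesis for $Mod_{H\UZ}$, and, more substantively, checking that the composition product on the degree-$0$ homotopy of the mapping spectra between the free modules reproduces composition in $\MS{B}\UZ_G$ --- i.e.\ that restrictions, transfers, and Weyl actions all match the span/$G$-map description --- rather than merely agreeing at the level of abelian groups. Once the endomorphism ringoid is correctly identified, everything else is formal.
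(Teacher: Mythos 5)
Your proposal is correct and follows essentially the same route as the paper: the paper also applies criterion (4) of Theorem \ref{thm-modeleq}, taking the tiltors to be the free modules $\{X_+ \wedge H\UZ \mid X \text{ a finite } G\text{-set}\}$, identifying $ho(Mod_{H\UZ})(X_+ \wedge H\UZ, Y_+ \wedge H\UZ) \cong [X_+, Y_+ \wedge H\UZ]^G \cong Hom_G(\ZZ[X],\ZZ[Y])$ so that the endomorphism ringoid is $\MS{B}\UZ_G$, and then invoking Proposition \ref{prop-FunctorZmod}. Your additional bookkeeping (compact generation, matching the composition pairing with spans, simpliciality) only spells out steps the paper leaves implicit.
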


\begin{proof}
In $Mod_{H\UZ}$, a set of tiltors $\mathbb{T}$ can be chosen as the set
\[
\{X_+ \wedge H\UZ|X \text{ is a finite $G$-set}\}.
\]
Then we have
\[
ho(Mod_{H\UZ})(X_+ \wedge H\UZ, Y_+ \wedge H\UZ) \cong [X_+, Y_+ \wedge H\UZ]^G \cong Hom_G(\ZZ[X],\ZZ[Y]).
\]
Therefore the endomorphism ringoid of $\mathbb{T}$ is isomorphic to $\MS{B} \UZ_G$ in Definition \ref{def-BZG}. By Proposition \ref{prop-FunctorZmod}, the category of modules over $\MS{B} \UZ_G$ is $Mod_{\UZ}$.
\end{proof}

\begin{rmk}
The exactly same proof works for any fixed point Mackey functors of commutative rings where $G$-acts through ring isomorphisms (e.g. $\U{\FF_p}$), since these Mackey functors are automatically Tambara functors and have similar ringoid descriptions, see \cite[Example~1.3.1, 1.4.5]{MazurMackey}.
\end{rmk}

We need a simple lemma for the proof of Theorem \ref{thm-ext}.

\begin{lem}\label{lem-torsionlambda}
If $\U{M}(G/e) \cong 0$, then $\Sigma^{\lambda} H\UM \simeq H\UM$.
\end{lem}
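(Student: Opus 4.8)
The plan is to exhibit $S^{\lambda}$ as the cofiber of a map assembled entirely out of free $G$-cells, and then observe that smashing such a map with $H\UM$ annihilates it precisely because $\UM(G/e) \cong 0$.

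First I would recall the standard cofiber sequence of based $G$-spaces attached to an actual representation $V$,
\[
S(V)_+ \longrightarrow S^0 \longrightarrow S^V,
\]
coming from $S(V)_+ \to D(V)_+ \to D(V)/S(V) = S^V$ together with the $G$-equivariant contractibility of the disk $D(V)$. Specializing to $V = \lambda = \lambda_0$, the unit sphere $S(\lambda)$ is a circle on which $C_{p^n}$ acts by rotation through $2\pi/p^n$; this action is \emph{free}, so $S(\lambda)$ is a finite free $G$-CW complex (one free $0$-cell orbit and one free $1$-cell orbit, matching the cell structure of $S^{\lambda}$ recorded in Section~\ref{sec-rep}). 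Smashing the cofiber sequence with $H\UM$ then yields a cofiber sequence of $G$-spectra
\[
S(\lambda)_+ \wedge H\UM \longrightarrow H\UM \longrightarrow S^{\lambda} \wedge H\UM,
\]
so it suffices to prove that the left-hand term is contractible.

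For that I would use that $S(\lambda)_+ \wedge H\UM$ is a finite cell object whose cells all have the form $(G/e)_+ \wedge S^n \wedge H\UM$, and that $(G/e)_+ \wedge H\UM \simeq G_+ \wedge i^*_e(H\UM)$ with $i^*_e(H\UM) \simeq H(\UM(G/e)) = H(0) \simeq \ast$ by hypothesis. Hence every cell is contractible, so $S(\lambda)_+ \wedge H\UM$ is contractible, and the map $H\UM \to S^{\lambda} \wedge H\UM$ is an equivalence, which is exactly the assertion $\Sigma^{\lambda} H\UM \simeq H\UM$. I do not anticipate a genuine obstacle here; the only points that need care are the verification that $S(\lambda)$ is honestly a free $G$-space (not merely one with free top-dimensional cells) and the identification $i^*_e(H\UM) \simeq \ast$, which is the single place the hypothesis $\UM(G/e) \cong 0$ enters.

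When $\UM$ is moreover a $\UZ$-module, as in the application to Theorem~\ref{thm-ext}, I would instead record the slicker computational variant paralleling the proof of Lemma~\ref{lem-orbitH}: the $\UZ$-coefficient cellular chains of $S^{\lambda}$ form the complex $\UZ \leftarrow \U{\ZZ[C_{p^n}]} \xleftarrow{1-\gamma} \U{\ZZ[C_{p^n}]}$ in degrees $0,1,2$ (as in the proof of Proposition~\ref{prop-choice}), and these are flat $\UZ$-modules, so $\U{\pi}_{\ast}(S^{\lambda} \wedge H\UM)$ is the homology of $\U{C}_{\ast}(S^{\lambda}) \square_{\UZ} \UM$. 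Since $\U{\ZZ[C_{p^n}]} \square_{\UZ} \UM \cong \UM_{G/e} \cong \U{0}$ by the hypothesis, this complex collapses to $\UM$ concentrated in degree $0$, and uniqueness of Eilenberg--Mac Lane spectra finishes.
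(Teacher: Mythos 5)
Your proposal is correct. Your primary argument is a spectrum-level variant of what the paper does: you use the cofiber sequence $S(\lambda)_+ \to S^0 \to S^{\lambda}$ together with the freeness of the $G$-action on the unit circle $S(\lambda)$, and kill the free part by observing $(G/e)_+ \wedge H\UM \simeq G_+ \wedge i^*_e(H\UM) \simeq \ast$ because $\UM(G/e)\cong 0$. The paper instead works algebraically, taking the cellular chain complex $\UZ \xleftarrow{\nabla} \U{\ZZ[G]} \xleftarrow{1-\gamma} \U{\ZZ[G]}$ of $S^{\lambda}$ and noting $(\U{\ZZ[G]} \square_{\UZ} \UM)(X) \cong \UM(X\times G) \cong \U{0}$, so that $\U{\pi}_*(\Sigma^{\lambda}H\UM)$ is $\UM$ concentrated in degree $0$; your second, "computational" variant is essentially this proof. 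The two routes trade off slightly: your cofiber-sequence argument never invokes uniqueness of Eilenberg--Mac Lane spectra or the identification of $\U{\pi}_*(\Sigma^{\lambda}H\UM)$ with the homology of $\U{C}_*(S^{\lambda})\square_{\UZ}\UM$, and it applies verbatim to any Mackey functor with $\UM(G/e)\cong 0$, not only to $\UZ$-modules; the paper's chain-level argument stays inside the algebraic framework of Section~\ref{sec-HA} and matches the computation pattern of Lemma~\ref{lem-orbitH} and Proposition~\ref{prop-choice}. The only points needing care in your version are exactly the ones you flag (freeness of $S(\lambda)$, which holds since every nontrivial subgroup of $C_{p^n}$ acts on the circle by a nontrivial rotation, and $i^*_e H\UM \simeq \ast$), plus the routine induction over the skeletal filtration showing that a finite complex built from free cells smashes with $H\UM$ to a contractible spectrum.
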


\begin{proof}
Consider $\U{C}_*(S^{\lambda})$, the cellular chain of $S^{\lambda}$, which is
\[
    \UZ \xleftarrow{\nabla} \U{\ZZ [G]} \xleftarrow{1-\gamma} \U{\ZZ [G]}
\]
Now, $\U{\pi}_*(\Sigma^{\lambda}H\UM) = \U{H}_*(C_*(S^{\lambda}) \square_{\UZ}\UM)$. However,
\[
    (\U{\ZZ [G]} \square_{\UZ}\UM)(X) \cong \UM(X \times G) \cong 0
\]
Since $X \times G$ is a free $G$-set and $\UM$ evaluating on free $G$-set is $0$ since $\UM(G/e) \cong 0$. Therefore $\U{\pi}_* \Sigma^{\lambda}H\UM$ is concentrated in degree $0$, and $\U{\pi}_0 = \UZ \square_{\UZ} \UM = \UM$.
\end{proof}

\begin{proof}[Topological proof of {Theorem \ref{thm-ext}}]
By Corollary \ref{cor-equivalence},
\[
    \U{Ext}^i_{\UZ}(\UM,\UZ) = \U{\pi}_{-i}(Fun_{H\UZ}(H\UM,H\UZ))
\]
Now by Lemma \ref{lem-miracle}, $H\UZ \cong \Sigma^{\lambda-2} I^G_{\ZZ}(H\UZ)$, and by Proposition \ref{prop-AndersonHom} we have
\begin{align*}
    Fun_{H\UZ}(H\UM,H\UZ) & \simeq Fun_{H\UZ}(H\UM,\Sigma^{\lambda-2} I^G_{\ZZ}(H\UZ)) \\
                          & \simeq \Sigma^{\lambda-2} I^G_{\ZZ}(H\UM \wedge_{H\UZ} H\UZ) \\
                          & \simeq \Sigma^{\lambda-2} I^G_{\ZZ}(H\UM) \\
                          & \simeq \Sigma^{\lambda-2} \Sigma^{-1} H\UM^E \\
                          & \simeq \Sigma^{-3} H\UM^E
\end{align*}
\end{proof}

\subsection{Some elements of $\U{\pi}_{\star}(H\UZ)$}

Before we can do more computation, we need to introduce some special elements of $\U{\pi}_{\star}(H\UZ)$, namely the $a$ and $u$ families. It turns out that every element in $\U{\pi}_{\star}(H\UZ)$ is a fraction or an image under connecting homomorphisms of these families.

\begin{prop}[{\cite[Proposition~3.3]{HHR:C4}}]\label{prop-tophom}
If $V$ is an actual orientable representation for $G = C_{p^n}$ of dimension $n$, then
\[
    \U{H}_{n}(S^{V};\UZ) = \U{\pi}_{n - V}(S^{V} \wedge H\UZ) \cong \UZ.
\]
\end{prop}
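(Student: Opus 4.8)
The plan is to read off the top Bredon homology $\U{H}_n(S^V;\UZ)$ (the orientation class appearing in the statement) directly from the $G$-CW structure on $S^V$ built in Section~\ref{sec-rep}. Because $V$ is an actual orientable representation of $G=C_{p^n}$, we may assume $V = a_n + \sum_{i=0}^{n-1} a_i\lambda_i$ with all $a_i\geq 0$: for $p$ odd every representation already has this shape, and for $p=2$ orientability is precisely what lets us absorb the sign representation into $\lambda_{n-1}=2\sigma$. Then the explicit cell structure of Section~\ref{sec-rep} applies, and its $\UZ$-coefficient cellular chain complex $\U{C}_*(S^V)$ is a complex of permutation Mackey functors $\U{\ZZ[C_{p^n}/C_{p^k}]}$ concentrated in degrees $0,\dots,n$. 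Since these are already $\UZ$-modules, $\U{\pi}_*(S^V\wedge H\UZ)$ is the homology of $\U{C}_*(S^V)$, and in the top degree this is
\[
    \U{H}_n(S^V;\UZ) \cong \ker\bigl(d_n\colon \U{C}_n(S^V)\to \U{C}_{n-1}(S^V)\bigr).
\]

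Next I would pin down the top differential $d_n$. Let $j$ be the least index with $a_j>0$; if $V=\RR^n$ (all $a_i=0$) then $d_n\colon\UZ\to\U{0}$ and $\U{H}_n(S^V;\UZ)=\UZ$ directly. Otherwise, reading off the cell structure, the ``innermost'' block $a_j\lambda_j$ sits on top of the cellular filtration and contributes the unique $n$-cell and the unique $(n-1)$-cell, both of orbit type $C_{p^n}/C_{p^j}$; the $2a_j$ cells of this block are attached, from the bottom of the block upward, by maps alternating between $1-\gamma^r$ and the norm $N=1+\gamma^r+\cdots$ and beginning with $1-\gamma^r$, so — the block having even length — the topmost such map, which is $d_n$, is again $1-\gamma^r$ for a unit $r$ mod $p$. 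No other block has a cell in dimension $n-1$, so $\U{C}_{n-1}(S^V)=\U{\ZZ[C_{p^n}/C_{p^j}]}$ and $d_n$ is exactly this single map. Orientability is used precisely here: it forces the relevant $2$-cells to come from rotations $\lambda_j$, so that the topmost attaching map has the form $1-\gamma^r$; a surviving $\sigma$-summand would instead make the top-level kernel collapse, as happens for $S^\sigma$ over $C_2$.

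It then remains to compute $\ker\bigl(1-\gamma^r\colon \U{\ZZ[C_{p^n}/C_{p^j}]}\to\U{\ZZ[C_{p^n}/C_{p^j}]}\bigr)$. Since $p\nmid r$, $\gamma^r$ generates $C_{p^n}$, so this kernel is the sub-Mackey-functor of $C_{p^n}$-invariants: its value at $G/K$ is $\ZZ[C_{p^n}/C_{p^j}]^{C_{p^n}}$ sitting inside $\ZZ[C_{p^n}/C_{p^j}]^K$. Because $C_{p^n}/C_{p^j}$ is a single orbit, $\ZZ[C_{p^n}/C_{p^j}]^{C_{p^n}}$ is free of rank one, generated by the sum of all cosets, and this generator lies compatibly in each of the $K$-fixed subgroups. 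Hence the kernel is $\ZZ$ at every level with trivial Weyl action, and every restriction map sends the orbit-sum to the orbit-sum, hence is an isomorphism. By Remark~\ref{rmk-form-Z} the form of $\UZ$ all of whose restrictions are isomorphisms is $\UZ_{0,\dots,0}$, which is precisely the fixed-point Mackey functor $\UZ$; therefore $\U{H}_n(S^V;\UZ)\cong\UZ$, as claimed.

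The step I expect to cost the most care is the bookkeeping of the second paragraph: confirming exactly which cells land in dimensions $n$ and $n-1$ and, above all, that the topmost attaching map is $1-\gamma^r$ rather than the norm $N$. This is where both hypotheses enter — $\dim V=n$ fixes the relevant skeleton, and orientability fixes the shape of the attaching map — and it needs a little extra attention when $p=2$. Once that is settled, the conclusion is the one-line computation of the third paragraph.
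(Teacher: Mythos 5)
Your argument is correct, and there is essentially nothing in the paper to compare it against: the proposition is quoted from \cite[Proposition~3.3]{HHR:C4} without an internal proof, and your explicit cellular computation is the standard argument, consistent with how the paper itself manipulates cell structures of representation spheres (e.g.\ in the proofs of Proposition \ref{prop-choice} and Lemma \ref{lem-miracle}). The two points you flag as delicate do hold. First, writing an orientable actual $V$ as $a_n+\sum_{i<n}a_i\lambda_i$ with $a_i\geq 0$ (absorbing pairs of sign representations via $\lambda_{n-1}=2\sigma$ when $p=2$, and keeping track of the rotation numbers $r$ prime to $p$) is exactly the convention of Section \ref{sec-rep}. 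Second, in the top block the alternation $1-\gamma^r,\ N,\ 1-\gamma^r,\dots$ beginning and ending with $1-\gamma^r$ is forced even without careful bookkeeping: the underlying nonequivariant complex must have vanishing reduced homology strictly below the top degree of the block, which pins down the alternation, and a top attaching map equal to $N$ would give underlying top homology of rank $p^{n-j}-1$ instead of $1$, contradicting $S^V\simeq S^{|V|}$ nonequivariantly. Your final identification of $\ker\bigl(1-\gamma^r\colon\U{\ZZ[C_{p^n}/C_{p^j}]}\to\U{\ZZ[C_{p^n}/C_{p^j}]}\bigr)$ with $\UZ$ (orbit sum at every level, restrictions the identity on it, transfers multiplication by the index) is also right, and it moreover exhibits the generator whose restriction to $G/e$ is the underlying orientation class, matching Definition \ref{def-au}.
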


\begin{defi}\label{def-au}
\begin{enumerate}
\item For an actual representation $V$ with $V^G = 0$, let $a_{V} \in \U{\pi}_{-V}(S^0)$ be the map $S^0 \rightarrow S^V$ which embeds $S^0$ to $0$ and $\infty$ in $S^V$. We will also use $a_V$ for its Hurewicz image in $\U{\pi}_{-V}(H\UZ)$.\\
\item For an actual orientable representation $W$ of dimension $n$, let $u_W$ be the generator of $\U{H}_{n}(S^W;\UZ)(G/G)$ which restricts to the choice of orientation in
    \begin{displaymath}
    \U{H}_{n}(S^W;\UZ)(G/e) \cong H_{n}(S^{n};\ZZ).
    \end{displaymath}
    In homotopy grading, $u_W \in \U{\pi}_{n - W}(H\UZ)(G/G)$.
\end{enumerate}
\end{defi}

\begin{prop}[{\cite[Lemma~3.6]{HHR:C4}}]\label{prop-au}
Elements $a_{V} \in \U{\pi}_{-V}(H\UZ)(G/G)$ and $u_{W} \in \U{\pi}_{|W| - W}(H\UZ)(G/G)$ satisfy the following:
\begin{enumerate}
\item $a_{V_1 + V_2} = a_{V_1}a_{V_2}$ and $u_{W_1 + W_2} = u_{W_1}u_{W_2}$.
\item $Res^G_H(a_V) = a_{i^*_H(V)}$ and $Res^G_H(u_V) = u_{i^*_H(V)}$
\item $|G/G_V| a_V = 0$, where $G_V$ is the isotropy subgroup of $V$.
\item \textbf{The gold relation.} For $V,W$ oriented representations of degree $2$, with $G_V \subset G_W$,
        \begin{displaymath}
            a_Wu_V = |G_W/G_V| a_Vu_W
        \end{displaymath}
        In terms of oriented irreducible representations of $C_{p^n}$, the gold relation reads
        \begin{displaymath}
            \textrm{For $0 \leq i < j < n$, } a_{\lambda_j}u_{\lambda_i} = p^{i-j}a_{\lambda_i}u_{\lambda_j}
        \end{displaymath}
\item The subring consists of $\U{\pi}_{i-V}(H\UZ)(G/G)$ where $V$ is an actual representation is
        \begin{displaymath}
            \ZZ[a_{\lambda_i},u_{\lambda_i}]/(p^{n-i}a_{\lambda_i} = 0, \textrm{gold relations}) \textrm{    for $0 \leq i,j < n$}
        \end{displaymath}
\end{enumerate}
We will call this subring $BB_G$, standing for "basic block". We will use $\U{BB}_G$ for the graded Green functor in the corresponding $RO(G)$-degree of $BB_G$. We will omit $G$ if there is no ambiguity.

\end{prop}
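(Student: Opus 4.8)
The plan is to prove the five items in turn; items (1)--(3) and (5) are fairly direct, and the gold relation in (4) carries the real content. For (1) and (2) I would argue formally from the definitions. The inclusion of fixed points factors as $S^0=S^0\wedge S^0\xrightarrow{a_{V_1}\wedge\mathrm{id}}S^{V_1}\wedge S^0\xrightarrow{\mathrm{id}\wedge a_{V_2}}S^{V_1}\wedge S^{V_2}$, and it is manifestly natural for the restriction functors $i^*_H$; passing to Hurewicz images in $H\UZ$ gives $a_{V_1+V_2}=a_{V_1}a_{V_2}$ and $Res^G_H a_V=a_{i^*_H V}$. For $u$, recall that $u_W$ is pinned down by its image in $\U{H}_{|W|}(S^W;\UZ)(G/e)\cong H_{|W|}(S^{|W|};\ZZ)$; since the standard orientation class of $S^n$ is multiplicative for smash products and unchanged by the forgetful functor to non-equivariant spectra, this characterization forces $u_{W_1+W_2}=u_{W_1}u_{W_2}$ and $Res^G_H u_W=u_{i^*_H W}$. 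For (3): as $i^*_{G_V}(V)$ is a trivial $G_V$-representation of dimension $|V|\ge 1$, the class $Res^G_{G_V}(a_V)=a_{i^*_{G_V}V}$ is the Hurewicz image of the nullhomotopic inclusion $S^0\hookrightarrow S^{|V|}$, hence is $0$; and $\U{\pi}_{-V}(H\UZ)$, being the zeroth homotopy Mackey functor of the $H\UZ$-module $S^V\wedge H\UZ$, is a module over the Green functor $\U{\pi}_0(H\UZ)=\UZ$ and hence cohomological (Remark \ref{rmk-Z}), so $|G/G_V|\,a_V=Tr^G_{G_V}Res^G_{G_V}(a_V)=0$.

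The hard part will be (4). Using (1) and (2) I would reduce to oriented irreducibles $V=\lambda_i$, $W=\lambda_j$ with $0\le i<j<n$, so that $G_V=C_{p^i}\subsetneq C_{p^j}=G_W$. Restricting to $C_{p^j}$, the representation $\lambda_j$ becomes trivial, so $Res^G_{C_{p^j}}(a_{\lambda_j})=0$ and $Res^G_{C_{p^j}}(u_{\lambda_j})$ is a unit; hence $Res^G_{C_{p^j}}(a_{\lambda_j}u_{\lambda_i})=0$, while $Res^G_{C_{p^j}}(|G_W/G_V|\,a_{\lambda_i}u_{\lambda_j})=|G_W/G_V|\,a_{i^*_{C_{p^j}}\lambda_i}$, which vanishes by (3) in $C_{p^j}$ because $i^*_{C_{p^j}}\lambda_i$ has isotropy $C_{p^i}$ of index $|G_W/G_V|$. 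Thus both sides of the asserted identity land in $\ker(Res^G_{C_{p^j}})$ of $\U{\pi}_{2-\lambda_i-\lambda_j}(H\UZ)(G/G)$, so the restriction reduction by itself does \emph{not} prove the relation: it only shows both sides agree modulo that kernel. To pin down the common value I would compute $\U{\pi}_{2-\lambda_i-\lambda_j}(H\UZ)(G/G)=\U{H}_2(S^{\lambda_i+\lambda_j};\UZ)(G/G)$ outright, from the $G$-CW structures on $S^{\lambda_i}$ and $S^{\lambda_j}$ recalled in Section \ref{sec-rep} together with Mazur's formula for $\square_{\UZ}$: the group is cyclic, and tracing the two evident $2$-cells through the total complex exhibits $a_{\lambda_j}u_{\lambda_i}$ as $|G_W/G_V|$ times the generator $a_{\lambda_i}u_{\lambda_j}$. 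This explicit computation is the one in \cite[Lemma~3.6]{HHR:C4}, and it, rather than the restriction reduction, is where the work lies.

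Finally (5): the monomials $\prod_i a_{\lambda_i}^{b_i}u_{\lambda_i}^{c_i}$ lie in $\U{\pi}_{2(\sum_i c_i)-\sum_i(b_i+c_i)\lambda_i}(H\UZ)(G/G)$ and so span a subring of $BB_G$ in which the relations $p^{n-i}a_{\lambda_i}=0$ (from (3)) and the gold relations (from (4)) hold. To see there are no further relations, and that these monomials exhaust $BB_G$, I would argue degree by degree: for each actual $V=\sum_i a_i\lambda_i$, compute $\U{\pi}_{k-V}(H\UZ)(G/G)=\U{H}_k(S^V;\UZ)(G/G)$ from the $G$-CW structure of $S^V$ recalled in Section \ref{sec-rep}, and match orders and generators with the presented ring — or, alternatively, bootstrap from $G=C_p$ by induction on $n$ via the Tate square, as the rest of the paper does.
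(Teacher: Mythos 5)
Your proposal is essentially correct, but note that the paper itself gives no proof of this proposition: it is quoted verbatim from \cite[Lemma~3.6]{HHR:C4}, so there is nothing internal to compare against beyond that citation. Your arguments for (1)--(3) are sound and are the standard ones (for (3), the combination of $Res^G_{G_V}(a_V)=0$, which holds because $i^*_{G_V}V$ is trivial of positive dimension so the class lies in a vanishing negative integer-graded group, with the cohomological condition $|G/G_V|\,a_V = Tr^G_{G_V}Res^G_{G_V}(a_V)$ is exactly right). Your self-diagnosis on (4) is also correct and is a genuine point in your favor: the restriction argument only localizes both sides in $\ker(Res^G_{C_{p^j}})$, and the actual identity requires knowing that $\U{\pi}_{2-\lambda_i-\lambda_j}(H\UZ)(G/G)$ is cyclic of order $p^{n-i}$ on $a_{\lambda_i}u_{\lambda_j}$ with $a_{\lambda_j}u_{\lambda_i}$ equal to $p^{j-i}$ times that generator; this chain-level computation (and likewise the degree-by-degree verification needed for (5)) is precisely the content of the cited lemma in \cite{HHR:C4}, so deferring to it is consistent with what the paper does. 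One small point: the displayed exponent $p^{i-j}$ in the statement is a typo for $p^{j-i}=|G_W/G_V|$ (compare the later uses such as $a_{\lambda_1}u_{\lambda_0}=pa_{\lambda_0}u_{\lambda_1}$), and your formulation uses the correct form.
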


As discussed in Section \ref{sec-rep}, every representation sphere $S^V$ has a $G$-CW-complex structure. If $V \in RO(G)$ is in the image of the map $RO(G/K) \rightarrow RO(G)$ for a normal subgroup $K \subset G$, then in the $\UZ$-coefficient cellular chain complex of $S^V$, all $\UZ$-modules involved are projective $\UZ$-modules in the image of the pullback $\Psi^*_K$. By Corollary \ref{cor-pullhom}, we have
\begin{cor}\label{cor-pullback}
Let $V \in RO(G)$ be a representation in the image of $RO(G/K)$, then
\[
    \U{H}_*(S^V;\UZ) \cong \Psi^*_K(\U{H}_*(S^V;\UZ)).
\]
The right hand side is computed $G/K$-equivariantly.

Furthermore, this isomorphism sends elements $a_V$ and $u_V$ in $G/K$-homology to $a_V$ and $u_V$ in $G$-homology.
\end{cor}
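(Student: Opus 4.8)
The plan is to compute $\U{H}_*(S^V;\UZ) = \U{\pi}_*(S^V \wedge H\UZ)$ as the homology of the $\UZ$-coefficient cellular chain complex of $S^V$, and to recognize that complex as $\Psi^*_K$ applied to the analogous $G/K$-equivariant complex; exactness of $\Psi^*_K$ then settles the first claim.

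First I would write $V = q^*V'$ for some $V' \in RO(G/K)$, with $q\colon G \to G/K$ the quotient map, and choose actual $G/K$-representations $V'_1, V'_2$ with $V' = V'_1 - V'_2$. Building the cellular structure on $S^{V'}$ as in Section \ref{sec-rep}, its $G/K$-CW structure inflates along $q$ to a $G$-CW structure on $S^V$ in which each cell $(G/K)/(H/K)_+ \wedge e^n$ becomes $G/H_+ \wedge e^n$. Hence the $\UZ$-coefficient cellular chain complex $\U{C}^G_*(S^V)$ involves exactly the same graded orbit set as $\U{C}^{G/K}_*(S^{V'})$, and since the attaching maps --- so also the differentials, which are assembled from copies of the maps $\nabla$, $\Delta$ and $1 - \gamma$ --- are themselves inflated from $G/K$, Proposition \ref{prop-pullFix} together with the functoriality of $\Psi^*_K$ would give a chain-level isomorphism $\U{C}^G_*(S^V) \cong \Psi^*_K(\U{C}^{G/K}_*(S^{V'}))$. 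Because $\Psi^*_K$ is exact (Lemma \ref{lem-exact}) it commutes with passage to homology, so
\[
    \U{H}^G_*(S^V;\UZ) \cong \Psi^*_K\bigl(\U{H}^{G/K}_*(S^{V'};\UZ)\bigr),
\]
which is precisely the sense, recorded in Corollary \ref{cor-pullhom}, in which $\Psi^*_K$ preserves homological invariants.

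For the statement about $a_V$ and $u_W$ I would argue as follows. The defining map $S^0 \to S^V$ of $a_V$ is inflated from the corresponding $G/K$-map $S^0 \to S^{V'}$, so under the chain-level identification above it is $\Psi^*_K$ of that map; hence the induced map on homology carries the $G/K$-class $a_{V'}$ to the $G$-class $a_V$. For $u_W$ with $W = q^*W'$ orientable, Proposition \ref{prop-tophom} gives $\U{H}_{|W|}(S^W;\UZ) \cong \UZ$, so $u_W$ is the unique class at the $G/G$ level whose restriction to the $G/e$ level is the chosen orientation; using the explicit levelwise description of $\Psi^*_K$ in Proposition \ref{prop-pullP} --- under which the $G/G$- and $G/e$-levels of $\Psi^*_K(\UM)$ are identified with the $(G/K)/(G/K)$- and $(G/K)/e$-levels of $\UM$, and $Res^G_e$ factors as $Res^{G/K}_e$ followed by the isomorphism $Res^K_e$ --- one checks that $Res^G_e \Psi^*_K(u_{W'})$ is exactly the chosen orientation, so $\Psi^*_K(u_{W'}) = u_W$.

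The step I expect to be the main obstacle is the chain-level identification $\U{C}^G_*(S^V) \cong \Psi^*_K(\U{C}^{G/K}_*(S^{V'}))$: one must verify that $\psi\colon \MS{B}\UZ_{G/K} \to \MS{B}\UZ_G$ carries not only the chain modules $\U{\ZZ[X]}$ (which is Proposition \ref{prop-pullFix}) but also every structure map occurring as a differential to the correct $G$-equivariant map, i.e.\ that the $G$-equivariant attaching data of the inflated sphere $S^V$ really is the inflation of the $G/K$-equivariant attaching data of $S^{V'}$. Once that is in place, exactness of $\Psi^*_K$ and the bookkeeping of levels and restrictions via Proposition \ref{prop-pullP} are routine.
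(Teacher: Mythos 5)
Your proposal is correct and is essentially the paper's own argument: the paper likewise observes that the $\UZ$-coefficient cellular chain complex of $S^V$ consists of projective modules $\U{\ZZ[X]}$ inflated from $G/K$ and then invokes the exactness and projective-preserving properties of $\Psi^*_K$ (Corollary \ref{cor-pullhom}, Lemma \ref{lem-exact}) to pass to homology. The chain-level identification of the differentials and the tracking of $a_V$ and $u_V$ that you spell out are exactly the details the paper leaves implicit, and your verification of them (via Proposition \ref{prop-pullFix} and the levelwise description in Proposition \ref{prop-pullP}) is sound.
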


If $G = C_{p^n}$ and $K$ is the subgroup of order $p$, then $a_{\lambda_i}$ and $u_{\lambda_i}$ in $G/K$ pullback to $a_{\lambda_{i+1}}$ and $u_{\lambda_{i+1}}$ respectively in $G$.

\subsection{Forms of $\UZ$} We prove the following theorem.
\begin{thm}\label{thm-formz}
For $G = C_{p^n}$, if $\UM$ is a form of $\UZ$ (see Definition \ref{def-form-Z}), then
\[
    H\UM \simeq \Sigma^{V} H\UZ,
\]
for some $V \in RO(G)$.
\end{thm}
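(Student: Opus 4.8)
The plan is to realize every form of $\UZ$ as a representation-sphere suspension of $H\UZ$ by induction on $n$, organized around two operations that visibly preserve the class of such suspensions: pullback along the quotient $C_{p^n}\to C_{p^n}/C_p$ and equivariant Anderson duality. First I would use Remark \ref{rmk-form-Z} to reduce to the $2^n$ explicit forms $\UZ_{t_1,\dots,t_n}$ of Definition \ref{def-form-Z-index}, where $t_i\in\{0,1\}$ records whether $Res^{p^i}_{p^{i-1}}$ is an isomorphism or multiplication by $p$. The induction is trivial for the trivial group (take $V=0$); so fix $n\geq 1$, assume the statement for $C_{p^{n-1}}$ with realizing classes of dimension $0$, and set $\UM=\UZ_{t_1,\dots,t_n}$.

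If $t_1=0$, then a direct check from Proposition \ref{prop-pullP} identifies $\UM\cong\Psi^*_{C_p}(\UN)$, where $\UN=\UZ_{t_2,\dots,t_n}$ is the corresponding form of $\UZ$ for $C_{p^n}/C_p\cong C_{p^{n-1}}$. By the inductive hypothesis there is $W\in RO(C_{p^{n-1}})$ with $\dim W=0$ and $\Sigma^W H\UZ\simeq H\UN$. I would then let $V\in RO(C_{p^n})$ be the image of $W$ under $RO(C_{p^n}/C_p)\to RO(C_{p^n})$ (so $\lambda_i\mapsto\lambda_{i+1}$), which has dimension $0$, and apply Corollary \ref{cor-pullback}:
\[
    \U{H}_*(S^V;\UZ)\cong\Psi^*_{C_p}\big(\U{H}_*(S^W;\UZ)\big)\cong\Psi^*_{C_p}(\UN)\cong\UM ,
\]
concentrated in degree $0$. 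Uniqueness of Eilenberg--Mac Lane spectra then gives $\Sigma^V H\UZ\simeq H\UM$.

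If $t_1=1$, I would pass to the levelwise dual; since $\UZ_{t_1,\dots,t_n}^*\cong\UZ_{1-t_1,\dots,1-t_n}$, the module $\UM^*$ is again a form of $\UZ$, now with leading index $0$, so the previous case produces $V'\in RO(C_{p^n})$ of dimension $0$ with $\Sigma^{V'}H\UZ\simeq H\UM^*$. The point is then that Anderson duality returns $\UM$: because a form of $\UZ$ is levelwise free, $(\UM^*)^E=Ext_L(\UM^*,\ZZ)=\U{0}$, so Proposition \ref{prop-Anderson} with $E=H\UM^*$ and $X=S^0$ shows $I^G_{\ZZ}(H\UM^*)$ has homotopy $(\UM^*)^*\cong\UM$ in degree $0$ and $\U{0}$ elsewhere, i.e.\ $I^G_{\ZZ}(H\UM^*)\simeq H\UM$. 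On the other hand, invertibility of $S^{V'}$ together with Lemma \ref{lem-miracle} gives
\[
    I^G_{\ZZ}(H\UM^*)=Fun_G(\Sigma^{V'}H\UZ,\,I^G_{\ZZ})\simeq\Sigma^{-V'}I^G_{\ZZ}(H\UZ)\simeq\Sigma^{(2-\lambda)-V'}H\UZ .
\]
Hence $H\UM\simeq\Sigma^{(2-\lambda)-V'}H\UZ$, with the shift again of dimension $0$, completing the induction.

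Conceptually, the two moves --- stripping a leading $0$ from the index by $\Psi^*_{C_p}$ (which on $RO(G)$ is inflation) and flipping all the $t_i$ by Anderson duality (which on $RO(G)$ is $V\mapsto(2-\lambda)-V$) --- together generate all $2^n$ forms starting from $\UZ=\Sigma^0 H\UZ$. I expect the only genuinely delicate step to be the identification $I^G_{\ZZ}(H\UM^*)\simeq H\UM$: it hinges on the vanishing of the $Ext_L$-term in the Anderson short exact sequence, which is precisely where one uses that a form of $\UZ$ is levelwise free (not merely torsion-free with $\ZZ$ at the top level). The remaining work --- verifying that the class $V$ one writes down in the first case really lies in the image of $RO(C_{p^n}/C_p)$ so that Corollary \ref{cor-pullback} applies --- is automatic from the construction of $V$ as an inflation.
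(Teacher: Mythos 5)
Your proposal is correct and follows essentially the same route as the paper's own proof: induction through the quotient group, using Proposition \ref{prop-pullP} and Corollary \ref{cor-pullback} to handle the case where the bottom restriction is an isomorphism, and passing to the levelwise dual together with Anderson duality and Lemma \ref{lem-miracle} (the shift by $2-\lambda_0$) in the other case. The extra details you supply --- the vanishing of the $Ext_L$-term for a levelwise free Mackey functor and the identification $I^G_{\ZZ}(H\UM^*)\simeq\Sigma^{(2-\lambda_0)-V'}H\UZ$ --- are exactly what the paper leaves implicit.
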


\begin{proof}
By Lemma \ref{lem-miracle}, the theorem is true for $C_p$, since the only forms of $\UZ$ are $\UZ$ and $\UZ^*$. Now assume that the theorem is true for $G/K$, where $K$ is the unique subgroup of order $p$. If $Res^p_1:\UM(G/K) \rightarrow \UM(G/e)$ is an isomorphism, then by Proposition \ref{prop-pullP} $\UM \cong \Psi^*_K(\U{M}')$, where $\U{M}'$ is a form of $\UZ$ for $G/K$. By induction hypothesis,
\[
H\U{M}' \simeq S^{V_{G/K}} \wedge H\UZ
\]
for some $V_{G/K} \in RO(G/K)$. Using the quotient map $G \rightarrow G/K$ and Corollary \ref{cor-pullback}, we see
\[
H\UM \simeq S^{V} \wedge H\UZ,
\]
where $V$ is the pullback of $V_{G/K}$ into $RO(G)$.

If $Res^p_1:\UM(G/K) \rightarrow \UM(G/e)$ is multiplication by $p$, then the corresponding restriction for $\UM^*$ (see Definition \ref{def-Hom-Ext-L}) is an isomorphism. By the above argument we have
\[
    H\UM^* \simeq S^V \wedge H\UZ
\]
for some $V \in RO(G)$. Then by Proposition \ref{prop-Anderson} and Lemma \ref{lem-miracle}
\[
    H\UM \simeq I_{\ZZ}(H\UM^*) \simeq S^{2-\lambda_0 - V} \wedge H\UZ.
\]
\end{proof}

\begin{rmk}
We can construct $V$ from $\UM$ by going backwards: If $Res^p_1$ is an isomorphism, then we do nothing and go to the quotient group $G/N$. If $Res^p_1$ is multiplication by $p$, then we record $2-\lambda_0$ then go to the quotient group and repeat. In the end, $V$ will be an alternative sum of $\lambda_i$, with identification $2 = \lambda_n$.
\end{rmk}

\begin{exam}
Let $G = C_8$ and $\UM = \UZ_{1,0,1}$ (see Definition \ref{def-form-Z-index}) is the following form of $\UZ$
\[
\xymatrix{
\ZZ \ar@/_/[d]_2 \\
\ZZ \ar@/_/[d]_1 \ar@/_/[u]_1\\
\ZZ \ar@/_/[d]_2 \ar@/_/[u]_2\\
\ZZ \ar@/_/[u]_1
}
\]
Now since the bottom restriction is $2$, we know
\[
H\UM \simeq S^{2 - \lambda_0 - V_1} \wedge H\UZ,
\]
where $V_1$ is a representation from $C_8/C_2$ and in $C_8/C_2$, since $\UM^* \cong \Psi^*_{C_2}(\UZ_{1,0})$, we have
\[
H\UZ_{1,0} \simeq S^{V_1} \wedge H\UZ.
\]

In $C_8/C_2$, by the same argument, we see $V_1 = 2 - \lambda_1 - V_2$ (here $\lambda_1$ is the $\lambda_1$ on $C_8$, which factors through $C_8/C_2$) for some $V_2$ from $C_8/C_4$. Finally in $C_8/C_4$, we see that $V_2 = 2 - \lambda_2$. Therefore we have
\[
    H\UM \simeq \Sigma^{-\lambda_0 + \lambda_1 - \lambda_2 + 2} H\UZ.
\]
\end{exam}

\section{Computation of $\U{\pi}_{\star}(H\UZ)$ for $C_{p^2}$}\label{sec-HZ2}
In this section, we first gives a complete computation of $\U{\pi}_{\star}(H\UZ)$ for $G = C_{p^2}$ using the Tate diagram and induction on quotient groups. Then we analyze both equivariant Anderson duality and Spanier-Whitehead duality of $H\UZ$-modules and show how they interact with each other. Finally, using these computation, we can give more computations in homological algebra of $\UZ$-modules, which are more difficult using purely algebraic methods.
\subsection{The main computation}
Now we start to compute $\U{\pi}_{\star}(H\UZ)$ for $G = C_{p^2}$. We will first compute the case when $\star$ is orientable, which includes all $RO(G)$ for $p$ odd, then use the cofibre sequence
\[
    C_{2^n}/C_{2^{n-1}+} \rightarrow S^0 \rightarrow S^{\sigma}
\]
to compute the case for $p = 2$. The result for $C_{p^2}$ where $p$ is odd is Theorem \ref{thm-Cp2oriented} and for $p = 2$ is discussed around Example \ref{exam-C4}.

The Tate diagram for $H\UZ$ is the following:
\[
\xymatrix{
H\UZ_h     \ar[r] \ar[d]^{\simeq} & H\UZ \ar[r] \ar[d]& {\widetilde{H\UZ}} \ar[d]\\
H\UZ_h     \ar[r]                 & H\UZ^h \ar[r]     & H\UZ^t
}.
\]

We start with the second row, which consists of the homotopy fixed point, homotopy orbit and Tate spectrum of $H\UZ$. By smashing with representation spheres, the corresponding spectral sequences can be made into $RO(G)$-grading. By using $\UZ$-module structure in group homology and cohomology (see Example \ref{exam-gpcoh}), these spectral sequences can be made into spectral sequences of $\UZ$-modules. If $p$ is odd, then all representation spheres are orientable, therefore we have the homotopy fixed point spectral sequence
\[
        \U{E}^2_{V,t} = \U{H}^{t}(G;\pi_V(H\UZ)) \Rightarrow \U{\pi}_{t-V}(H\UZ^h)
\]

By degree reason, this spectral sequence collapse at $E^2$-page. We will use the same name in Proposition \ref{prop-au} to name elements in $\U{\pi}_{\star}(H\UZ^h)$. That is, we have $a_{\lambda_0} \in \U{\pi}_{-\lambda_0}(H\UZ^h)$ and $u_{\lambda_0} \in \U{\pi}_{2 - \lambda_0}(H\UZ^h)$. If $p = 2$ we also have $a_{\sigma} \in \U{\pi}_{-\sigma}(H\UZ^h)$.
\begin{prop}\label{prop-gpcoh}
For $G = C_{p^n}$ and $\star$ orientable, as modules over $\U{BB}_{C_{p^n}}$ (see Proposition \ref{prop-au}) we have
\[
    \U{\pi}_{\star}(H\UZ^h)(G/G) = \ZZ[a_{\lambda_0},u_{\lambda_i}^{\pm}]/(p^n a_{\lambda_0}) \text{ for $0 \leq i < n$}
\]
The Mackey functor structure is determined by the following: any fraction of $u_V$ generates a $\UZ$, and monomials containing positive powers of $a_{\lambda_0}$ generates $\UB_{1,1,...,1}$.

By the gold relation in Proposition \ref{prop-au}, we have $a_{\lambda_i} = 2^i a_{\lambda_0}\frac{u_{\lambda_i}}{u_{\lambda_0}}$.

\[
    \U{\pi}_{\star}(H\UZ^t)(G/G) = \ZZ/p^n[a_{\lambda_0}^{\pm},u_{\lambda_i}^{\pm}] \text{ for $0 \leq i < n$.}
\]
All Mackey functors are $\UB_{1,1,...,1}$.

\[
    \U{\pi}_{\star}(H\UZ_h)(G/G) = p^n\ZZ[u_{\lambda_i}^{\pm}] \oplus \ZZ/p^n \langle \Sigma^{-1} \frac{P}{a_{\lambda_0}^j} \rangle,
\]
where $0 \leq i < n$, $j > 0$ and $P$ is any fraction of monomials of $u_{\lambda_i}$s. All torsion free generators generate $\UZ^*$ and all torsions are $\UB_{1,1,...,1}$. The $\Sigma^{-1}$ means that these elements are coming from connecting homomorphism from $H\UZ^t$.

\end{prop}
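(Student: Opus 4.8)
The plan is to feed the bottom row of the Tate diagram (Definition~\ref{def-Tate}) into the three $RO(G)$-graded spectral sequences of Theorem~\ref{thm-hfpss} and the remark following it: the homotopy fixed point, Tate, and homotopy orbit spectral sequences of $H\UZ$, which are spectral sequences of $\UZ$-modules whose $E_2$-pages are, level by level, the group cohomology, Tate cohomology, and group homology of $C_{p^n}$ with coefficients in the underlying homotopy $G$-modules of $S^\star\wedge H\UZ$. For $\star$ orientable, that underlying module is concentrated in the single degree $|\star|$, where it is the \emph{trivial} $C_{p^n}$-module $\ZZ$ (an orientation-preserving action acts trivially on the top homology of a sphere --- this is the only place orientability enters). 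Hence each of the three spectral sequences is concentrated on a single line of its $E_2$-page, so it degenerates there and the filtration on $\U{\pi}_\star$ is trivial: $\U{\pi}_\star = E_\infty = E_2$ literally, as abelian groups and as Mackey functors, with no hidden extensions of either kind. Thus on each orbit $\U{\pi}_\star$ is the corresponding (co)homology of $C_{p^n}$ with $\ZZ$-coefficients, and I would quote the standard computations: $H^*(C_{p^n};\ZZ)$ is $\ZZ$ in degree $0$ and $\ZZ/p^n$ in each positive even degree; $\hat H^*(C_{p^n};\ZZ)$ is $\ZZ/p^n$ in every even degree; $H_*(C_{p^n};\ZZ)$ is $\ZZ$ in degree $0$ and $\ZZ/p^n$ in each positive odd degree.

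For $\U{\pi}_\star(H\UZ^h)$: the class $u_{\lambda_i}$ of Definition~\ref{def-au} maps to a class of filtration $0$ detecting a generator of $H^0(C_{p^n};\ZZ)\cong\ZZ$ (its underlying restriction is an orientation generator), hence a unit; the class $a_{\lambda_0}$ maps to a generator of the filtration-$2$ line $H^2(C_{p^n};\ZZ)\cong\ZZ/p^n$. Inverting the $u_{\lambda_i}$ moves any orientable degree to an integer degree, and the gold relation $a_{\lambda_i}=p^ia_{\lambda_0}u_{\lambda_i}/u_{\lambda_0}$ of Proposition~\ref{prop-au} eliminates the remaining $a_{\lambda_i}$, so $\U{\pi}_\star(H\UZ^h)(G/G)\cong\ZZ[a_{\lambda_0},u_{\lambda_i}^{\pm}]/(p^na_{\lambda_0})$. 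Running the same argument through the Tate spectral sequence, where $a_{\lambda_0}$ is additionally invertible since cup product with its image makes $\hat H^*(C_{p^n};\ZZ)$ $2$-periodic, gives $\U{\pi}_\star(H\UZ^t)(G/G)\cong\ZZ/p^n[a_{\lambda_0}^{\pm},u_{\lambda_i}^{\pm}]$.

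The Mackey functor structures follow from three inputs: the underlying level is governed by $i^*_e EG_+\simeq S^0$ and $i^*_e\widetilde{EG}\simeq\ast$, so $i^*_e H\UZ^h\simeq i^*_e H\UZ_h\simeq H\ZZ$ while $i^*_e H\UZ^t\simeq\ast$; the intermediate levels are governed by naturality of the spectral sequences in the group (equivalently, induction on $n$); and the transfers are then forced by Frobenius reciprocity (Definition~\ref{def-Green2}) together with the explicit corestriction maps in group (co)homology, which satisfy $\mathrm{cor}\circ\mathrm{res}=[\,\cdot\,]$. Thus in $\U{\pi}_\star(H\UZ^h)$ a Laurent monomial $P$ in the $u_{\lambda_i}$ generates $\U{\pi}_0(H\UZ^h)\cdot P$, and $\U{\pi}_0(H\UZ^h)$ is the Borel cohomology Mackey functor, which is $\UZ$, so $P$ generates $\UZ$; a monomial with a positive power of $a_{\lambda_0}$ restricts to $0$ underlying, is $p^n$-torsion on $G/G$, restricts on each $C_{p^k}$ to the reduction $\ZZ/p^n\to\ZZ/p^k$ on $H^2$, and has transfers the corestrictions, giving $\UB_{1,1,\dots,1}$; every class of $\U{\pi}_\star(H\UZ^t)$ restricts to $0$ underlying and is likewise $\UB_{1,1,\dots,1}$. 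Finally $\U{\pi}_\star(H\UZ_h)$ is read off from the long exact sequence of Mackey functors of the cofibre sequence $H\UZ_h\to H\UZ^h\to H\UZ^t$: the map $\U{\pi}_\star(H\UZ^h)\to\U{\pi}_\star(H\UZ^t)$ is reduction $\ZZ\to\ZZ/p^n$ on the $a_{\lambda_0}$-free part, an isomorphism on the positive-$a_{\lambda_0}$ part, and has cokernel the negative powers of $a_{\lambda_0}$; so the kernel is the torsion-free summand $p^n\ZZ[u_{\lambda_i}^{\pm}]$, which as a sub-Mackey functor of $\U{\pi}_\star(H\UZ^h)$ is $p^n\ZZ,p^{n-1}\ZZ,\dots,\ZZ$ level by level, hence has restrictions multiplication by $p$, i.e. is $\UZ^*=\U{O}(\ZZ)$ (Definition~\ref{def-Hom-Ext-L}, Example~\ref{exam-orbitMF}); and the connecting homomorphism contributes the classes $\Sigma^{-1}P/a_{\lambda_0}^j$ with $j>0$, which restrict to $0$ underlying and are $\UB_{1,1,\dots,1}$.

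The step I expect to be the main obstacle is the Mackey functor bookkeeping, not the additive or ring-level computation. Restrictions are essentially forced, but the transfers require careful use of Frobenius reciprocity and of the corestriction maps for $C_{p^n}$, and one must be careful that sub- and quotient-Mackey functors appearing in the long exact sequence --- for instance the kernel $p^n\ZZ[u^{\pm}]$ above --- have restriction maps differing level by level from what a naive submodule description suggests, which is exactly what turns $\UZ$ into $\UZ^*$ there. One also needs the easy but essential observation that single-line concentration kills hidden extensions in Mackey functors, not merely in abelian groups. The non-orientable case $p=2$ lies outside this argument and is handled separately via the cofibre sequence $C_{2^n}/C_{2^{n-1}+}\to S^0\to S^\sigma$.
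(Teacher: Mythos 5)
Your proposal is correct and follows essentially the same route as the paper: compute $\U{\pi}_{\star}(H\UZ^h)$ from the collapsing $RO(G)$-graded homotopy fixed point spectral sequence (group cohomology of $C_{p^n}$ with trivial $\ZZ$-coefficients, which is where orientability enters), identify $\U{\pi}_{\star}(H\UZ^t)$ by inverting $a_{\lambda_0}$, and read off $\U{\pi}_{\star}(H\UZ_h)$ as kernel and cokernel of the localization map via the bottom cofibre sequence, keeping track of Mackey functor structure levelwise. The only cosmetic difference is that you justify the inversion of $a_{\lambda_0}$ through the $2$-periodicity of Tate cohomology, where the paper uses the geometric identification $\widetilde{EG}\simeq S^{\infty\lambda_0}$ of Lemma \ref{lem-inverta}; both give the same localization statement.
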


\begin{proof}
$\U{\pi}_{\star}(H\UZ^h)$ is computed directly from group cohomology. By Lemma \ref{lem-inverta} below, we obtain
\[
    \U{\pi}_{\star}(H\UZ^t) \cong a_{\lambda_0}^{-1} \U{\pi}_{\star}(H\UZ^h).
\]
Finally, one can compute $\U{\pi}_{\star}(H\UZ_h)$ by taking kernel and cokernel (as $\UZ$-modules) of the $a_{\lambda_0}$-localization map
\[
    \U{\pi}_{\star}(H\UZ^h) \rightarrow \U{\pi}_{\star}(H\UZ^t).
\]
\end{proof}

\begin{lem}\label{lem-inverta}
For $G = C_{p^n}$, we have
\[
    \U{\pi}_{\star}(\widetilde{X}) \cong a_{\lambda_0}^{-1} \U{\pi}_{\star}(X).
\]
\end{lem}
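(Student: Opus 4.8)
The plan is to identify $\widetilde{EG}$ with an infinite representation sphere and then use that $\U{\pi}_{\star}(-)$ commutes with sequential homotopy colimits. The key observation is that for $G = C_{p^n}$ the representation $\lambda_0 = \lambda(1)$ is \emph{faithful}: a generator of $C_{p^n}$ acts on $\RR^2$ by rotation through $2\pi/p^n$, so $\ker\lambda_0 = e$ and hence $(\lambda_0)^H = 0$ for every nontrivial subgroup $H \subseteq G$. Consequently the infinite representation sphere $S^{\infty\lambda_0} = \operatorname*{colim}_k S^{k\lambda_0}$ satisfies $(S^{\infty\lambda_0})^H \simeq S^0$ for $H \neq e$ and $(S^{\infty\lambda_0})^e \simeq S^{\infty} \simeq \ast$; these are exactly the fixed-point data of $\widetilde{EG}$, so $S^{\infty\lambda_0}$ is a model for $\widetilde{EG}$, with the structure maps $S^{k\lambda_0} \hookrightarrow S^{(k+1)\lambda_0}$ of the colimit being $\operatorname{id}\wedge a_{\lambda_0}$ (inclusion of the equator).

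First I would smash this equivalence with $X$ to present $\widetilde{X} = \widetilde{EG}\wedge X$ as the homotopy colimit of the sequence $\bigl(S^{k\lambda_0}\wedge X\bigr)_k$ along the maps $a_{\lambda_0}\wedge\operatorname{id}_X$. Since $\U{\pi}_{\star}(-)$ commutes with sequential homotopy colimits of $G$-spectra, we obtain
\[
    \U{\pi}_{\star}(\widetilde{X}) \;\cong\; \operatorname*{colim}_k \U{\pi}_{\star}\bigl(S^{k\lambda_0}\wedge X\bigr) \;\cong\; \operatorname*{colim}_k \U{\pi}_{\star - k\lambda_0}(X),
\]
with transition maps given by multiplication by $a_{\lambda_0}\in\U{\pi}_{-\lambda_0}(S^0)(G/G)$. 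By definition this colimit is the localization $a_{\lambda_0}^{-1}\U{\pi}_{\star}(X)$ of $\U{\pi}_{\star}(X)$, viewed as a module over $\U{\pi}_{\star}(S^0)$, at the element $a_{\lambda_0}$. As every arrow above is induced by an honest map of $G$-spectra, this is an isomorphism of Mackey functors and of modules over $\U{\pi}_{\star}(S^0)$ (hence over $\U{BB}_G$), not merely of graded abelian groups.

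The verifications that $\U{\pi}_{\star}(-)$ commutes with sequential homotopy colimits, and that a colimit along multiplication-by-$a_{\lambda_0}$ maps computes the localization, are routine. The only step carrying real content is the identification $\widetilde{EG}\simeq S^{\infty\lambda_0}$, which rests on $\lambda_0$ being faithful together with the standard fact that $\widetilde{E\mathcal{F}}$ is characterized up to $G$-equivalence by its fixed-point profile (here $\mathcal{F} = \{e\}$, the family of subgroups on which $\lambda_0$ has nonzero fixed points).
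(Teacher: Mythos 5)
Your proof is correct and takes essentially the same approach as the paper: both identify $\widetilde{EG}$ with $S^{\infty\lambda_0} = \operatorname*{colim}_k S^{k\lambda_0}$ (the paper via contractibility and freeness of the unit sphere of $\infty\lambda_0$, you via the fixed-point characterization of $\widetilde{EG}$ --- two phrasings of the same faithfulness of $\lambda_0$) and then pass $\U{\pi}_{\star}$ through the sequential colimit whose transition maps are multiplication by $a_{\lambda_0}$. No changes needed.
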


\begin{proof}
Since the unit sphere of $\infty \lambda_0$ is contractible with free $G$-action, its cone $S^{\infty \lambda_0}$ models $\widetilde{EG}$. We can write $S^{\infty \lambda_0}$ as the colimit of
\[
    S^0 \xrightarrow{a_{\lambda_0}} S^{\lambda_0} \xrightarrow{a_{\lambda_0}} S^{2\lambda_0} \rightarrow ...
\]
Therefore smashing with $\tilde{E}G$ is the same as inverting $a_{\lambda_0}$.
\end{proof}

In general, we will compute $\U{\pi}_{\star}(H\UZ)$ by induction on quotient groups. Let $G = C_{p^n}$ and $K = C_{p}$, we have a four-steps induction:
\begin{enumerate}
\item Assume that we know $\U{\pi}_{\star}(H\UZ)$ for $G/K$, then by Corollary \ref{cor-pullback} we know $\U{\pi}_V(H\UZ)$ for all $V$ in the image of $RO(G/K) \rightarrow RO(G)$. That is, all $V$ that contains no copies of $\lambda_0$.
\item We can understand the map $\U{\pi}_V(H\UZ_h) \rightarrow \U{\pi}_V(H\UZ)$ by comparing names of elements, therefore compute $\U{\pi}_V(\widetilde{H\UZ})$ for all $V$ containing no $\lambda_0$.
\item Since $\U{\pi}_{\star}(\widetilde{H\UZ})$ is $a_{\lambda_0}$-periodic, we then understand all $\U{\pi}_{\star}(\widetilde{H\UZ})$.
\item Use the cofibre sequence again to compute $\U{\pi}_{\star}(H\UZ)$ from $\U{\pi}_{\star}(H\UZ_h)$ and $\U{\pi}_{\star}(\widetilde{H\UZ)}$.
\end{enumerate}

We start the computation with $G = C_p$, to illustrate the computational method and to keep track of elements. $C_p$ computation is well known and is written in \cite[Section~2.C]{Greenlees-Four}. In diagrams and spectral sequences, it is awkward to use the notation in Definition \ref{def-form-Z-index}. Instead we will use notations in Table \ref{tab-Cp}. In general, a box shape means a form of $\UZ$ and other shapes mean torsion $\UZ$-modules.

\begin{table}
\includegraphics{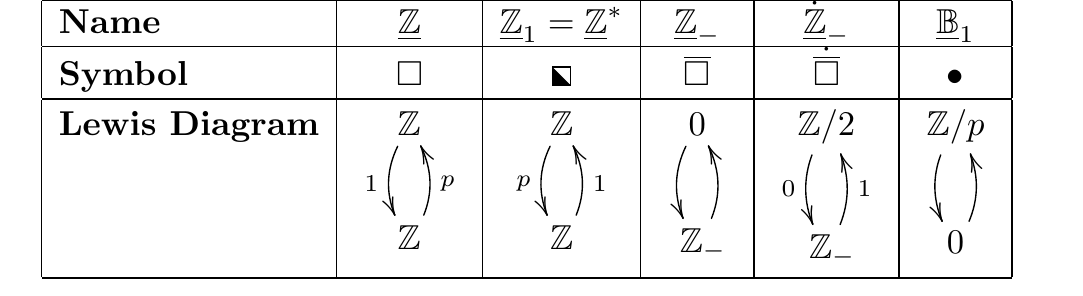}
\caption{Table of $C_p$ Mackey functors}\label{tab-Cp}
\end{table}

Notice that $\overline{\square}$ and $\dot{\overline{\square}}$ are only defined for $p = 2$.\\

Usually, we will write our index $\star = * - V$ for $V \in RO(G)$ with $V^G = 0$. The reason is
\[
    \U{\pi}_{* - V}(H\UZ) \cong \U{H}_*(S^V;\UZ).
\]
In this way, it is easier to compare and verify the result for some special $V$, especially those that $S^V$ has a simple cellular structure.

For $G = C_p$, by Proposition \ref{prop-gpcoh} we have
\[
    \U{\pi}_{\star}(H\UZ^h)(G/G) = \ZZ[a_{\lambda},u_{\lambda}^{\pm}]/(pa_{\lambda}),
\]
\[
    \U{\pi}_{\star}(H\UZ^t)(G/G) = \ZZ/p[a_{\lambda}^{\pm},u_{\lambda}^{\pm}],
\]
and
\[
    \U{\pi}_{\star}(H\UZ_h)(G/G) = p\ZZ[u_{\lambda}^{\pm}] \oplus \ZZ/p\langle \Sigma^{-1} \frac{u_{\lambda}^i}{a_{\lambda}^j} \rangle,
\]
where $i \in \ZZ$ and $j > 0$.

Specifically, we see that
\[
    \U{\pi}_*(H\UZ_h)(G/G) = p\ZZ \oplus \ZZ/p \langle \Sigma^{-1} \frac{u_{\lambda}^i}{a_{\lambda}^i} \rangle \text{ for $i > 0$}.
\]
Since $\U{\pi}_*(H\UZ)$ is $\UZ$ concentrated in $* = 0$, by the long exact sequence, we see that
\[
    \U{\pi}_*(\widetilde{H\UZ})(G/G) = \ZZ/p[\frac{u_{\lambda}}{a_{\lambda}}].
\]
Now $\widetilde{H\UZ}$ is $a_{\lambda}$-periodic, therefore
\[
    \U{\pi}_{\star}(\widetilde{H\UZ})(G/G) = \ZZ/p[a_{\lambda}^{\pm},u_{\lambda}]
\]
and all $\UZ$-modules are $\bullet$.

Now, consider the connecting homomorphism
\[
    \U{\pi}_{\star}(\widetilde{H\UZ}) \rightarrow \U{\pi}_{\star-1}(H\UZ_h).
\]

If $\star = i - m\lambda$ for $m \geq 0$, then
\[
\U{\pi}_{* - m\lambda}(\widetilde{H\UZ}) = \ZZ/p \langle \frac{a_{\lambda}^m u_{\lambda}^i}{a_{\lambda}^i} \rangle \textrm{For $i \geq 0$.}
\]
and
\[
\U{\pi}_{* - m\lambda}(H\UZ_h) = \ZZ \langle pu_{\lambda}^m \rangle \oplus \ZZ/p \langle \Sigma^{-1}\frac{u_{\lambda}^m u_{\lambda}^i}{a_{\lambda}^i} \rangle \textrm{For $i > 0$.}
\]
Therefore, for $0 \leq i < m$, elements $\frac{a_{\lambda}^m u_{\lambda}^i}{a_{\lambda}^i} = a_{\lambda}^{m-i} u_{\lambda}^i$ maps to $0$ under connecting homomorphism, and thus gives elements in $\U{\pi}_{2i - m\lambda}(H\UZ)$. For $i = m$, we have a nontrivial extension
\[
    0 \rightarrow \ZZ \langle pu_{\lambda}^m \rangle \rightarrow \ZZ \langle u_{\lambda}^m \rangle \rightarrow \ZZ/p\langle u_{\lambda}^m \rangle \rightarrow 0
\]
obtained from comparison with the bottom row of the Tate diagram. In terms of $\UZ$-modules, it is
\[
    \U{0} \rightarrow \twobox \rightarrow \square \rightarrow \bullet \rightarrow \U{0}.
\]

The following picture shows the case $m = 2$, with the bottom row $\U{\pi}_{*-2\lambda}(\widetilde{H\UZ})$ and top row $\U{\pi}_{*-2\lambda}(H\UZ_h)$. Arrows indicate connecting homomorphism, and green vertical line means an extension involving an exotic restriction.

\begin{center}
\includegraphics{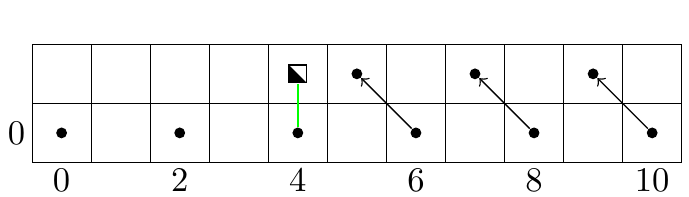}
\end{center}

By considering all $m \geq 0$, this gives exactly the part $\U{BB}_{C_p}$ in Proposition \ref{prop-au}.

If $\star = i - m\lambda$ for $m < 0$, since the source of connecting homomorphism only exits in degrees where $i \geq 0$, the element $u_{\lambda}^m$ would not receive a nontrivial extension, and elements $\Sigma^{-1} \frac{u_{\lambda}^m u_{\lambda}^i}{a_{\lambda}^i}$ for $0 < i < -m$ will not be killed by the connecting homomorphism. For $m = -2$, the following picture shows the connecting homomorphism.

\begin{center}
\includegraphics{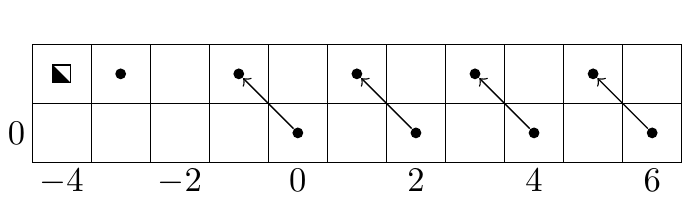}
\end{center}

By summarizing the computation, we have
\begin{prop}\label{prop-Cporientable}
If $G = C_p$ and $\star \in RO(G)$ is orientable, then
\begin{align*}
    \U{\pi}_{\star}(H\UZ)(G/G) = & \ZZ[u_{\lambda},a_{\lambda}]/(pa_{\lambda}) \oplus p\ZZ[u_{\lambda}^{-i}] & \textrm{for $i > 0$}\\
                                 & \oplus \ZZ/p \langle \Sigma^{-1}u_{\lambda}^{-j}a_{\lambda}^{-k} \rangle & \textrm{for $j,k > 0$.}
\end{align*}
As $\UZ$-modules, each monomial contains powers of $a_{\lambda}$ generates a $\bullet$, $u_{\lambda}^i$ generates $\square$ and $pu_{\lambda}^{-i}$ generates $\twobox$.
\end{prop}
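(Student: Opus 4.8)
The plan is to run the four-step induction sketched above in the base case $G = C_p$, where the Tate diagram simplifies because $E\mathcal{P} = EG$ (Remark~\ref{rmk-geofp}): we obtain two cofibre sequences $H\UZ_h \to H\UZ \to \widetilde{H\UZ}$ and $H\UZ_h \to H\UZ^h \to H\UZ^t$, and smashing with $S^V$ for $V^G = 0$ turns all the associated long exact sequences into $RO(G)$-graded exact sequences of $\UZ$-modules. First I would record the bottom row of the diagram using Proposition~\ref{prop-gpcoh}: namely $\U{\pi}_\star(H\UZ^h)(G/G) = \ZZ[a_\lambda, u_\lambda^{\pm}]/(p a_\lambda)$ straight from group cohomology, $\U{\pi}_\star(H\UZ^t)(G/G) = \ZZ/p[a_\lambda^{\pm}, u_\lambda^{\pm}]$ by inverting $a_\lambda$ (Lemma~\ref{lem-inverta}), and $\U{\pi}_\star(H\UZ_h)(G/G) = p\ZZ[u_\lambda^{\pm}] \oplus \ZZ/p\langle \Sigma^{-1} u_\lambda^i a_\lambda^{-j}\rangle_{j>0}$ as the kernel and cokernel of the $a_\lambda$-localization map, together with the Mackey functor labels listed in that proposition (the free generators of $\U{\pi}_\star(H\UZ_h)$ carry $\twobox$, the torsion generators carry $\bullet$).

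Next I would pin down $\widetilde{H\UZ}$. Working first in integer grading, the long exact sequence of $H\UZ_h \to H\UZ \to \widetilde{H\UZ}$, combined with the fact that $\U{\pi}_*(H\UZ)$ is $\UZ$ concentrated in degree $0$, forces $\U{\pi}_*(\widetilde{H\UZ})(G/G) = \ZZ/p[u_\lambda a_\lambda^{-1}]$: the summand $p\ZZ$ of $\U{\pi}_0(H\UZ_h)$ accounts for $p\ZZ \subset \ZZ$, and each $\ZZ/p$ of $\U{\pi}_*(H\UZ_h)$ in odd degree is the image of a $\ZZ/p$ in the adjacent even degree. Since $\widetilde{H\UZ} \simeq \widetilde{EG} \wedge H\UZ$ is $a_\lambda$-periodic, this upgrades to $\U{\pi}_\star(\widetilde{H\UZ})(G/G) = \ZZ/p[a_\lambda^{\pm}, u_\lambda]$ with every level a $\bullet$.

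The last step is to analyze the connecting homomorphism $\partial\colon \U{\pi}_\star(\widetilde{H\UZ}) \to \U{\pi}_{\star-1}(H\UZ_h)$, writing $\star = i - m\lambda$. For $m \ge 0$ the source in degree $* - m\lambda$ is $\ZZ/p\langle a_\lambda^{m-i}u_\lambda^i\rangle$ for $i \ge 0$ and the target is $\ZZ\langle p u_\lambda^m\rangle \oplus \ZZ/p\langle \Sigma^{-1} u_\lambda^{m+i} a_\lambda^{-i}\rangle_{i>0}$; for $0 \le i < m$ the genuine class $a_\lambda^{m-i}u_\lambda^i$ maps to $0$ and survives to $\U{\pi}_{2i-m\lambda}(H\UZ)$ (these reassemble into the subring $\U{BB}_{C_p}$), for $i = m$ the class $u_\lambda^m$ also maps to $0$ and fits with $p u_\lambda^m$ into an extension $\U{0} \to \twobox \to \square \to \bullet \to \U{0}$, and for $i > m$ the map $\partial$ is an isomorphism onto the torsion summand, so those classes do not persist. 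For $m < 0$ the source of $\partial$ is nonzero only when $i \ge 0$, so $u_\lambda^m$ receives no extension and is detected by $p u_\lambda^m$ from $\U{\pi}_\star(H\UZ_h)$ (a $\twobox$), while the classes $\Sigma^{-1} u_\lambda^m u_\lambda^i a_\lambda^{-i}$ with $0 < i < -m$ persist, contributing the summand $\ZZ/p\langle \Sigma^{-1} u_\lambda^{-j} a_\lambda^{-k}\rangle_{j,k>0}$. Reading off the three families, with the $\UZ$-module labels inherited from the rows of the Tate diagram, gives the stated answer. The main obstacle is the extension problem at $i = m$: one must show the extension $\U{0} \to \twobox \to \square \to \bullet \to \U{0}$ is the nonsplit one, equivalently that the restriction on $u_\lambda^m$ is surjective. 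I would settle this by comparison with the bottom row of the diagram, where the corresponding class in $\U{\pi}_\star(H\UZ^h)$ is an honest power of the invertible class $u_\lambda$, transporting the identification along the equivalence $H\UZ_h \simeq (H\UZ^h)_h$.
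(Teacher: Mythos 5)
Your proposal is correct and follows essentially the same route as the paper: read off the bottom row from Proposition \ref{prop-gpcoh}, determine $\U{\pi}_{\star}(\widetilde{H\UZ})$ from the integer-graded long exact sequence plus $a_{\lambda}$-periodicity, then analyze the connecting homomorphism separately for $m \geq 0$ and $m < 0$. Your resolution of the extension at $i = m$ (showing the restriction on $u_{\lambda}^m$ is surjective by comparing with the homotopy fixed point row, where $u_{\lambda}$ is invertible) is exactly the paper's ``comparison with the bottom row of the Tate diagram.''
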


Figure \ref{fig-Cp} shows the result intuitively, with horizontal coordinate the trivial representation and vertical coordinate $\lambda$. Vertical lines mean $a_{\lambda}$-multiplications.
\begin{figure}
\includegraphics{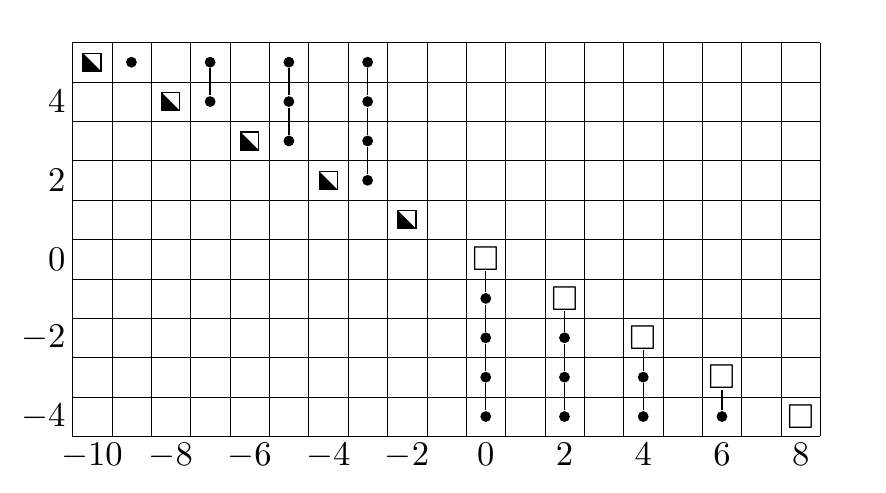}
\caption{$\U{\pi}_{\star}(H\UZ)$ for $G = C_p$ and $p$ odd}\label{fig-Cp}
\end{figure}

For $p = 2$ the above proposition covers half of $RO(C_2)$. We can identify $\lambda = 2\sigma$ and smash $S^V$ with the cofibre sequence
\[
    C_{2+} \rightarrow S^0 \rightarrow S^{\sigma}
\]
to compute $\U{\pi}_{\star}(H\UZ)$. By definition, $\U{\pi}_{\star}(C_{2+}\wedge X) \cong \U{\pi}_{\star}(X)_{C_{2+}}$, and $(\UB_1)_{C_{2+}} \cong \U{0}$ and $\UZ_{C_{2+}} \cong \UZ^*_{C_{2+}} \cong \U{\ZZ[C_2]}$. Finally, we have exact sequences
\[
   \U{0} \rightarrow \UZ_- \rightarrow \U{\ZZ[C_2]} \rightarrow \UZ \rightarrow \UB_1 \rightarrow 0.
\]
and
\[
   \U{0} \rightarrow \UZ_- \rightarrow \U{\ZZ[C_2]} \rightarrow \UZ^* \rightarrow \U{0}
\]
Thus, if we compute $\U{H}_{*}(S^{V+ \sigma};\UZ)$ from $\U{H}_{*}(S^V;\UZ)$, all $\UB_1$ will remain, and each $\UZ$ in $S^V$ will be replaced by a new $\UB_1$, with a $\UZ_-$ in degree $1$ higher. For $\UZ^*$, a new $\UZ_-$ appear in degree $1$ higher. However, if $V \neq -2\sigma$, then in degree $|V| + 1$, there is also a $\UB_1$, and the extension problem here can be solved by the following lemma.
\begin{lem}[{\cite[Lemma~4.2]{HHR:C4}}]\label{lem-TAR}
Let $G = C_{2^n}$ with sign representation $\sigma$. Let $K \subset G$  be the index $2$-subgroup and $X$ a $G$-spectrum. Then we have an exact sequence of abelian groups
\[
    \U{\pi}_{\star}(X)(G/K) \xrightarrow{Tr^G_K} \U{\pi}_{\star}(X)(G/G) \xrightarrow{a_{\sigma}} \U{\pi}_{\star-\sigma}(X)(G/G) \xrightarrow{Res^G_K} \U{\pi}_{\star-\sigma}(X)(G/K).
\]
\end{lem}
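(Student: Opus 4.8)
The plan is to obtain the asserted sequence as a four–term segment of the long exact homotopy sequence attached to a cofibre sequence of $G$-spectra, after identifying its terms and its maps.

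Write $\sigma$ for the sign representation of $G = C_{2^n}$, so that $K = C_{2^{n-1}}$ is exactly the kernel of the sign character, and hence the unit sphere $S(\sigma)$ is isomorphic to $G/K$ as a $G$-set. Collapsing $S(\sigma)$ to a point and passing to one-point compactifications yields the cofibre sequence of $G$-spectra
\[
    G/K_+ \xrightarrow{c} S^0 \xrightarrow{a_\sigma} S^\sigma ,
\]
where $c$ is the stable collapse and $a_\sigma$ is the map of Definition \ref{def-au}; this is the cofibre sequence already used above. I would smash it with $X$ and apply $\U{\pi}_{\star}(-)(G/G)$, extended over $RO(G)$ as usual by first smashing with representation spheres, to get a long exact sequence
\[
    \cdots \to \U{\pi}_{\star}(G/K_+ \wedge X)(G/G) \xrightarrow{c_*} \U{\pi}_{\star}(X)(G/G) \xrightarrow{(a_\sigma)_*} \U{\pi}_{\star}(\Sigma^\sigma X)(G/G) \xrightarrow{\partial} \U{\pi}_{\star-1}(G/K_+ \wedge X)(G/G) \to \cdots .
\]

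Next I would identify the three outer terms. Since $G/K_+$ is a finite $G$-set it is self-dual, so $G/K_+ \wedge X \simeq Fun_G(G/K_+, X)$ and therefore $\U{\pi}_{\star}(G/K_+ \wedge X)(G/G) \cong \U{\pi}_{\star}(X)(G/K)$ naturally; tautologically $\U{\pi}_{\star}(\Sigma^\sigma X)(G/G) = \U{\pi}_{\star-\sigma}(X)(G/G)$; and since $i^*_K\sigma$ is the trivial one-dimensional $K$-representation, the boundary target is $\U{\pi}_{\star-1}(G/K_+ \wedge X)(G/G) \cong \U{\pi}_{\star-1}(X)(G/K) = \U{\pi}_{\star-\sigma}(X)(G/K)$. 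Under these identifications, $(a_\sigma)_*$ is multiplication by $a_\sigma$ straight from the definition of the $RO(G)$-graded module structure on $\U{\pi}_{\star}(X)$. The collapse $c$ is Spanier--Whitehead dual to the pretransfer $S^0 \to G/K_+$, and precomposition with the pretransfer is by definition the transfer, so $c_*$ becomes $Tr^G_K$. Finally, restricting the cofibre sequence along $i^*_K$ turns it into a suspension of the fold cofibre sequence $S^0 \vee S^0 \to S^0 \to S^1$, and chasing the connecting map through this identification (together with the description of $Res^G_K$ as precomposition with the collapse $G/K_+ \to S^0$) shows that $\partial$ becomes $Res^G_K$; alternatively one may cite the standard identification of these connecting maps in equivariant stable homotopy theory.

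Assembling the identifications, the displayed four consecutive terms of the long exact sequence read
\[
    \U{\pi}_{\star}(X)(G/K) \xrightarrow{Tr^G_K} \U{\pi}_{\star}(X)(G/G) \xrightarrow{a_{\sigma}} \U{\pi}_{\star-\sigma}(X)(G/G) \xrightarrow{Res^G_K} \U{\pi}_{\star-\sigma}(X)(G/K),
\]
and exactness at the two middle terms is exactness of the long exact sequence there, which is the claim. The only real work is the bookkeeping in the middle step: checking that the collapse and connecting maps of the isotropy cofibre sequence recover the Mackey structure maps $Tr^G_K$ and $Res^G_K$ under the self-duality (Wirthm\"uller) identification, and tracking the degree shift carefully — the sign line $\sigma$ becoming a trivial line upon restriction to $K$ — so that the source and target of $Res^G_K$ line up. Everything else is formal.
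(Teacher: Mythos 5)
Your argument is correct and is essentially the intended one: the paper does not prove this lemma but cites \cite[Lemma~4.2]{HHR:C4}, whose proof is precisely this analysis of the cofibre sequence $G/K_+ \rightarrow S^0 \xrightarrow{a_\sigma} S^\sigma$ smashed with $X$, with the outer terms and maps identified as $Tr^G_K$ and $Res^G_K$ via the self-duality of $G/K_+$ and the triviality of $i^*_K\sigma$. The one delicate point, identifying the Puppe boundary with $Res^G_K$, is adequately covered by your restriction-to-$K$/duality bookkeeping (or by citing the standard identification), so nothing essential is missing.
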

By degree reason, the torsion in $\U{H}_{-n}(S^{-n\sigma};\UZ)$ for $n > 2$ odd is annihilated by $a_{\sigma}$, therefore is in the image of transfer. Thus we know that
\[
    \U{H}_{-n}(S^{-n\sigma};\UZ) \cong \dot{\UZ}_- \text{ for $n >2$ and odd}.
\]
Thus we have
\begin{prop}\label{prop-C2}
For $G = C_2$,
\begin{align*}
    \U{\pi}_{\star}(H\UZ)(G/G) = & \ZZ[u_{2\sigma},a_{\sigma}]/(2a_{\sigma}) \oplus 2\ZZ[u_{2\sigma}^{-i}] & \textrm{for $i > 0$}\\
                            & \oplus \ZZ/2 \langle \Sigma^{-1}u_{2\sigma}^{-j}a_{\sigma}^{-k} \rangle & \textrm{for $j , k> 0$}
\end{align*}
As $\UZ$-modules, each monomial with powers of $a_{\sigma}$ generates $\UB_1$, except the power is $-1$, then it generates $\dot{\UZ}_-$. Each power of $u_{2\sigma}$ generates $\UZ$ and in $\U{\pi}_{n - n\sigma}(H\UZ)$ for $n > 0$ odd, there is a $\UZ_-$, which has trivial $G/G$-level.
\end{prop}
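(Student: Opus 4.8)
The plan is to deduce the full answer from the orientable computation of Proposition~\ref{prop-Cporientable} by a single application of the cofibre sequence
\[
    C_{2+} \rightarrow S^0 \rightarrow S^{\sigma}.
\]
For $G = C_2$ the orientable classes are exactly those in the subgroup of $RO(C_2)$ spanned by $1$ and $\lambda = 2\sigma$, so every remaining class has the form $V + \sigma$ with $V$ orientable, and it suffices to compute $\U{H}_{*}(S^{V+\sigma};\UZ)$ from $\U{H}_{*}(S^{V};\UZ)$. First I would smash $S^{V}$ with the cofibre sequence and with $H\UZ$ and pass to $RO(C_2)$-graded homotopy Mackey functors, obtaining the long exact sequence relating $\U{H}_{*}(S^{V};\UZ)$, $\U{H}_{*}(S^{V+\sigma};\UZ)$ and $\U{\pi}_{*}(C_{2+}\wedge S^{V}\wedge H\UZ)\cong \U{H}_{*}(S^{V};\UZ)_{C_{2+}}$. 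The last term only records underlying homotopy, and since $(\UB_1)_{C_{2+}}\cong\U{0}$ while $\UZ_{C_{2+}}\cong\UZ^{*}_{C_{2+}}\cong\U{\ZZ[C_2]}$ it is a single induced Mackey functor $\U{\ZZ[C_2]}$ placed in the top underlying degree.

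Next I would plug in the two four-term exact sequences
\[
    \U{0}\rightarrow\UZ_-\rightarrow\U{\ZZ[C_2]}\rightarrow\UZ\rightarrow\UB_1\rightarrow\U{0},
    \qquad
    \U{0}\rightarrow\UZ_-\rightarrow\U{\ZZ[C_2]}\rightarrow\UZ^{*}\rightarrow\U{0},
\]
which identify the augmentation and connecting maps in the long exact sequence. Reading off degrees, adjoining a $\sigma$ has the following effect: every $\UB_1$ summand of $\U{H}_{*}(S^{V};\UZ)$ is unchanged, each $\UZ$ summand in degree $d$ is traded for a $\UB_1$ in degree $d$ together with a $\UZ_-$ in degree $d+1$, and each $\UZ^{*}$ summand in degree $d$ yields a $\UZ_-$ in degree $d+1$. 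Running this once over all orientable $V$ and combining with the orientable half of $\U{\pi}_{\star}(H\UZ)$ reproduces the additive description in the statement: the polynomial block $\ZZ[u_{2\sigma},a_{\sigma}]/(2a_{\sigma})$, the classes $2\ZZ[u_{2\sigma}^{-i}]$, and the $a_\sigma$-divided torsion $\ZZ/2\langle \Sigma^{-1}u_{2\sigma}^{-j}a_{\sigma}^{-k}\rangle$, with the new $\UZ_-$'s appearing precisely in the degrees $\U{\pi}_{n-n\sigma}(H\UZ)$ with $n$ odd, and each summand acquiring the claimed $\UZ$-module type ($\UB_1$ for the $a_\sigma$-monomials, a form of $\UZ$ for powers of $u_{2\sigma}$).

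The one nontrivial point, and the main obstacle, is the extension problem the long exact sequence leaves in underlying degree $|V|+1$ when $V\neq -2\sigma$: there one finds both a $\UB_1$ (from shifting a $\UZ$) and a $\UZ_-$ in the same bidegree, and the sequence alone does not decide whether the transfer to the $G/G$-level of the resulting Mackey functor is zero (giving the split sum) or onto (giving the nonsplit $\dot{\UZ}_-$). I would resolve this with the transfer--restriction exactness of Lemma~\ref{lem-TAR}: in the relevant bidegree, written $\star = -n+n\sigma$ with $n>2$ odd, the neighbouring class $\star-\sigma$ is orientable, and in the already-computed orientable part $\U{\pi}_{\star-\sigma}(H\UZ)$ vanishes in that degree; hence $a_\sigma\colon \U{\pi}_{\star}(H\UZ)(G/G)\to\U{\pi}_{\star-\sigma}(H\UZ)(G/G)$ is zero, so by exactness the full $G/G$-level ($\cong \ZZ/2$) lies in the image of $Tr^{G}_{e}$, forcing the Mackey functor to be $\dot{\UZ}_-$. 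The value $V=-2\sigma$ is exceptional because, by Lemma~\ref{lem-miracle}, $\U{H}_{-2}(S^{-2\sigma};\UZ)\cong\UZ^{*}$ is torsion-free and no ambiguity arises. Finally, propagating these identifications through the $a_\sigma$- and $u_{2\sigma}$-periodicities of the long exact sequence, and verifying that the resulting $a_\sigma$-multiplications agree with the asserted ring, completes the computation.
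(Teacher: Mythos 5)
Your argument is essentially the paper's own proof: the same cofibre sequence $C_{2+}\to S^0\to S^{\sigma}$, the same two exact sequences of $\UZ$-modules describing the effect of adjoining $\sigma$, and the same resolution of the single extension ambiguity via Lemma \ref{lem-TAR} together with the vanishing of the orientable group one $\sigma$ lower, which forces the $G/G$-level into the image of the transfer and yields $\dot{\UZ}_-$. (One minor bookkeeping slip that does not affect the argument: in the ambiguous bidegree $|V|+1$ the $\UB_1$ is a pre-existing torsion class of $S^{V}$ that survives, while the new $\UZ_-$ comes from the $\UZ^*$ at the bottom of $S^{V}$, rather than the $\UB_1$ arising ``from shifting a $\UZ$''.)
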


Figure \ref{fig-C2} shows the result for $C_2$, with horizontal coordinate the trivial representations and vertical coordinate $\sigma$. Vertical lines are $a_{\sigma}$-multiplications.

\begin{figure}
\includegraphics{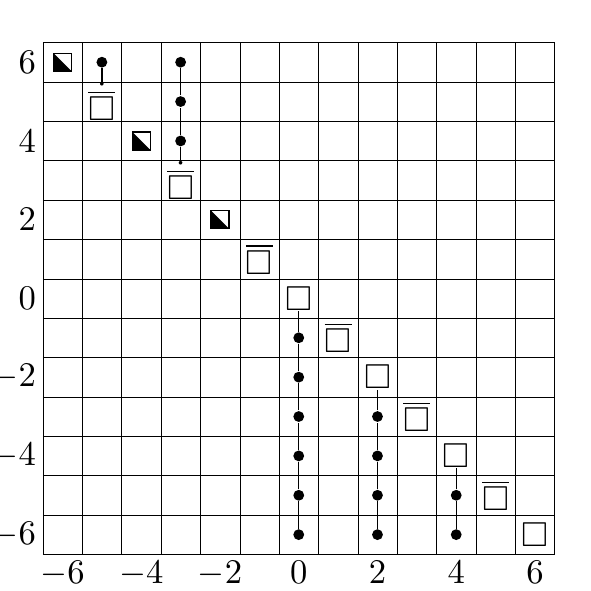}
\caption{$\U{\pi}_{\star}(H\UZ)$ for $G = C_2$}\label{fig-C2}
\end{figure}

\begin{rmk}
For $G = C_p$, the Tate diagram may not be the cleanest way of doing such a computation. Modulo trivial representations, all $V \in RO(G)$ is either an actual representation or the opposite of one, thus $S^V$ has very simple cellular structure. However, the Tate diagram computation can keep track of the multiplicative structure, and is the one that can be easily generalized.
\end{rmk}

Now we compute $\U{\pi}_{\star}(H\UZ)$ for $G = C_{p^2}$. First we need symbols for all $\UZ$-modules that appear which are in Table \ref{tab-Cp2}. We use the same symbol for a $C_{p}$-$\UZ$-module and its pullback in $C_{p^2}$.

\begin{table}
\includegraphics{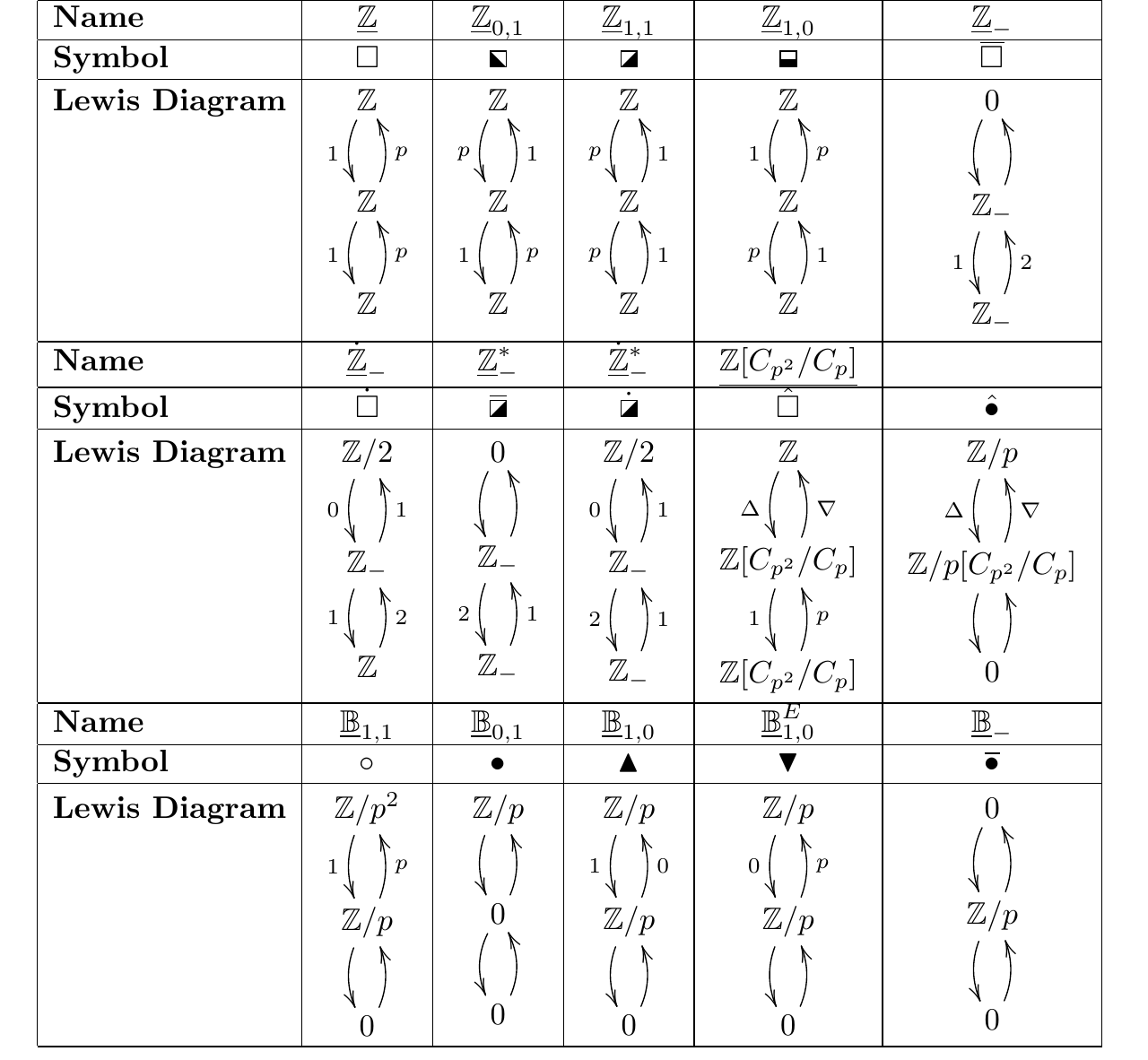}
\caption{Table of $C_{p^2}$ Mackey functors}\label{tab-Cp2}
\end{table}

First, we want to describe $\U{\pi}_{\star}(\widetilde{H\UZ})$. We know $\U{\pi}_V(H\UZ)$ for all $V$ coming from $RO(C_{p^2}/C_p)$, which is exactly Proposition \ref{prop-Cporientable} with replacing $\lambda$ in $C_p$ by its image $\lambda_1$ in $C_{p^2}$. By Proposition \ref{prop-gpcoh}, we have
\[
    \U{\pi}_{\star}(H\UZ_h)(G/G) = p^2\ZZ[u_{\lambda_0}^{\pm},u_{\lambda_1}^{\pm}] \oplus \ZZ/p^2 \langle \Sigma^{-1}u_{
    \lambda_0}^m u_{\lambda_1}^n\frac{u_{\lambda_0}^i}{a_{\lambda_0}^i}\rangle \ \textrm{for $m,n \in \ZZ$ and $i > 0$.}
\]
In particular, in degree $\star = * - n\lambda_1$, we have
\[
    \U{\pi}_{* - n\lambda_1}(H\UZ_h) = p^2\ZZ \langle u_{\lambda_1}^n \rangle \oplus \ZZ/p^2 \langle \Sigma^{-1} u_{\lambda_1}^n \frac{u_{\lambda_0}^i}{a_{\lambda_0}^i} \rangle \textrm{ for $i > 0$.}
\]
On the other hand, if $n \geq 0$,
\[
    \U{\pi}_{* - n\lambda_1}(H\UZ) = \ZZ \langle u_{\lambda_1}^n \rangle \oplus \ZZ/p \langle a_{\lambda_1}^iu_{\lambda_1}^{n-i}\rangle \ \textrm{for $0 < i \leq n$}
\]
The map $H\UZ_h \rightarrow H\UZ$ induces $\UZ^* \rightarrow \UZ$ on forms of $\UZ$, and trivial otherwise by degree reason. So we know that for $n \geq 0$
\begin{align*}
    \U{\pi}_{*-n\lambda_1}(\widetilde{H\UZ})(G/G) &  =  \ZZ/p \langle a_{\lambda_1}^iu_{\lambda_1}^{n-i}\rangle &\textrm{for $0 < i \leq n$}\\
    &\oplus \ZZ/p^2 \langle u_{\lambda_1}^n\frac{u_{\lambda_0}^j}{a_{\lambda_0}^j}\rangle &\textrm{for $j \geq 0$.}
\end{align*}
Here all $p$-torsion generates $\UB_{0,1}$ and $p^2$-torsion generates $\UB_{1,1}$.

If $n < 0$, $\U{\pi}_{* - n\lambda_1}(H\UZ_h)$ has the same description as above. However for $\U{\pi}_{* - n\lambda_1}(H\UZ)$ we have
\[
    \U{\pi}_{*-n\lambda_1}(H\UZ)(G/G) = p\ZZ \langle u_{\lambda_1}^n \rangle \oplus \ZZ/p \langle \Sigma^{-1} u_{\lambda_1}^n \frac{u_{\lambda_1}^i}{a_{\lambda_1}^i} \rangle \ \textrm{for $0 < i < |n|$}
\]
with $pu_{\lambda_1}^n$ generates $\UZ_{0,1} = \Psi^*_{C_p}(\UZ_1)$ and all torsions are $\UB_{0,1} = \Psi^*_{C_p}(\UB_1)$. The map $H\UZ_h \rightarrow H\UZ$ induces an isomorphism on the $G/e$-level of forms of $\UZ$, since it is an underlying equivalence. Thus on $\UZ$-modules we have a short exact sequence
\[
    \U{0} \rightarrow \fourbox \rightarrow \twobox \rightarrow \JJ \rightarrow \U{0}.
\]
On torsion classes, by the gold relation in Proposition \ref{prop-au}, we have $a_{\lambda_1}u_{\lambda_0} = pa_{\lambda_0}u_{\lambda_1}$, therefore
\[
    \frac{u_{\lambda_0}^i}{a_{\lambda_0}^i} = p^i \frac{u_{\lambda_1}^i}{a_{\lambda_1}^i} = 0
\]
since the latter is a $p$-torsion and $i > 0$. Therefore, the map is trivial on all torsion classes. For $n < 0$ we have
\begin{align*}
    \U{\pi}_{*-n\lambda_1}(\widetilde{H\UZ})(G/G) =\ZZ/p\langle pu_{\lambda_1}^n \rangle &\oplus \ZZ/p \langle \Sigma^{-1}u_{\lambda_1}^{n}\frac{u_{\lambda_1}^i}{a_{\lambda_1}^i} \rangle &\textrm{for $0 < i < |n|$}\\
    &\oplus \ZZ/p^2 \langle u_{\lambda_1}^{n}\frac{u_{\lambda_0}^j}{a_{\lambda_0}^j}\rangle &\textrm{for $j > 0$}
\end{align*}
In terms of $\UZ$-modules, the class $pu_{\lambda_1}^n$ generates a $\UB_{1,0}$, all other $p$-torsions generate $\UB_{0,1}$ and $p^2$-torsions generate $\UB_{1,1}$.

Summarizing the computation above, we have
\begin{prop}\label{prop-Cp2-a-local}
For $G = C_{p^2}$, the Green functor structure of $\U{\pi}_{\star}(\widetilde{H\UZ})$ can be described as follows:
If $n \geq 0$,
\begin{align*}
    \U{\pi}_{*-n\lambda_1}(\widetilde{H\UZ})(G/G) &  =  \ZZ/p \langle a_{\lambda_1}^iu_{\lambda_1}^{n-i}\rangle &\textrm{for $0 < i \leq n$}\\
    &\oplus \ZZ/p^2 \langle u_{\lambda_1}^n\frac{u_{\lambda_0}^j}{a_{\lambda_0}^j}\rangle &\textrm{for $j \geq 0$.}
\end{align*}
All $p$-torsion generates $\UB_{0,1}$ and $p^2$-torsion generates $\UB_{1,1}$.

If $n < 0$,
\begin{align*}
    \U{\pi}_{*-n\lambda_1}(\widetilde{H\UZ})(G/G) =\ZZ/p\langle pu_{\lambda_1}^n \rangle &\oplus \ZZ/p \langle \Sigma^{-1}u_{\lambda_1}^{n}\frac{u_{\lambda_1}^i}{a_{\lambda_1}^i} \rangle &\textrm{for $0 < i < |n|$}\\
    &\oplus \ZZ/p^2 \langle u_{\lambda_1}^{n}\frac{u_{\lambda_0}^j}{a_{\lambda_0}^j}\rangle &\textrm{for $j > 0$}
\end{align*}
The class $pu_{\lambda_1}^n$ generates a $\UB_{1,0}$, all other $p$-torsions generate $\UB_{0,1}$ and $p^2$-torsions generate $\UB_{1,1}$.

Finally, every element is $a_{\lambda_0}$-periodic (see Lemma \ref{lem-inverta}), that is,
\[
    \U{\pi}_{* - n\lambda_1 - m\lambda_0}(\widetilde{H\UZ}) = a_{\lambda_0}^m \U{\pi}_{* - n\lambda_1}(\widetilde{H\UZ})
\]

Multiplication is determined by name of elements and fraction form of the gold relation, and product of two elements with $\Sigma^{-1}$ is $0$.
\end{prop}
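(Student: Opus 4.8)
The plan is to run the four-step induction sketched just above the statement, with the top row of the Tate diagram as the computational engine. Since the Tate diagram consists of cofibrations, smashing the cofibre sequence $H\UZ_h \to H\UZ \to \widetilde{H\UZ}$ with representation spheres and passing to homotopy yields a long exact sequence of $RO(G)$-graded $\UZ$-modules
\[
    \cdots \to \U{\pi}_{V}(H\UZ_h) \to \U{\pi}_{V}(H\UZ) \to \U{\pi}_{V}(\widetilde{H\UZ}) \to \U{\pi}_{V-1}(H\UZ_h) \to \cdots .
\]
We already know $\U{\pi}_{\star}(H\UZ_h)$ from Proposition \ref{prop-gpcoh}, and for every $V$ containing no copy of $\lambda_0$ -- equivalently, every $V$ in the image of $RO(C_{p^2}/C_p)\to RO(G)$ -- Corollary \ref{cor-pullback} identifies $\U{\pi}_V(H\UZ)$ with the pullback $\Psi^*_{C_p}$ of the $C_p$ answer of Proposition \ref{prop-Cporientable}, with $\lambda$ replaced by $\lambda_1$. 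Hence the computation of $\U{\pi}_V(\widetilde{H\UZ})$ in the range ``no $\lambda_0$'' reduces to understanding the comparison map $\U{\pi}_V(H\UZ_h)\to\U{\pi}_V(H\UZ)$; once that range is settled, Lemma \ref{lem-inverta} -- which gives $\U{\pi}_\star(\widetilde{H\UZ})\cong a_{\lambda_0}^{-1}\U{\pi}_\star(H\UZ)$, so $a_{\lambda_0}$ acts invertibly -- propagates the answer to all of $RO(G)$ by $a_{\lambda_0}$-periodicity.

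The first substantive step is to pin down the comparison map $\U{\pi}_V(H\UZ_h)\to\U{\pi}_V(H\UZ)$ for $V = * - n\lambda_1$. Because $EG_+\to S^0$ is a nonequivariant equivalence, $H\UZ_h\to H\UZ$ is an underlying equivalence, so the map is an isomorphism on every $G/e$-level. On the torsion-free summand -- a form of $\UZ$ on each side, equal to $\UZ^*$ on the $H\UZ_h$ side -- it is therefore the unique map of forms of $\UZ$ restricting to an isomorphism underneath; its cokernel, computed from the relevant forms of $\UZ$, is $\UB_{1,1}$ when $n\ge 0$ (target $\UZ$) and the corresponding cokernel of $\UZ^*\to\UZ_{0,1}$ when $n<0$. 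On the torsion summands the map vanishes: this is visible by a parity count in the degrees where it applies and, in general, directly from the gold relation $a_{\lambda_1}u_{\lambda_0}=p\,a_{\lambda_0}u_{\lambda_1}$ of Proposition \ref{prop-au}, which forces $u_{\lambda_0}^{i}/a_{\lambda_0}^{i}=p^{i}u_{\lambda_1}^{i}/a_{\lambda_1}^{i}=0$ in $\U{\pi}_\star(H\UZ)$ for $i>0$. Feeding this back into the long exact sequence presents $\U{\pi}_V(\widetilde{H\UZ})$ as an extension of $\ker\bigl(\U{\pi}_{V-1}(H\UZ_h)\to\U{\pi}_{V-1}(H\UZ)\bigr)$ by $\operatorname{coker}\bigl(\U{\pi}_{V}(H\UZ_h)\to\U{\pi}_{V}(H\UZ)\bigr)$, which on $G/G$-levels gives exactly the abelian groups in the two displays of the statement.

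What remains -- and this is where the real work lies -- is to determine the $\UZ$-module, not merely abelian-group, structure of these extensions: which generators carry exotic restrictions or transfers, and in particular how to distinguish $\UB_{1,1}$, $\UB_{1,0}$ and $\UB_{0,1}$ among the cokernel and kernel classes, with the bookkeeping split according to the sign of $n$. I would settle this by comparing with the bottom row of the Tate diagram -- the map $\widetilde{H\UZ}\to H\UZ^t$, whose homotopy is $a_{\lambda_0}^{-1}\U{\pi}_\star(H\UZ^h)$ by Lemma \ref{lem-inverta} and all of whose Mackey functors are $\UB_{1,1}$ by the group-cohomology part of Proposition \ref{prop-gpcoh} -- together with the already-established structure of the pulled-back $C_p$ answer; these two constraints pin down each Mackey functor. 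Finally, for the multiplicative structure: $\widetilde{H\UZ}=\widetilde{EG}\wedge H\UZ$ is a ring spectrum, so the products among the $a$- and $u$-type classes are forced by the names of the generators and the gold relation, while the product of two classes in the image of a connecting homomorphism vanishes by exactly the argument used in the $C_p$ case (Proposition \ref{prop-Cporientable}). The principal obstacle is thus not any single hard computation but the sustained bookkeeping of $\UZ$-module extensions across the long exact sequence and keeping the $n\ge 0$ and $n<0$ cases apart.
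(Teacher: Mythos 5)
Your proposal is correct and takes essentially the same route as the paper's own computation: the paper likewise runs the long exact sequence of $H\UZ_h \to H\UZ \to \widetilde{H\UZ}$, feeding in $\U{\pi}_{\star}(H\UZ^h)$/$\U{\pi}_{\star}(H\UZ_h)$ from Proposition \ref{prop-gpcoh} and the pulled-back $C_p$ answer for $\U{\pi}_{*-n\lambda_1}(H\UZ)$ via Corollary \ref{cor-pullback}, shows the comparison map is an underlying isomorphism on the forms of $\UZ$ and vanishes on torsion (degree/parity reasons for $n \geq 0$, the gold relation $a_{\lambda_1}u_{\lambda_0} = pa_{\lambda_0}u_{\lambda_1}$ for $n < 0$), and then spreads the answer over all of $RO(G)$ by $a_{\lambda_0}$-periodicity from Lemma \ref{lem-inverta}. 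The only inessential difference is your extra appeal to the bottom row of the Tate diagram for the Mackey-functor identifications: since the kernel and cokernel classes occupy disjoint integer degrees, no genuine extension problems arise, and each Mackey functor is read off directly either as a torsion piece of $\U{\pi}_{\star}(H\UZ_h)$ or $\U{\pi}_{\star}(H\UZ)$ or as the explicit cokernel of the map of forms of $\UZ$, which is how the paper proceeds.
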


Figure \ref{fig-tilde} shows $\U{\pi}_{\star}(\widetilde{H\UZ})$, with horizontal coordinate the trivial representations and vertical coordinate $\lambda_1$. Vertical lines are $a_{\lambda_1}$-multiplication, while dash lines mean sending generators to $p$-times generators and firm lines mean surjections. Since everything is $a_{\lambda_0}$-periodic, we omit $\lambda_0$-coordinate.
\begin{figure}[H]
\includegraphics{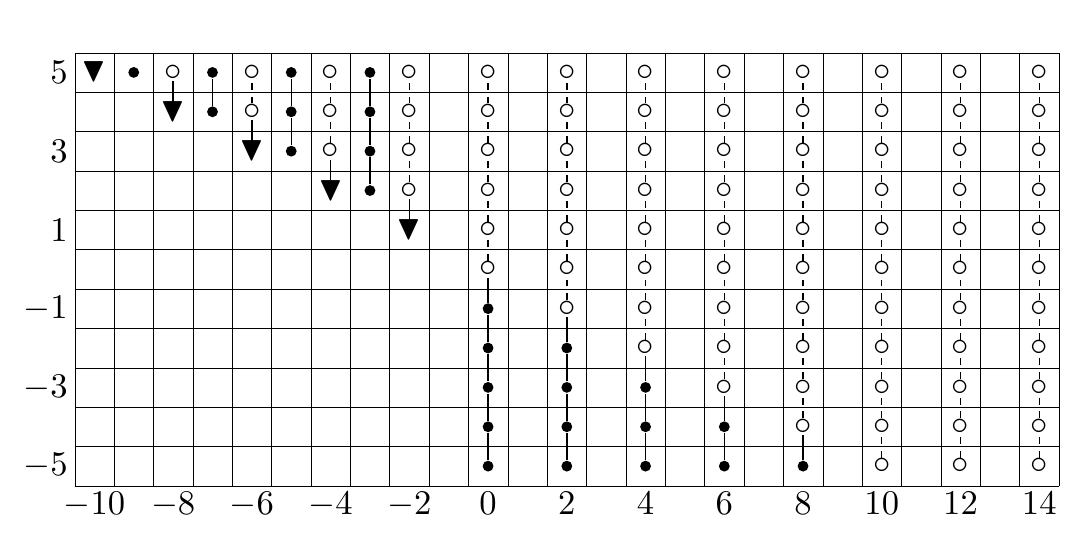}
\caption{$\U{\pi}_{\star}(\widetilde{H\UZ})$ for $G = C_{p^2}$ and $p$ odd}\label{fig-tilde}
\end{figure}

Now we understand $\U{\pi}_{\star}(H\UZ_h)$, $\U{\pi}_{\star}(\widetilde{H\UZ})$ and the connecting homomorphism between them, we can compute $\U{\pi}_{\star}(H\UZ)$. What essentially happen in this computation is that $H\UZ_h$ is $u_{\lambda_0}$-periodic while $\widetilde{H\UZ}$ is $a_{\lambda_0}$-periodic and this difference in periodicity produces a lot of classes in $H\UZ$.

First, by Corollary \ref{cor-pullback}, if $\star = * - n\lambda_1$, then $\U{\pi}_{\star}(H\UZ)$ can be computed from Proposition \ref{prop-Cporientable} by applying the pullback functor $\Psi^*_{C_p}$.

Then we start with $\U{\pi}_{* - n\lambda_1 - m\lambda_0}(H\UZ)$ for $n , m \geq 0$. By Proposition \ref{prop-gpcoh}, we have
\[
    \U{\pi}_{*-n\lambda_1-m\lambda_0}(H\UZ_h)(G/G) = p^2\ZZ \langle u_{\lambda_0}^m u_{\lambda_1}^n \rangle \oplus \ZZ/p^2 \langle \Sigma^{-1} u_{\lambda_0}^m u_{\lambda_1}^n \frac{u_{\lambda_0}^i}{a_{\lambda_0}^i} \rangle \ \textrm{for $i > 0$}.
\]
By Proposition \ref{prop-Cp2-a-local} we have
\begin{align*}
    \U{\pi}_{*-n\lambda_1-m\lambda_0}(\widetilde{H\UZ})(G/G) & = \ZZ/p \langle a_{\lambda_0}^m a_{\lambda_1}^iu_{\lambda_1}^{n-i} \rangle & \textrm{for $0 < i \leq n$}\\
    & \oplus \ZZ/p^2 \langle a_{\lambda_0}^m u_{\lambda_1}^n\frac{u_{\lambda_0}^j}{a_{\lambda_0}^j} \rangle & \textrm{for $0 \leq j$.}
\end{align*}
Simply by comparing names of elements, we see that elements $a_{\lambda_0}^m u_{\lambda_1}^n\frac{u_{\lambda_0}^j}{a_{\lambda_0}^j}$ for $j > m$ kill the corresponding elements with $\Sigma^{-1}$ under connecting homomorphism, while when $j = m$, it is $u_{\lambda_1}^n u_{\lambda_0}^m$, which is involved in an extension of the form
\[
 \U{0} \rightarrow \fourbox \rightarrow \square \rightarrow \circ \rightarrow \U{0},
\]
where the $\UZ_{1,1}$ is generated by $p^2u_{\lambda_0}^m u_{\lambda_1}^n$ in $H\UZ_h$. Therefore we see that for $m , n \geq 0$,
\begin{align*}
    \U{\pi}_{* - n\lambda_1 - m\lambda_0}(H\UZ) = \ZZ \langle u_{\lambda_0}^m u_{\lambda_1}^n \rangle &\oplus \ZZ/p \langle a_{\lambda_1}^ia_{\lambda_0}^{m}u_{\lambda_1}^{n-i} \rangle & \textrm{for $0 < i \leq n$}\\
    &\oplus \ZZ/p^2 \langle a_{\lambda_0}^ju_{\lambda_0}^{m-j}u_{\lambda_1}^n \rangle & \textrm{for $0 < j \leq m$},
\end{align*}
where $u_{\lambda_0}^m u_{\lambda_1}^n$ generates $\UZ$, all $p$-torsions generate $\UB_{0,1}$ and $p^2$-torsions generate $\UB_{1,1}$. By considering the gold relation $a_{\lambda_1}u_{\lambda_0} = p a_{\lambda_0}u_{\lambda_1}$, this is precisely $\U{BB}_{C_{p^2}}$ in Proposition \ref{prop-au}. A picture indicating connecting homomorphism and extensions for $m = n = 2$ is the following:
\begin{center}
\includegraphics{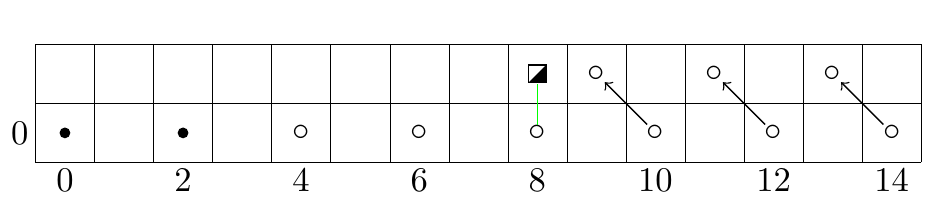}
\end{center}

The next case is $m,n < 0$. $\U{\pi}_{* - n\lambda_1 - m\lambda_0}(H\UZ_h)$ has the same description in every case, and by Proposition \ref{prop-Cp2-a-local} we have
\begin{align*}
    \U{\pi}_{*-n\lambda_1-m\lambda_0}(\widetilde{H\UZ})(G/G) = \ZZ/p\langle pa_{\lambda_0}^n u_{\lambda_0}^{m}\rangle &\oplus \ZZ/p \langle \Sigma^{-1}a_{\lambda_0}^n u_{\lambda_1}^{i-n} \rangle &\textrm{for $0 < i < |m|$}\\
    &\oplus \ZZ/p^2 \langle a_{\lambda_0}^n u_{\lambda_1}^m\frac{u_{\lambda_0}^j}{a_{\lambda_0}^j}\rangle &\textrm{for $j > 0$}
\end{align*}

The connecting homomorphism can be computed in the same way. In this case, the class $ pa_{\lambda_0}^n u_{\lambda_0}^{m}$ and all $p^2$-torsion classes in $\U{\pi}_{* - n\lambda_0 - m\lambda_1}(\widetilde{H\UZ})$ kill the corresponding classes with $\Sigma^{-1}$ in $\U{\pi}_{* - n\lambda_0 -m\lambda_1}(H\UZ_h)$. The connecting homomorphism on $p^2$-torsions are isomorphism, while on $pa_{\lambda_0}^n u_{\lambda_0}^{m}$ it fits into the following short exact sequence
\[
    \U{0} \rightarrow \JJ \rightarrow \circ \rightarrow \bullet \rightarrow \U{0}.
\]
So we have for $m,n < 0$,
\begin{align*}
    \U{\pi}_{*-n\lambda_1-m\lambda_0}(H\UZ)(G/G) = \ZZ \langle p^2u_{\lambda_0}^m u_{\lambda_1}^n \rangle &\oplus \ZZ/p^2 \langle  \Sigma^{-1} u_{\lambda_0}^m u_{\lambda_1}^n \frac{u_{\lambda_0}^i}{a_{\lambda_0}^i}\rangle &\textrm{for $0< i <|m|$}\\
    &\oplus \ZZ/p \langle \Sigma^{-1}a_{\lambda_0}^m u_{\lambda_1}^n \frac{u_{\lambda_1}^j}{a_{\lambda_1}^j} \rangle &\textrm{for $0 \leq j < |n|$}
\end{align*}
The torsion free class generates a Mackey functor $\UZ_{1,1}$, the $\ZZ/p$-torsions generate $\UB_{1,1}$ and the $\ZZ/p$-torsions generate $\UB_{0,1}$.

For $m = n = -2$, the picture indicting the connecting homomorphism is the following.
\begin{center}
\includegraphics{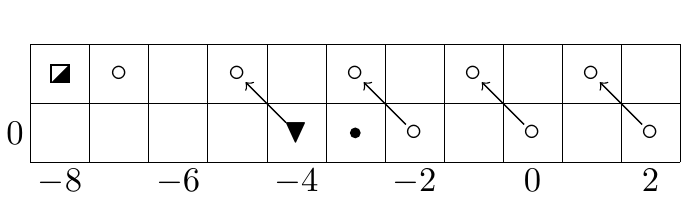}
\end{center}

If $n = 0$ and $m < 0$, a similar argument of the case $n,m < 0$ works, and we have
\[
    \U{\pi}_{* - m\lambda_0}(H\UZ)(G/G) = \ZZ \langle p^2 u_{\lambda_0}^m \rangle \oplus \ZZ/p^2\langle \Sigma^{-1} u_{\lambda_0}^m \frac{u_{\lambda_0}^i}{a_{\lambda_0}^i} \rangle \textrm{ for $0 < i < |m|$.}
\]

The next case is $n < 0$ and $m > 0$. By reading $\U{\pi}_{*-n\lambda_1 - m\lambda_0}(\widetilde{H\UZ})$ from Proposition \ref{prop-Cp2-a-local} and $\U{\pi}_{*-n\lambda_1 - m\lambda_0}(H\UZ_h)$, we can compute as above, and find that
\begin{align*}
    \U{\pi}_{*-n\lambda_1-m\lambda_0}(H\UZ) = \ZZ \langle u_{\lambda_0}^m u_{\lambda_1}^n \rangle \oplus \ZZ/p\langle pa_{\lambda_0}^m u_{\lambda_1}^n\rangle &\oplus \ZZ/p \langle \Sigma^{-1}a_{\lambda_0}^mu_{\lambda_1}^{n}\frac{u_{\lambda_1}^i}{a_{\lambda_1}^i} \rangle &\textrm{for $0 < i < |n|$}\\
    &\oplus \ZZ/p^2 \langle a_{\lambda_0}^m u_{\lambda_1}^{n}\frac{u_{\lambda_0}^j}{a_{\lambda_0}^j}\rangle &\textrm{for $0 < j < m$}.
\end{align*}
The torsion free class generates $\UZ$, the class $pa_{\lambda_0}^m u_{\lambda_1}^n$ generates $\UB_{1,0}^E$, all other $p$-torsions generate $\UB_{0,1}$ and all $p^2$-torsions generate $\UB_{1,1}$.

For $n = - 2$ and $m = 2$ the connecting homomorphism is the following:

\begin{center}
\includegraphics{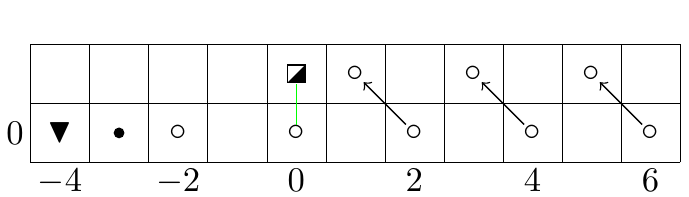}
\end{center}

Finally we compute for $n > 0$ and $m < 0$. In this case there are some subtlety. We start with listing homotopy Mackey functors of $H\UZ_h$ and $\widetilde{H\UZ}$ again.

\begin{align*}
    \U{\pi}_{*-n\lambda_1-m\lambda_0}(\widetilde{H\UZ})(G/G) & = \ZZ/p \langle a_{\lambda_0}^m a_{\lambda_1}^iu_{\lambda_1}^{n-i} \rangle & \textrm{for $0 < i \leq n$}\\
    & \oplus \ZZ/p^2 \langle a_{\lambda_0}^m u_{\lambda_1}^n\frac{u_{\lambda_0}^j}{a_{\lambda_0}^j} \rangle & \textrm{for $0 \leq j$.}
\end{align*}

\[
    \U{\pi}_{*-n\lambda_1-m\lambda_0}(H\UZ_h)(G/G) = p^2\ZZ \langle u_{\lambda_0}^m u_{\lambda_1}^n \rangle \oplus \ZZ/p^2 \langle \Sigma^{-1} u_{\lambda_0}^m u_{\lambda_1}^n \frac{u_{\lambda_0}^i}{a_{\lambda_0}^i} \rangle \ \textrm{for $i > 0$}.
\]

First we assume $m < -1$. In this case, the Mackey functor $\UB_{0,1}$ in $\U{\pi}_{* - n\lambda_1 - m\lambda_0}(\widetilde{H\UZ})$ with highest integer degree, is generated by $a_{\lambda_0}^ma_{\lambda_1}u_{\lambda_1}^{n-1}$. Its target under connecting homomorphism is $\UB_{1,1}$ generated by $\Sigma^{-1}\frac{a_{\lambda_0}^{m+1}u_{\lambda_1}^n}{u_{\lambda_0}}$ (If $m = -1$ this element doesn't exist). By the gold relation, we have
\[
    a_{\lambda_1}u_{\lambda_0} = pa_{\lambda_0} u_{\lambda_1}.
\]
Therefore, the connecting homomorphism in this degree is
\[
    \U{0} \rightarrow \bullet \xrightarrow{p} \circ \rightarrow \JJJ \rightarrow \U{0}.
\]
For other $\UB_{0,1}$ in $\U{\pi}_{* - n\lambda_1 - m\lambda_0}(\widetilde{H\UZ})$, by similar argument, we see that the map is multiplication by higher powers of $p$ thus is trivial. For $\UB_{1,1}$ in $\U{\pi}_{* - n\lambda_1 - m\lambda_0}(\widetilde{H\UZ})$, they maps isomorphically under the connecting homomorphism. Finally, in degree $* = 2n + 2m$, there is a potential extension
\[
    \U{0} \rightarrow \fourbox \rightarrow ? \rightarrow \bullet \rightarrow \U{0}.
\]
For $m < -1$, by the same gold relation argument, the extension is trivial. Therefore we conclude that
\begin{align*}
    \U{\pi}_{* - n\lambda_1 - m\lambda_0}(H\UZ) = \ZZ \langle p^2u_{\lambda_0}^m u_{\lambda_1}^n \rangle & \oplus \ZZ/p \langle \Sigma^{-1}\frac{a_{\lambda_0}^{m+1}u_{\lambda_1}^n}{u_{\lambda_0}} \rangle & \\
    &\oplus \ZZ/p \langle a_{\lambda_0}^m a_{\lambda_1}^iu_{\lambda_1}^{n-i} \rangle & \textrm{for $1 < i \leq n$}\\
    &\oplus \ZZ/p^2 \langle \Sigma^{-1} u_{\lambda_0}^{m}u_{\lambda_1}^n \frac{u_{\lambda_0}^i}{a_{\lambda_0}^i} \rangle & \textrm{for $0 < i < |m| - 1$}
\end{align*}

The torsion free class generates a $\UZ_{1,1}$, the class $\Sigma^{-1}\frac{a_{\lambda_0}^{m+1}u_{\lambda_1}^n}{u_{\lambda_0}}$ generates a $\UB_{1,0}$, all other $p$-torsions generate $\UB_{0,1}$ and all $p^2$-torsions generate $\UB_{1,1}$.

For $n = 4$ and $m = -3$, the picture of the connecting homomorphism is the following:
\begin{center}
\includegraphics{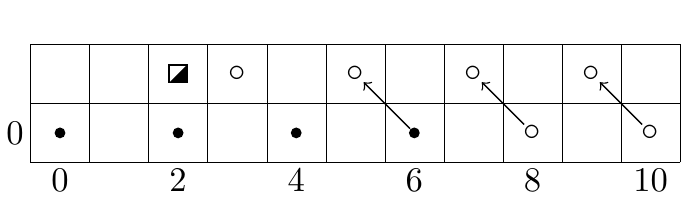}
\end{center}

When $m = -1$, by the gold relation again, the potential extension is nontrivial:
\[
    \U{0} \rightarrow \UZ_{1,1} \rightarrow \UZ_{1,0} \rightarrow \UB_{0,1} \rightarrow \U{0}.
\]
Thus we have
\[
    \U{\pi}_{* - n\lambda_1 + \lambda_0}(H\UZ) =  \ZZ \langle \frac{pu_{\lambda_1}^n}{u_{\lambda_0}}\rangle \oplus \ZZ/p \langle a_{\lambda_0}^{-1} a_{\lambda_1}^iu_{\lambda_1}^{n-i} \rangle \ \textrm{for $1 < i \leq n$}.
\]
The torsion-free class generates $\UZ_{1,0}$ and $p$-torsions generate $\UB_{0,1}$.

A picture indicating the case $n = 3$ and $m = -1$ is the following:

\begin{center}
\includegraphics{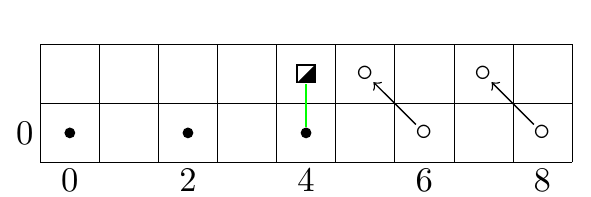}
\end{center}

\begin{rmk}
If $n = 1$ and $m = -1$, we see that
\[
S^{-\lambda_1 + \lambda_0} \wedge H\UZ \simeq H\UZ_{1,0},
\]
which can also be obtained by Theorem \ref{thm-formz}.
\end{rmk}

\textbf{An exotic multiplication.} In the case $n > 0$ and $m < 0$, if $m = -1$, then the torsion free class comes from an extension and is $\UZ_{1,0}$. However, when $m < -1$, by the gold relation this extension doesn't happen, and the corresponding $\UZ$-module in degree $\star = 2(m + n) - n\lambda_1 - m\lambda_0$ is $\UZ_{1,1} \oplus \UB_{0,1}$. Since the map $H\UZ \rightarrow \widetilde{H\UZ}$ is a map of ring spectra and the generator $\frac{pu_{\lambda_1}^n}{u_{\lambda_0}}$ of $\UZ_{1,0}$ maps to $a_{\lambda_0}^{-1}a_{\lambda_1}u_{\lambda_1}^{n-1}$ in $\U{\pi}_{\star}(\widetilde{H\UZ})$, whose $k$-th powers in $\widetilde{H\UZ}$ is nontrivial for all $k \geq 1$. Therefore, $(\frac{pu_{\lambda_1}^n}{u_{\lambda_0}})^k$ maps to $a_{\lambda_0}^{-k}a_{\lambda_1}^ku_{\lambda_1}^{k(n-1)}$ in $\U{\pi}_{\star}(\widetilde{H\UZ})$. On the other hand, $\frac{pu_{\lambda_1}^n}{u_{\lambda_0}}$ restricts to $p$-times of the chosen generator of $H_0(S^0;\ZZ)$ in $G/e$-level, thus its $k$-th power restricts to $p^k$-times of the generator. So we conclude
\begin{prop}\label{prop-exoticmul}
Consider the class $\frac{pu_{\lambda_1}^n}{u_{\lambda_0}} \in \U{\pi}_{2(n - 1) - n\lambda_1 + \lambda_0}(H\UZ)$. For $k > 1$, its $k$-th power is
\[
    (\frac{pu_{\lambda_1}^n}{u_{\lambda_0}})^k = \frac{p^ku_{\lambda_1}^{kn}}{u_{\lambda_0}^k} + a_{\lambda_0}^{-k} a_{\lambda_1}^k u_{\lambda_1}^{k(n-1)}.
\]
That is, the sum of $p^{k-2}$-times of transfer of the underlying orientation class and the $p$-torsion class.
\end{prop}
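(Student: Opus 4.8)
The strategy is to expand $x^{k}$, where $x := \frac{pu_{\lambda_1}^n}{u_{\lambda_0}}$, against the generators of $\U{\pi}_{\star}(H\UZ)$ in the degree it occupies, and to pin down the two coefficients that appear: one by pushing forward along $H\UZ \to \widetilde{H\UZ}$, the other by restricting to the trivial subgroup. Since $x \in \U{\pi}_{2(n-1)-n\lambda_1+\lambda_0}(H\UZ)$, the power $x^{k}$ lies in $\U{\pi}_{2k(n-1)-kn\lambda_1+k\lambda_0}(H\UZ)$; writing this as $* - N\lambda_1 - M\lambda_0$ we have $N = kn > 0$ and $M = -k$, and because $k > 1$ this falls squarely in the regime ``$n > 0$, $m < -1$'' already computed in this section. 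There
\[
    \U{\pi}_{2k(n-1)-kn\lambda_1+k\lambda_0}(H\UZ) \cong \UZ_{1,1} \oplus \UB_{0,1},
\]
the potential extension being trivial precisely because $M < -1$, with $G/G$-generators $p^{2}u_{\lambda_0}^{-k}u_{\lambda_1}^{kn}$ of the $\UZ_{1,1}$-summand and $a_{\lambda_0}^{-k}a_{\lambda_1}^{k}u_{\lambda_1}^{k(n-1)}$ of the $\UB_{0,1}$-summand. Hence $x^{k} = c\,(p^{2}u_{\lambda_0}^{-k}u_{\lambda_1}^{kn}) + \epsilon\,(a_{\lambda_0}^{-k}a_{\lambda_1}^{k}u_{\lambda_1}^{k(n-1)})$ for some $c \in \ZZ$ and $\epsilon \in \ZZ/p$, and the claim is that $c = p^{k-2}$ and $\epsilon = 1$.

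To determine $\epsilon$ I would apply the ring map $\phi \colon H\UZ \to \widetilde{H\UZ}$. We have already recorded that $\phi(x) = a_{\lambda_0}^{-1}a_{\lambda_1}u_{\lambda_1}^{n-1}$ (this is where the gold relation enters), so $\phi(x^{k}) = a_{\lambda_0}^{-k}a_{\lambda_1}^{k}u_{\lambda_1}^{k(n-1)}$, which is the nonzero generator of $\U{\pi}_{2k(n-1)-kn\lambda_1+k\lambda_0}(\widetilde{H\UZ})(G/G) \cong \ZZ/p$. From the long exact sequence of the cofibre sequence $H\UZ_h \to H\UZ \to \widetilde{H\UZ}$ in this degree — as analysed when computing the connecting homomorphism above — $\phi$ annihilates the $\UZ_{1,1}$-summand and carries the $\UB_{0,1}$-summand isomorphically onto $\U{\pi}_{\star}(\widetilde{H\UZ})$ here. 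Therefore $\epsilon \cdot \phi(a_{\lambda_0}^{-k}a_{\lambda_1}^{k}u_{\lambda_1}^{k(n-1)}) = \phi(x^{k})$ forces $\epsilon = 1$.

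To determine $c$ I would restrict to $G/e$. The degree is underlying-trivial, so $\U{\pi}_{2k(n-1)-kn\lambda_1+k\lambda_0}(H\UZ)(G/e) \cong \ZZ$ generated by the orientation class $\iota$, and $Res^{G}_{e}$ is a ring homomorphism; since each $u$-class restricts to an orientation generator, $x$ restricts to $p\iota$ and hence $x^{k}$ restricts to $p^{k}\iota$. On the other hand $a_{\lambda_0}^{-k}a_{\lambda_1}^{k}u_{\lambda_1}^{k(n-1)}$ restricts to $0$ (its Mackey functor has trivial $G/e$-level), while $p^{2}u_{\lambda_0}^{-k}u_{\lambda_1}^{kn}$ restricts to $p^{2}\iota$: indeed it equals $Tr^{G}_{e}(\iota)$, and $Res^{G}_{e}Tr^{G}_{e}(\iota) = |G|\,\iota = p^{2}\iota$. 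So $c\,p^{2} = p^{k}$, i.e. $c = p^{k-2}$, and assembling the two pieces gives
\[
    x^{k} = \frac{p^{k}u_{\lambda_1}^{kn}}{u_{\lambda_0}^{k}} + a_{\lambda_0}^{-k}a_{\lambda_1}^{k}u_{\lambda_1}^{k(n-1)},
\]
with $\frac{p^{k}u_{\lambda_1}^{kn}}{u_{\lambda_0}^{k}} = p^{k-2}Tr^{G}_{e}(\iota)$, which is the asserted ``sum of $p^{k-2}$ times the transfer of the underlying orientation class and the $p$-torsion class''.

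The main obstacle, and the only delicate point, is the bookkeeping that puts us in the right case: one must check that $2k(n-1)-kn\lambda_1+k\lambda_0$ genuinely has $M < -1$, which is exactly the role of the hypothesis $k > 1$ (at $k = 1$ one lands in the $M = -1$ case, where the torsion-free Mackey functor is $\UZ_{1,0}$ arising from the \emph{nontrivial} extension $\U{0} \to \UZ_{1,1} \to \UZ_{1,0} \to \UB_{0,1} \to \U{0}$, so the formula would change), and that the two displayed classes are exactly the generators occupying this degree in $\U{\pi}_{\star}(H\UZ)$, $\U{\pi}_{\star}(\widetilde{H\UZ})$ and $\U{\pi}_{\star}(H\UZ_h)$, with $\phi$ and $Res^{G}_{e}$ acting on the two summands as described. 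All of this is read off from the case-by-case computation already carried out, together with the basic Mackey relation $Res^{G}_{e}Tr^{G}_{e} = |G|$ on the underlying level.
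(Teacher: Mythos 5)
Your argument is correct and is essentially the paper's own proof: the paper likewise determines the torsion component by pushing forward along the ring map $H\UZ \rightarrow \widetilde{H\UZ}$ (using $\frac{pu_{\lambda_1}^n}{u_{\lambda_0}} \mapsto a_{\lambda_0}^{-1}a_{\lambda_1}u_{\lambda_1}^{n-1}$ and the nonvanishing of its powers) and the torsion-free coefficient by restricting to the $G/e$-level, where the class restricts to $p$ times the orientation generator. Your extra bookkeeping (identifying the $\UZ_{1,1}\oplus\UB_{0,1}$ decomposition for $m<-1$, the role of $k>1$, and $Res^G_eTr^G_e=p^2$) just makes explicit what the paper leaves implicit, so there is nothing to correct.
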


Finally, we can describe the $RO(C_{p^2})$-graded ring structure for $\U{\pi}_{\star}(H\UZ)(G/G)$, which combining with Proposition \ref{prop-Cporientable}, describes the $RO(C_{p^2})$-graded Green functor structure of $\U{\pi}_{\star}(H\UZ)$.

\begin{thm}\label{thm-Cp2oriented}
As a module over $BB_{C_{p^2}}$ (see Proposition \ref{prop-au}), $\U{\pi}_{\star}(H\UZ)(G/G)$ is
\begin{itemize}
    \item If $m,n \geq 0$,
        \begin{align*}
            \U{\pi}_{* - n\lambda_1 - m\lambda_0}(H\UZ) = \ZZ \langle u_{\lambda_0}^m u_{\lambda_1}^n \rangle &\oplus \ZZ/p \langle a_{\lambda_1}^ia_{\lambda_0}^{m}u_{\lambda_1}^{n-i} \rangle & \textrm{for $0 < i \leq n$}\\
            &\oplus \ZZ/p^2 \langle a_{\lambda_0}^ju_{\lambda_0}^{m-j}u_{\lambda_1}^n \rangle & \textrm{for $0 < j \leq m$}.
        \end{align*}
    \item If $m,n < 0$,
        \begin{align*}
            \U{\pi}_{*-n\lambda_1-m\lambda_0}(H\UZ)(G/G) = \ZZ \langle p^2u_{\lambda_0}^m u_{\lambda_1}^n \rangle &\oplus \ZZ/p^2 \langle  \Sigma^{-1} u_{\lambda_0}^m u_{\lambda_1}^n \frac{u_{\lambda_0}^i}{a_{\lambda_0}^i}\rangle &\textrm{for $0< i <|m|$}\\
            &\oplus \ZZ/p \langle \Sigma^{-1}a_{\lambda_0}^m u_{\lambda_1}^n \frac{u_{\lambda_1}^j}{a_{\lambda_1}^j} \rangle &\textrm{for $0 \leq j < |n|$}
        \end{align*}
    \item If $n = 0$ and $m < 0$,
        \[
            \U{\pi}_{* - m\lambda_0}(H\UZ)(G/G) = \ZZ \langle p^2 u_{\lambda_0}^m \rangle \oplus \ZZ/p^2\langle \Sigma^{-1} u_{\lambda_0}^m \frac{u_{\lambda_0}^i}{a_{\lambda_0}^i} \rangle \textrm{ for $0 < i < |m|$.}
        \]
    \item If $n < 0$ and $m > 0$,
        \begin{align*}
            \U{\pi}_{*-n\lambda_1-m\lambda_0}(H\UZ) = \ZZ \langle u_{\lambda_0}^m u_{\lambda_1}^n \rangle \oplus \ZZ/p\langle pa_{\lambda_0}^m u_{\lambda_1}^n\rangle &\oplus \ZZ/p \langle \Sigma^{-1}a_{\lambda_0}^mu_{\lambda_1}^{n}\frac{u_{\lambda_1}^i}{a_{\lambda_1}^i} \rangle &\textrm{for $0 < i < |n|$}\\
            &\oplus \ZZ/p^2 \langle a_{\lambda_0}^m u_{\lambda_1}^{n}\frac{u_{\lambda_0}^j}{a_{\lambda_0}^j}\rangle &\textrm{for $0 < j < m$}.
        \end{align*}
    \item If $n > 0$ and $m < -1$,
        \begin{align*}
            \U{\pi}_{* - n\lambda_1 - m\lambda_0}(H\UZ) = \ZZ \langle p^2u_{\lambda_0}^m u_{\lambda_1}^n \rangle & \oplus \ZZ/p \langle \Sigma^{-1}\frac{a_{\lambda_0}^{m+1}u_{\lambda_1}^n}{u_{\lambda_0}} \rangle & \\
            &\oplus \ZZ/p \langle a_{\lambda_0}^m a_{\lambda_1}^iu_{\lambda_1}^{n-i} \rangle & \textrm{for $1 < i \leq n$}\\
            &\oplus \ZZ/p^2 \langle \Sigma^{-1} u_{\lambda_0}^{m}u_{\lambda_1}^n \frac{u_{\lambda_0}^i}{a_{\lambda_0}^i} \rangle & \textrm{for $0 < i < |m| - 1$}
        \end{align*}
    \item If $n > 0$ and $m = -1$,
        \[
            \U{\pi}_{* - n\lambda_1 + \lambda_0}(H\UZ) =  \ZZ \langle \frac{pu_{\lambda_1}^n}{u_{\lambda_0}}\rangle \oplus \ZZ/p \langle a_{\lambda_0}^{-1} a_{\lambda_1}^iu_{\lambda_1}^{n-i} \rangle \ \textrm{for $1 < i \leq n$}.
        \]
\end{itemize}
Multiplication is given by the following rules:
\begin{itemize}
    \item If at least one of the elements in multiplication is without $\Sigma^{-1}$, then multiplication can be calculated by tracking names of elements, with a special case by Proposition \ref{prop-exoticmul}.
    \item Multiplication of two elements with $\Sigma^{-1}$ is trivial.
\end{itemize}
\end{thm}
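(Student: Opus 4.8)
The plan is to assemble the theorem from the computations carried out earlier in this section, organized around the Tate diagram of $H\UZ$ (Definition \ref{def-Tate}) and the four-step induction on quotient groups described above, with $G = C_{p^2}$ and $K = C_p$ the subgroup of order $p$. Writing the orientable grading as $\star = * - n\lambda_1 - m\lambda_0$ with $m,n \in \ZZ$, the gradings with $m = 0$ --- those lying in the image of $RO(C_{p^2}/C_p) \to RO(C_{p^2})$ --- are obtained by applying the pullback functor $\Psi^*_{C_p}$ to the $C_p$ answer of Proposition \ref{prop-Cporientable}, using Corollary \ref{cor-pullback}; this is the base of the induction. The remaining inputs are the homotopy orbit and Tate homotopy Mackey functors $\U{\pi}_{\star}(H\UZ_h)$ and $\U{\pi}_{\star}(H\UZ^t)$, read off from group (co)homology in Proposition \ref{prop-gpcoh} (the former $u_{\lambda_0}$-periodic, the latter $a_{\lambda_0}$-periodic), and $\U{\pi}_{\star}(\widetilde{H\UZ})$, which is determined in Proposition \ref{prop-Cp2-a-local} by first computing it in gradings containing no $\lambda_0$ (comparing names of elements along $H\UZ_h \to H\UZ$, using the base case) and then invoking $a_{\lambda_0}$-periodicity (Lemma \ref{lem-inverta}).

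With $\U{\pi}_{\star}(H\UZ_h)$ and $\U{\pi}_{\star}(\widetilde{H\UZ})$ in hand, the cofibre sequence $H\UZ_h \to H\UZ \to \widetilde{H\UZ}$ gives a long exact sequence of $\UZ$-modules, and the seven cases of the theorem correspond exactly to the sign conditions on $(m,n)$ that govern the connecting homomorphism $\partial \colon \U{\pi}_{\star}(\widetilde{H\UZ}) \to \U{\pi}_{\star-1}(H\UZ_h)$. In each region one identifies, by matching the monomial names in $a_{\lambda_0}, a_{\lambda_1}, u_{\lambda_0}, u_{\lambda_1}$ together with the fractions $u_{\lambda_0}^j/a_{\lambda_0}^j$, which classes of $\U{\pi}_{\star}(\widetilde{H\UZ})$ hit the $\Sigma^{-1}$-classes of $\U{\pi}_{\star-1}(H\UZ_h)$ --- isomorphically, or through a power of $p$ --- and which survive to $\U{\pi}_{\star}(H\UZ)$; the torsion-free generator surviving in each region then sits in a short exact sequence of $\UZ$-modules whose middle term must be pinned down, for instance $\U{0} \to \fourbox \to ? \to \circ \to \U{0}$ in the cone $m,n \geq 0$, or $\U{0} \to \UZ_{1,1} \to ? \to \UB_{0,1} \to \U{0}$ in the region $n > 0$, $m < 0$.

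Resolving these extensions is the crux of the argument, and it is settled by the gold relation $a_{\lambda_1}u_{\lambda_0} = p\,a_{\lambda_0}u_{\lambda_1}$ of Proposition \ref{prop-au}: rewriting the candidate generator as a fraction and applying the relation shows that the extension in the region $n > 0$, $m < 0$ is nontrivial precisely when $m = -1$ (giving $\UZ_{1,0}$) and split when $m < -1$ (giving $\UZ_{1,1} \oplus \UB_{0,1}$), while comparison with the bottom row of the Tate diagram --- where the analogous class generates a $\UZ_{1,1}$ --- fixes the behaviour in the cones $m,n \geq 0$ and $m,n < 0$. For the multiplicative structure one uses that $\U{\pi}_{\star}(H\UZ) \to \U{\pi}_{\star}(H\UZ^h)$ and $\U{\pi}_{\star}(H\UZ) \to \U{\pi}_{\star}(\widetilde{H\UZ})$ are maps of ring spectra landing in localizations of polynomial-type Green functors, so any product not involving a $\Sigma^{-1}$-class is recovered by tracking names, the sole exception being the class $p u_{\lambda_1}^n/u_{\lambda_0}$ in the case $m = -1$, whose powers are computed in Proposition \ref{prop-exoticmul} by combining its image $a_{\lambda_0}^{-1}a_{\lambda_1}u_{\lambda_1}^{n-1}$ in $\widetilde{H\UZ}$ with its underlying restriction; and a product of two $\Sigma^{-1}$-classes is zero because, by the cofibre sequence and the Tate square, it maps to $0$ in both $H\UZ^h$ and $\widetilde{H\UZ}$, and inspection of the explicit answer shows that its total bidegree contains no classes (a parity obstruction). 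The main obstacle throughout is the bookkeeping: there is no uniform argument, and each of the seven regions requires its own gold-relation computation together with a comparison against either the $C_p$ answer (via $\Psi^*_{C_p}$) or the bottom row of the Tate diagram.
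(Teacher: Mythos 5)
Your outline of the module structure is essentially the paper's own argument: pull back the $C_p$ answer along $\Psi^*_{C_p}$ for gradings without $\lambda_0$, read off $\U{\pi}_{\star}(H\UZ_h)$ and $\U{\pi}_{\star}(H\UZ^t)$ from Proposition \ref{prop-gpcoh}, determine $\U{\pi}_{\star}(\widetilde{H\UZ})$ by comparison and $a_{\lambda_0}$-periodicity (Proposition \ref{prop-Cp2-a-local}, Lemma \ref{lem-inverta}), and then run the connecting homomorphism region by region, with the gold relation deciding that the extension in the region $n>0$, $m<0$ is nontrivial exactly when $m=-1$, and with Proposition \ref{prop-exoticmul} recording the exotic powers of $\frac{pu_{\lambda_1}^n}{u_{\lambda_0}}$. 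That part is faithful to the paper.

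The gap is in the one statement for which the paper writes a formal proof: the vanishing of products of two $\Sigma^{-1}$-classes. Your justification is that such a product maps to $0$ in both $H\UZ^h$ and $\widetilde{H\UZ}$ and that ``its total bidegree contains no classes (a parity obstruction).'' The emptiness claim is false when both factors lie in the region $n>0$, $m<0$: the $\Sigma^{-1}$-classes there sit in odd integer degrees, so the product sits in an even integer degree of a grading still of the form $n>0$, $m<-1$, and those degrees are not empty --- they contain the torsion-free class $p^{2}u_{\lambda_0}^{m}u_{\lambda_1}^{n}$ and the $\ZZ/p$-classes $a_{\lambda_0}^{m}a_{\lambda_1}^{i}u_{\lambda_1}^{n-i}$ for $1<i\leq n$. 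For instance, the product of $\Sigma^{-1}\frac{a_{\lambda_0}^{m_1+1}u_{\lambda_1}^{n_1}}{u_{\lambda_0}}$ and $\Sigma^{-1}\frac{a_{\lambda_0}^{m_2+1}u_{\lambda_1}^{n_2}}{u_{\lambda_0}}$ lands in degree $2(n_1+n_2-3)-(n_1+n_2)\lambda_1-(m_1+m_2)\lambda_0$, which carries the nonzero class $a_{\lambda_0}^{m_1+m_2}a_{\lambda_1}^{3}u_{\lambda_1}^{n_1+n_2-3}$ once $n_1+n_2\geq 3$. So parity alone does not close this case. The paper's proof handles it by a different mechanism: in that region every $\Sigma^{-1}$-class is annihilated by a power of $a_{\lambda_0}$, hence so is the product, while no even-degree torsion class there is $a_{\lambda_0}$-power-torsion (and the product, being torsion, has no component on the free class); the cone $m,n\leq 0$ is handled by the parity of its torsion, and the region $m>0$, $n<0$ is reduced to that cone by factoring out powers of $a_{\lambda_0}$ using the $\U{BB}$-module structure. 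Your first observation can be upgraded to a correct alternative --- from the Tate square, an element mapping to $0$ in both $H\UZ^h$ and $\widetilde{H\UZ}$ lies in the image of the connecting map from $\U{\pi}_{\star+1}(H\UZ^t)$, i.e.\ in the span of the $\Sigma^{-1}$-classes, which are all of odd degree, whereas the product is of even degree --- but that is the argument you need to make; what is written instead asserts the relevant bidegree is empty, which it is not.
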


\begin{proof}
    We only need to prove the square zero extension part. Assume $x,y \in \U{\pi}_{*-n\lambda_1-m\lambda_0}(H\UZ)(G/G)$ are elements with $\Sigma^{-1}$.

    First, if $x,y$ are torsions in $m,n \leq 0$, then $xy = 0$ since all torsions there are in odd dimension. Now if at least one of $x,y$ is from the part $m > 0$ and $n < 0$, then by the module structure, $x  = a_{\lambda_0}^i x'$ and $y = a_{\lambda_0}^j y'$, where $x'$ and $y'$ are torsions in $m , n \leq 0$, therefore $xy = a_{\lambda_0}^{i+j}x'y' = 0$.

    If $x,y$ are from the part where $m < 0$ and $n > 0$, then they are $a_{\lambda_0}$-torsion. However, all torsion classes in even degree for $m < 0$ and $n > 0$ are not annihilated by $a_{\lambda_0}$, thus $xy = 0$.
\end{proof}

For $p = 2$, Theorem \ref{thm-Cp2oriented} computes $\U{\pi}_{\star}(H\UZ)$ for $\star$ in the index $2$ subgroup of $RO(C_4)$ of orientable representations. In $C_4$, we have $\lambda_1 = 2\sigma$, where $\sigma$ is the sign representation of $C_4$. The non-orientable representations are precisely those with odd copies of $\sigma$. We have a cofibre sequence
\[
    C_4/C_{2+} \rightarrow S^0 \rightarrow S^{\sigma}.
\]
By smashing it with $S^{V} \wedge H\UZ$, we can compute non-orientable representations from orientable ones. We have
\[
    \U{\pi}_{\star}(C_4/C_{2+} \wedge X ) \cong \U{\pi}_{\star}(X)_{C_4/C_2},
\]
and the map $C_4/C_{2+} \rightarrow S^0$ is the transfer map. Therefore modulo extension and multiplication, for each $\UZ$-module $\UM$, we need to compute the kernel and cokernel of the Mackey functor transfer map
\[
\U{Tr}^4_2 : \UM_{C_4/C_2} \rightarrow \UM.
\]
Extensions can be resolved by Proposition \ref{lem-TAR}, and multiplications are controlled by Theorem \ref{thm-Cp2oriented}. All exact sequences about the Mackey functor transfer map $\U{Tr}^4_2$ are listed below:
\[
\xymatrix{
{\U{0}} \ar[r] & \overline{\square} \ar[r] & {\hat{\square}} \ar[r]^{\U{Tr}^4_2} & \square \ar[r] & \bullet \ar[r] & {\U{0}}\\
{\U{0}} \ar[r] & \overline{\fourbox}\ar[r] & {\hat{\fourbox}} \ar[r]^{\U{Tr}^4_2}&  {\fourbox} \ar[r]&\bullet \ar[r] & {\U{0}}\\
{\U{0}} \ar[r] & \overline{\square} \ar[r] & {\hat{\square}} \ar[r]^{\U{Tr}^4_2} & {\twobox} \ar[r] & {\U{0}}\\
{\U{0}} \ar[r] & \overline{\fourbox}\ar[r] & {\hat{\fourbox}}\ar[r]^{\U{Tr}^4_2} & {\halfbox} \ar[r]& {\U{0}}
}
\]
\[
\xymatrix{
{\U{0}} \ar[r]^{\U{Tr}^4_2} & \bullet \ar[r] & {\U{0}}\\
{\U{0}} \ar[r] & \overline{\bullet} \ar[r] & {\hat{\bullet}} \ar[r]^{\U{Tr}^4_2} & \circ \ar[r] & \bullet \ar[r] & {\U{0}}\\
{\U{0}} \ar[r] & \JJJ \ar[r] & {\hat{\bullet}} \ar[r]^{\U{Tr}^4_2} & \JJJ \ar[r] & \bullet \ar[r] & {\U{0}}\\
{\U{0}} \ar[r] & \overline{\bullet} \ar[r] & {\hat{\bullet}} \ar[r]^{\U{Tr}^4_2} & \JJ \ar[r] & {\U{0}}
}
\]

From here, what's left is simply routine computation. We will not list the complete result, since it is very tedious, but give some examples of computation. We shall have no difficulty in computing all $\U{\pi}_{\star}(H\UZ)$ by this method on demand.

\begin{exam}\label{exam-C4}
Consider $\U{\pi}_{* - 4\sigma + 3\lambda_0}(H\UZ)$, which by Theorem \ref{thm-Cp2oriented} is the following:
\[
    \U{\pi}_{i - 4\sigma + 3\lambda_0}(H\UZ) = \left\{ \begin{array}{ll}
                                                            \halfbox & \textrm{for $i = -2$}\\
                                                            \circ &    \textrm{for $i = -1$}\\
                                                            \bullet &  \textrm{for $i = 0$}\\
                                                            \JJJ    &  \textrm{for $i = 1$}\\
                                                            \U{0}   &   \textrm{otherwise.}
                                                        \end{array} \right.
\]
By applying the corresponding exact sequences, we have
\[
    \U{\pi}_{i - 3\sigma + 3\lambda_0}(H\UZ) = \left\{ \begin{array}{ll}
                                                           \UM_1 & \textrm{for $i = -1$}\\
                                                           \UM_2 & \textrm{for $i = 0$}\\
                                                           \bullet & \textrm{for $i = 1$}\\
                                                           \JJJ    & \textrm{for $i = 2$}\\
                                                           \U{0}   & \textrm{otherwise,}
                                                       \end{array} \right.
\]
where $\UM_1$ and $\UM_2$ fits into the following extensions
\[
\U{0} \rightarrow \bullet \rightarrow \UM_1 \rightarrow \overline{\fourbox} \rightarrow 0
\]
and
\[
\U{0} \rightarrow \bullet \rightarrow \UM_2 \rightarrow \overline{\bullet} \rightarrow 0.
\]
For the first extension, since $\U{\pi}_{-1 - 2\sigma + 3\lambda_0}(H\UZ) = \U{0}$, the $2$-torsion is annihilated by $a_{\sigma}$, so it is in the image of $Tr^4_2$, that means the extension on $\UM_1$ is nontrivial and $\UM_1 = \dot{\overline{\fourbox}}$. For the second extension, we can check that the $2$-torsion in $C_4/C_4$-level is not annihilated by $a_{\sigma}$, thus the extension is trivial, and $\UM_2 = \bullet \oplus \overline{\bullet}$.
\end{exam}
\subsection{Dualities} \label{sec-duality}

Here we describe how the equivariant Anderson duality in Definition \ref{def-Anderson}, the universal coefficient spectral sequence and the K\"unneth spectral sequence in Theorem \ref{thm-UCSS} interacts with $\U{\pi}_{\star}(H\UZ)$. Most of the conclusion works for $G = C_{p^n}$, but we will focus on examples with $C_{p^2}$, as we have a complete description in Theorem \ref{thm-Cp2oriented}.

First we start with Anderson duality. Assume $\U{\pi}_{* - V}(H\UZ)$ is known for some $V \in RO(G)$. Since $I_{\ZZ}(H\UZ) \simeq \Sigma^{2 - \lambda_0}H\UZ$, the short exact sequence in Proposition \ref{prop-Anderson} computes $\U{\pi}_{-* + V + (2 - \lambda_0)}(H\UZ)$ from $\U{\pi}_*(* - V)(H\UZ)$. There is only one potential nontrivial extension when $* = |V|$. When $V$ is orientable, it can be solved by the following:
\begin{prop}\label{prop-noext}
Let $\UM$ be a form of $\UZ$ and $\U{T}$ be a levelwise torsion $\UZ$-module, then
\[
    Ext^1_{\UZ}(\UM,\U{T}) = \U{Ext}^1_{\UZ}(\UM,\U{T})(G/G) \cong 0.
\]
\end{prop}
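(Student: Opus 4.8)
The plan is to transport the computation into $H\UZ$-modules, where the self-duality of $H\UZ$ makes it transparent. By Corollary \ref{cor-equivalence} we have $\U{Ext}^1_{\UZ}(\UM,\U{T}) \cong \U{\pi}_{-1}(Fun_{H\UZ}(H\UM,H\U{T}))$, and the stated equality $Ext^1_{\UZ}(\UM,\U{T}) = \U{Ext}^1_{\UZ}(\UM,\U{T})(G/G)$ is the $G/G$-level of this identification together with the interpretation of $\U{Ext}^1_{\UR}$ as a group of extensions (Proposition \ref{prop-LES} and the remark following it). So it suffices to show that this mapping spectrum has vanishing $\pi_{-1}$. Since $\UM$ is a form of $\UZ$, Theorem \ref{thm-formz} supplies a dimension-zero class $V \in RO(G)$ with $H\UM \simeq \Sigma^V H\UZ$; hence $Fun_{H\UZ}(H\UM,H\U{T}) \simeq \Sigma^{-V}Fun_{H\UZ}(H\UZ,H\U{T}) \simeq \Sigma^{-V}H\U{T}$, and therefore $\U{Ext}^1_{\UZ}(\UM,\U{T}) \cong \U{\pi}_{V-1}(H\U{T})$. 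Everything is then reduced to proving $\U{\pi}_{V-1}(H\U{T}) \cong \U{0}$.

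To see this I would use that $V-1$ has dimension $-1$ and that, by the proof of Theorem \ref{thm-formz}, $V$ can be chosen to be an alternating sum of the oriented irreducibles $\lambda_0,\dots,\lambda_{n-1}$ together with even copies of the trivial representation. As $\U{T}$ is torsion, Lemma \ref{lem-torsionlambda} identifies $\Sigma^{\lambda_0}H\U{T}$ with $H\U{T}$, which removes each $\lambda_0$ from $V$ at the cost of an integral suspension. The remaining $\lambda_i$ with $i>0$ are pulled back along $C_{p^n}\to C_{p^n}/C_p$, so I would argue by induction on $n$ via the pullback functor $\Psi^*_{C_p}$ (Proposition \ref{prop-pullP}, Corollary \ref{cor-pullback}) and the cellular chain model $\UZ \leftarrow \U{\ZZ[G/C_p]}\xleftarrow{1-\gamma}\U{\ZZ[G/C_p]}$ of $S^{\lambda_1}$ from Section \ref{sec-rep}, peeling off one $\lambda$ at a time until the degree has been brought down to $-1$, where $\U{\pi}_{-1}(H\U{T}) \cong \U{0}$ because $H\U{T}$ is an Eilenberg--Mac Lane spectrum.

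The step I expect to be the main obstacle is precisely this last reduction: unlike $S^{\lambda_0}$, the spheres $S^{\lambda_i}$ with $i>0$ carry non-free cells, so $\Sigma^{\lambda_i}H\U{T}$ genuinely differs from $H\U{T}$, and one must keep careful track of the double complex $\U{C}_*(S^V)\square_{\UZ}\U{T}$ -- this is where the torsion hypothesis on $\U{T}$ is really doing the work. A purely algebraic alternative would sidestep the topology: writing $\UM \cong \UZ_{t_1,\dots,t_n}$ and using the short exact sequence $\U{0}\to\UZ_{t_1,\dots,t_n}\to\UZ\to\UB_{t_1,\dots,t_n}\to\U{0}$ of Definition \ref{def-form-Z-index} together with the projectivity of $\UZ$, one obtains $\U{Ext}^1_{\UZ}(\UM,\U{T}) \cong \U{Ext}^2_{\UZ}(\UB_{t_1,\dots,t_n},\U{T})$; this can then be read off from a length-three projective resolution of $\UB_{t_1,\dots,t_n}$ of the kind built in Section \ref{sec-HA}, whose terms are fixed-point Mackey functors of permutation modules, so that applying $\U{Hom}_{\UZ}(-,\U{T})$ turns the resolution into a complex of lift Mackey functors of $\U{T}$ and one checks that the degree-two cohomology vanishes.
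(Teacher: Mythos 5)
Your opening reduction is fine: Corollary \ref{cor-equivalence} together with Theorem \ref{thm-formz} does identify $\U{Ext}^1_{\UZ}(\UM,\U{T})$ with $\U{\pi}_{V-1}(H\U{T})$ for the dimension-zero $V$ attached to the form $\UM$. From that point on, however, the proposal is not a proof, and the route cannot be closed by the tools you cite. First, Lemma \ref{lem-torsionlambda} is invoked for an arbitrary levelwise torsion $\U{T}$, but its hypothesis is $\U{T}(G/e)\cong 0$, which ``levelwise torsion'' does not give: for $\U{T}=\U{\ZZ/p}$ the underlying spectrum of $\Sigma^{\lambda_0}H\U{T}$ is $\Sigma^{2}H(\ZZ/p)$, so the $\lambda_0$-removal step already fails. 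Second, the step you yourself defer as ``the main obstacle'' is exactly where the vanishing breaks down, so no bookkeeping with the double complex will rescue it: take $G=C_{p^2}$, $\UM=\UZ_{0,1}$ (so $H\UM\simeq S^{2-\lambda_1}\wedge H\UZ$ and $V=2-\lambda_1$) and $\U{T}=\UB_{1,0}$. The cellular chains of $S^{\lambda_1}$ give at the $G/G$-level the complex $\U{T}(G/G)\xleftarrow{Tr^{p^2}_p}\U{T}(G/C_p)\xleftarrow{1-\gamma}\U{T}(G/C_p)$, and for $\UB_{1,0}$ both maps are zero, so $\U{\pi}_{1-\lambda_1}(H\UB_{1,0})(G/G)\cong \ZZ/p\neq 0$. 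Your algebraic alternative computes the same group: applying $\U{Hom}_{\UZ}(-,\UB_{1,0})$ to the pulled-back resolution $\UZ\xrightarrow{\Delta}\U{\ZZ[G/C_p]}\xrightarrow{1-\gamma}\U{\ZZ[G/C_p]}\rightarrow\UZ_{0,1}$ gives $\U{Ext}^1_{\UZ}(\UZ_{0,1},\UB_{1,0})(G/G)\cong\ker(Tr^{p^2}_p)/\mathrm{im}(1-\gamma)\cong\ZZ/p$, and pushing out $\U{\ZZ[G/C_p]}\rightarrow\UZ_{0,1}$ along a nonzero Weyl-invariant map from its kernel to $\UB_{1,0}$ produces an explicit non-split extension, in which the Weyl group acts nontrivially on the $G/C_p$-level.

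Since your reduction is faithful to the statement, this example also shows that the vanishing you are after is not a formal consequence of ``$\UM$ a form of $\UZ$ and $\U{T}$ levelwise torsion,'' so no argument of the general shape you propose (homotopical or via a length-three resolution) can finish the job without substantially more input about $\U{T}$ (for instance the specific torsion Mackey functors arising in the Anderson duality application). For comparison, the paper's proof never passes through $H\UZ$-modules: it reads a class of $Ext^1_{\UZ}(\UM,\U{T})$ as an extension $\U{0}\rightarrow\U{T}\rightarrow\U{X}\rightarrow\UM\rightarrow\U{0}$, splits it levelwise, and excludes exotic restrictions and transfers using that all restrictions and transfers of the form $\UM$ are injective. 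The subtlety your computation isolates --- a non-split extension of modules over the Weyl group at an intermediate level, invisible to levelwise splitting as abelian groups --- is precisely the case any complete argument must rule out, and it is the point at which your proposal, as written, has a genuine gap.
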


\begin{proof}
    Consider the following extension
    \[
        \U{0} \rightarrow \U{T} \rightarrow \U{X} \rightarrow \UM \rightarrow \U{0},
    \]
    and consider $G/K$-level that $\U{T}(G/K)$ is nontrivial. In this level, as $G$-modules, the extension is trivial. Therefore, if the extension as $\UZ$-module is nontrivial, the only possibility is that the image of $\U{T}(G/K)$ in $\U{X}(G/K)$ receives an exotic restriction or transfer from a torsion free class. However, in the quotient $\UM$ all restrictions and transfers are injective, so exotic restriction or transfers cannot happen.
\end{proof}

\begin{rmk}
This proposition does not hold when $p = 2$ and $\UZ$-modules with $\ZZ_-$ are involved. For example in $C_2$ we have a nontrivial extension
\[
    \U{0} \rightarrow \UB_{1} \rightarrow \dot{\UZ}_- \rightarrow \UZ_- \rightarrow 0,
\]
which is an exotic transfer.
\end{rmk}

\begin{cor}\label{cor-anderson}
If $G = C_{p^n}$ and $V \in RO(G)$ is orientable, then $\U{\pi}_{* - V}(H\UZ)$ completely determines $\U{\pi}_{-* + V + (2 - \lambda_0)}(H\UZ)$ via Anderson duality.
\end{cor}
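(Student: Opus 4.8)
The plan is to feed the homotopy Mackey functors of $H\UZ$ through the Anderson-duality short exact sequence and then show that the one extension which arises is forced to split. First I would apply the $RO(G)$-graded sequence of Proposition \ref{prop-Anderson} with $E = H\UZ$ and $X = S^0$, and use Lemma \ref{lem-miracle} to rewrite $I^G_{\ZZ}(H\UZ) \simeq \Sigma^{2-\lambda_0}H\UZ$; taking the graded pieces indexed by the coset $* - V$ and unwinding the grading conventions fixed above, the middle term becomes $\U{\pi}_{-*+V+(2-\lambda_0)}(H\UZ)$, so for each integer $n$ one obtains a short exact sequence
\[
0 \longrightarrow Ext_L\bigl(\U{\pi}_{n-1-V}(H\UZ),\ZZ\bigr) \longrightarrow \U{\pi}_{-n+V+(2-\lambda_0)}(H\UZ) \longrightarrow Hom_L\bigl(\U{\pi}_{n-V}(H\UZ),\ZZ\bigr) \longrightarrow 0 .
\]
Since $I^G_{\ZZ}(H\UZ)$ is an $H\UZ$-module by Proposition \ref{prop-AndersonRing}, this is a sequence of $\UZ$-modules, and the two outer terms are manifestly determined by the graded family $\U{\pi}_{*-V}(H\UZ)$ alone (the index $n-1-V$ lies in the coset $* - V$). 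Thus all the corollary really asserts is that this sequence splits.

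To see that, I would use orientability of $V$ to control the outer terms. For $G = C_{p^n}$ and an orientable degree, $\U{\pi}_{\star}(H\UZ)$ is a direct sum of a form of $\UZ$ and a levelwise torsion $\UZ$-module: if its $G/e$-level vanishes it is levelwise torsion by Lemma \ref{lem-torsion}, and otherwise the orientation class $u$ sits over $G/G$ and splits off a form of $\UZ$ (for $C_{p^2}$ this is explicit in Theorem \ref{thm-Cp2oriented}; for general $C_{p^n}$ it follows from the inductive description of $\U{\pi}_{\star}(H\UZ)$ via the Tate diagram, where orientability is exactly what prevents a $\UZ_-$-type summand from appearing). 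Now $Hom_{Ab}$ kills finite groups and sends $\ZZ$ to $\ZZ$, so $Hom_L(\U{\pi}_{n-V}(H\UZ),\ZZ)$ is a form of $\UZ$ (or $\U{0}$); and $Ext_{Ab}(\ZZ,\ZZ)=0$ while $Ext_{Ab}$ of a finite group is finite, so $Ext_L(\U{\pi}_{n-1-V}(H\UZ),\ZZ)$ is levelwise torsion. Hence the displayed sequence is an extension of a form of $\UZ$ (or $\U{0}$) by a levelwise torsion $\UZ$-module.

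Finally, Proposition \ref{prop-noext} gives that $\U{Ext}^1_{\UZ}$ of a form of $\UZ$ by a levelwise torsion module is $\U{0}$ (and trivially so when the quotient is $\U{0}$), so the sequence splits and the middle term is the direct sum of its two outer terms, hence completely determined by $\U{\pi}_{*-V}(H\UZ)$; running over all integer shifts $n$ finishes the proof. The step I expect to require the most care is precisely this vanishing: Proposition \ref{prop-noext} fails once $p=2$ and a $\dot{\UZ}_-$-type Mackey functor is in play (compare the nontrivial exotic transfer $\U{0}\to\UB_1\to\dot{\UZ}_-\to\UZ_-\to\U{0}$ in the remark following it), which is exactly why the hypothesis is that $V$ be orientable; the grading bookkeeping in the first step, by contrast, is purely formal.
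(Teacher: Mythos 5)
Your argument is essentially the paper's own: the corollary there is deduced directly from the discussion immediately preceding it, namely the $RO(G)$-graded sequence of Proposition \ref{prop-Anderson} rewritten via Lemma \ref{lem-miracle}, the observation that the only possibly nontrivial extension occurs at $* = |V|$, where the quotient is a form of $\UZ$ and the kernel is levelwise torsion, and Proposition \ref{prop-noext} (whose failure for $\UZ_-$-type Mackey functors is exactly where orientability enters) to split that extension. Your write-up simply supplies a bit more justification for the structural input (Lemma \ref{lem-torsion} together with the underlying homotopy of $S^V \wedge H\UZ$, and the explicit $C_{p^2}$ answer) and otherwise follows the same route, so it matches the paper's proof.
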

Some multiplicative properties are also preserved in Anderson duality.

\begin{prop}\label{prop-Adnerson-alambda}
The classes $a_{\lambda_i} \in \U{\pi}_{-\lambda_i}(H\UZ)$ is self-Anderson dual.
\end{prop}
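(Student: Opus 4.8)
The plan is to realize $a_{\lambda_i}$ as a degree $-\lambda_i$ self-map of the $H\UZ$-module $H\UZ$, Anderson-dualize the associated cofibre sequence, and observe that the dual cofibre sequence has the same shape. Write $S(\lambda_i)$ for the unit circle of $\lambda_i$. Since $\lambda_i=\lambda(p^i)$ is a complex representation (underlying space $\mathbb{C}$), the bundle $S(\lambda_i)\times\lambda_i$ is $G$-equivariantly trivial via $(x,v)\mapsto(x,v/x)$; hence the tangent bundle of $S(\lambda_i)$ is the trivial line bundle $\U{\RR}$, so $Fun(S(\lambda_i)_+,Y)\simeq\Sigma^{-1}S(\lambda_i)_+\wedge Y$ for every $H\UZ$-module $Y$, and $S(\lambda_i)_+\wedge S^{\lambda_i}\simeq\Sigma^{2}S(\lambda_i)_+$. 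Smashing the Euler-class cofibre sequence $S(\lambda_i)_+\to S^0\xrightarrow{a_{\lambda_i}}S^{\lambda_i}$ with $H\UZ$ and desuspending by $\Sigma^{-\lambda_i}$ (using $S(\lambda_i)_+\wedge S^{\lambda_i}\simeq\Sigma^{2}S(\lambda_i)_+$ to simplify the third term) yields the cofibre sequence of $H\UZ$-modules
\[
\Sigma^{-\lambda_i}H\UZ\xrightarrow{a_{\lambda_i}}H\UZ\longrightarrow\Sigma^{-1}S(\lambda_i)_+\wedge H\UZ.
\]

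Next I would apply the exact contravariant functor $I_{\ZZ}(-)$ and simplify, using $I_{\ZZ}(\Sigma^{V}H\UZ)\simeq\Sigma^{-V}I_{\ZZ}(H\UZ)$, Lemma~\ref{lem-miracle} in the form $I_{\ZZ}(H\UZ)\simeq\Sigma^{2-\lambda_0}H\UZ$, and Proposition~\ref{prop-AndersonHom} applied to the free $H\UZ$-module $S(\lambda_i)_+\wedge H\UZ$, which gives $I_{\ZZ}(S(\lambda_i)_+\wedge H\UZ)\simeq Fun(S(\lambda_i)_+,I_{\ZZ}(H\UZ))\simeq\Sigma^{1-\lambda_0}S(\lambda_i)_+\wedge H\UZ$. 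Desuspending by $\Sigma^{2-\lambda_0}$, the Anderson dual of the displayed cofibre sequence becomes the cofibre sequence
\[
S(\lambda_i)_+\wedge H\UZ\longrightarrow H\UZ\xrightarrow{I_{\ZZ}(a_{\lambda_i})}\Sigma^{\lambda_i}H\UZ,
\]
which is again the shape of the cofibre sequence produced by multiplication by an element of $\U{\pi}_{-\lambda_i}(H\UZ)(G/G)$ on $H\UZ$, with the same fibre $S(\lambda_i)_+\wedge H\UZ$ (hence the same mapping cofibre $\Sigma S(\lambda_i)_+\wedge H\UZ$) as for $a_{\lambda_i}$ itself. Thus $a_{\lambda_i}$ and its Anderson dual $I_{\ZZ}(a_{\lambda_i})$ are two elements of the cyclic group $\U{\pi}_{-\lambda_i}(H\UZ)(G/G)\cong\ZZ/p^{n-i}$, which is generated by $a_{\lambda_i}$, whose mapping cofibres are equivalent.

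To finish, writing $I_{\ZZ}(a_{\lambda_i})=m\,a_{\lambda_i}$ I would compute from the long exact sequence that $[S^0,\operatorname{cofib}(m\,a_{\lambda_i}\colon H\UZ\to\Sigma^{\lambda_i}H\UZ)]^G\cong\ZZ/\gcd(m,p^{n-i})$, so the equivalence of cofibres forces $\gcd(m,p^{n-i})=1$; that is, $I_{\ZZ}(a_{\lambda_i})$ is a unit multiple of $a_{\lambda_i}$ and generates the same (Anderson-self-dual) cyclic submodule. I expect the only real obstacle — not the duality computation itself — to be normalizing this unit to $1$: this amounts to choosing the trivialization of $S(\lambda_i)\times\lambda_i$ compatibly with the orientation defining $u_{\lambda_i}$, so that the identification of the two cofibre sequences is the canonical one; for $p$ odd the remaining ambiguity is just a sign, further constrained by the involutivity $I_{\ZZ}\circ I_{\ZZ}\simeq\operatorname{id}$ and the $H\UZ$-linearity of $I_{\ZZ}$, which force $m^{2}\equiv 1\pmod{p^{n-i}}$.
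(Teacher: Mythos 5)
The paper states this proposition without proof, so there is no argument of the author's to compare with; judged on its own, your duality bookkeeping is correct: the Euler cofibre sequence, the equivariant trivializations giving $\Sigma^{\lambda_i}S(\lambda_i)_+\simeq\Sigma^{2}S(\lambda_i)_+$ and $Fun_G(S(\lambda_i)_+,Y)\simeq\Sigma^{-1}S(\lambda_i)_+\wedge Y$, and the identification of the Anderson-dualized sequence as $S(\lambda_i)_+\wedge H\UZ\to H\UZ\to\Sigma^{\lambda_i}H\UZ$ are all right. But as written there are two loose ends, one of them genuine. First, to write $I_{\ZZ}(a_{\lambda_i})=m\,a_{\lambda_i}$ you need the dual map $H\UZ\to\Sigma^{\lambda_i}H\UZ$ to be an $H\UZ$-module map under an $H\UZ$-\emph{linear} identification $I_{\ZZ}(H\UZ)\simeq\Sigma^{2-\lambda_0}H\UZ$; a bare map of spectra between these objects is a degree-$\lambda_i$ cohomology operation and need not be multiplication by a homotopy class. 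You invoke the $H\UZ$-linearity of $I_{\ZZ}$ only in the final sentence, but it is needed already at this step. Second, and more seriously, the normalization is not carried out: the cofibre comparison pins $m$ down only to a unit modulo $p^{n-i}$, involutivity gives $m^{2}\equiv 1\pmod{p^{n-i}}$ (so $m\equiv\pm1$ for odd $p$, and for $p=2$, $n-i\geq 3$ also $m\equiv 2^{n-i-1}\pm1$), and the proposed fix via a compatible choice of trivialization is stated as an expectation rather than proved. So the proposal establishes that the Anderson dual of $a_{\lambda_i}$ generates the same cyclic subgroup of $\U{\pi}_{-\lambda_i}(H\UZ)(G/G)$, not that it equals $a_{\lambda_i}$.

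The gap closes, and the geometric detour becomes unnecessary, if you lean fully on linearity from the start. For any $x\in\U{\pi}_{\star}(H\UZ)(G/G)$ and any $H\UZ$-module $M$, the $H\UZ$-module structure on $I_{\ZZ}(M)=Fun_G(M,I^G_{\ZZ})$ from Proposition \ref{prop-AndersonRing} is induced from the action on the source $M$, so multiplication by $x$ on $I_{\ZZ}(M)$ is literally precomposition with multiplication by $x$ on $M$; hence $I_{\ZZ}$ of $x$-multiplication is $x$-multiplication, on the nose. Choosing the equivalence $I_{\ZZ}(H\UZ)\simeq\Sigma^{2-\lambda_0}H\UZ$ of Lemma \ref{lem-miracle} to be $H\UZ$-linear (possible since both sides are Eilenberg-Mac Lane $H\UZ$-modules on $\UZ^*$, using Corollary \ref{cor-equivalence} to detect module equivalences on $\U{\pi}_0$), and noting that conjugation by an $H\UZ$-linear equivalence preserves multiplication by any class in $\U{\pi}_{\star}(H\UZ)$, you get $I_{\ZZ}(a_{\lambda_i})=a_{\lambda_i}$ exactly, with no unit ambiguity and no need for the cofibre comparison or the $[S^0,\mathrm{cofib}]^G$ computation.
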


The universal coefficient spectral sequence is more subtle. If $\U{\pi}_{* - V}(H\UZ)$ is known, then by Theorem \ref{thm-UCSS} there is a spectral sequence with $E^2$-page
\[
    E^2_{s,t} = \U{Ext}^s(\U{\pi}_{t - V}(H\UZ),\UZ) \Rightarrow \U{\pi}_{-t - s + V}(H\UZ).
\]

Since all torsion $\UZ$-modules in $\U{\pi}_{\star}(H\UZ)$ has trivial $G/e$-level, by Theorem \ref{thm-ext}, we can compute their internal $Ext$. If $\UM = \UZ_{t_1,t_2,...,t_n}$ is a form of $\UZ$, then by the short exact sequence
\[
    \U{0} \rightarrow \UZ_{t_1,t_2,...,t_n} \rightarrow \UZ \rightarrow \UB_{t_1,t_2,..,t_n} \rightarrow \U{0},
\]
we have
\begin{prop}\label{prop-extZ}
\[
    \U{Ext}^i_{\UZ}(\UZ_{t_1,t_2,...,t_n},\UZ) = \left\{ \begin{array}{ll}
                                                            \UZ & \textrm{for $i = 0$}\\
                                                            \UB_{t_1,t_2,...,t_n}^E & \textrm{for $i = 2$}\\
                                                            \U{0} & \textrm{otherwise}
                                                        \end{array} \right.
\]
\end{prop}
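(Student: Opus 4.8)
The plan is to derive this directly from the short exact sequence
\[
    \U{0} \rightarrow \UZ_{t_1,t_2,...,t_n} \rightarrow \UZ \rightarrow \UB_{t_1,t_2,...,t_n} \rightarrow \U{0}
\]
displayed just before the statement, by running the long exact sequence for $\U{Ext}^*_{\UZ}(-,\UZ)$ (Proposition \ref{prop-LES}) and reading off the answer from two ingredients already in hand: the projectivity of $\UZ$ and Theorem \ref{thm-ext}. First I would record that $\UZ$ is a projective object of $Mod_{\UZ}$ --- it is the unit, and it equals $\U{\ZZ[G/G]}$, which is projective by Proposition \ref{prop-proj-Z} --- so that $\U{Ext}^i_{\UZ}(\UZ,\UZ) = \U{0}$ for $i \geq 1$ while $\U{Ext}^0_{\UZ}(\UZ,\UZ) = \U{Hom}_{\UZ}(\UZ,\UZ) \cong \UZ$. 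Next I would note that $\UB_{t_1,t_2,...,t_n}$ is by construction (Definition \ref{def-form-Z-index}) the cokernel of a map which is an isomorphism on $G/e$, hence $\UB_{t_1,t_2,...,t_n}(G/e) \cong 0$; Theorem \ref{thm-ext} then gives $\U{Ext}^i_{\UZ}(\UB_{t_1,t_2,...,t_n},\UZ) \cong \UB_{t_1,t_2,...,t_n}^E$ for $i = 3$ and $\U{0}$ otherwise.

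With these computed, the long exact sequence attached to the short exact sequence above takes the form
\[
    \U{0} \rightarrow \U{0} \rightarrow \UZ \xrightarrow{\ \varphi\ } \UZ \rightarrow \U{0} \rightarrow \U{0} \rightarrow \U{Ext}^1_{\UZ}(\UZ_{t_1,t_2,...,t_n},\UZ) \rightarrow \U{0} \rightarrow \U{0} \rightarrow \U{Ext}^2_{\UZ}(\UZ_{t_1,t_2,...,t_n},\UZ) \rightarrow \UB_{t_1,t_2,...,t_n}^E \rightarrow \U{0} \rightarrow \U{Ext}^3_{\UZ}(\UZ_{t_1,t_2,...,t_n},\UZ) \rightarrow \U{0},
\]
using that $\U{Hom}_{\UZ}(\UB_{t_1,t_2,...,t_n},\UZ) = \U{0}$ (a map from a levelwise torsion $\UZ$-module to the levelwise torsion-free $\UZ$ vanishes on every orbit) and that $\U{Hom}_{\UZ}(\UZ_{t_1,t_2,...,t_n},\UZ) \cong \UZ$ (the example preceding Lemma \ref{lem-torsion}). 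Exactness forces $\varphi$ to be an isomorphism, $\U{Ext}^1_{\UZ}(\UZ_{t_1,t_2,...,t_n},\UZ) = \U{0}$, $\U{Ext}^3_{\UZ}(\UZ_{t_1,t_2,...,t_n},\UZ) = \U{0}$, and the connecting map to be an isomorphism $\U{Ext}^2_{\UZ}(\UZ_{t_1,t_2,...,t_n},\UZ) \cong \U{Ext}^3_{\UZ}(\UB_{t_1,t_2,...,t_n},\UZ) = \UB_{t_1,t_2,...,t_n}^E$; together with $\U{Ext}^0 = \UZ$ this is exactly the claimed answer.

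The only point requiring any care is the identification of $\varphi$: it is the map induced by the inclusion $\UZ_{t_1,t_2,...,t_n} \rightarrow \UZ$, so one should check it is an isomorphism rather than, say, multiplication by a power of $p$. This follows because that inclusion is an underlying (i.e. $G/e$-level) isomorphism, and a $\UZ$-module map into $\UZ$ is determined by what it does on underlying level (both $\UZ$ and the internal Hom into $\UZ$ are fixed-point objects in the relevant sense), so $\varphi$ sends a generator to a generator. Everything else is formal bookkeeping in the long exact sequence once Theorem \ref{thm-ext} and the projectivity of $\UZ$ are available, so I expect no genuine obstacle beyond this bookkeeping; a short remark could also point out that one could instead splice a length-$3$ projective resolution of $\UZ_{t_1,t_2,...,t_n}$ onto that of $\UB_{t_1,t_2,...,t_n}$, but the short exact sequence argument is cleaner.
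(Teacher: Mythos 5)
Your proposal is correct and is essentially the paper's own argument: the paper derives Proposition \ref{prop-extZ} exactly from the short exact sequence $\U{0} \rightarrow \UZ_{t_1,\dots,t_n} \rightarrow \UZ \rightarrow \UB_{t_1,\dots,t_n} \rightarrow \U{0}$ together with Theorem \ref{thm-ext} and the projectivity of $\UZ$, just as you do, with the long exact sequence bookkeeping left implicit. Your extra verification that the map $\varphi$ is an isomorphism is harmless but unnecessary, since (as you also note) exactness already forces it once $\U{Hom}_{\UZ}(\UB_{t_1,\dots,t_n},\UZ)$ and $\U{Ext}^1_{\UZ}(\UB_{t_1,\dots,t_n},\UZ)$ vanish.
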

Therefore we completely understand $E^2$-page of the universal coefficient spectral sequence. Since most of the elements are concentrated in filtration $3$ by Theorem \ref{thm-ext}, and the only element in filtration $2$ and $0$ are produced by the form of $\UZ$ via the above proposition, we see that this spectral sequence will only have a single potential $d_3$, from $\UZ$ to a torsion $\UZ$-module in one degree lower. Finally, there can be two potential extensions, one being an extension between a torsion in filtration $3$ and a torsion in filtration $2$ from the above proposition. Another one is a potential extension from a torsion in filtration $3$ to the form of $\UZ$ in degree $0$. This extension is trivial by Proposition \ref{prop-noext}.

The $d_3$ in the universal coefficient spectral sequence happens very often: If $\UM$ is a form of $\UZ$, $\U{Ext}^0_{\UZ}(\UM,\UZ) \cong \UZ$. However the orientation $\UZ$-module in $\U{H}_{|V|}(S^V;\UZ)$ is usually not $\UZ$, but other forms of $\UZ$, so they are just the kernel of this $d_3$. We can see this by examples.

\begin{exam}
Let $G = C_p$ and $V = 2\lambda_0$. Then we know
\[
    \U{H}_i(S^V;\UZ) = \left\{ \begin{array}{ll}
                                    \UB_1    & \textrm{for $i = 0,2$}\\
                                    \UZ      & \textrm{for $i = 4$}\\
                                    \U{0}    & \textrm{otherwise.}
                                \end{array} \right.
\]
We compute, by Theorem \ref{thm-ext} and Proposition \ref{prop-extZ}, the $E^2$-page of the universal coefficient spectral sequence with $d_3$ is
\begin{center}
\includegraphics{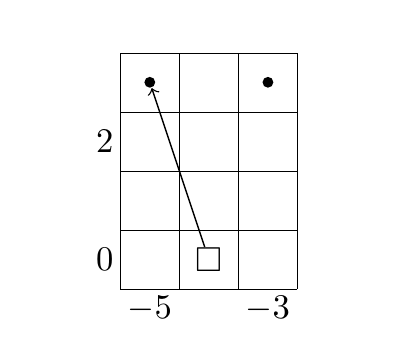}
\end{center}
Here we are using the Adams index.
\end{exam}

In fact, if we can compute the form of $\UZ$ in $\U{H}_{|V|}(S^V;\UZ)$ before computing all $\U{\pi}_{\star}(H\UZ)$ and solve the extension between filtration $3$ and $2$ (It is trivial in all known computations.), then we can use Anderson duality and the universal coefficient spectral sequence to compute all $\U{\pi}_{\star}(H\UZ)$ inductively. Let $G = C_{p^n}$
\begin{enumerate}
    \item Assume that we know $\U{\pi}_{\star}(H\UZ)$ for $C_{p^{n-1}}$, then we can use Corollary \ref{cor-pullback}  to compute $\U{\pi}_{V}(H\UZ)$ for all $V \in RO(G)$ containing no $\lambda_0$.
    \item Using equivariant Anderson duality and Lemma \ref{lem-miracle}, we can compute $\U{\pi}_{* + V - \lambda_0}(H\UZ)$ from $\U{\pi}_{* - V}(H\UZ)$. By Proposition \ref{prop-noext}, the potential extension is trivial.
    \item Using the universal coefficient spectral sequence, we compute $\U{\pi}_{* - V + \lambda_0}(H\UZ)$ from $\U{\pi}_{* + V -\lambda_0}(H\UZ)$.
    \item Repeat $(2)$ and $(3)$ to add arbitrary copies of $\lambda_0$ into $V$.
\end{enumerate}
The problem that which form of $\UZ$ appears in $\U{H}_{|V|}(S^V;\UZ)$ is nontrivial: By Theorem \ref{thm-formz} we know that every form of $\UZ$ appears in $\U{\pi}_{\star}(H\UZ)$.

If $G = C_p$, it is an entertaining practice to compute $\U{\pi}_{\star}(H\UZ)$ starting from the defining property of $H\UZ$ in Theorem \ref{thm-EM}, and we can see that the figures \ref{fig-Cp} and \ref{fig-C2} are self-dual in both ways. For $G = C_{p^2}$, Figure \ref{fig-Cp2} and \ref{fig-Cp2'} are $\U{\pi}_*(S^{2\lambda_1 + m\lambda_0} \wedge H\UZ)$ and $\U{\pi}_*(S^{-2\lambda_1 + m\lambda_0}\wedge H\UZ)$ respectively, with horizontal coordinate $*$, vertical coordinate $m$ and vertical lines are $a_{\lambda_0}$-multiplications. Both Anderson duality and the universal coefficient spectral sequence can be seen in these pictures.\\
\begin{figure}[H]
\includegraphics{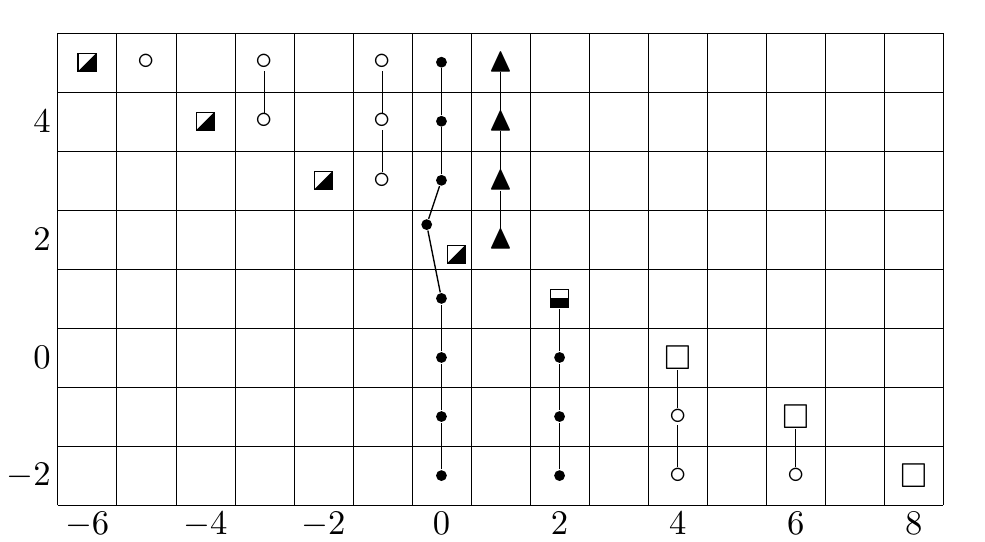}
\caption{$\U{\pi}_*(S^{2\lambda_1 + m\lambda_0} \wedge H\UZ)$}\label{fig-Cp2}
\end{figure}
\begin{figure}[H]
\includegraphics{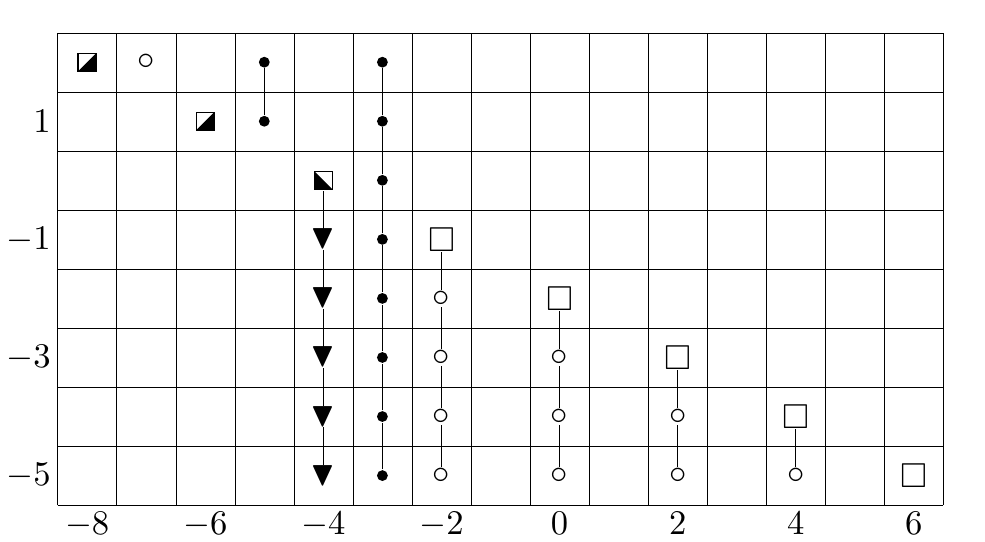}
\caption{$\U{\pi}_*(S^{-2\lambda_1 + m\lambda_0}\wedge H\UZ)$}\label{fig-Cp2'}
\end{figure}

We can also compute in the K\"unneth spectral sequence in Theorem \ref{thm-UCSS}. The advantage of K\"unneth spectral sequence is that it keeps track of most multiplicative structure in filtration $0$. However, it has many differentials and nontrivial extensions.

\begin{exam}
Let $G = C_p$ and $V = \lambda_0$. Since
\[
    \U{Tor}^{\UZ}_i(\UB_1,\UB_1) = \left\{ \begin{array}{ll}
                                                \UB_1 & \textrm{for $i = 0,3$}\\
                                                \U{0} & \textrm{otherwise.}
                                            \end{array} \right.
\]
We know the $E_2$-page of the spectral sequence computing $\U{H}_*(S^{2V};\UZ)$ from
\[
\U{Tor}^{\UZ}_{i,j}(\U{H}_*(S^{V}),\U{H}_*(S^V))
\]
is the following:
\begin{center}
\includegraphics{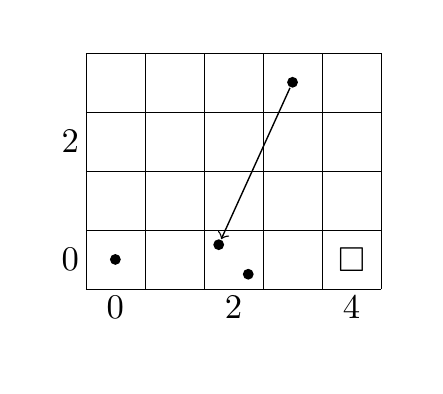}
\end{center}
Here the differential kills the sum of two torsions, making two different multiplications of $a_{\lambda_0}$ and $u_{\lambda_0}$ equal to each other in the quotient.
\end{exam}

\subsection{More homological computation}
Through the universal coefficient and K\"unneth spectral sequences, we can use our understanding of $\U{\pi}_{\star}(H\UZ)$ to compute more homological algebra of $\UZ$-modules. Essentially, the Tate diagram gives us a new way of computing homological invariants of $\UZ$-modules.

\begin{prop}\label{prop-extfromS}
Let $G = C_{p^n}$ and $\UM,\UN$ be forms of $\UZ$. Let $H\UM \simeq S^{V_1} \wedge H\UZ$ and $H\UN \simeq S^{V_2} \wedge H\UZ$ (see Theorem \ref{thm-formz}). We have
\[
    \U{Ext}^*_{\UZ}(\UM,\UN) \cong \U{\pi}_{-*}(S^{V_2 - V_1} \wedge H\UZ) \cong \U{H}_{-*}(S^{V_2 - V_1};\UZ).
\]
and
\[
    \U{Tor}^{\UZ}_*(\UM,\UN) \cong \U{\pi}_*(S^{V_1 + V_2} \wedge H\UZ) \cong \U{H}_*(S^{V_1 + V_2};\UZ).
\]
\end{prop}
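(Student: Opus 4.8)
The plan is to translate the purely algebraic derived functors $\U{Ext}^*_{\UZ}$ and $\U{Tor}^{\UZ}_*$ into the category of $H\UZ$-modules via Corollary \ref{cor-equivalence}, and then to use the fact that forms of $\UZ$ have Eilenberg--Mac Lane spectra which are invertible $H\UZ$-modules (Theorem \ref{thm-formz}) to rewrite everything in terms of $\U{\pi}_{\star}(H\UZ)$. Concretely, by Corollary \ref{cor-equivalence} we have
\[
    \U{Ext}^*_{\UZ}(\UM,\UN) \cong \U{\pi}_{-*}(Fun_{H\UZ}(H\UM,H\UN))
    \quad\text{and}\quad
    \U{Tor}^{\UZ}_*(\UM,\UN) \cong \U{\pi}_*(H\UM \wedge_{H\UZ} H\UN).
\]
So the first step is simply to invoke this identification.

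Next I would substitute the equivalences $H\UM \simeq S^{V_1} \wedge H\UZ$ and $H\UN \simeq S^{V_2} \wedge H\UZ$ coming from Theorem \ref{thm-formz}. For the $\U{Tor}$ statement this is immediate: smashing over $H\UZ$ gives
\[
    H\UM \wedge_{H\UZ} H\UN \simeq (S^{V_1} \wedge H\UZ) \wedge_{H\UZ} (S^{V_2} \wedge H\UZ) \simeq S^{V_1 + V_2} \wedge H\UZ,
\]
using that $S^{V} \wedge H\UZ$ is the free $H\UZ$-module on $S^V$ and that smashing representation spheres adds representations. Taking $\U{\pi}_*$ of both sides and recalling $\U{\pi}_{*-W}(H\UZ) \cong \U{H}_*(S^W;\UZ)$ yields the claimed formula for $\U{Tor}$. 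For the $\U{Ext}$ statement, since $S^{V_1} \wedge H\UZ$ is an invertible $H\UZ$-module with inverse $S^{-V_1} \wedge H\UZ$, we get
\[
    Fun_{H\UZ}(H\UM,H\UN) \simeq Fun_{H\UZ}(S^{V_1} \wedge H\UZ,\, S^{V_2} \wedge H\UZ)
    \simeq S^{-V_1} \wedge S^{V_2} \wedge H\UZ \simeq S^{V_2 - V_1} \wedge H\UZ,
\]
and again taking $\U{\pi}_{-*}$ gives the result.

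The routine parts are the manipulations of function spectra and smash products over $H\UZ$; the one point that needs a little care — and is the main thing to be checked rather than a serious obstacle — is that $S^{V} \wedge H\UZ$ really behaves as an invertible object in $Mod_{H\UZ}$ with $Fun_{H\UZ}(S^{V}\wedge H\UZ, -) \simeq S^{-V} \wedge (-)$, i.e. that the standard dualizability of representation spheres in $\mathcal{S}p^G$ passes correctly through the free-module functor and Corollary \ref{cor-equivalence}'s Quillen equivalence. This is formal once one notes that $S^V$ is invertible in $ho\mathcal{S}p^G$ and that $H\UZ \wedge (-)$ is strong monoidal, so $S^V \wedge H\UZ$ is invertible in $ho(Mod_{H\UZ})$ with the expected inverse and internal-hom behavior. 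Everything else is bookkeeping, together with the already-established dictionary between homotopy groups of representation-sphere smashes and $RO(G)$-graded homotopy of $H\UZ$.
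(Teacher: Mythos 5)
Your proposal is correct and follows essentially the same route as the paper: invoke Corollary \ref{cor-equivalence} to translate $\U{Ext}_{\UZ}$ and $\U{Tor}^{\UZ}$ into $Fun_{H\UZ}$ and $\wedge_{H\UZ}$, substitute the equivalences $H\UM \simeq S^{V_1}\wedge H\UZ$ and $H\UN \simeq S^{V_2}\wedge H\UZ$ from Theorem \ref{thm-formz}, and use invertibility of representation spheres to identify the result with $S^{V_2-V_1}\wedge H\UZ$ (respectively $S^{V_1+V_2}\wedge H\UZ$). The paper states this chain of isomorphisms tersely; the invertibility point you flag is exactly the implicit step, and your justification of it is fine.
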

\begin{proof}
By Corollary \ref{cor-equivalence},
\begin{align*}
    \U{Ext}^*_{\UZ}(\UM,\UN) & \cong \U{\pi}_{-*}Fun_{H\UZ}(H\UM ,H\UN)\\
                             & \cong \U{\pi}_{-*}Fun_{H\UZ}(S^{V_1} \wedge H\UZ, S^{V_2} \wedge H\UZ)\\
                             & \cong \U{\pi}_{-*}(S^{V_2 - V_1} \wedge H\UZ)
\end{align*}
The $\U{Tor}$ part shares a similar proof.
\end{proof}

\begin{exam}
Let $G = C_{p^2}$, $\UM = \UZ_{1,0}$ and $\UN = \UZ_{0,1}$. Then
\[
H\UM \simeq S^{\lambda_1 - \lambda_0} \wedge H\UZ
\]
and
\[
H\UN \simeq S^{2-\lambda_1} \wedge H\UZ.
\]
By reading $\U{\pi}_*(S^{-2\lambda_1 + \lambda_0} \wedge H\UZ)$ from Theorem \ref{thm-Cp2oriented}, we have
\[
        \U{Ext}^i_{\UZ}(\UZ_{1,0},\UZ_{0,1}) = \left\{ \begin{array}{ll}
                                                \UZ & i = 0\\
                                                \UB_{0,1} & i = 1\\
                                                \UB_{1,0}^E & i =2\\
                                                \U{0} & \textrm{otherwise}
                                                    \end{array} \right.
\]
Specifically, $\U{Ext}^1_{\UZ}(\UZ_{1,0},\UZ_{0,1})(G/G) \cong \ZZ/2$ is nontrivial. Therefore, there is a nontrivial extension
\[
    \U{0} \rightarrow \UZ_{0,1} \rightarrow \UM \rightarrow \UZ_{1,0} \rightarrow \U{0}.
\]
Such an extension can be constructed as follows:

Let $\UM$ be the $\UZ$-module with Lewis diagram
\begin{displaymath}
\xymatrix
@R=7mm
@C=10mm{
\ZZ \oplus \ZZ \ar@/_/[d]\\
\ZZ \oplus \ZZ \ar@/_/[d] \ar@/_/[u]\\
\ZZ \oplus \ZZ \ar@/_/[u]
}
\end{displaymath}
We call generators of $\ZZ \oplus \ZZ$ in $C_{p^2}/C_{p^i}$-level $a_i$ and $b_i$ for $0 \leq i \leq 2$, then restrictions and transfers are defined as follows:
\begin{align*}
    Res^{p^2}_p(a_2) = pa_1 &, \: Res^{p^2}_p(b_2) =  b_1 & Tr^{p^2}_p(a_1) = a_2 &, \:  Tr^{p^2}_p(b_1) =  pb_2 \\
    Res^{p}_1(a_1) = a_0    &, \: Res^{p}_1(b_1) = a_0 + pb_0 & Tr^p_1(a_0) = pa_1    & , \: Tr^p_1(b_0) = -a_1 + b_1
\end{align*}
$\UZ_{0,1} \rightarrow \UM$ maps generator in each $\ZZ$ to the corresponding $a_i$ and $\UM \rightarrow \UZ_{1,0}$ maps $a_i$ to $0$ and $b_i$ to generators in each level.

If a section $\UZ_{1,0} \rightarrow \UM$ exists, then it sends $1 \in \UZ_{1,0}(C_{p^2}/e)$ to $ta_0 + b_0$ for some $t \in \ZZ$. This forces $1$ in $C_{p^2}/C_p$-level going to $(tp-1)a_1 + b_1$, and therefore $p$ in $C_{p^2}/C_{p^2}$-level goes to $(tp-1)a_2 + pb_2$. But this element is not divisible by $p$, so we have a contradiction.
\end{exam}

\begin{exam}\label{exam-tor}
Let $G = C_{p^2}$ and $\UM = \UZ_{1,0}$. By Proposition \ref{prop-extfromS},
\[
\U{Tor}^{\UZ}_i(\UZ_{1,0},\UZ_{1,0}) \cong \U{\pi}_i(S^{2\lambda_1 - 2\lambda_0} \wedge H\UZ),
\]
therefore we have
\[
    \U{Tor}^{\UZ}_i(\UZ_{1,0},\UZ_{1,0}) = \left\{ \begin{array}{ll}
                                                \UZ_{1,1} \oplus \UB_{0,1} & \textrm{for $i = 0$}\\
                                                \UB_{1,0}                  & \textrm{for $i = 1$}\\
                                                \U{0}                      & \textrm{otherwise.}
                                                    \end{array} \right.
\]
\end{exam}

\bibliography{math}{}
\bibliographystyle{alpha}
\end{document}